\newtheorem{theorem}{Theorem}[section]
\newtheorem{lemma}{Lemma}[section]
\newtheorem{definition}{Definition}[section]
\newtheorem{notation}{Notation}[section]
\newtheorem{remark}{Remark}[section]
\newtheorem{proposition}{Proposition}[section]
\numberwithin{equation}{section}
\title{Incompressible limits of the Patlak-Keller-Segel model and its stationary state}
\author[a,b]{Qingyou He\thanks{E-mail: qyhe.cnu.math@qq.com (Q. Y. He)}}
\author[a,b]{Hai-Liang  Li\thanks{%Corresponding author:
E-mail:		hailiang.li.math@gmail.com (H.-L. Li)}}
\author[c]{Beno\^{\i}t Perthame\thanks{%Corresponding author:
E-mail:		benoit.perthame@sorbonne-universite.fr (B. Perthame)}}
    \affil[a]{School of Mathematical Sciences,
	Capital Normal University, Beijing 100048, P.R. China}
\affil[b]{Academy for Multidisciplinary Studies, Capital Normal University, Beijing~100048, P.R. China}
\affil[c]{Sorbonne Universit{\'e}, CNRS, Universit\'{e} de Paris, Inria, Laboratoire Jacques-Louis Lions UMR7598, F-75005 Paris}
\date{}           %%  default, LaTeX uses the current date
\begin{document}
\maketitle
\begin{abstract}
We complete previous results about the incompressible limit of both the $n$-dimensional $(n\geq3)$ compressible Patlak-Keller-Segel (PKS) model and its stationary state. As in previous works, in this limit, we derive the weak form of a geometric free boundary problem of Hele-Shaw type, also called congested flow. In particular, we are able to take into account the unsaturated zone, and establish the complementarity relation which describes the limit pressure by a degenerate elliptic equation. Not only our analysis uses a completely different framework than previous approaches, but  we also establish a novel uniform $L^3$ estimate of the pressure gradient, regularity \`a la Aronson-B\'enilan,  %estimates in $L^1\& L^3$ settings, 
and a uniform $L^1$ estimate for the time derivative of the pressure. Furthermore, for the Hele-Shaw problem, we prove the uniqueness of solutions, meaning that the incompressible limit of the PKS model is unique.
% the finite speed of propagation, and the limit energy functional. 
In addition,  we establish the corresponding incompressible limit of the stationary state for the PKS model with a given mass, where, different from the case of PKS model, we obtain the uniform bound of pressure and the uniformly bounded support of density.
\end{abstract}

\noindent{\bf Key-words.} Keller-Segel system; Stationary state; Incompressible limit;
Aronson-B\'enilan estimate; Complementarity relation; Free boundary; Hele-Shaw problem
\\
2010 {\bf MSC.} 35B45; 35K57; 35K65; 35Q92; 76N10;
76T99

%\tableofcontents

\section{Introduction}
%%%%%%%%%%%%

The Patlak-Keller-Segel (PKS) model can be used to describe the collective dynamics  of a large
number of individual agents interacting  through a diffusive signal. For instance, it appears for the chemotaxis phenomena of various types of cells, aggregation dynamics of crowds or to describe the gravitational collapse, see~\cite{rr26, MRSV11,CCH}, and references therein. With a source term, it is used as a mechanical description of tumor growth \cite{r34, rBC}. Including nonlinear diffusivity and Newtonian interactions, the PKS model is written
\begin{equation}\label{d1}
\begin{cases}
\partial _t\rho_m=\Delta \rho_m^m+\nabla\cdot(\rho_m\nabla\mathcal{N}\ast\rho_m)\quad\text{for} \quad (x,t)\in\mathbb{R}^n\times\mathbb{R}_+, \quad n\geq 3,
\\[5pt]
\rho_{m}(x,0)=\rho_{m,0}(x)\geq0\quad\text{for }x\in\mathbb{R}^n,  \qquad  \qquad  m>2-2/n, \qquad  \text{(subcritical case)}.
\end{cases}
\end{equation}	
For chemotaxis, $\rho_m(x,t) \geq 0$ represents the cell density and $\mathcal{N}\ast\rho_m$ represents the chemical substance concentration  obtained by convolution with  the Newtonian potential
\begin{equation*}
\mathcal{N}(x)=\frac{-1}{n(n-2)\alpha_n|x|^{n-2}}\qquad\text{for } \quad
x\in\mathbb{R}^n\backslash \{0\}, \qquad \Delta \mathcal{N} = \delta,
\end{equation*}
with $\alpha_n>0$  being the volume of the $n$-dimensional unit ball and $\delta$  the Dirac measure. The conservation of mass for the Cauchy problem Eq.~\eqref{d1}  holds
\begin{equation*}
%\label{d12}
\int_{\mathbb{R}^n}\rho_m(x,t)dx=\int_{\mathbb{R}^n}\rho_{m,0}dx:=M, \qquad \forall\ t \geq 0.
\end{equation*}
For solutions of Eq.~\eqref{d1}, the pressure denotes a power of the density (Darcy's law) as
\begin{equation}\label{d6}
P_m:=\frac{m}{m-1}\rho_m^{m-1}, \qquad \quad P_{m,0}:=\frac{m}{m-1}\rho_{m,0}^{m-1}.
\end{equation}
We can rewrite Eq.~\eqref{d1}  for the density $\rho_m$ and pressure $P_m$ in terms of the transport
equation with the effective velocity $u_m$ as
\begin{equation}\label{d2}
\partial _t\rho_m=\nabla\cdot(\rho_mu_m), \qquad u_m:=\nabla
P_m+\nabla\mathcal{N}\ast\rho_m.
\end{equation}	
By a direct computation, the pressure satisfies the equation
\begin{equation}\label{d3}
\partial_t P_m=(m-1)P_m(\Delta P_m+\rho_m)+\nabla P_m\cdot(\nabla
P_m+\nabla\mathcal{N}\ast\rho_m).
\end{equation}
The competition between the degenerate diffusion and the nonlocal aggregation is
the main characteristic of Eq.~(\ref{d1}) or Eq.~(\ref{d2}). This is well
represented by the free energy functional 
\begin{equation}\label{a58}
F_m(\rho_m)= \frac{1}{m-1}\int_{\mathbb{R}^n}\rho_m^mdx-
\frac{1}{2}\int_{\mathbb{R}^n}|\nabla\mathcal{N}\ast\rho_m|^2dx.
\end{equation}
It satisfies the energy identity, which shows that $F_m(\rho_m)$ is non-increasing with time, 
\begin{equation}\label{f1}
\frac{dF_m(\rho_m)}{dt}+\int_{\mathbb{R}^n}\rho_m|\nabla(P_m+\mathcal{N}\ast\rho_m)|^2dx=0.
\end{equation}
 Since $\frac{\delta F_m(\rho_m)}{\delta \rho_m}=P_m+\mathcal{N}\ast\rho_m$ represents the
chemical potential, there exists a gradient flow structure for the PKS model,
\begin{equation}\label{gf}
\frac{dF_m(\rho_m)}{dt}+\int_{\mathbb{R}^n}\rho_m|\nabla\frac{\delta
F_m(\rho_m)}{\delta\rho_m}|^2dx=0.
\end{equation}

Solutions $\rho_{m,s}$ of the stationary PKS system (SPKS) satisfy that the free energy $F_m(\rho_m(t))$ is  constant in  time and thus are determined by 
\begin{equation}\label{SPKS}
\nabla\rho_{m,s}^m+\rho_{m,s}\nabla\mathcal{N}\ast\rho_{m,s}=0\quad\text{for
}x\in\mathbb{R}^n, \qquad P_{m,s}=\frac{m}{m-1}\rho_{m,s}^{m-1}.
\end{equation}
\\

Since a decade, and the paper~\cite{5} motivated by tumor growth, a large literature has been devoted to studying the incompressible (Hele-Shaw) limit, which means the limit as $m\to\infty$, for several variants of the porous medium equations (see below).   In particular, establishing this limit when Newtonian interactions are included, as in  Eq.~\eqref{d1} and  Eq.~\eqref{SPKS}, has been a long standing question solved in~\cite{CKY_2018} based both on optimal transportation methods and viscosity solutions. 
\\

\paragraph{Incompressible limit.} Our purpose is to complete the understanding, from~\cite{CKY_2018},  of the incompressible (Hele-Shaw) limit for the PKS model Eq.~\eqref{d1} in various directions. Firstly, we introduce a third approach based on weak solutions as described below. In particular, our assumptions on the initial data are more general (not necessarily  patches data), and the method can easily be extended to source terms when mass varies. Secondly, we prove new regularity results: an $L^3$ estimate on $\nabla P_m$ and  regularity \`a la Aronson-B\'enilan showing bounds on the second derivatives of the pressure $P_m$. Thirdly, we can prove directly an estimate on the time derivative of the pressure based on a new idea since a direct approach would not work. Finally, we prove a new uniqueness theorem for the limiting Hele-Shaw problem.
\\

Following~\cite{CKY_2018}, the Hele-Shaw limit system writes
\begin{align}\label{hs}
\begin{cases}
\partial_{t}\rho_{\infty}=\Delta
P_{\infty}+\nabla\cdot(\rho_{\infty}\nabla\mathcal{N}\ast\rho_{\infty}), \qquad
&\text{in } D'\big(\mathbb{R}^n\times \mathbb{R}_+\big),
\\[5pt]
(1-\rho_{\infty})P_{\infty}=0,  \qquad \quad 0\leq\rho_{\infty}\leq1,   &\text{a.e. }(x,t)\in \mathbb{R}^n\times
\mathbb{R}_+.
\end{cases}
\end{align}
This is a weak version of the geometric Hele-Shaw problem including chemotaxis. We also prove the
complementarity relation (in distributional sense)
\begin{equation}\label{d5}
P_{\infty}(\Delta P_{\infty}+\rho_{\infty})=0.
\end{equation}
It describes the limit pressure by a degenerate elliptic equation once we know the regularity of the 
set $\{ p_\infty > 0 \}$, which is a major challenge for
the Hele-Shaw problem, see \cite{rCS,r20,r60, GKM} and reference therein. Furthermore, with Eqs.~\eqref{hs}--\eqref{d5} at hand, the limiting free energy functional easily follows,
\begin{equation}\label{f3}
\begin{cases}
F_{\infty}(\rho_\infty)=\frac{1}{2}\int_{\mathbb{R}^n}\rho_{\infty}\mathcal{N}\ast\rho_{\infty}dx, \qquad 0\leq\rho_\infty\leq 1,
\\[10pt]
\frac{dF_{\infty}(\rho_\infty(t))}{dt}+\int_{\mathbb{R}^n}
\rho_{\infty}(t)|\nabla\big(P_\infty(t)+\mathcal{N}\ast\rho_{\infty}(t)\big)|^2dx=0.
\end{cases}
\end{equation}
Compared with
the free energy~\eqref{a58}, the diffusive effect is replaced by the height
constraint $\rho_\infty\leq 1$. In the end, we extend the uniqueness \cite{4,r46} of solution to the PKS model Eq.~\eqref{d1} to the uniqueness of solution to the Hele-Shaw limit system Eq.~\eqref{hs}.

In the stationary case, the incompressible (Hele-Shaw) limit from the SPKS model Eq.~\eqref{SPKS} as $m\to \infty$, is represented as
\begin{equation}\label{HSSPKS}
\begin{cases}
\nabla P_{\infty,s}+\rho_{\infty,s}\nabla\mathcal{N}\ast\rho_{\infty,s}=0,&\text{in }\mathcal{D}'(\mathbb{R}^n),
\\[5pt]
(1-\rho_{\infty,s})P_{\infty,s}=0, \qquad \quad  0\leq \rho_{\infty,s}\leq 1,\quad &\text{a.e. }x\in\mathbb{R}^n.
\end{cases}
\end{equation}
As before, this corresponds to vanishinf dissipation for the free energy $F_\infty(\rho_{\infty}(t))$.
\\

The limits~\eqref{hs}--\eqref{f3} can be formally derived from the PKS model Eq.~\eqref{d1}. Indeed,  taking the limit as $m\to\infty$ in Eq.~\eqref{d1}, we  formally obtain the
first equation in~\eqref{hs}.  Since we can prove that the limit pressure $P_{\infty}$ is  bounded, from ~\eqref{d6} we recover $\eqref{hs}_2$. Also, we can formally attain the complementarity relation Eq.~\eqref{d5} thanks to a direct calculation of Eq.~\eqref{d3} as $m\to\infty$.
In addition, from~\eqref{f1}, we can formally obtain the limit energy functional~\eqref{f3}. It should be emphasized that the structure of
gradient flow as in~\eqref{gf} is still present in a weak form as in the optimal
transportation approach, cf.~\cite{r18,MRS14}.  Similarly, the incompressible limit Eq.~$\eqref{HSSPKS}_1$ is formally derived from the SPKS model Eq.~\eqref{SPKS} as $m\to\infty$. As it is wellknown, establishing rigorously these limits faces deep difficulties due to the nonlinearities and weak regularity; the limit $\rho_\infty$ is discontinuous in space and $P_{\infty}$ can undergo discontinuities in time.

\paragraph{Review of literature.} As mentioned earlier, several approaches are possible to overcome the above mentioned difficulties. Optimal transportation methods are used in conservative cases, and the incompressible limit is the so-called {\em congested flows}. This  method was intitiated in \cite{MRS10}, and is well adapted for the tansitions from discrete to continuous models~\cite{MRSV11}. It was extended to the two-species case in~\cite{Laborde}.   The case of Newtonian drift, and the limit $m\to \infty$ was proved in~\cite{CKY_2018}.
\\

Another approach is by viscosity solutions, see for instance~\cite{IY,r20}, for an external drift see~\cite{LeiKim,r108}  and again~\cite{CKY_2018} for Newtonian drifts. In particular, this approach can handle source terms as initiated in~\cite{KimPS}. It has the advantage of handling specifically the free boundary in the limit with minimal assumptions for this purpose.
\\

Our approach is by weak solutions as defined below (see Def.~\ref{def:WS}) and is motivated by tumor growth models  of the form \begin{equation}\label{tg}
\partial_t\rho_m=\Delta \rho_m^m+\rho_m G(P_m)\quad\text{for }m>1,
\end{equation}
where $G(P)$ is a given decreasing function satisfying $G(P_M)=0$ for some threshold $P_M>0$. This problem was first solved in \cite{5}  using regularity as introduced by Aronson and B\'enilan~\cite{r32} and $BV$ estimates. The method was extended to include a drift, see~\cite{r50}, to replace Darcy's law by Brinkman's law~\cite{r51,r52}  and to a system with nutrients in~\cite{r35} using a new estimate in $L^4_{t,x}$ for $\nabla P_m$. Recently, multispecies problems were handled in~\cite{GPS2019,r53}, and a major improvement for compactness followed by~\cite{rPrX,r62}, see also~\cite{IgN} and the most advanced version in~\cite{david:hal-03636939}.

Furthermore, let us recall that for the porous medium equation (PME), i.e., when $G\equiv 0$, the problem leads to the so-called {\em mesa problem} and was also treated in a large literature, see for instance \cite{r4,r5,r10,r12} and references therein.  The weak formulation and the variational formulation  (using the so-called Baiocchi variable), of Hele-Shaw type were first introduced in~\cite{r107,r17} respectively.

Concerning the Keller-Segel model, with $m$ fixed,  very much is known and methods are nowadays well established.  Important progresses have been made recently on global existence, large time behaviors, critical mass and finite time blow-up  for the multi-dimensional  PKS model. In particular, the solutions with different diffusion exponent exhibit different behaviors. For diffusion exponent $1\leq m<2-2/n$ (supercritical case), the diffusion is dominant at the parts of low density and the aggregation is dominant at the parts of high density, then the solution to  Eq.~\eqref{d1} exhibits finite time blow-up for large mass and global existence in time for small mass, cf.~\cite{BL13,CC,CW,Sugi06,S2}. For $m=2-2/n$ (critical case), there exists a critical mass $M_c>0$ such that the solution  blows up in a finite time for the initial mass $M>M_c$, \cite{Sugi06, CC, CW}, and exists globally in time for the initial mass $M<M_c$, see~\cite{BCL,CCH} and reference therein. And for diffusion exponent $m>2-2/n$ (subcritical case), the diffusion dominates at the parts of high density, the solution to this model is uniformly bounded and  exists globally in time without any restriction on the size of the initial data, cf.~\cite{CCJ,S2,sb2014,BL13}. In addition, the large time behaviors have been investigated extensively, one can refer to~\cite{S7,YY,IY,LBW,r18} and references therein.

The SPKS model Eq.~\eqref{SPKS} has also been widely studied. For existence of solutions, see \cite{rr10,MF,LPL,rr2}, for uniqueness see \cite{rr12,r2,rr30}, and for radial symmetry see \cite{r18,r12,rr30}. Critical points of free energy $F_m(\rho_m)$ in~\eqref{a58} have been studied, see \cite{MF,rr10,LPL,rr2} and references therein. For the multi-dimensional SPKS model with more general attractive potential, the authors in~\cite{r18} proved that the solution is radially decreasing symmetric up to a translation obtained by the method of continuous Steiner symmetrization, then it was proved in~\cite{rr12} uniqueness $(m\geq2)$ and non-uniqueness $(1<m<2)$ of the solution to the SPKS  with general attractive potential.  Before that, the authors in~\cite{rr2} proved that all compactly supported solutions to the 3-dimensional SPKS model~Eq.~\eqref{SPKS} with $m>4/3$ must be radially symmetric up to a translation, hence obtaining  uniqueness of the solution among compactly supported functions. Furthermore, for the same case, the authors in~\cite{rr29} proved,  in 3 dimensions,  uniqueness of the solution among radial functions for a given mass, and their method can handle general potential when $m>2-2/n$. Similar results were obtained in~\cite{rr30} for 2-dimensional case with $m>1$ by an adapted moving plane technique. Carrillo et al. in~\cite{rr10} showed the existence and compact support  property of the radially symmetric solutions using  dynamical system arguments.

\paragraph{Difficulties and novelties}
 However, it should be emphasized that the arguments for passing to incompressible limit in \cite{5,r35,r50} cannot be applied directly to  Eq.~\eqref{d1}. This is due to the Newtonian drift in the PKS model, eventhough it is of lower order than the diffusion term. Its singularity gives rise to new and essential challenges for rigorously establishing the incompressible limits~\eqref{hs}--\eqref{f3}.  Indeed, for the models of tumor growth as Eq.~\eqref{tg}, the source term $\rho_mG(P_m)$ helps the authors to obtain a uniform $L^1$ estimate for the time derivative of both the density and the pressure by Kato's inequality. But, for Eq.~\eqref{d1}, on the one hand, the nonlocal Newtonian interaction leads to the absence of comparison principle, which means that it is impossible to get a uniform bound for the pressure. On the other hand, one of main challenges is to obtain a uniform $L^1$ estimate for the time derivative of pressure without the help of the source term, despite the effect of nonlocal interaction. Thus, it is  difficult to gain the desired compactness on not only the density but also the pressure for the PKS model.  Besides, using the weak formulation approach for the incompressible limit for the SPKS model Eq.~\eqref{SPKS} is a new and interesting topic for the diffusion-aggregation equations by the methof of weak solutions, see~\cite{CKY_2018} for viscosity solution methods.
\\ 

Therefore, to achieve our goals, we develop new estimates and strategies as follows:
\\[2pt]
%$\bullet$ To show the bound of the limit density, as in the above references, we use that larger diffusion exponent means stronger diffusive effect in the zone of high density,
%\begin{equation}\label{f2b}
% \sup_{0\leq t \leq T} \|\max\big( \rho_m(t)-1,0\big) \|_{L^2(\mathbb{R}^n)}\leq \frac{C(T)}{\sqrt m},\end{equation}
%where $C(T)>0$ is independent of $m$, which  directly leads to the inequality $ 0\leq \rho_\infty\leq 1$. Therefore a uniform bound for the pressure is not necessary. \\
%$\bullet$  For the stiff limit, although strong limits of both the density  and the pressure for the PKS model Eq.~\eqref{d1} as in \cite{r35,5} are not available,  we still  obtain (see Notation~\ref{d11}  below)
%\begin{equation*} \rho_m P_m\rightharpoonup P_\infty= \rho_\infty P_\infty ,\quad \text{ weakly in } L^2(Q_T),\qquad \text{ as } m\to\infty ,\end{equation*}
%thanks to  locally strong limit of the sequence $\{\nabla\mathcal{N}\ast\rho_m\}_{m>1}$ due to Sobolev's compactness embedding. Notice that, unlike~\cite{MRS10}, we do not need compensated compactness here.\\
$\bullet$  We  obtain the complementarity relation Eq.~\eqref{d5} for the PKS model Eq.~\eqref{d1}. We first derive a uniform $L^3$ estimate on the pressure gradient in the spirit of \cite{r35}. Then, we establish the uniform  Aronson-B\'enilian (AB) estimates in $L^3\cap L^1$ as initiated in~\cite{GPS2019}. In particular, we show a decay rate for the AB estimate in $L^3$ under the form
\begin{equation*}
         \|\min\big\{\Delta P_m+\rho_m,0\big\}\|_{L^3(Q_T)}^3\leq \frac{C(T)}{m}.
\end{equation*}
\\
$\bullet$  In addition, we establish a new uniform $L^1$ estimate for the time derivative of pressure. To our knowledge, this is the first time such an estimate is obtained  for the high-dimensional porous medium equation (Dracy's law) with a nonlocal attractive interaction since working directly on the pressure is not sufficient.
\\
$\bullet$  To prove the uniqueness of the solution to the Hele-Shaw limit system Eq.~\eqref{hs}, the key is to show that the limit pressure is somehow monotone to the limit density. Suppose that $P_i\rho_i=P_i$ and $0\leq \rho_i\leq 1$ for $i=1,2$ hold, we find $ (P_1-P_2)(\rho_1-\rho_2)\geq0$.
\\
$\bullet$  To establish the incompressible limit of the SPKS model with a given mass, we gain the uniform bound of the pressure and the uniformly bounded support of the density.

\paragraph{Notations}  We use the following notations and definitions.
\begin{notation}\label{d11} We set
\\
$\bullet$ $Q_{T}=\mathbb{R}^n\times(0,T)$, \quad
$Q=\mathbb{R}^n\times(0,\infty)$.
\\ $\bullet$
$B_{R}:=\{x:|x|\leq R\},\quad R>0$.\\
$\bullet$
$|f(x)|_{+}=\max\{f(x),0\}$, \quad $|f(x)|_-=-\min\{f(x),0\}$.
\\ $\bullet$
$\nabla^2f:\nabla^2g:=\sum\limits_{i,j=1}^{n}\partial_{ij}^2f\partial_{ij}^2g,\quad
(\nabla^2f)^2:=\nabla^2f:\nabla^2f=\sum\limits_{i,j=1}^{n}(\partial_{ij}^2f)^2$.
\end{notation}

Also, we use  $C$ as a generic constant independent of time $t$ and diffusion
exponent $m$, $C(T)$ or  $C(T,R)$ denote generic constants only depending on the time $T$ or on $T$ and  $R>0$.
%\\ $\bullet$
%$S_n=\frac{n(n-2)}{4}|\mathbf{S}^{n-1}|^{\frac{2}{n}}=\frac{n(n-2)}{4}2^{\frac{2}{n}}\pi^{1+\frac{2}{n}}\Gamma(\frac{n+1}{2})^{-\frac{2}{n}}$, where $\mathbf{S}^{n-1}$ is the n-dimensional sphere.

\begin{definition}[Weak solution]\label{def:WS}The weak solutions of the PKS model Eq.~\eqref{d1} and the SPKS model Eq.~\eqref{SPKS} are defined as follows:
\\
$\bullet$We recall that a weak solution to Eq.~\eqref{d1} means that for all $T>0$ and all test function $\varphi \in C_0^{\infty}(Q_T)$, such that $\varphi(T)=0$, it holds
\[
  \int_{Q_T} \Big[ \rho_m    \partial_t \varphi +\rho_m^m    \Delta \varphi -\rho_m \nabla \varphi .\nabla {\mathcal N}\ast
\rho_m \Big] dx dt =  \int_{\mathbb{R}^n} \rho_{m,0} \varphi(0) dx .
\]
For that $\rho_m$, $\rho_m^m$ and $\rho_m \nabla {\mathcal N}\ast  \rho_m$ are supposed to be integrable.\\
$\bullet$ A weak solution to Eq.~\eqref{SPKS}
is defined for all test function $\varphi \in C_0^{\infty}(\mathbb{R}^n)$ as
\[
\int_{\mathbb{R}^n} \Big[ \nabla\rho_{m,s}^m\cdot\nabla\varphi + \rho_{m,s}\nabla {\mathcal N}\ast
\rho_{m,s}\cdot\nabla\varphi \Big]dx=0,
\]
where $\nabla\rho_{m,s}^m$ and $\rho_{m,s} \nabla {\mathcal N}\ast  \rho_{m,s}$ are supposed to be integrable.
\end{definition}

%%%%%%%%%%%%%
\section{Main results}
%%%%%%%%%%%%%

To state our main results on the incompressible limit of PKS model, we need assumptions on the initial data $\rho_{m,0}$.  Firstly, for  $\rho_{m,0}$, we assume
\begin{equation}\label{c1}
\begin{cases}
\int_{\mathbb{R}^n}  \rho_{m,0} (x) dx =: M <\infty,\quad
\|\rho_{m,0}^{m+1}\|_{L^1(\mathbb{R}^n)}\leq C,\quad\|\rho_{m,0}\|_{L^\infty(\mathbb{R}^n)}<\infty,&\\[2pt]
\|\rho_{m,0}-\rho_{\infty,0}\|_{L^1(\mathbb{R}^n)}\to0,\quad\text{as }m\to\infty,&\\
{\rm supp}(\rho_{m,0})\subset B_{R_m}\text{ for some constant } R_m> 1.&
\end{cases}
\end{equation}
Secondly, for some results, in particular the Aronson-B\'enilan estimate, we also need additional regularity assumptions on the initial data,
\begin{align}
&\|P_{m,0}\|_{L^2(\mathbb{R}^n)}+\| \nabla P_{m,0}\|_{L^2(\mathbb{R}^n)}\leq C,\label{c2}\\[2pt]
&\||\Delta P_{m,0}+\rho_{m,0}|_-\|_{L^1(\mathbb{R}^n)\cap L^2(\mathbb{R}^n)}\leq C.\label{c3}
\end{align}
Furthermore, a compatibility condition is also needed for obtaining the $L^1$ estimate of the time derivative for the pressure,
\begin{equation}\label{c4}
\||(m+1)\rho_{m,0}^{m+1}(\Delta
P_{m,0}+\rho_{m,0})+\nabla\rho_{m,0}^{m+1}\cdot(\nabla
P_{m,0}+\nabla\mathcal{N}\ast\rho_{m,0})|_-\|_{L^1(\mathbb{R}^n)}\leq C.
\end{equation}
Finally, to show the compact support of the solution of the Hele-Shaw limit system Eq.~\eqref{hs}, we need an additional uniform support assumption 
\begin{equation}\label{c5}
{\rm supp}(\rho_{m,0})\subset B_{R_0} \quad \text{ for a fixed constant }R_0>0.
\end{equation}
\begin{remark} Let $\varphi\in C_{0}^{\infty}(\mathbb{R}^n)$ and $0\leq \varphi\leq \frac 1 2$, one immediately  verifies that the initial data $\rho_{m,0}=\varphi$ satisfies the assumptions \eqref{c1}--\eqref{c4}.
\end{remark}

 Assumption \eqref{c1} guarantees global existence of solutions to the Cauchy problem \eqref{d1}  because $m>2-2/n$, as mentioned earlier. 
We also recall in Appendices \ref{AAA} and \ref{sec:comsupp} that solutions satisfy, for some $\mathcal{R}_{m}(T)$, 
\[
\|\rho_m\|_{L^\infty(Q_T)}\leq C(m,T), \qquad  {\rm supp}(\rho_m(T))\subset B_{\mathcal{R}_{m}(T)}, \forall \ T>0. 
\]  

We now gather several uniform regularity estimates, and then establish the stiff limit of the PKS model as $m \to \infty$. 

\begin{theorem}[Uniform bounds and compactness] \label{tAE}
Assume~\eqref{c1}, then the global solution  $\rho_m$ to  the Cauchy problem  Eq.~\eqref{d1} in the sense of Def.~\ref{d1} satisfies for any $T>0$, 
\[
\sup_{0\leq t\leq T}\|\rho_m(t)\|_{L^{q}(\mathbb{R}^n)}+
\|\rho_m^m\|_{L^2(Q_T)}+\|\nabla\rho_m^m\|_{L^2(Q_T)}\leq C(T),\quad \forall\ q\in[1,m+1],
\]
\[
\int_{Q_T}\nabla\rho_m^m\cdot\nabla\rho_m^{p-1}dxdt\leq C(T,p), \quad  1<p\leq2
,\]
\[
\|\rho_{m}\|_{L^{(2+\frac 2n)m+ \frac  2 n} (Q_T)}+\|\nabla P_m\|_{L^2(Q_T)}\leq C(T),
\]
\[\sup_{0\leq t \leq T} \||\rho_m(t)-1|_+\|_{L^2(\mathbb{R}^n)}\leq\frac{C(T)}{\sqrt m},\]
\[
\|\partial_{t}\nabla\mathcal{N}\ast\rho_{m}\|_{L^2(Q_T)}+\|\nabla\mathcal{N}\ast\rho_m\|_{L^{\infty}(Q_T)}
+ \sup\limits_{0\leq t\leq T}\|\nabla\mathcal{N}\ast\rho_{m}(t)\|_{L^2(\mathbb{R}^n)}\leq C(T),
\]
\[
\sup\limits_{0\leq t\leq T}\|\nabla^2\mathcal{N}\ast\rho_m(t)\|_{L^q(\mathbb{R}^n)}\leq C(T,q),
\]
where $C(T,q)\sim \frac{1}{q-1}$ for $0<q-1\ll 1$ and $C(T,q)\sim q$ for $q\gg
1$ and $ m>n-1$.
\end{theorem}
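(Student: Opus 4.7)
The workhorse is the family of $L^p$ energy identities obtained by testing Eq.~\eqref{d1} against $\rho_m^{p-1}$ for $p\in(1,m+1]$. Integrating by parts and exploiting the identity $\Delta\mathcal{N}\ast\rho_m=\rho_m$, which turns the nonlocal drift into a local superlinear term, yields
\begin{equation*}
\frac{1}{p}\frac{d}{dt}\int_{\mathbb{R}^n}\rho_m^p\,dx+\frac{4m(p-1)}{(m+p-1)^2}\int_{\mathbb{R}^n}|\nabla\rho_m^{(m+p-1)/2}|^2\,dx=\frac{p-1}{p}\int_{\mathbb{R}^n}\rho_m^{p+1}\,dx.
\end{equation*}
In the subcritical regime $m>2-2/n$, the right-hand side can be absorbed into the diffusion by a Gagliardo--Nirenberg interpolation between $\|\rho_m\|_{L^1}$ (preserved by mass conservation) and $\|\nabla\rho_m^{(m+p-1)/2}\|_{L^2}$, in the spirit of Sugiyama and Blanchet--Laurençot; the crux is that the interpolation exponents stay away from the critical threshold so that the constants are independent of $m$. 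A Grönwall step then gives $\sup_t\|\rho_m\|_{L^q}\leq C(T)$ for every $q\in[1,m+1]$ (intermediate $q$ following by log-convexity), and, taking $p=m+1$, the bound $\|\nabla\rho_m^m\|_{L^2(Q_T)}\leq C(T)$. Integrating the above identity in time for $1<p\leq 2$ delivers the second listed estimate on $\int_{Q_T}\nabla\rho_m^m\cdot\nabla\rho_m^{p-1}\,dxdt$.

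For the space--time integrability $\rho_m\in L^{(2+2/n)m+2/n}(Q_T)$, I would combine $\rho_m^m\in L^\infty_tL^{(m+1)/m}_x$ (from Step~1) with $\rho_m^m\in L^2_tL^{2n/(n-2)}_x$ (from $\nabla\rho_m^m\in L^2(Q_T)$ and Sobolev embedding, $n\geq 3$). A parabolic H\"older interpolation, choosing the time--space exponents so that the time factor saturates exactly the $L^2_t$ norm, lands in $L^b_{t,x}$ with $b m=(2+2/n)m+2/n$, which is the claimed exponent. The $\|\nabla P_m\|_{L^2(Q_T)}$ bound falls out of the same energy identity at the choice $p=m-1$: the diffusion becomes $\tfrac{m(m-2)}{(m-1)^2}\|\nabla\rho_m^{m-1}\|_{L^2(Q_T)}^2$, and since $\nabla P_m=\tfrac{m}{m-1}\nabla\rho_m^{m-1}$, the $L^2(Q_T)$ bound is immediate. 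The rate $\||\rho_m-1|_+\|_{L^2}\leq C(T)/\sqrt m$ is then a purely pointwise consequence of the binomial inequality $\rho_m^{m+1}\geq 1+(m+1)(\rho_m-1)+\binom{m+1}{2}(\rho_m-1)^2$ on $\{\rho_m>1\}$: integrating over $\mathbb{R}^n$ and invoking the uniform bound on $\int\rho_m^{m+1}$ gives $\||\rho_m-1|_+\|_{L^2}^2\leq 2C(T)/(m(m+1))$, which is even sharper than the stated rate.

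The Newtonian potential bounds are then algebraic. The HLS inequality applied to $\int\rho_m(x)|x-y|^{-(n-2)}\rho_m(y)\,dxdy=-\int\rho_m\mathcal{N}\ast\rho_m\,dx$ together with $\rho_m\in L^1\cap L^\infty\subset L^{2n/(n+2)}$ gives the $L^2$ bound on $\nabla\mathcal{N}\ast\rho_m$. Splitting the singular kernel $|\nabla\mathcal{N}(x)|\lesssim|x|^{1-n}$ into near and far field, integrating the near part against $\rho_m\in L^{m+1}$ by H\"older (which requires $m+1>n$) and the far part against $\rho_m\in L^1$ yields $\|\nabla\mathcal{N}\ast\rho_m\|_{L^\infty(Q_T)}\leq C(T)$. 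The $L^q$ bound on $\nabla^2\mathcal{N}\ast\rho_m$ is Calder\'on--Zygmund for the composition of Riesz transforms $\partial_i\partial_j\Delta^{-1}$, producing $\|\nabla^2\mathcal{N}\ast\rho_m\|_{L^q}\leq C_q\|\rho_m\|_{L^q}$ with the classical asymptotics $C_q\sim 1/(q-1)$ as $q\to 1^+$ and $C_q\sim q$ as $q\to\infty$. Finally, for $\partial_t\nabla\mathcal{N}\ast\rho_m$ I would rewrite Eq.~\eqref{d1} as $\partial_t\rho_m=\nabla\cdot J_m$ with $J_m=\nabla\rho_m^m+\rho_m\nabla\mathcal{N}\ast\rho_m$: on the Fourier side, $\partial_t\nabla\mathcal{N}\ast\rho_m$ is the orthogonal projection of $J_m$ onto its curl-free component, and is therefore controlled in $L^2(Q_T)$ by $\|J_m\|_{L^2(Q_T)}$, both of whose pieces have been bounded above (using $\|\rho_m\nabla\mathcal{N}\ast\rho_m\|_{L^2}\leq\|\rho_m\|_{L^2}\|\nabla\mathcal{N}\ast\rho_m\|_{L^\infty}$).

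The hard part, and what genuinely has to be checked, is the $m$-uniformity in the very first step: the subcriticality $m>2-2/n$ is sharp for the Gagliardo--Nirenberg absorption of the aggregation $\int\rho_m^{p+1}$ into the diffusion, and the constants must be tracked carefully to confirm they do not deteriorate as $m\to\infty$; otherwise none of the subsequent estimates would pass to the incompressible limit. The $L^\infty$ drift bound, which forces $m>n-1$ and a delicate H\"older split between $L^1$ and $L^{m+1}$, is the other point where uniformity in $m$ is nontrivial but turns out to be sharp.
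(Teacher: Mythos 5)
Your plan follows the paper's own route almost step for step: the $L^p$ energy identity from testing against $\rho_m^{p-1}$, absorption of $\int\rho_m^{p+1}$ via Gagliardo--Nirenberg/Sobolev, the parabolic interpolation for the $L^{(2+2/n)m+2/n}(Q_T)$ exponent, HLS for $\|\nabla\mathcal{N}\ast\rho_m\|_{L^2}$, kernel splitting for the $L^\infty$ drift bound, Calder\'on--Zygmund for $\nabla^2\mathcal{N}\ast\rho_m$, and $\Delta^{-1}$ applied to the equation for $\partial_t\nabla\mathcal{N}\ast\rho_m$. One genuine deviation: for $\|\nabla P_m\|_{L^2(Q_T)}$ you run the energy identity at $p=m-1$ so the dissipation is directly $\tfrac{m(m-2)}{(m-1)^2}\|\nabla\rho_m^{m-1}\|_{L^2}^2$; the paper instead obtains it by a purely algebraic Young inequality $\rho_m^{2m-4}\leq\tfrac{2}{m-1}\rho_m^{m-1}+\tfrac{m-3}{m-1}\rho_m^{2m-2}$, which writes $|\nabla P_m|^2$ as a convex combination of $\nabla\rho_m^m\cdot\nabla\rho_m$ (already controlled from the $p=2$ case) and $|\nabla\rho_m^m|^2$ (from the $p=m+1$ case), with prefactors $\tfrac{2m}{m-1},\tfrac{m-3}{m-1}$ bounded in $m$. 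Your route does work, but it is more delicate than it may look: at $p=m-1$ the right-hand side $\tfrac{m-2}{m-1}\int\rho_m^m$ cannot be absorbed by interpolating against $L^1$ and $L^{m+1}$ only (that gives a bound growing like $m$); you must use the Sobolev route, interpolating between $L^1$ and $L^{(2m-2)n/(n-2)}$ so the resulting power on the gradient tends to $1<2$ with $m$-uniform constants. The paper's trick sidesteps this entirely and is cheaper.

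One small slip worth flagging: you claim $\||\rho_m-1|_+\|_{L^2}^2\leq 2C(T)/(m(m+1))$, i.e.\ a rate $1/m$, ``even sharper than stated.'' That would require $\sup_t\int\rho_m^{m+1}dx\leq C(T)$ uniformly in $m$; but the $p=m+1$ energy estimate actually gives $\int\rho_m^{m+1}(t)\leq \int\rho_{m,0}^{m+1}+Ct(m+1)$, which grows linearly in $m$. Plugging this in yields $\||\rho_m-1|_+\|_{L^2}^2\leq\tfrac{2}{m(m+1)}\cdot C(T)(m+1)=C(T)/m$, i.e.\ exactly the stated rate $1/\sqrt m$ and no better. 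The correction does not affect any downstream conclusion.
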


Thanks to these estimates we may extract subsequences, still denoted by $\rho_m$ such that, as $m\to\infty$, $\rho_m$ converges weakly in $L^2(Q_T)$ to
$\rho_{\infty}\in L^\infty\big(0,T;L_+^1(\mathbb{R}^n)\big)$, $P_{m}$ and $\rho_m P_m$ converge weakly in $L^2(Q_T)$ to the same limit $P_{\infty}\in L^2\big(0,T;H^1(\mathbb{R}^n)\big)$, and $\nabla\mathcal{N}\ast\rho_m$ converges strongly in $L^2_{loc}(Q_{T})$ to
$\nabla\mathcal{N}\ast\rho_{\infty}\in\mathcal{C}\big(0,T;L^2(\mathbb{R}^n)\big)\cap L^\infty\big(0,T; L^\infty(\mathbb{R}^n)\cap \dot{W}^{1,q}(\mathbb{R}^n)\big)\text{ for }1<q<\infty$. We have the
%--------------------
\begin{theorem}[Stiff limit] \label{tAEbis}
With assumption \eqref{c1}, this limit, $(\rho_\infty, P_\infty)$ satisfies the Hele-Shaw limit system in the sense of Def.~\ref{def:WS} as
\begin{align}
	&\partial_{t}\rho_{\infty}-\Delta
P_{\infty}=\nabla\cdot(\rho_{\infty}\nabla\mathcal{N}\ast\rho_{\infty}),
&\text{in }\mathcal{D}'(Q_{T}),\label{z6}
\\
&(1-\rho_{\infty})P_{\infty}=0,  \qquad  0\leq\rho_{\infty}\leq1,  &\text{a.e. in } Q_{T}. \label{z8}
\end{align}
\end{theorem}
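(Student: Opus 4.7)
The plan is to pass to the limit $m\to\infty$ in the weak formulation of Eq.~\eqref{d1} from Def.~\ref{def:WS}, combining the weak-limit identifications announced just before the theorem with a strong compactness argument for $P_m$ in order to extract the obstacle constraint~\eqref{z8}.

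First I would verify $0\leq\rho_\infty\leq 1$ a.e. Nonnegativity is preserved by weak limits since $\rho_m\geq 0$. For the upper bound, the estimate $\sup_{t\in[0,T]}\||\rho_m(t)-1|_+\|_{L^2(\mathbb{R}^n)}\leq C(T)/\sqrt{m}$ from Theorem~\ref{tAE} shows that $|\rho_m-1|_+\to 0$ strongly in $L^\infty(0,T;L^2(\mathbb{R}^n))$. Since $s\mapsto |s-1|_+$ is convex and continuous, weak $L^2$ lower semicontinuity along $\rho_m\rightharpoonup\rho_\infty$ yields $\int_{Q_T}|\rho_\infty-1|_+\,\phi\,dxdt\leq\liminf_m\int_{Q_T}|\rho_m-1|_+\,\phi\,dxdt=0$ for every $0\leq\phi\in C_0^\infty(Q_T)$, and therefore $\rho_\infty\leq 1$ a.e.

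Next I would pass to the limit in the weak formulation
\[
\int_{Q_T}\!\big[\rho_m\partial_t\varphi+\rho_m^m\Delta\varphi-\rho_m\nabla\varphi\cdot\nabla\mathcal{N}\ast\rho_m\big]dxdt=\int_{\mathbb{R}^n}\!\rho_{m,0}\,\varphi(0)\,dx.
\]
The $\rho_m\partial_t\varphi$ term converges by weak $L^2$ convergence of $\rho_m$, and the right-hand side converges by assumption~\eqref{c1}. For the diffusion term, I would rewrite $\rho_m^m=\tfrac{m-1}{m}\rho_m P_m$ and invoke the announced weak convergence $\rho_m P_m\rightharpoonup P_\infty$ in $L^2(Q_T)$, giving $\rho_m^m\rightharpoonup P_\infty$. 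For the drift term, the pair $(\rho_m,\nabla\mathcal{N}\ast\rho_m)$ couples weak $L^2$ convergence of the first factor with strong $L^2_{loc}$ convergence of the second (as stated before the theorem), so the product converges weakly in $L^1_{loc}$ to $\rho_\infty\nabla\mathcal{N}\ast\rho_\infty$. Together these deliver~\eqref{z6}.

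The heart of the argument is the complementarity $(1-\rho_\infty)P_\infty=0$. My plan is to upgrade the convergence of $P_m$ to strong convergence in $L^2_{loc}(Q_T)$ via Aubin--Lions--Simon, using the spatial estimate $\|\nabla P_m\|_{L^2(Q_T)}\leq C(T)$ from Theorem~\ref{tAE} together with the uniform $L^1$ bound on $\partial_t P_m$ that is one of the paper's main new ingredients. With $P_m\to P_\infty$ strongly and $\rho_m\rightharpoonup\rho_\infty$ weakly, the product $\rho_m P_m$ converges distributionally to $\rho_\infty P_\infty$; comparing this with the already identified weak limit $P_\infty$ gives $\rho_\infty P_\infty=P_\infty$, which is exactly~\eqref{z8}. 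This last step is the main obstacle: without the novel $L^1$ time-derivative estimate one could only assert the separate identities $\rho_m P_m\rightharpoonup P_\infty$ and $P_m\rightharpoonup P_\infty$, and these two weak limits alone are insufficient to identify the nonlinear product limit and close the Hele-Shaw obstacle relation.
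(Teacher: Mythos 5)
Your outline of the limit passage for~\eqref{z6} and of $0\le\rho_\infty\le 1$ matches the paper. The problem is in the proof of $\rho_\infty P_\infty=P_\infty$, which is the heart of the theorem.

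You propose to obtain strong $L^2_{loc}(Q_T)$ compactness of $P_m$ via Aubin--Lions, feeding in $\|\nabla P_m\|_{L^2(Q_T)}\le C(T)$ and the ``uniform $L^1$ bound on $\partial_t P_m$''. But that $L^1$ estimate is Theorem~\ref{t12}, which requires the extra hypotheses~\eqref{c2}--\eqref{c4} (the $H^1$ and Aronson--B\'enilan-type bounds on the initial data plus the compatibility condition). Theorem~\ref{tAEbis} is stated under~\eqref{c1} alone, and under~\eqref{c1} alone there is no uniform-in-$m$ control of $\partial_t P_m$: the pressure equation~\eqref{d3} carries an explicit $(m-1)$ factor, so $\partial_t P_m$ has no obvious $m$-independent bound in any $L^1$ or better norm. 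Thus your compactness mechanism is simply not available under the theorem's hypotheses, and the argument that $\rho_m P_m$ converges distributionally to $\rho_\infty P_\infty$ collapses.

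The paper's proof avoids strong compactness of $P_m$ entirely. For the second identification $\rho_m P_m\rightharpoonup\rho_\infty P_\infty$, it exploits the Newtonian structure: write $\rho_m=\Delta\mathcal{N}\ast\rho_m$, so
\[
\int_{Q_T}\rho_m P_m\varphi\,dxdt
=-\int_{Q_T}\nabla\mathcal{N}\ast\rho_m\cdot\nabla P_m\,\varphi\,dxdt
-\int_{Q_T}\nabla\mathcal{N}\ast\rho_m\cdot\nabla\varphi\,P_m\,dxdt,
\]
and now one only needs weak $L^2$ convergence of $P_m$ and $\nabla P_m$ against the locally strong convergence of $\nabla\mathcal{N}\ast\rho_m$, which holds already under~\eqref{c1} by Lemma~\ref{l18}. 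This is the key new ingredient you are missing. A secondary remark: you treat the first identification $\rho_m P_m\rightharpoonup P_\infty$ as a given because it appears in the paragraph preceding the theorem, but that paragraph is a preview and the identification itself is established \emph{inside} the proof of Theorem~\ref{tAEbis} via the truncation/Young-inequality argument~\eqref{eqAPm}; a self-contained proof should include it.
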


 Then, using the additional assumptions~\eqref{c2}--\eqref{c3} on the initial data, we obtain the higher regularity estimates on the pressure. We can establish the 
\begin{theorem}[Complementarity relation and semi-harmonicity] \label{t15}
 Assume $m>\max\{n-1,\frac{5n-2}{n+2}\}$  and that the initial data satisfies \eqref{c1}--\eqref{c4}, then the global weak solution $\rho_m$ to  \eqref{d1} satisfies the additional regularity estimates 
 \[
 \|\sqrt{P_m}\nabla P_m\|_{L^2(Q_T)}+\sup\limits_{0\leq t\leq
T}\|\nabla P_m(t)\|_{L^2(\mathbb{R}^n)}\leq C(T),
\]
\[
\|\nabla P_m\|_{L^3(Q_T)}+\|\sqrt{P_m}\nabla^2P_m\|_{L^2(Q_T)}\leq C(T),
\]
\[
\|\sqrt{P_m}\omega_m\|_{L^2(Q_T)}^2+\||\omega_m|_{-}\|_{L^3(Q_T)}^{3}\leq \frac{C(T)}{m}, \qquad \omega_m:=\Delta P_m+\rho_m,
\]
\[
\sup\limits_{0\leq t\leq T}\||\omega_m(t)|_-\|_{L^2\cap L^1(\mathbb{R}^n)}+\sup\limits_{0\leq t\leq T}\|\Delta P_m(t)\|_{L^1(\mathbb{R}^n)}+\|\partial_tP_m\|_{L^1(Q_T)}\leq C(T).
\]

Furthermore, after the extraction of subsequences, as $m\to\infty$, $\nabla P_m$ converges strongly in
$L^2_{loc}(Q_T)$ to $\nabla P_\infty\in L^3(Q_T)\cap L^\infty(0,T;L^2(\mathbb{R}^n))$, and the complementarity relation and semi-harmonicity  hold
\begin{equation}\label{z15}
	P_{\infty}(\Delta P_{\infty}+\rho_{\infty})=0, \qquad \Delta P_{\infty}+\rho_{\infty}\geq 0, \quad \text{ in }
\mathcal{D}'(Q_{T}).
\end{equation}
It follows that
\begin{equation*}
(1-\rho_\infty)\nabla P_\infty=0,\quad\text{a.e. in } Q_T.
\end{equation*}
\end{theorem}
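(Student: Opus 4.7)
The plan is to establish the refined pressure regularity in three stages---$L^2$ energy bounds, Aronson--B\'enilan (AB) estimates on $\omega_m:=\Delta P_m+\rho_m$, and a novel $L^1$ bound on $\partial_t P_m$---and then pass to the limit, exploiting the $(m-1)^{-1}$ factor in Eq.~\eqref{d3} to extract the complementarity relation. For the $L^2$ regularity, I will test Eq.~\eqref{d3} by $P_m$ (entropy structure) to obtain $\|\sqrt{P_m}\nabla P_m\|_{L^2(Q_T)}$, and by $-\Delta P_m$ (energy structure) to obtain $\sup_t\|\nabla P_m(t)\|_{L^2}$ together with $\|\sqrt{P_m}\nabla^2 P_m\|_{L^2(Q_T)}$ (after Bochner's identity $\int|\Delta P_m|^2=\int|\nabla^2 P_m|^2$). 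The cross terms involving $\nabla\mathcal{N}\ast\rho_m$ and $\nabla^2\mathcal{N}\ast\rho_m$ are absorbed using the $L^\infty$ and $L^q$ bounds already provided by Theorem~\ref{tAE}; the matching of integrabilities is what dictates the restriction $m>\max\{n-1,(5n-2)/(n+2)\}$. The $L^3$ bound on $\nabla P_m$ then follows from the identity $\tfrac12\Delta P_m^2=P_m\Delta P_m+|\nabla P_m|^2$ together with the preceding bounds.

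For the AB estimate, I compute $\Delta$ of Eq.~\eqref{d3} and add Eq.~\eqref{d1} to derive, in the spirit of \cite{GPS2019}, a parabolic equation of the form
\[
\partial_t\omega_m-(m-1)P_m\Delta\omega_m-2m\,\nabla P_m\cdot\nabla\omega_m-(m-1)\omega_m^2-2|\nabla^2 P_m|^2=\mathcal{R}_m,
\]
where $\mathcal{R}_m$ collects the perturbations from the Newtonian drift (controlled by Step~1 and Theorem~\ref{tAE}). Multiplying by $-|\omega_m|_-$ exploits the sign structure $(m-1)\omega_m^2\cdot(-|\omega_m|_-)=-(m-1)|\omega_m|_-^3$, producing after bookkeeping and use of assumption~\eqref{c3} the decay $\||\omega_m|_-\|_{L^3(Q_T)}^3\leq C(T)/m$. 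Testing Eq.~\eqref{d3} directly by $\omega_m$ and using $\int\rho_m\partial_t P_m=\tfrac{d}{dt}\int\rho_m^m$ gives the complementary bound $\|\sqrt{P_m}\omega_m\|_{L^2(Q_T)}^2\leq C(T)/m$. The pointwise-in-time bounds on $\||\omega_m|_-\|_{L^1\cap L^2}$ and on $\|\Delta P_m\|_{L^1}$ come from analogous $p=1,2$ computations, combined with the elementary identity $|\Delta P_m|\leq\omega_m+2|\omega_m|_-+\rho_m$ to pass from the former to the latter.

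The principal novelty is the $L^1$ bound on $\partial_t P_m$: a direct approach is ruled out by the factor $(m-1)$ appearing in $\partial_t P_m=(m-1)P_m\omega_m+|\nabla P_m|^2+\nabla P_m\cdot\nabla\mathcal{N}\ast\rho_m$. Instead, I work with $J_m:=\partial_t\rho_m^{m+1}$, for which a direct computation using $\partial_t\rho_m=\nabla\cdot(\rho_m u_m)$ and $\nabla\cdot u_m=\omega_m$ yields
\[
J_m=\nabla\cdot(\rho_m^{m+1}u_m)+m\rho_m^{m+1}\omega_m,
\]
and evaluated at $t=0$ this is precisely the quantity controlled by assumption~\eqref{c4}. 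Differentiating in time gives a parabolic-type equation for $J_m$ whose source involves $\rho_m^{m+1}\partial_t u_m$ and $\rho_m^{m+1}\partial_t\omega_m$; a Kato-type inequality for the negative part $|J_m|_-$, together with absorption of the error terms by the Step~1--2 estimates, propagates $\||J_m|_-\|_{L^1(Q_T)}\leq C(T)$ starting from the initial bound~\eqref{c4}. Combining with $\int_0^T\!\int J_m\,dx\,dt=\int\rho_m^{m+1}(T)-\int\rho_m^{m+1}(0)$ (bounded by Theorem~\ref{tAE}) and the decomposition $|J_m|=J_m+2|J_m|_-$ yields $\|J_m\|_{L^1(Q_T)}\leq C(T)$. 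The conversion to $\partial_t P_m$ uses the pointwise identity $\partial_t P_m=\tfrac{m}{(m+1)\rho_m^2}J_m$: on $\{\rho_m\geq 1/2\}$ this gives $|\partial_t P_m|\leq 4|J_m|$ directly, while on $\{\rho_m<1/2\}$ the bound $(m-1)P_m\leq m(1/2)^{m-1}\to 0$ combined with $\|\omega_m\|_{L^1(Q_T)}\leq C$ controls the dangerous $(m-1)P_m\omega_m$ term, the remaining $|\nabla P_m|^2$ and $\nabla P_m\cdot\nabla\mathcal{N}\ast\rho_m$ being bounded by Step~1.

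Finally, for the limit passage: Aubin--Lions (with $\nabla P_m\in L^2(Q_T)$ and $\partial_t P_m\in L^1(Q_T)$) gives strong convergence of $P_m$ in $L^2_{loc}(Q_T)$, upgraded to strong convergence of $\nabla P_m$ in $L^2_{loc}(Q_T)$ via an energy identity obtained by testing Eq.~\eqref{d3} by $P_m$ against a cutoff and passing to the limit in each term. For the complementarity relation, I divide Eq.~\eqref{d3} by $m-1$ and test against $\varphi\in C_0^\infty(Q_T)$: the three terms $(m-1)^{-1}\int\partial_t P_m\,\varphi$, $(m-1)^{-1}\int|\nabla P_m|^2\varphi$, and $(m-1)^{-1}\int\nabla P_m\cdot\nabla\mathcal{N}\ast\rho_m\,\varphi$ all vanish as $m\to\infty$ by the preceding steps, leaving $\int P_\infty\omega_\infty\varphi=0$---the strong $L^2_{loc}$ convergence of $\nabla P_m$ is essential here, to pass the product $P_m\Delta P_m$ to the limit through integration by parts. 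Semi-harmonicity $\omega_\infty\geq 0$ is immediate from $\||\omega_m|_-\|_{L^3(Q_T)}^3\leq C(T)/m\to 0$, and $(1-\rho_\infty)\nabla P_\infty=0$ a.e.\ splits into $\{P_\infty>0\}$ (where $\rho_\infty=1$ by Eq.~\eqref{z8}) and $\{P_\infty=0\}$ (where $\nabla P_\infty=0$ a.e.\ by Stampacchia's lemma). The principal obstacle throughout is Step~3: for tumor-growth models Kato's inequality applied to the source $\rho G(P)$ directly controls $\partial_t P$; here the indirect route through $J_m=\partial_t\rho_m^{m+1}$, motivated by the compatibility condition~\eqref{c4}, is the key technical innovation.
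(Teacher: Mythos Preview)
Your three-stage plan (energy estimates $\to$ Aronson--B\'enilan $\to$ $L^1$ time derivative via $\partial_t\rho_m^{m+1}$, then limit passage) is exactly the paper's route, and you have correctly identified the central innovation of Step~3 together with the role of the compatibility condition~\eqref{c4}. Two technical points deserve correction.

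First, in Step~3 the Kato computation on $|\partial_t\rho_m^{m+1}|_-$ does not directly give $\||J_m|_-\|_{L^1(Q_T)}\leq C(T)$. The coercive term on the left comes from $(m-1)\Delta P_m\,|\partial_t\rho_m^{m+1}|_-=(m-1)(\omega_m-\rho_m)\,|\partial_t\rho_m^{m+1}|_-$ and therefore carries an extra factor~$\rho_m$, yielding $(m-1)\int\rho_m|\partial_t\rho_m^{m+1}|_-=\tfrac{(m-1)(m+1)}{m+2}\int|\partial_t\rho_m^{m+2}|_-$; meanwhile the drift remainder on the right scales like~$m$. After time integration and division by~$m$ one obtains $\||\partial_t\rho_m^{m+2}|_-\|_{L^1(Q_T)}\leq C(T)$, so the exponent shifts to $m+2$. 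Your high/low-density conversion then works verbatim with $\partial_t\rho_m^{m+2}$ in place of~$J_m$ (and is in fact cleaner than the paper's detour through the material derivative $\partial_t\rho_m^{m-1}-\nabla\rho_m^{m-1}\!\cdot\!\nabla\mathcal{N}\ast\rho_m$ and Lemma~\ref{l16}).

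Second, and more seriously, your mechanism for the strong $L^2_{loc}$ convergence of $\nabla P_m$ is circular as written. Testing Eq.~\eqref{d3} against a cutoff and passing to the limit yields $\lim_m\int|\nabla P_m|^2\varphi=\int P_\infty\rho_\infty\varphi-\int P_\infty\nabla P_\infty\!\cdot\!\nabla\varphi$; but equating this to $\int|\nabla P_\infty|^2\varphi$ is precisely the statement $\langle P_\infty,\Delta P_\infty+\rho_\infty\rangle=0$, i.e.\ the complementarity relation you are trying to prove. Weak lower semicontinuity only gives $\int|\nabla P_\infty|^2\varphi\le\lim_m\int|\nabla P_m|^2\varphi$, which, combined with semi-harmonicity, produces the trivial inequality $P_\infty\omega_\infty\ge0$. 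The paper instead obtains strong convergence of $\nabla P_m$ by a direct compactness argument (after localisation by a cutoff~$\varphi_R$) using exactly the three bounds you have already assembled---$\partial_t P_m\in L^1(Q_T)$, $\nabla P_m\in L^3(Q_T)$, $\Delta P_m\in L^\infty_tL^1_x$---cf.\ \cite[Theorem~6.1]{r35}. Once that is in hand, your limit passage in Eq.~\eqref{d3}/$(m-1)$ and your derivation of $(1-\rho_\infty)\nabla P_\infty=0$ are correct.
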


For the Hele-Shaw system Eqs.~\eqref{z6}--\eqref{z8}, the weak solution to the Cauchy problem is unique.
\begin{theorem}[Uniqueness]\label{t14}
Being given two global weak solutions $\rho_{i}$ for $i=1,2$ to the Hele-Shaw system~\eqref{z6}--\eqref{z8} with the initial assumption
$\rho_{1}(x,0)=\rho_{2}(x,0)\in \dot{H}^{-1}(\mathbb{R}^n)$, we have
\begin{equation*}
\rho_1=\rho_2,\quad P_1=P_2,\quad \text{a.e. in }Q .
\end{equation*}
\end{theorem}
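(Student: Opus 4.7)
My plan is to run an $\dot H^{-1}$-duality (Otto-type) argument in which the complementarity constraint plays the role of a sign-definite dissipation, and to handle the Newtonian drift by a Yudovich-type logarithmic loss. Throughout, I write $w=\rho_1-\rho_2$, $v_i=\nabla\mathcal{N}\ast\rho_i$, and $\psi:=(-\Delta)^{-1}w$, so that $\|\nabla\psi\|_{L^2}=\|w\|_{\dot H^{-1}}$ and $\nabla\psi=-(v_1-v_2)$.

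\textbf{Step 1 (monotonicity).} From $(1-\rho_i)P_i=0$ together with $0\leq\rho_i\leq 1$ and $P_i\geq 0$, a case analysis (if $\rho_1>\rho_2$ then $\rho_2<1$, hence $P_2=0\leq P_1$, and symmetrically) gives the pointwise inequality
\[
(P_1-P_2)(\rho_1-\rho_2)\geq 0 \quad \text{a.e. in } Q.
\]
\textbf{Step 2 (energy identity in $\dot H^{-1}$).} Subtracting the two copies of \eqref{z6} and pairing with $\psi$ (admissible since $\partial_t w\in L^2(0,T;H^{-1})$ by the regularity gathered in Theorem \ref{tAEbis}), I obtain
\[
\frac{1}{2}\frac{d}{dt}\|w\|_{\dot H^{-1}}^2+\int(P_1-P_2)(\rho_1-\rho_2)\,dx=-\int(\rho_1v_1-\rho_2v_2)\cdot\nabla\psi\,dx.
\]
By Step~1 the second term on the LHS is nonnegative, so I drop it.

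\textbf{Step 3 (drift bound).} Split $\rho_1v_1-\rho_2v_2=-\rho_1\nabla\psi+w\,v_2$: the first piece contributes at most $\|w\|_{\dot H^{-1}}^2$ because $\rho_1\leq 1$. For the second, use $w=-\Delta\psi$ and integrate by parts, exploiting $\nabla\cdot v_2=\rho_2$ and the symmetry of $\nabla v_2=\nabla^2\mathcal{N}\ast\rho_2$, to get
\[
-\int w\,v_2\cdot\nabla\psi\,dx=-\int(\nabla\psi)^{\top}\nabla v_2\,\nabla\psi\,dx+\tfrac{1}{2}\int\rho_2|\nabla\psi|^2\,dx.
\]
The trilinear term is the delicate one. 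Since $\rho_2\in L^1\cap L^\infty$ uniformly, Calderón-Zygmund gives $\|\nabla v_2\|_{L^p}\lesssim p$ for $p\gg 1$; on the other hand $\|\nabla\psi\|_{L^\infty}\leq C$ (uniform, from $w\in L^1\cap L^\infty$ and the pointwise estimate on $\nabla\mathcal{N}\ast w$), so interpolation yields $\|\nabla\psi\|_{L^{2p'}}^{2}\lesssim\|\nabla\psi\|_{L^\infty}^{2-2/p'}\,\|w\|_{\dot H^{-1}}^{2/p'}$. Optimizing $p$ in the spirit of Yudovich's uniqueness argument for 2D Euler produces the Osgood-type bound
\[
\frac{d}{dt}\|w\|_{\dot H^{-1}}^2\leq C\,\|w\|_{\dot H^{-1}}^2\bigl(1+\log_{+}(1/\|w\|_{\dot H^{-1}})\bigr).
\]
As $\|w(0)\|_{\dot H^{-1}}=0$ and $s\mapsto s(1+\log_{+}(1/s))$ is an Osgood modulus, this forces $w\equiv 0$ on $[0,T]$.

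\textbf{Step 4 (recovering $P_1=P_2$).} Once $\rho_1=\rho_2$, we have $v_1=v_2$, so the subtracted equation \eqref{z6} collapses to $\Delta(P_1-P_2)=0$ in $\mathcal{D}'(Q)$. Because $P_1-P_2\in L^2(0,T;H^1(\mathbb{R}^n))$ by Theorem \ref{tAEbis}, for a.e.~$t$ the difference is a harmonic $H^1$ function on $\mathbb{R}^n$, hence identically zero; so $P_1=P_2$. The main obstacle is exactly Step 3: the log-Lipschitz failure of $v_2$ (its gradient lies in BMO but not $L^\infty$) rules out a naive Grönwall, and uniqueness only survives thanks to the logarithmic loss combined with the uniform $L^\infty$ control on $\nabla\psi$ furnished by the boundedness and integrability of $w$.
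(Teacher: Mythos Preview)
Your proof is correct and follows essentially the same route as the paper (Proposition~\ref{p1}): the $\dot H^{-1}$ energy with $\psi=\mathcal{N}\ast(\rho_1-\rho_2)$, the monotonicity $(P_1-P_2)(\rho_1-\rho_2)\geq 0$ to discard the pressure term, and the Calder\'on--Zygmund bound $\|\nabla v_i\|_{L^p}\lesssim p$ combined with $\nabla\psi\in L^\infty$ to control the trilinear drift term. The only cosmetic difference is in the endgame --- you optimise in $p$ to reach the Osgood modulus $\gamma(1+\log_+(1/\gamma))$, whereas the paper keeps $p$ fixed, compares with the ODE $\bar\gamma'=\hat Cp\,\bar\gamma^{1-1/p}$ to obtain $\gamma(t)\leq(\hat Ct)^p$, and then sends $p\to\infty$ --- and your explicit Liouville step for $P_1=P_2$ is a correct addition that the paper leaves implicit.
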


Next,   we establish that the limit free energy functional $F_{\infty}\big(\rho_\infty(t)\big)$, 
 with $0\leq \rho_\infty\leq1$,  is non-increasing as time increases.
 %------------------------------
\begin{theorem}[Compact support and limit energy functional]
Under the initial assumptions~\eqref{c1}--\eqref{c5}, the limit  $(\rho_{\infty},P_{\infty})$, as in
Theorems~\ref{tAEbis}--\ref{t15}, are compactly supported for any finite time. For some $\mathcal{R}_0\geq  \max(R_0, \sqrt{4n + \frac{4n^2}{n-2}})$,  we have
\begin{equation*}
{\rm supp} (P_\infty(t))\subset {\rm supp} (\rho_\infty(t))\subset B_{\mathcal{R}(t)},
\end{equation*}
\[
\mathcal{R}(t):=\big(\mathcal{R}_{0}+n\|\nabla\mathcal{N}\ast\rho_\infty\|_{L^\infty(Q)}\big)e^{\frac{t}{n}}-n\|\nabla\mathcal{N}\ast\rho_\infty\|_{L^\infty(Q)}.
\]
Furthermore, the limit energy dissipation holds for a.e. $t\in[0,\infty)$, 
\begin{equation*}
\frac{dF_{\infty}(\rho_\infty(t))}{dt}+\int_{\mathbb{R}^n}
\rho_{\infty}(t) \big|\nabla(P_\infty(t)+\mathcal{N}\ast\rho_{\infty}(t)) \big|^2dx=0\quad\text{ with }\ 0\leq \rho_\infty\leq 1.
\end{equation*}
\end{theorem}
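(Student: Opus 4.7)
The statement has two essentially independent parts: the uniform-in-time containment $\mathrm{supp}(P_\infty(t))\subset \mathrm{supp}(\rho_\infty(t))\subset B_{\mathcal{R}(t)}$ and the energy dissipation equality. I would treat them separately, working at the level of the Hele-Shaw limit. The inner inclusion $\mathrm{supp}(P_\infty(t))\subset\mathrm{supp}(\rho_\infty(t))$ is immediate from~\eqref{z8}, so the real issue is the containment in $B_{\mathcal{R}(t)}$. The ODE $\mathcal{R}'(t) = \frac{1}{n}\mathcal{R}(t)+\|\nabla\mathcal{N}\ast\rho_\infty\|_{L^\infty(Q)}$ governing $\mathcal{R}(t)$ is dictated by the explicit model pressure $\bar P(x)=\frac{1}{2n}(\mathcal{R}^2-|x|^2)_+$, which solves $-\Delta\bar P=1$ on $B_{\mathcal{R}}$ (matching the complementarity relation~\eqref{z15} on the saturated region) and has $|\nabla\bar P|=\mathcal{R}/n$ on the free boundary; the extra term $\|\nabla\mathcal{N}\ast\rho_\infty\|_{L^\infty}$, finite by Theorem~\ref{tAE}, accounts for the worst-case drift contribution. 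My plan is to test~\eqref{z6} against a radial cutoff $\psi_R(x)=\eta(|x|-R)$ with $\eta\in C^{1,1}$ convex, nonnegative, nondecreasing, and vanishing on $(-\infty,0]$, and argue by Gronwall. Integration by parts together with $(1-\rho_\infty)P_\infty=0$ and the $L^\infty$ bound on $\nabla\mathcal{N}\ast\rho_\infty$ should produce a differential inequality for $\int\rho_\infty\psi_{\mathcal{R}(t)}\,dx$ whose right-hand side is nonpositive precisely when $\mathcal{R}(t)$ obeys its ODE. The threshold $\mathcal{R}_0\geq \sqrt{4n+4n^2/(n-2)}$ is the initial radius needed for the barrier to dominate $\rho_{\infty,0}\leq 1$ even after the drift perturbation, and~\eqref{c5} together with the $L^1$ convergence in~\eqref{c1} provides the initial condition $\int\rho_{\infty,0}\psi_{\mathcal{R}_0}\,dx=0$.

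For the energy dissipation, I would argue by direct algebraic computation once the compact-support property is available. Since $\mathcal{N}$ is symmetric,
\[
\frac{d}{dt}F_\infty(\rho_\infty(t)) = \int_{\mathbb{R}^n}(\mathcal{N}\ast\rho_\infty)\,\partial_t\rho_\infty\,dx.
\]
Substituting~\eqref{z6}, integrating by parts twice, and using $\Delta\mathcal{N}\ast\rho_\infty=\rho_\infty$ together with $\rho_\infty P_\infty = P_\infty$ from~\eqref{z8}, this reduces to
\[
\frac{d}{dt}F_\infty = \int P_\infty\,dx - \int\rho_\infty|\nabla\mathcal{N}\ast\rho_\infty|^2\,dx.
\]
On the other hand, expanding the dissipation integrand and using $(1-\rho_\infty)\nabla P_\infty = 0$ from Theorem~\ref{t15} to rewrite $\rho_\infty\nabla P_\infty$ as $\nabla P_\infty$, together with $\int|\nabla P_\infty|^2 = -\int P_\infty\Delta P_\infty = \int P_\infty\rho_\infty = \int P_\infty$ via~\eqref{z15}, and $\int\nabla P_\infty\cdot\nabla\mathcal{N}\ast\rho_\infty = -\int P_\infty$ by one more integration by parts, one gets
\[
\int\rho_\infty|\nabla(P_\infty+\mathcal{N}\ast\rho_\infty)|^2\,dx = -\int P_\infty\,dx + \int\rho_\infty|\nabla\mathcal{N}\ast\rho_\infty|^2\,dx.
\]
Adding the two identities yields the desired equality for a.e.~$t\in[0,\infty)$.

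The main obstacle is the rigorous execution of the barrier step: the radial cutoff $\psi_R$ is only $C^{1,1}$, the free boundary $\partial\{P_\infty>0\}$ has no a priori regularity, and $P_\infty$ may exhibit jumps in time. To justify every integration by parts in the weak framework of Def.~\ref{def:WS}, I would rely on the strong $L^2_{\mathrm{loc}}$ convergence of $\nabla P_m$ and the uniform $L^1$ bound on $\partial_tP_m$ from Theorem~\ref{t15}, combined with Steklov averaging in time. An alternative route, technically cleaner in places, is to construct a compressible supersolution of the pressure equation~\eqref{d3} by adding an $O(1/m)$ correction to $\bar P$, apply the classical comparison principle using the known (though $m$-dependent) support of $\rho_m$ from Appendix~\ref{sec:comsupp}, and then pass to the limit $m\to\infty$; this avoids temporal discontinuities but requires handling the singular Newtonian drift uniformly in $m$. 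Once the support bound is secured, the dissipation identity is essentially algebraic and follows directly from the computations above.
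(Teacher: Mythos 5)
Your energy-dissipation computation is essentially the paper's own argument: symmetrize the interaction kernel, substitute \eqref{z6}, and simplify using $\rho_\infty P_\infty=P_\infty$, $\rho_\infty\nabla P_\infty=\nabla P_\infty$, and the integrated complementarity relation $\int|\nabla P_\infty|^2=\int\rho_\infty P_\infty$; that part is fine once compact support is in hand to justify the integrations by parts.

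The gap is in the compact-support step, and it is not merely one of technical execution. Your fallback plan---build an $O(1/m)$-corrected supersolution at the compressible level and pass $m\to\infty$ using the $m$-dependent radius $\mathcal{R}_m(T,t)$ of Appendix~\ref{sec:comsupp}---cannot work as stated, and the difficulty you flag (``the singular Newtonian drift uniformly in $m$'') misidentifies the obstruction. The drift \emph{amplitude} $\|\nabla\mathcal{N}\ast\rho_m\|_{L^\infty(Q_T)}$ is already uniformly bounded (Lemma~\ref{l18}); what is not uniform is its \emph{divergence} $\nabla\cdot\nabla\mathcal{N}\ast\rho_m=\rho_m$, whose $L^\infty$ bound is only $C(m,T)$. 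This divergence enters the comparison argument through the source term $(m-1)P_m\Delta\Phi$ in the pressure equation, and is precisely what produces the $m$-dependent exponential rate $A_{m,T}=\max(\|\rho_m\|_{L^\infty(Q_T)},1)$ in $\mathcal{R}_m(T,t)=(\mathcal{R}_{m,0}+\dots)e^{A_{m,T}t/n}-\dots$; this radius blows up as $m\to\infty$ and yields no limiting bound. The paper circumvents this with a device you have not found: introduce the auxiliary problem \eqref{w1} with \emph{frozen, mollified drift} $\nabla\Phi_{1/m}=\zeta_{1/m}\ast\nabla\mathcal{N}\ast\rho_\infty$, whose divergence $\zeta_{1/m}\ast\rho_\infty$ is bounded by $1$ thanks to the already-established height constraint $0\le\rho_\infty\le1$. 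The viscosity-solution barrier (Lemma~\ref{l17}, imported from~\cite{CKY_2018}) then gives the $m$-uniform radius $\mathcal{R}(t)$ with rate $e^{t/n}$, the auxiliary incompressible limit $(\varrho_\infty,\mathrm{P}_\infty)$ inherits it, and the uniqueness theorem (Proposition~\ref{p1}) identifies $(\varrho_\infty,\mathrm{P}_\infty)$ with $(\rho_\infty,P_\infty)$. Your first (Hele-Shaw-level Gronwall) route would sidestep the $m$-dependence entirely, but you correctly note it founders on the unregularity of the free boundary and the temporal jumps of $P_\infty$; the paper's detour through a fixed-drift PME is exactly the mechanism that makes the comparison principle available.
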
	

 From \cite{r18,rr10,rr12}, we know that the solution to the SPKS model Eq.~\eqref{SPKS} are radially decreasing symmetric up to a translation and compactly supported. This allows us to  gather some useful  a priori estimates in order to prove the compactness for $\rho_{m,s} $, $P_{m,s}$, and $\mathcal{N}\ast\rho_{m,s}$. Then, we can derive  the incompressible limit of the SPKS model Eq.~\eqref{SPKS}.  
 %--------------------------
 \begin{theorem}[Incompressible limit for stationary state]\label{ILSPKS} Let $m\geq 3$, $\rho_{m,s}$ be a weak solution to the SPKS model Eq.~\eqref{SPKS} in the sense of Def.~\ref{d1} with a given mass $\|\rho_{m,s}\|_{L^1(\mathbb{R}^n)}=M>0$,  $\int_{\mathbb{R}^n}x\rho_{m,s}(x)dx=0$. We define $R_m(M)>0$ satisfying $B_{R_m(M)}:={\rm supp}(\rho_{m,s})$ and $\alpha_m:=\rho_{m,s} (0)=\|\rho_{m,s}\|_{L^\infty(\mathbb{R}^n)}$, then the following regularity estimates hold,
\begin{equation*}
\alpha_{m}^{m-1}\leq\alpha_{m}+\frac{2M}{n(n-2)\omega_n}\leq\frac{1+\sqrt{1+\frac{8M}{n(n-2)\omega_n}}}{2}+\frac{2M}{n(n-2)\omega_n},
 \end{equation*}
 \begin{equation*}
R_m(M)\leq R_*(M),\quad\|\rho_{m,s}\|_{L^1\cap L^\infty(\mathbb{R}^n)}\leq C,
 \end{equation*}
\begin{equation*}
\quad\|\nabla \mathcal{N}\ast\rho_{m,s}\|_{L^2\cap L^\infty(\mathbb{R}^n)}\leq C,\quad\|\nabla^2 \mathcal{N}\ast\rho_{m,s}\|_{L^p(\mathbb{R}^n)}\leq C(M,p),
\end{equation*}
\begin{equation*}
\||\omega_{m,s}|_-\|_{L^3\cap L^1(\mathbb{R}^n)}^3\leq \frac C m, \qquad \quad \omega_{m,s}=\Delta P_{m,s}+\rho_{m,s},
\end{equation*}
\begin{equation*}
\|P_{m,s}\|_{L^1\cap L^\infty(\mathbb{R}^n)}+\|\nabla P_{m,s}\|_{L^1\cap L^\infty(\mathbb{R}^n)}+\|\Delta P_{m,s}\|_{L^1(\mathbb{R}^n)}\leq C,
\end{equation*}
 where $R_*(M)=\log\Big(1+\exp\Big[2n(n-1)\Big(\frac{1+\sqrt{1+\frac{8M}{n(n-2)\omega_n}}}{2}+\frac{2M}{n(n-2)\omega_n}\Big)\Big]^{1/2}\Big)$,  $C(M,p)\sim\frac{1}{p-1}$ for $0<p-1\ll1$ and $C(M,p)\sim p$ for $p\gg1$.

 Furthermore,  after extracting subsequences, as $m\to\infty$, $P_{m,s}$ converges strongly in $L^\infty(\mathbb{R}^n)$ to $P_{\infty,s}\in L^1(\mathbb{R}^n)\cap L^\infty(\mathbb{R}^n)$, $\rho_{m,s}$ converges weakly in $L^p(\mathbb{R}^n)$  for $1< p<\infty$ to $\rho_{\infty,s}\in L^1(\mathbb{R}^n)\cap L^\infty(\mathbb{R}^n)$, $\nabla P_{m,s}$ converges strongly in $L^p(\mathbb{R}^n)$  for $1\leq p<\infty$ to $\nabla P_{\infty,s}\in L^1(\mathbb{R}^n)\cap L^\infty(\mathbb{R}^n)$, and $\nabla \mathcal{N}\ast\rho_{m,s}$ locally converges strongly in $L^p(\mathbb{R}^n)$  for $1\leq p<\infty$ to $\nabla \mathcal{N}\ast\rho_{\infty,s}\in L^2(\mathbb{R}^n)\cap L^\infty(\mathbb{R}^n)\cap\dot{W}^{1,q}(\mathbb{R}^n)$ for $1<q<\infty$.
Therefore, the incompressible (Hele-Shaw) limit of the SPKS model Eq.~\eqref{SPKS} satisfies
\begin{align*}
&\|\rho_{\infty,s}\|_{L^1(\mathbb{R}^n)}=M,&&\int_{\mathbb{R}^n}x\rho_{\infty,s}dx=0,\\
&0\leq \rho_{\infty,s}\leq 1, \quad (1-\rho_{\infty,s})P_{\infty,s}=0,  &&\text{a.e. in }\mathbb{R}^n,\\
&\nabla P_{\infty,s}+\rho_{\infty,s}\nabla\mathcal{N}\ast\rho_{\infty,s}=0, &&\text{a.e. in }\mathbb{R}^n,\\
&\Delta P_{\infty,s}+\rho_{\infty,s}\geq 0,&&\text{in }\mathcal{D}'(\mathbb{R}^n).
\end{align*}
Moreover, it holds for $R(M)>0$ satisfying $|B_{R(M)} |_n=M$ that
\begin{equation*}
\rho_{\infty,s}=\chi_{\{P_{\infty,s}>0\}}=\chi_{B_{R(M)} },\quad \text{a.e. in }\mathbb{R}^n.
\end{equation*}
\end{theorem}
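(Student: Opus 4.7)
The plan is to establish all the a priori estimates using the radial decreasing symmetry and compact support of stationary solutions (available from \cite{r18,rr10,rr12}), then pass to the incompressible limit by compactness, and finally identify the limit as the characteristic function of a ball. For the $L^\infty$ bound on $\alpha_m=\rho_{m,s}(0)=\|\rho_{m,s}\|_{L^\infty}$, the shell theorem gives $\nabla\mathcal N\ast\rho_{m,s}(x)=M(|x|)x/(n\omega_n|x|^n)$ with $M(r)=\int_{B_r}\rho_{m,s}$, whence the stationary equation on the support $B_{R_m(M)}$ reduces to $P_{m,s}+\mathcal N\ast\rho_{m,s}=c_m:=\mathcal N\ast\rho_{m,s}(R_m(M))$, and evaluation at the origin yields
\[
\tfrac{m}{m-1}\alpha_m^{m-1}=P_{m,s}(0)=\tfrac{1}{n(n-2)\omega_n}\int_{\mathbb{R}^n}\rho_{m,s}(y)\big(|y|^{2-n}-R_m(M)^{2-n}\big)dy.
\]
Splitting the right-hand integral at $|y|=1$, using $\rho_{m,s}\leq\alpha_m$ on $B_1$ together with $\int\rho_{m,s}=M$ on $B_1^c$, yields the stated inequality $\alpha_m^{m-1}\leq\alpha_m+2M/[n(n-2)\omega_n]$; since $m\geq 3$ forces $\alpha_m^2\leq\alpha_m^{m-1}$ whenever $\alpha_m\geq 1$, solving the resulting quadratic produces the explicit upper bound. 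The support radius $R_*(M)$ is then obtained by integrating the radial ODE $\partial_rP_{m,s}(r)=-M(r)/(n\omega_n r^{n-1})$ together with the $L^\infty$ bound just established, and applying a Gr\"onwall-type comparison in the spirit of Appendix~\ref{sec:comsupp}, producing the log-exp upper bound.

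The remaining $L^p$ controls on $\rho_{m,s}$, $\nabla\mathcal N\ast\rho_{m,s}$ and $\nabla^2\mathcal N\ast\rho_{m,s}$ follow as in Theorem~\ref{tAE} via Hardy--Littlewood--Sobolev and Calder\'on--Zygmund, while the estimates on $P_{m,s}$, $\nabla P_{m,s}$ and $\Delta P_{m,s}$ come from the algebraic identity $P_{m,s}=c_m-\mathcal N\ast\rho_{m,s}$ on the support combined with those Newtonian bounds. For the Aronson--B\'enilan-type estimate $\||\omega_{m,s}|_-\|_{L^3\cap L^1}^{3}\leq C/m$ with $\omega_{m,s}=\Delta P_{m,s}+\rho_{m,s}$, I would mimic the argument of Theorem~\ref{t15} in the stationary regime: apply $\Delta$ to the divergence form of the stationary equation to obtain an elliptic identity for $\omega_{m,s}$, test it against $|\omega_{m,s}|_-^{p-1}$ for $p=2,3$, integrate by parts, and extract the $1/m$ decay from the explicit factor $m$ coming from $P_{m,s}=\tfrac{m}{m-1}\rho_{m,s}^{m-1}$. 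The absence of a time derivative actually simplifies matters here, since $\omega_{m,s}$ vanishes in the interior of the positivity set, so its negative part is localized in a neighborhood of $\partial B_{R_m(M)}$.

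With all uniform bounds in hand, compactness follows from Arzel\`a--Ascoli: the uniform $L^\infty$ bound on $P_{m,s}$, the $L^\infty$ bound on $\nabla P_{m,s}$, and the uniform compact support in $B_{R_*(M)}$ give, along a subsequence, strong convergence $P_{m,s}\to P_{\infty,s}$ in $L^\infty(\mathbb{R}^n)$; weak $L^p$-compactness of $\rho_{m,s}$ follows from its uniform bound, strong $L^p$-convergence of $\nabla P_{m,s}$ from the $L^1$ bound on $\Delta P_{m,s}$ together with compact support and interpolation against the $L^\infty$ bound, and strong local $L^p$-convergence of $\nabla\mathcal N\ast\rho_{m,s}$ from the $\dot W^{1,q}$ bound. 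Passing to the limit in the weak form gives $\nabla P_{\infty,s}+\rho_{\infty,s}\nabla\mathcal N\ast\rho_{\infty,s}=0$. The complementarity $(1-\rho_{\infty,s})P_{\infty,s}=0$ follows because on every compact subset of the open set $\{P_{\infty,s}>0\}$ one has $P_{m,s}\geq c>0$ uniformly for $m$ large, whence $\rho_{m,s}=\big(\tfrac{m-1}{m}P_{m,s}\big)^{1/(m-1)}\to 1$ uniformly there; the bound $\rho_{\infty,s}\leq 1$ a.e.\ comes from $\alpha_m\to 1$ (forced by $\alpha_m^{m-1}\leq C$), and the semi-harmonicity $\Delta P_{\infty,s}+\rho_{\infty,s}\geq 0$ is inherited from $|\omega_{m,s}|_-\to 0$ in $L^1$.

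It remains to identify $\rho_{\infty,s}=\chi_{B_{R(M)}}$. On $\{P_{\infty,s}>0\}$ we already have $\rho_{\infty,s}=1$, while on its complement the limit equation reduces to $\rho_{\infty,s}\nabla\mathcal N\ast\rho_{\infty,s}=0$. Since $\rho_{\infty,s}$ inherits radial decreasing symmetry, the shell theorem yields $\nabla\mathcal N\ast\rho_{\infty,s}(x)=M_\infty(|x|)x/(n\omega_n|x|^n)\neq 0$ whenever $M_\infty(|x|)>0$, forcing $\rho_{\infty,s}=0$ outside $\{P_{\infty,s}>0\}$. Hence $\rho_{\infty,s}$ is the characteristic function of a radial set, necessarily a ball $B_{R(M)}$ whose radius is pinned by the mass constraint $\omega_nR(M)^n=M$. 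The main obstacles I foresee are (i) the stationary Aronson--B\'enilan estimate with the $1/m$ rate, without the aid of the time regularization available in the parabolic case, and (ii) carefully excluding a positive-measure ``halo'' outside $\{P_{\infty,s}>0\}$ on which $0<\rho_{\infty,s}<1$, for which the shell-theorem argument based on radial symmetry is crucial.
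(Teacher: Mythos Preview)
Your overall architecture matches the paper's: the $\alpha_m$ bound via $P_{m,s}=-\mathcal N\ast\rho_{m,s}-C$ and a near/far splitting, Newtonian and Calder\'on--Zygmund bounds for the potential, $\nabla P_{m,s}=-\nabla\mathcal N\ast\rho_{m,s}$ on the support for the gradient control, a stationary Aronson--B\'enilan computation for the $1/m$ rate, and the shell-theorem contradiction to show $\mathrm{supp}\,P_{\infty,s}=\mathrm{supp}\,\rho_{\infty,s}$. Your worry~(i) is unfounded: in the stationary case the AB argument is \emph{easier}, not harder---after taking the Laplacian of the pressure equation and multiplying by $|\omega_{m,s}|_-$, integration by parts yields directly $(m-4)\int|\omega_{m,s}|_-^3\le C$ with no time derivative to absorb.

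The one genuine gap is the uniform support bound $R_m(M)\le R_*(M)$. Your sketch (``integrate $\partial_rP_{m,s}=-M(r)/(n\omega_n r^{n-1})$ and apply a Gr\"onwall comparison as in Appendix~\ref{sec:comsupp}'') does not work here: that appendix builds a barrier for the \emph{parabolic} pressure equation via a comparison principle, and there is no analogous supersolution argument that pins down the stationary support radius from the first-order radial identity alone. The paper instead follows the dynamical-systems route of Carrillo--Sugiyama~\cite{rr10}: writing $\Psi_m=\rho_{m,s}^{m-1}$, one passes to the plane autonomous system
\[
r\tfrac{du_m}{dr}=u_m\big(n-u_m-\tfrac{v_m}{m-1}\big),\qquad r\tfrac{dv_m}{dr}=v_m\big(-(n-2)+u_m+v_m\big),
\]
with $v_m=-r\Psi_m'/\Psi_m$. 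One first shows $u_m+v_m>n$ for $r>0$, giving $r\,dv_m/dr>2v_m$ and hence the \emph{lower} bound $v_m(r)\ge \tfrac{m-1}{mn}\alpha_m^{2-m}r^2$; combined with your $\alpha_m^{m-1}\le C$ this makes $v_m(r)\ge n-1$ at a uniform radius $R'(M)$. From there the inequality $r\,dv_m/dr\ge(v_m-(n-2))^2$ forces $v_m$ to blow up before the explicit $R_*(M)$ in the statement, which is exactly the vanishing of $\Psi_m$. This two-step comparison (quadratic growth, then Riccati blow-up) is what produces the specific $\log(1+\exp[\cdot]^{1/2})$ formula and is not recoverable from a single Gr\"onwall step; you should replace your support-radius paragraph with this argument.
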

\begin{remark}
The results in Theorem~\ref{ILSPKS} show that the incompressible limit of the SPKS model Eq.~\eqref{SPKS} is the stationary state of the Hele-Shaw problem Eqs.~\eqref{z6}--\eqref{z15}.
\end{remark}

%%%%%%%%%%%%%%%%%%%%%%%
\section{Bounds, compactness and stiff limit}
%%%%%%%%%%%%%%%%%%%%%%%

We  establish the estimates in Theorem~\ref{tAE} and then the stiff limit in Theorem~\ref{tAEbis}.
\\

We begin with the a priori regularity results on the density $\rho_m$, and then treat the nonlocal term.
 %--------------------------
\begin{lemma}[Regularity estimate on density and pressure] \label{l10}
Assume that the initial data satisfies \eqref{c1}. Let $\rho_m$ be the weak solution to Eq.~\eqref{d1}  in the sense of Def.~\ref{d1}, then it follows
%\[\begin{cases}
\begin{align}
&\sup_{0\leq t\leq T}\|\rho_m(t)\|_{L^{m+1}(\mathbb{R}^n)}+
\|\nabla\rho_m^m\|_{L^2(Q_T)}\leq C(T),
\label{est:rho1}
\\[2pt]
&\sup_{0\leq t\leq T}\|\rho_m(t)\|_{L^{p}(\mathbb{R}^n)}+\int
\hskip-4pt \int_{Q_T}\nabla\rho_m^m\cdot\nabla\rho_m^{p-1}dxdt\leq C(T), \;
 1<p\leq2,
\label{est:rho2}
\\[2pt]
&\|\rho_{m} \|_{L^{(2+2/n)m+2/n} (Q_T)}+\|\rho_m\nabla
P_m\|_{L^2(Q_T)}\leq C(T),
\label{est:rho3}
\\[2pt]
&\|P_m\|_{L^2(Q_T)}+\|\rho_m^m\|_{L^2(Q_T)}+\|\nabla P_m\|_{L^2(Q_T)}\leq C(T).
\label{est:rho4}
\end{align}
%\end{cases}\]
\end{lemma}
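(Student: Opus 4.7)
The unified strategy is the classical energy-testing method: multiply Eq.~\eqref{d1} by a suitable power of $\rho_m$, integrate by parts, and exploit the crucial identity $\Delta\mathcal{N}\ast\rho_m=\rho_m$ to turn the nonlocal aggregation term into a pure polynomial of $\rho_m$. This turns estimate~\eqref{est:rho1} into a differential inequality whose closure requires the subcritical condition $m>2-2/n$, while~\eqref{est:rho2}--\eqref{est:rho4} are then deduced by routine interpolation and a separate integration of the pressure equation~\eqref{d3}.

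For~\eqref{est:rho1}, multiply~\eqref{d1} by $\rho_m^m$ and integrate over $\mathbb{R}^n$. The diffusion term contributes $-\|\nabla\rho_m^m\|_{L^2}^2$. For the drift, two integrations by parts together with $\Delta\mathcal{N}\ast\rho_m=\rho_m$ give
$$\int_{\mathbb{R}^n}\rho_m^m\,\nabla\cdot(\rho_m\nabla\mathcal{N}\ast\rho_m)\,dx=-\tfrac{m}{m+1}\int_{\mathbb{R}^n}\nabla\rho_m^{m+1}\cdot\nabla\mathcal{N}\ast\rho_m\,dx=\tfrac{m}{m+1}\int_{\mathbb{R}^n}\rho_m^{m+2}\,dx,$$
leading to
$$\tfrac{1}{m+1}\tfrac{d}{dt}\|\rho_m\|_{L^{m+1}}^{m+1}+\|\nabla\rho_m^m\|_{L^2}^2=\tfrac{m}{m+1}\int_{\mathbb{R}^n}\rho_m^{m+2}\,dx.$$
To close it, apply Gagliardo--Nirenberg to $v=\rho_m^m$, interpolating between $\|v\|_{L^{1/m}}=M^m$ and $\|\nabla v\|_{L^2}$: a direct computation shows that the exponent of $\|\nabla v\|_{L^2}$ in the resulting bound on $\int\rho_m^{m+2}$ is strictly less than $2$ precisely when $m>2-2/n$. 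Young's inequality then absorbs the dissipation, and Grönwall's lemma plus assumption~\eqref{c1} on $\|\rho_{m,0}^{m+1}\|_{L^1}$ yields~\eqref{est:rho1}.

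Estimate~\eqref{est:rho2} is obtained in the same way with the test function $\rho_m^{p-1}$: the analogous computation produces
$$\tfrac{1}{p}\tfrac{d}{dt}\|\rho_m\|_{L^p}^p+\int_{\mathbb{R}^n}\nabla\rho_m^{p-1}\cdot\nabla\rho_m^m\,dx=\tfrac{p-1}{p}\int_{\mathbb{R}^n}\rho_m^{p+1}\,dx,$$
and since $1<p\leq 2$, the right-hand side is interpolated between $\|\rho_m\|_{L^1}=M$ and $\|\rho_m\|_{L^{m+1}}^{m+1}$ already controlled by~\eqref{est:rho1}. For~\eqref{est:rho3}, the identity $\rho_m\nabla P_m=\nabla\rho_m^m$ gives $\|\rho_m\nabla P_m\|_{L^2(Q_T)}=\|\nabla\rho_m^m\|_{L^2(Q_T)}$ directly from~\eqref{est:rho1}; the space-time bound on $\rho_m$ comes by interpolating $\rho_m^m\in L^\infty_t(L^{(m+1)/m}_x)$ (from~\eqref{est:rho1}) with $\rho_m^m\in L^2_t(L^{2n/(n-2)}_x)$ (Sobolev embedding applied to the $L^2(Q_T)$ gradient bound on $\rho_m^m$), which yields exactly the exponent $(2+2/n)m+2/n$.

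For~\eqref{est:rho4}, the $L^2(Q_T)$ bounds on $P_m\sim\rho_m^{m-1}$ and $\rho_m^m$ both follow from Hölder interpolation against the $L^1$ mass and the $L^{(2+2/n)m+2/n}(Q_T)$ bound from~\eqref{est:rho3}. The delicate piece is $\|\nabla P_m\|_{L^2(Q_T)}$, which cannot be read off from $\nabla\rho_m^m=\rho_m\nabla P_m$ because of the loss of information where $\rho_m$ vanishes. The plan here is to work on the pressure equation~\eqref{d3}: integrating it over $\mathbb{R}^n$, using $\int P_m\Delta P_m=-\int|\nabla P_m|^2$ and once more $\Delta\mathcal{N}\ast\rho_m=\rho_m$, we obtain the clean identity
$$\tfrac{d}{dt}\!\int_{\mathbb{R}^n}P_m\,dx+(m-2)\!\int_{\mathbb{R}^n}|\nabla P_m|^2\,dx=(m-2)\!\int_{\mathbb{R}^n}P_m\rho_m\,dx=\tfrac{m(m-2)}{m-1}\!\int_{\mathbb{R}^n}\rho_m^m\,dx,$$
which, after time integration and division by $m-2$, is controlled by $\|P_{m,0}\|_{L^1}$ (finite thanks to~\eqref{c1}) and the already known bound on $\rho_m^m$ in $L^1(Q_T)$. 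The main obstacle throughout is to keep all these constants \emph{uniform in $m$}: the Gagliardo--Nirenberg exponent in the first step is the borderline where subcriticality is used, and the absorption step must be arranged so that the residual $C_\varepsilon$ in Young's inequality depends only on $M$ and $n$, not on $m$; this is what ultimately forces the hypothesis $m>2-2/n$ and drives the whole chain of bounds.
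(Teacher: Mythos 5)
Your treatment of \eqref{est:rho1}--\eqref{est:rho3} follows the paper's own route essentially step by step: testing Eq.~\eqref{d1} against $\rho_m^m$ (resp.\ $\rho_m^{p-1}$), converting the nonlocal drift into $\int\rho_m^{m+2}$ via $\Delta\mathcal{N}\ast\rho_m=\rho_m$, closing by interpolation plus Sobolev (the paper interpolates $\rho_m$ between $L^1$ and $L^{2mn/(n-2)}$ rather than $\rho_m^m$ between the quasi-norm $L^{1/m}$ and $\dot H^1$, which is equivalent after the substitution but avoids invoking Gagliardo--Nirenberg in a sub-$L^1$ space), and reading $\|\rho_m\nabla P_m\|_{L^2(Q_T)}=\|\nabla\rho_m^m\|_{L^2(Q_T)}$ off the identity $\nabla\rho_m^m=\rho_m\nabla P_m$. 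Your mixed-norm interpolation for the exponent $(2+2/n)m+2/n$ is a correct repackaging of the paper's pointwise-in-time interpolation.

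Where you genuinely diverge is the last bound $\|\nabla P_m\|_{L^2(Q_T)}$. The paper works purely at the density level: starting from $|\nabla P_m|^2=m^2\rho_m^{2m-4}|\nabla\rho_m|^2$, it uses the pointwise weighted AM--GM inequality
\[
\rho_m^{2m-4}\;\leq\;\tfrac{2}{m-1}\,\rho_m^{m-1}+\tfrac{m-3}{m-1}\,\rho_m^{2m-2},
\]
so that $\int_{Q_T}|\nabla P_m|^2$ is absorbed into the already-controlled quantities $\int_{Q_T}\nabla\rho_m^m\cdot\nabla\rho_m$ (from \eqref{est:rho2} with $p=2$) and $\int_{Q_T}|\nabla\rho_m^m|^2$ (from \eqref{est:rho1}). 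This is elementary, requires no manipulation of the pressure equation, and only uses $m\geq 3$. Your alternative---integrating the pressure equation~\eqref{d3} over $\mathbb{R}^n$ and exploiting the cancellation $\int\nabla P_m\cdot\nabla\mathcal{N}\ast\rho_m=-\int P_m\rho_m$ to arrive at the clean identity
\[
\tfrac{d}{dt}\int P_m\,dx+(m-2)\int|\nabla P_m|^2\,dx=(m-2)\int P_m\rho_m\,dx
\]
---is correct and arguably more elegant, since it yields an identity rather than an inequality. But you should note three caveats. First, it requires $m>2$ for the coefficient $m-2$ to have the right sign; this is the same practical regime as the paper's $m\geq3$, so it is not a real loss, but it should be stated. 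Second, the term $\tfrac{1}{m-2}\|P_{m,0}\|_{L^1}$ is finite uniformly in $m$ only after a short interpolation: from $\|\rho_{m,0}^{m+1}\|_{L^1}\leq C$ one gets $\|\rho_{m,0}\|_{L^{m+1}}\leq C^{1/(m+1)}$, and then $\|\rho_{m,0}^{m-1}\|_{L^1}\leq M^{2/m}\,C^{(m-1)/m}\leq C$; this step is implicit in your ``finite thanks to~\eqref{c1}'' and deserves to be spelled out. Third, the integration by parts $\int P_m\Delta P_m=-\int|\nabla P_m|^2$ presupposes $\nabla P_m\in L^2$ at the stage where you are proving precisely that, so the identity must be derived on approximate (regularized) solutions and passed to the limit; the paper's density-level argument sidesteps this circularity entirely. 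None of these affect the validity as an a priori estimate, but the paper's route is the more robust one for the same conclusion.
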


\begin{proof}
For \eqref{est:rho1}, we multiply Eq.~\eqref{d1} by $\rho_m^m$ and integrate by parts on
$\mathbb{R}^n$, we find
\begin{equation*}
\begin{aligned}
\frac{1}{m+1} \frac{d} {dt} \int_{\mathbb{R}^n}\rho_m^{m+1}dx+\int_{\mathbb{R}^n}|\nabla\rho_m^m|^2dx&\leq \frac{m}{m+1}\int_{\mathbb{R}^n}\rho_m^{m+2}dx\\
&  \leq \left[ \| \rho_m\|_{L^{1}(\mathbb{R}^n)}^{1-\theta}  \|\rho_m\|_{L^{\frac{2mn}{n-2}}(\mathbb{R}^n)}^{\theta}  \right]^{m+2} \\
& \leq  \| \rho_m\|_{L^{1}(\mathbb{R}^n)}^{(1-\theta)(m+2)}  \|\rho_m^m\|_{L^{\frac{2n}{n-2}}(\mathbb{R}^n)}^{\theta \frac{m+2}{m}}, 
\end{aligned}
\end{equation*}
where we have used interpolation inequality with $\frac 1{m+2}=1-\theta+ \theta  \frac{n-2}{2mn}$. 
We notice that  $(1-\theta) (m+2) \leq 1$. Using this and Sobolev's inequality (Theorem~\ref{t8}), we get
\begin{align}
\frac{1}{m+1} \frac d {dt} \int_{\mathbb{R}^n}\rho_m^{m+1}dx+
\int_{\mathbb{R}^n}|\nabla\rho_m^m|^2dx
\leq& \max (1,\| \rho_m\|_{L^{1}(\mathbb{R}^n)})   \| \nabla
\rho_m^m\|_{L^{2}(\mathbb{R}^n)}^{\theta \frac{m+2}{m}} \notag
\\[2pt]
 \leq& C + \frac 12 \| \nabla \rho_m^m\|_{L^{2}(\mathbb{R}^n)}^2, \notag
\label{m1}
\end{align}
where we have used ${\theta \frac{m+2}{m}}<2$ for $m>2- \frac 2 n$.

After time integration, we get the inequality
\begin{equation}
 \int_{\mathbb{R}^n}\rho_m(t)^{m+1}dx + \frac {m+1}2 \int_0^t\ \| \nabla
 \rho_m^m\|_{L^{2}(\mathbb{R}^n)}^2
 \leq \int_{\mathbb{R}^n}\rho_{m,0}^{m+1}dx + Ct(m+1),
 \label{est:m+1}
\end{equation}
which implies \eqref{est:rho1} in Lemma~\ref{l10};
\[
\|\rho_m(t)\|_{L^{m+1}(\mathbb{R}^n)}\leq (C+ Ct(m+1))^{\frac 1{m+1}} \leq C(T).
\]

A similar calculation\ gives \eqref{est:rho2} and we have
\[
\frac{1}{p}\int_{\mathbb{R}^n}\rho_m(t)^pdx+\frac{4m(p-1)}{(m+p-1)^2}  
\int_{Q_t} |\nabla\rho_m^{\frac{m+p-1}{2}}|^2dxds=
\frac{1}{p}\int_{\mathbb{R}^n}\rho_{m,0}^pdx  +\frac{p-1}{p}  \int_{Q_t}  \rho_m^{p+1}dxds.
 \]
Interpolating between $L^{m+1}(\mathbb{R}^n)$ and $L^{1}(\mathbb{R}^n)$, we know that the terms on the right hand
side are controlled and thus the gradient term is under control. It remains to
notice that
\[
\frac{4m(p-1)}{(m+p-1)^2}\int_{Q_T}
|\nabla\rho_m^{\frac{m+p-1}{2}}|^2dx
=m(p-1)\int_{Q_T} \rho_m^{m+p-3} |\nabla\rho_m|^2dx
=\int_{Q_T} \nabla\rho_m^m .\nabla\rho_m^{p-1}dx,
\]
and \eqref{est:rho2}  is proved.
\\

We turn to \eqref{est:rho3}. Thanks to the interpolation inequality, we have, for
$\alpha \geq 0$ and $t\leq T$,
\begin{equation*}
\begin{aligned}
 \int_{\mathbb{R}^n}\rho_{m}(t)^{2m+\alpha}dx&\leq
\| \rho_{m}(t)\|_{L^{m+1}(\mathbb{R}^n)}^{(1-\theta) (2m+\alpha)}
\| \rho_m(t)\|_{L^{\frac{2mn}{n-2}}(\mathbb{R}^n)}^{\theta (2m+\alpha)}
\\
&= \| \rho_{m}(t) \|_{L^{m+1}(\mathbb{R}^n)}^{(1-\theta) (2m+\alpha)}
\| \rho_m^m(t) \|_{L^{\frac{2n}{n-2}}(\mathbb{R}^n)}^{\theta \frac{2m+\alpha}{m}}
\end{aligned}
\end{equation*}
with
\[
\frac{1}{2m+\alpha}= \frac{1-\theta}{m+1} + \frac{\theta (n-2)}{2mn}, \qquad
0\leq \theta \leq 1.
\]
By Sobolev's inequality and the estimate in $L^{m+1}$, we obtain
\[
 \int_{\mathbb{R}^n}\rho_{m}(t)^{2m+\alpha}dx \leq C(T) ^{(1-\theta) (2m+\alpha)}
 \| \nabla \rho_m^m\|_{L^{2}(\mathbb{R}^n)}^{\theta \frac{2m+\alpha}{m}}.
\]
It remains to choose $\alpha$ such that $\theta \frac{2m+\alpha}{m}=2$ and we
find, integrating in time,
\[
\int_{Q_T} \rho_{m}(t)^{2m+\alpha}dx \leq C(T) ^{(1-\theta)
 (2m+\alpha)}
 \int_{Q_T} | \nabla \rho_m^m|^2dx.
\]
To compute the value of $\alpha$, we write the condition successively as
\[
2m= (2m+\alpha)\theta =  (2m+\alpha) \big[\frac 1 {m+1}-\frac{1}{2m+\alpha} \big]
\big[  \frac{1}{m+1}  - \frac{n-2}{2nm} \big]^{-1},
\]
\[
2m= \big[ 2m+\alpha -(m+1) \big] \big[  1- \frac{(n-2)(m+1)}{2mn} \big]^{-1},
\]
\[
2m  - \frac{(n-2)(m+1)}{n}= 2m+\alpha - (m+1) , \qquad \alpha= \frac 2 n (m+1).
\]
This gives the first statement of \eqref{est:rho3}. Then, since $\nabla\rho_m^m=\rho_m\nabla P_m$, we have from \eqref{est:rho1}
$\|\rho_m\nabla P_m\|_{L^2(Q_T)}\leq C(T)$ and  \eqref{est:rho3} is proved.
\\

The first estimate of \eqref{est:rho4} is obtained by interpolation and  Sobolev's inequality for gradient (Theorem~\ref{t8}) between two estimates in~\eqref{est:rho1}, for $\gamma=0,1$.
\begin{equation*}
\begin{aligned}
\int_{Q_T} \rho_m^{2m-2\gamma}dxdt &\leq \big(\int_{Q_T} \rho_m^{2m+1}dxdt\big)^{\frac{2m-2\gamma-1}{2m}} \big(\int_{Q_T} \rho_mdxdt \big)^{\frac{2\gamma+1}{2m}}\\
&\leq C(T)\int_{0}^{T}\hskip-6pt \big(\int_{\mathbb{R}^n}(\rho_m^m)^{\frac{2n}{n-2}}dx\big)^{\frac{n-2}{n}} \big(\int_{\mathbb{R}^n}\rho_m^\frac{n}{2}dx \big)^{\frac{2}{n}}dt\\
&\leq C(T)\int_{Q_T} |\nabla\rho_m^{m}|^2dxdt\leq C(T).
\end{aligned}
\end{equation*}
 We prove the second estimate of \eqref{est:rho4} by means of the estimates $\eqref{est:rho1}_{2nd}$--$\eqref{est:rho2}_{2nd}$,
\begin{equation*}
\begin{aligned}
\int_{Q_T} |\nabla P_m|^2dxdt=&m^2\int_{Q_T} \rho_m^{2(m-2)}|\nabla \rho_m|^2dxdt\\
\leq&\int_{Q_T} (\frac{2m^2}{m-1}\rho_m^{m-1}+\frac{m^2(m-3)}{m-1}\rho_m^{2m-2}|\nabla\rho_m|^2dxdt\\
=&\int_{Q_T} (\frac{2m}{m-1}\nabla\rho_m^{m}\cdot\nabla\rho_m+\frac{m-3}{m-1}|\nabla\rho_m^m|^2)dxdt \leq C(T).
\end{aligned}
\end{equation*}
\end{proof}

Now, we turn to the nonlocal term.
%------------------------------------------------
\begin{lemma}[Regularity of the nonlocal term] \label{l18}
Assume  $m>n-1$ and \eqref{c1}, let $\rho_m$ be a weak solution to Eq.~\eqref{d1}  in the sense of Def.~\ref{d1}, then it
holds for any $T>0$ that
\begin{align}
&\|\nabla\mathcal{N}\ast\rho_m\|_{L^{\infty}(Q_T)}\leq
C(T),&&
\|\partial_{t}\nabla\mathcal{N}\ast\rho_{m}(t)\|_{L^2(Q_T)}\leq C(T),\label{est:con1}\\
&\sup\limits_{0\leq t\leq
T}\|\nabla^2\mathcal{N}\ast\rho_m(t)\|_{L^q(\mathbb{R}^n)}\leq C(T,q),
&&\sup\limits_{0\leq t\leq
T}\|\nabla\mathcal{N}\ast\rho_{m}(t)\|_{L^2(\mathbb{R}^n)}\leq C(T),\label{est:con2}
\end{align}
where $C(T,q)\sim \frac{1}{q-1}$ for $0<q-1\ll 1$ and $C(T,q)\sim q$ for $q\gg
1$. Furthermore, after extraction,it holds  that
	\begin{equation*}
	\nabla\mathcal{N}\ast\rho_{m}\to\nabla\mathcal{N}\ast\rho_{\infty},\ stongly\
in\ L^2_{loc}(Q_{T}),\ as\ m\to\infty.
	\end{equation*}
\end{lemma}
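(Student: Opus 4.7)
The plan is to establish the four a priori bounds independently, all of which follow from the regularity already obtained in Lemma~\ref{l10}, and then conclude the strong $L^2_{loc}$ convergence by an Aubin--Lions compactness argument. Throughout, the standing hypothesis $m>n-1$ will be crucial only for the $L^\infty$ bound, while the other estimates need only $\rho_m\in L^\infty_t L^q$ for suitable $q$.

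For the pointwise $L^\infty$ bound, I split the convolution $|\nabla\mathcal{N}\ast\rho_m(x,t)|\lesssim \int |x-y|^{-(n-1)}\rho_m(y,t)\,dy$ at radius $R$. The outer integral is controlled by mass conservation, giving a contribution $\lesssim M R^{-(n-1)}$. On the inner ball I apply H\"older against the $L^{m+1}$-norm of Lemma~\ref{l10}, which produces $\|\rho_m\|_{L^{m+1}} R^{n/(m+1)'-(n-1)}$; the local integrability of $|z|^{-(n-1)(m+1)/m}$ near the origin requires precisely $m>n-1$. Optimizing in $R$ yields the first bound in \eqref{est:con1}. For the $L^2_x$ bound on $\nabla\mathcal{N}\ast\rho_m$, I integrate by parts using $\Delta\mathcal{N}=\delta$ to get
\[
\|\nabla\mathcal{N}\ast\rho_m(t)\|_{L^2(\mathbb{R}^n)}^{2}=-\int_{\mathbb{R}^n}(\mathcal{N}\ast\rho_m)(t)\,\rho_m(t)\,dx=\frac{1}{n(n-2)\alpha_n}\iint\frac{\rho_m(x,t)\rho_m(y,t)}{|x-y|^{n-2}}\,dx\,dy,
\]
and conclude by the Hardy--Littlewood--Sobolev inequality: this is $\lesssim \|\rho_m(t)\|_{L^{2n/(n+2)}}^{2}$, which is uniformly bounded in $t$ by interpolation between $L^1$ and $L^{m+1}$ (since $\frac{2n}{n+2}\in[1,m+1]$ under the subcriticality assumption $m>2-2/n$).

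For $\sup_t\|\nabla^2\mathcal{N}\ast\rho_m(t)\|_{L^q(\mathbb{R}^n)}$, I invoke the Calder\'on--Zygmund theorem: since $\nabla^2\mathcal{N}$ is a matrix of Riesz transforms (up to a multiple of the identity), the classical CZ inequality gives $\|\nabla^2\mathcal{N}\ast\rho_m\|_{L^q}\leq C(q)\|\rho_m\|_{L^q}$ with $C(q)\sim 1/(q-1)$ as $q\to 1^+$ and $C(q)\sim q$ as $q\to\infty$, which is exactly the announced dependence; the $L^q$ norm of $\rho_m$ is bounded uniformly in $t$ for $q\in[1,m+1]$ by Lemma~\ref{l10}. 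For the time derivative of $\nabla\mathcal{N}\ast\rho_m$, I use Eq.~\eqref{d1} and the identity $\mathcal{N}\ast\Delta=\mathrm{id}$ to obtain the key decomposition
\[
\partial_t\nabla\mathcal{N}\ast\rho_m=\nabla\rho_m^{m}+\nabla^{2}\mathcal{N}\ast\bigl(\rho_m\nabla\mathcal{N}\ast\rho_m\bigr).
\]
The first term is in $L^2(Q_T)$ by \eqref{est:rho1}. For the second I apply CZ with $q=2$ and bound $\|\rho_m\nabla\mathcal{N}\ast\rho_m\|_{L^2}\leq\|\rho_m\|_{L^2}\|\nabla\mathcal{N}\ast\rho_m\|_{L^\infty}$, which is in $L^2_t$ by the first step and the interpolated bound $\rho_m\in L^\infty_t L^2$.

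Finally, the strong convergence follows from Aubin--Lions: taking $q=2$ in Step~3, together with the $L^2_x$ bound, shows $\nabla\mathcal{N}\ast\rho_m$ is uniformly bounded in $L^\infty(0,T;H^1(\mathbb{R}^n))$, and Step~4 gives a uniform bound on the time derivative in $L^2(Q_T)$. Since $H^1(B_R)\hookrightarrow\hookrightarrow L^2(B_R)$ for every $R>0$, a diagonal extraction delivers strong $L^2_{loc}(Q_T)$ convergence of a subsequence; the limit is identified as $\nabla\mathcal{N}\ast\rho_\infty$ by pairing with test functions and using the weak $L^2(Q_T)$ convergence $\rho_m\rightharpoonup\rho_\infty$. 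I expect the main obstacle to be Step~1: the $L^\infty$ bound is the only place where the critical threshold $m>n-1$ is genuinely used, and it must be achieved without any $m$-uniform $L^\infty$ control on $\rho_m$, forcing us to route the estimate through Lemma~\ref{l10} and the split-convolution argument. The remaining steps are then robust consequences of Calder\'on--Zygmund theory and the energy-type estimates.
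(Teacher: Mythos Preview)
Your proposal is correct and follows essentially the same route as the paper: the split-convolution argument for the $L^\infty$ bound (the paper fixes the splitting radius at $1$ and uses $\rho_m\in L^{n+\varepsilon}$ rather than optimizing in $R$ with $L^{m+1}$, but both exploit $m>n-1$ in the same way), Hardy--Littlewood--Sobolev for the $L^2$ bound, Calder\'on--Zygmund for $\nabla^2\mathcal{N}\ast\rho_m$, the same decomposition of $\partial_t\nabla\mathcal{N}\ast\rho_m$ via the equation, and Aubin--Lions/Sobolev compactness for the strong convergence.
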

%---------------------------------------------
\begin{proof}
For the first estimate of \eqref{est:con1}, by means of Lemma~\ref{l10}, we obtain the $L^{\infty}$ estimate for
$\nabla\mathcal{N}\ast\rho_m(t)$ with $m>n-1$  because
\begin{equation}\label{m22}
\begin{aligned}
|\nabla\mathcal{N}\ast\rho_{m}(t)| &\leq
C\int_{\mathbb{R}^n}\frac{\rho_{m}(y,t)}{|x-y|^{n-1}}dy
\\
&\leq C\int_{|x-y|\leq
1}\frac{\rho_{m}(y,t)}{|x-y|^{n-1}}dy+C\int_{|x-y|>1}\frac{\rho_{m}(y,t)}{|x-y|^{n-1}}dy
\\
&\leq
C\Big(\int_{|x-y|\leq1}\frac{1}{|x-y|^{\frac{(n-1)(n+\varepsilon)}{n-1+\varepsilon}}}dy\Big)^{\frac{n-1+\varepsilon}{n+\varepsilon}}\Big(\int_{|x-y|\leq1}\rho_{m}^{n+\varepsilon}(y,t)dy\Big)^{\frac{1}{n+\varepsilon}}+
C
\\
&\leq C(T)\qquad  \forall t\in[0,T]\text{ and for some }0<\varepsilon\ll 1.
\end{aligned}
\end{equation}

Let the Laplace inverse operator $\Delta^{-1}=\mathcal{N}\ast$ act on
Eq.~\eqref{d1}, we get a new equation
 \begin{equation}\label{m10}
 \partial _t\mathcal{N}\ast\rho_m=
 \rho_m^m+\nabla\cdot\mathcal{N}\ast(\rho_m\nabla\mathcal{N}\ast\rho_m).
 \end{equation}
 Then, using the singular integral theory for Newtonian potential
 (Lemma~\ref{l12}), \eqref{m22}, and Lemma~\ref{l10}, we obtain
\begin{equation}\label{m23}
\begin{aligned}
\int_{\mathbb{R}^n}|\nabla\nabla\cdot\mathcal{N}\ast(\rho_m\nabla\mathcal{N}\ast\rho_m)|^{2}dx\leq& C\int_{\mathbb{R}^n}|\rho_m\nabla\mathcal{N}\ast\rho_m|^{2}dx\\
\leq&C\|\nabla\mathcal{N}\ast\rho_m\|^2_{L^{\infty}(Q_T)}\int_{\mathbb{R}^n}\rho_m^{2}dx\\
\leq& C(T).
\end{aligned}
\end{equation}
Due to Eq. \eqref{m10}, we use \eqref{m23} and Lemma~\ref{l10}, then it follows
\begin{equation*}
\|\partial_t\nabla\mathcal{N}\ast\rho_m\|_{L^2(Q_T)}\leq \|\nabla
\rho_m^m\|_{L^2(Q_T)}+\|\nabla\nabla\cdot\mathcal{N}\ast(\rho_m\nabla\mathcal{N}\ast\rho_m)\|_{L^2(Q_T)}\leq
C(T)
\end{equation*}
and the second bound of \eqref{est:con1} is proved.

For the first estimate of \eqref{est:con2}, we again use the singular integral theory for Newtonian
potential (Lemma~\ref{l12}), and we have  for all $t\in[0,T]$,
\begin{equation}\label{m25}
\|\nabla^2\mathcal{N}\ast\rho_m(t)\|_{L^q(\mathbb{R}^n)}\leq
C(q)\|\rho_m\|_{L^q(\mathbb{R}^n)}\leq C(T,q),
\end{equation}
where $C(T,q)\sim \frac{1}{q-1}$ for $0<q-1\ll 1$ and $C(T,q)\sim q$ for $q\gg
1$.
\\

And for the second bound of \eqref{est:con2}, thanks to the Hardy-Lilttlewood-Sobolev inequality
(Theorem~\ref{t7}) and Lemma~\ref{l10},  we get for all $t\in[0,T]$,
\begin{equation}\label{m43}
\begin{aligned}
\int_{\mathbb{R}^n}|\nabla\mathcal{N}\ast\rho_{m}(t)|^2dx=&-\int_{\mathbb{R}^n}(\Delta\mathcal{N}\ast\rho_{m})\mathcal{N}\ast\rho_{m}dx
=-\int_{\mathbb{R}^n}\rho_{m}\mathcal{N}\ast\rho_{m}dx\\
\leq& C(n)\|\rho_{m}\|_{L^{\frac{2n}{n+2}}(\mathbb{R}^n)}^2
\leq C(T).
\end{aligned}
\end{equation}

The last statement of Lemma \ref{l18}  follows from  Sobolev's compactness embeddings.
\end{proof}

In order to obtain convergence rate  on $|\rho_{m}-1|_+$ in Theorem~\ref{tAE}, it remains to establish the
\begin{lemma}[convergence rate  on $|\rho_{m}-1|_+$]$\label{l6}$
	Under the initial assumptions \eqref{c1}, let $\rho_{m}$ be the weak solution
to the Cauchy problem Eq.~\eqref{d1}  in the sense of Def.~\ref{d1} with $m>n-1$, then
	\begin{equation*}
\sup_{0\leq t \leq T} \||\rho_{m}(t)-1|_+\|_{L^2(\mathbb{R}^n)}\leq
\frac{C(T)}{\sqrt m}.
	\end{equation*}

\end{lemma}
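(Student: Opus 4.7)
The plan is to exploit the elementary observation that on the set $\{\rho_m\geq 1\}$, the power $\rho_m^{m+1}$ grows so fast in $m$ that a uniform $L^{m+1}$ bound forces $\rho_m-1$ to be small of order $1/\sqrt m$ in $L^2$. The key pointwise ingredient is the binomial lower bound: for any $\rho\geq 1$,
\[
\rho^{m+1}=(1+(\rho-1))^{m+1}\;\geq\; \binom{m+1}{2}(\rho-1)^2\;=\;\frac{m(m+1)}{2}\,(\rho-1)^2,
\]
where every term in the expansion is non-negative. Since $|\rho-1|_+$ vanishes on $\{\rho<1\}$, this rewrites as the pointwise inequality
\[
|\rho_m(x,t)-1|_+^{\,2}\;\leq\;\frac{2}{m(m+1)}\,\rho_m(x,t)^{m+1}\qquad\text{a.e. in }Q_T.
\]

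Next, I would integrate this over $\mathbb{R}^n$ and invoke the $L^{m+1}$ estimate already established inside the proof of Lemma~\ref{l10}, namely \eqref{est:m+1}, together with the initial assumption $\|\rho_{m,0}^{m+1}\|_{L^1(\mathbb{R}^n)}\leq C$ from \eqref{c1}. This yields
\[
\int_{\mathbb{R}^n}\rho_m(t)^{m+1}dx\;\leq\;\int_{\mathbb{R}^n}\rho_{m,0}^{m+1}dx+Ct(m+1)\;\leq\;C(T)\,(m+1),\qquad 0\leq t\leq T,
\]
so that combining the two displays gives
\[
\int_{\mathbb{R}^n}|\rho_m(t)-1|_+^{\,2}\,dx\;\leq\;\frac{2}{m(m+1)}\cdot C(T)(m+1)\;=\;\frac{C(T)}{m}.
\]
Taking square roots and supremum in $t\in[0,T]$ delivers the claim.

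There is no real obstacle: the entire argument reduces to the binomial inequality plus the $L^{m+1}$ estimate from Lemma~\ref{l10}. The only point that needs mild attention is that the constant $C$ in \eqref{est:m+1} is independent of $m$, which, as detailed inside the proof of \eqref{est:rho1}, follows from the fact that the interpolation exponent satisfies $(1-\theta)(m+2)\leq 1$ uniformly for $m>2-2/n$ and that $\theta(m+2)/m<2$ in the same range, so the Young split producing the $\tfrac{1}{2}\|\nabla\rho_m^m\|_{L^2}^2$ absorbed term is uniform in $m$.
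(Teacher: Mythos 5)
Your proof is correct and follows essentially the same route as the paper: the pointwise binomial bound $\rho_m^{m+1}\geq \tfrac{m(m+1)}{2}(\rho_m-1)^2$ on $\{\rho_m\geq 1\}$ combined with the $L^{m+1}$ estimate \eqref{est:m+1}. The only difference is cosmetic — you spell out the binomial justification of the pointwise inequality, which the paper states without proof.
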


\begin{proof}
Since, for $m>n-1\geq 2$ and $\rho_m \geq 1$, we have
\begin{equation*}
\rho_m^{m+1} \geq\frac{m(m+1)}{2}(\rho_m-1)^{2},
\end{equation*}
we conclude
\begin{equation*}
sgn(|\rho_m-1|_+)\rho_m^{m+1}\geq \frac{m(m+1)}{2}|\rho_m-1|_+^{2}.
\end{equation*}

From \eqref{est:m+1}, we obtain
\begin{align*}
\int_{\mathbb{R}^n}|\rho_m(t)-1|_+^2dx\leq \frac{2}{m(m+1)}
\int_{\mathbb{R}^n}\rho_{m}^{m+1}(t)dx\leq\frac{C(T)}{m}\quad\text{for all }0\leq t\leq T.
\end{align*}
\end{proof}
\begin{remark}
	The result of Lemma~\ref{l6} implies that larger diffusion exponent means
stronger diffusive effect on the zone of high density.
\end{remark}

%%%%%%%%%%%

In the following, with  the regularity estimates in Lemmas~\ref{l10}--\ref{l6} in hand, we prove the stiff limit statements in
Theorem~\ref{tAEbis}.

We recall that, thanks to the a priori regularity estimates in
Lemmas~\ref{l10}--\-ref{l6},  $\rho_m$  has a weak limit $\rho_\infty$ with $
\rho_\infty \leq 1$ in $L^p(Q_T)$ for $1<p<\infty$, $P_m$ has a weak limit $P_\infty$ in $L^2(Q_T)$, and  we
have locally strong convergence of $\nabla \mathcal{N}\ast\rho_{m}$ to $\nabla
\mathcal{N} \ast \rho_{\infty}$ in $L^2(Q_T)$.
\\

\noindent\emph{Proof of Eq.~\eqref{z6}}. The stiff limit equation \eqref{z6} in Theorem~\ref{tAEbis}  follows immediately with these weak limits and the definition of weak solutions in  Def.~\ref{def:WS},
 where the nonlinear term $\rho_m \nabla \mathcal{N}\ast\rho_{m}$ can pass to the limit $\rho_\infty\nabla\mathcal{N}\ast\rho_\infty$ by weak-strong convergence, and the another nonlinear term $\rho_m^m=\frac{m-1}{m}\rho_m P_m$ weakly converges to $P_\infty$ in $L^2(Q_T)$ from \eqref{m37}.
\\
%%%%%%%%%%%

\noindent\emph{Proof of Eq.~\eqref{z8}, $\rho_\infty P_\infty=P_\infty,\text{ a.e. } (x,t)\in Q_T$}. For the case of  tumor growth model in~\cite{5}, the proof is obtained because $\rho_m$  converges strongly, which is not available here. Therefore, we argue in two steps. We  firstly prove that after extraction,
 \begin{equation}\label{m37}
 \rho_m P_m\rightharpoonup P_\infty, \text{ weakly in }L^2(Q_T), \text{ as
 }m\to\infty.
 \end{equation}
For that, thanks to the relation $\rho_m P_m=\frac{m}{m-1}\rho_m^{m}\leq 2\rho_m^{m}$, it
 follows from Lemma~\ref{l10} that  $\rho_mP_m$ is bounded in $L^2(Q_T)$ and
 thus, after extraction, has a weak limit in $L^2(Q_T)$, which we call
 $Q_\infty$.

Due to Young's inequality, we have
 \begin{equation*}
 P_{m}=\frac{m}{m-1}\rho_m^{m-1}\leq
 \frac{m}{m-1}(\frac{m-1}{m}\rho_m^m+\frac{1}{m})=\frac{m-1}{m}\rho_mP_m+\frac{1}{m-1}.
 \end{equation*}
 In the weak limit, we obtain
 \[
 P_\infty \leq Q_\infty.
 \]

For the reverse inequality, we consider $A>1$ and $m$
sufficiently large. Then, we have
 \begin{equation}
 \rho_mP_m=\rho_m\min\{A,P_m\}+\rho_m|P_m-A|_+ \leq
 (\frac{m-1}{m}A)^\frac{1}{m-1}P_m+\rho_m|P_m-A|_+.
 \label{eqAPm}
 \end{equation}
 We can estimate the last term by 
\begin{equation*}
\rho_m|P_m-A|_+\leq
\chi_{\{P_m>A\}}\frac{m}{m-1}\rho_m^m=\chi_{\{P_m>A\}}\frac{m}{m-1}\frac{\rho_m^{2m}}{\rho_m^{m}}\leq
\frac{\rho_m^{2m}}{(\frac{m-1}{m}A)^{m/(m-1)}},
 \end{equation*}
and thus, for any non-negative smooth test function $\varphi\in
C_0^{\infty}(Q_T)$, we conclude
 \begin{equation*}%\label{w2}
 \limsup\limits_{m\to\infty} \int_{Q_T}\rho_m|P_m-A|_+\varphi
 dxdt\leq \limsup\limits_{m\to\infty}
\int_{Q_T}\frac{\rho_m^{2m}}{(\frac{m-1}{m}A)^{m/(m-1)}} dxdt\leq \frac  C A.
 \end{equation*}
On the other hand, $(\frac{m-1}{m}A)^\frac{1}{m-1}$ converges strongly to $1$.
Therefore by weak-strong convergence $(\frac{m-1}{m}A)^\frac{1}{m-1} P_m$ weakly
converges to $P_\infty$. Passing to the weak limit in \eqref{eqAPm}, we conclude that, for all $A>1$
\[
Q_\infty \leq P_\infty +\frac C A .
\]
We may take $A \to \infty$ and find the desired result, namely \eqref{m37}.
\\

 Secondly, we prove that  $ \rho_m P_m\rightharpoonup \rho_\infty P_\infty$. For any
 smooth test function $\varphi\in C_{0}^{\infty}(Q_T)$, we have, recalling the
 strong convergence proved in Lemma~\ref{l18},
 \begin{equation}\label{m38}
 \begin{aligned}
 \lim\limits_{m\to\infty}&\int_{Q_T}\rho_{m}P_{m}\varphi
 dxdt= \lim\limits_{m\to\infty}  \int_{Q_T}\Delta\mathcal{N}\ast\rho_{m}P_{m}\, \varphi dxdt\\
 =&-\lim\limits_{m\to\infty}  
 \int_{Q_T}\nabla\mathcal{N}\ast\rho_{m}\cdot\nabla P_{m} \, \varphi
 dxdt-\lim\limits_{m\to\infty}   \int_{Q_T}\nabla\mathcal{N}\ast\rho_{m}\cdot\nabla\varphi P_{m} dxdt\\
 =&-\int_{Q_T}\nabla\mathcal{N}\ast\rho_{\infty}\cdot\nabla
 P_{\infty}\varphi dxdt-  \int_{Q_T}\nabla\mathcal{N}\ast\rho_{\infty}\cdot\nabla\varphi P_{\infty} dxdt\\
 =&\int_{Q_T}\rho_{\infty} P_{\infty} \varphi dxdt.
 \end{aligned}
 \end{equation}
 This means that $\rho_m P_m\rightharpoonup \rho_\infty P_\infty$ and we have
 obtained the result.
 \\
 
 \noindent\emph{Proof of Eq.~\eqref{z8},  $0\leq \rho_\infty\leq1$,  a.e. in $Q_T$.}  It is directly obtained by Lemma~\ref{l6} and theinequality  $\|\rho_\infty\|_{L^2(Q_T)}\leq \liminf\limits_{m\to\infty}\|\rho_m\|_{L^2(Q_T)} \leq C(T)$.

 %%%%%%%%%%%%%%%%%%%%%%%%%%
\section{Additional regularity estimates for pressure}
%%%%%%%%%%%%%%%%%%%%%%%%%%

The classical Aronson-B\'enilan(AB) estimate \cite{r32,5} provides regularity for the pressure $P_m$. But the
nonlocal interaction results in the absence of comparison principle and the
$L^{\infty}$ bound from below  are missing.
Therefore, we prove uniform AB-type estimates in $L^3\&L^1$ versions, adapting the method in \cite{r35,r34}.
This refgularity is interesting by itself and is used to establish the complementarity relation Eq.~\eqref{z15} which is equivalent to proving
the strong compactness of the sequence $\{\nabla P_m\}_{m>1}$ in $L^2_{loc}(Q_T)$.
\\

 In this section, we need to further assume $m>\max \{n-1,\frac{5n-2}{n+2}\}$ because of inequality~\eqref{a53}.

\begin{lemma}\label{l11}
Under the initial assumptions
\eqref{c1}--\eqref{c2}, let $\rho_m$ be a weak solution to the Cauchy problem Eq.~\eqref{d1}  in the sense of Def.~\ref{d1}, then it holds
\begin{equation}\label{a16}
\|\sqrt{P_m}\nabla P_m\|_{L^2(Q_T)}\leq C(T) \qquad \forall T>0.
\end{equation}
\end{lemma}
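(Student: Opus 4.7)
The plan is to test the pressure equation \eqref{d3} against $P_m$ itself and integrate over $\mathbb{R}^n$. Writing $P_m^2\Delta P_m$ as $\nabla\cdot(P_m^2\nabla P_m)-2P_m|\nabla P_m|^2$, integration by parts on the diffusion term produces $-2(m-1)\int P_m|\nabla P_m|^2$, which combined with the contribution $\int P_m|\nabla P_m|^2$ coming from the term $|\nabla P_m|^2$ yields a net dissipation with the crucial coefficient $-(2m-3)$. The point I would exploit is that the Newtonian drift collapses algebraically via $P_m\nabla P_m = \tfrac12\nabla P_m^2$ and $\Delta\mathcal{N}\ast\rho_m=\rho_m$:
\begin{equation*}
\int_{\mathbb{R}^n} P_m\,\nabla P_m\cdot\nabla\mathcal{N}\ast\rho_m\,dx
=\tfrac12\int_{\mathbb{R}^n}\nabla P_m^2\cdot\nabla\mathcal{N}\ast\rho_m\,dx
=-\tfrac12\int_{\mathbb{R}^n} P_m^2\rho_m\,dx.
\end{equation*}
Assembling everything, I expect the identity
\begin{equation*}
\frac12\frac{d}{dt}\int_{\mathbb{R}^n}P_m^2\,dx+(2m-3)\int_{\mathbb{R}^n}P_m|\nabla P_m|^2\,dx=\bigl(m-\tfrac32\bigr)\int_{\mathbb{R}^n}P_m^2\rho_m\,dx.
\end{equation*}
Noting that $(m-3/2)/(2m-3)=1/2$, integration on $[0,T]$ and the assumption \eqref{c2} on $\|P_{m,0}\|_{L^2}$ would reduce \eqref{a16} to a uniform bound on $\int_{Q_T}P_m^2\rho_m\,dx\,dt$.

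To close, I would rewrite $P_m^2\rho_m=\bigl(\tfrac{m}{m-1}\bigr)^2\rho_m^{2m-1}$ and interpolate the borderline exponent $2m-1$ between $L^1$ and $L^{2m}$ over $Q_T$: by H\"older with exponents $(2m-1,(2m-1)/(2m-2))$,
\begin{equation*}
\int_{Q_T}\rho_m^{2m-1}\,dx\,dt\leq\Bigl(\int_{Q_T}\rho_m\,dx\,dt\Bigr)^{\!1/(2m-1)}\Bigl(\int_{Q_T}\rho_m^{2m}\,dx\,dt\Bigr)^{\!(2m-2)/(2m-1)}\!\leq(MT)^{1/(2m-1)}C(T),
\end{equation*}
where I used the conservation of mass and the bound $\|\rho_m^m\|_{L^2(Q_T)}\leq C(T)$ from Lemma~\ref{l10}. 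Since $(MT)^{1/(2m-1)}\to 1$ and $\bigl(\tfrac{m}{m-1}\bigr)^2\to 1$, this gives $\int_{Q_T}P_m^2\rho_m\,dx\,dt\leq C(T)$ uniformly in $m$, and \eqref{a16} follows.

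The main conceptual obstacle is the apparent $O(m)$ growth of the source term $(m-1)\int P_m^2\rho_m$, which at first sight could destroy the estimate in the stiff limit. Its resolution relies on two exact matches: the integration-by-parts identity on the Newtonian drift shaves exactly the right $\tfrac12\int P_m^2\rho_m$ off, so the surviving coefficient $m-3/2$ is precisely half of the dissipation coefficient $2m-3$; and the interpolation exponent $(2m-2)/(2m-1)<1$ on the $L^{2m}$ factor prevents any degeneracy as $m\to\infty$. Rigor is ensured by a standard regularization/approximation argument, with no boundary terms at infinity thanks to the compact support of $\rho_m$ established in Appendix~\ref{sec:comsupp}.
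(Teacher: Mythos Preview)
Your derivation of the key energy identity
\[
\frac12\frac{d}{dt}\int_{\mathbb{R}^n}P_m^2\,dx+(2m-3)\int_{\mathbb{R}^n}P_m|\nabla P_m|^2\,dx=\bigl(m-\tfrac32\bigr)\int_{\mathbb{R}^n}P_m^2\rho_m\,dx
\]
is exactly the paper's equation~\eqref{a14}, obtained by the same multiplication of \eqref{d3} by $P_m$ and the same integration-by-parts on the Newtonian drift. The only difference lies in how the right-hand side is closed after time integration: the paper uses H\"older and the Sobolev inequality $\|P_m\|_{L^{2n/(n-2)}}\lesssim\|\nabla P_m\|_{L^2}$ together with the bound $\|\rho_m\|_{L^{n/2}}\leq C(T)$ to reduce to $(m-\tfrac32)C(T)\int_{Q_T}|\nabla P_m|^2$, which is uniformly controlled by \eqref{est:rho4}; you instead rewrite $P_m^2\rho_m$ as a constant times $\rho_m^{2m-1}$ and interpolate between $L^1$ (mass conservation) and $L^{2m}$ (the bound $\|\rho_m^m\|_{L^2(Q_T)}\leq C(T)$, also from Lemma~\ref{l10}). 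Both closures are correct and draw on the same Lemma~\ref{l10}; your route is marginally more elementary in that it avoids Sobolev embedding and any dimension-dependent constants, while the paper's route is perhaps more in line with the Sobolev-based machinery used throughout Section~4.
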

\begin{proof}
Multiplying Eq. \eqref{d3} by $P_m$ and integrating on $\mathbb{R}^n$, then we
have \begin{equation}\label{a14}
\frac{1}{2}\frac{d}{dt}\int_{\mathbb{R}^n}P_m^2dx+(2m-3)\int_{\mathbb{R}^n}P_m|\nabla
P_m|^2dx=(m-\frac{3}{2})\int_{\mathbb{R}^n}P_m^2\rho_mdx.
\end{equation}
Due to Sobolev's inequality for gradient (Theorem~\ref{t8}), Holder's inequality,
and Lemma~\ref{l10}, we obtain
\begin{equation}\label{a15}
\begin{aligned}
(m-\frac{3}{2})\int_{\mathbb{R}^n}P_m^2\rho_mdx&\leq
(m-\frac{3}{2})(\int_{\mathbb{R}^n}P_m^{\frac{2n}{n-2}}dx)^{\frac{n-2}{n}}(\int_{\mathbb{R}^n}\rho_m^\frac{n}{2}dx)^{\frac{2}{n}}\\
&\leq (m-\frac{3}{2})C(n)C(T)\int_{\mathbb{R}^n}|\nabla P_m|^2dx.
\end{aligned}
\end{equation}
Taking \eqref{a15} into  \eqref{a14} and integrating \eqref{a14} on $[0,T]$ gives
\eqref{a16}.
 \end{proof}

%%%%%%%%%%%%%%%%%%%%%%%%%%%%
We are going to establish the uniform $L^3$ estimate for the pressure gradient. Recently, David and Perthame \cite[Theorem 3.2]{r35} proved a uniform sharp
 $L^4$ estimate for the pressure gradient. In contrast, we obtain here a uniform
 $L^3$ estimate for the pressure grdient by adapting their proof  to take into account that the nonlocal
 interaction term resulting  in the absence of a uniform bound for the pressure.
\begin{theorem}[$L^3$ estimate for pressure gradient]\label{t10}
Under the initial assumptions \eqref{c1}--\eqref{c2}, let $\rho_m$ be a weak solution to the Cauchy problem Eq.~\eqref{d1}  in the sense of Def.~\ref{d1}, then it holds for any
       given $T>0$ and $m>n-1$ that
       \begin{align}
       &\sup_{0\leq t\leq T}\|\nabla P_m(t)\|_{L^2(\mathbb{R}^n)}\leq
       C(T),&&\|\sqrt{P_m}(\Delta
       P_m+\rho_m)\|_{L^2(Q_T)}\leq\frac{C(T)}{\sqrt{m}},\label{est:pg1}\\
       &\|\sqrt{P_m}\nabla^2P_m\|_{L^2(Q_T)}\leq C(T),&&
       \|\nabla P_m\|_{L^3(Q_T)}\leq C(T)\label{est:pg2}.
\end{align}
\end{theorem}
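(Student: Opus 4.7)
The strategy is to multiply the pressure equation~\eqref{d3} by $-\omega_m:=-(\Delta P_m+\rho_m)$---the natural Aronson--B\'enilan quantity---and integrate over $\mathbb{R}^n$. Using the elementary identity $\rho_m\,\partial_t P_m=\partial_t\rho_m^m$ (which comes from $P_m=\tfrac{m}{m-1}\rho_m^{m-1}$) together with an integration by parts in the $\Delta P_m$-piece, the left-hand side collapses to
$$-\int_{\mathbb{R}^n}\omega_m\,\partial_tP_m\,dx=\frac{d}{dt}\Big[\tfrac{1}{2}\|\nabla P_m\|_{L^2}^2-\|\rho_m^m\|_{L^1}\Big].$$
The first term of~\eqref{d3} supplies $(m-1)\|\sqrt{P_m}\,\omega_m\|_{L^2}^2$ with a positive sign, while the transport part produces the cross-terms $-\int\omega_m\nabla P_m\cdot(\nabla P_m+\nabla\mathcal{N}\ast\rho_m)\,dx$. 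Thus a single energy identity couples the first two target bounds of Theorem~\ref{t10}.

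\textbf{Closing the energy identity.} The cross-terms are bounded by a weighted Young inequality with the weight $\sqrt{P_m}$, namely $|\omega_m\nabla P_m\cdot g|\leq\varepsilon P_m\omega_m^2+\tfrac{1}{4\varepsilon}|\nabla P_m|^2|g|^2/P_m$ with $g=\nabla P_m$ or $g=\nabla\mathcal{N}\ast\rho_m$, so that a small fraction of $(m-1)\|\sqrt{P_m}\omega_m\|_{L^2}^2$ is absorbed on the left. The remaining pieces are controlled by Lemma~\ref{l11} (which already yields $\|\sqrt{P_m}\nabla P_m\|_{L^2(Q_T)}$), the $L^\infty$ bound on $\nabla\mathcal{N}\ast\rho_m$ from Lemma~\ref{l18}, and the integral bounds of Lemma~\ref{l10}. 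The boundary quantity $\|\rho_m^m\|_{L^1}$ arising on the left---which a priori may grow in $m$---is tamed by interpolating the $L^1$ and $L^{m+1}$ bounds of Lemma~\ref{l10} together with Lemma~\ref{l6} (which forces $\rho_m$ to be close to $\leq 1$ in $L^2$). Gronwall-type reasoning then delivers $\sup_t\|\nabla P_m(t)\|_{L^2}\leq C(T)$ and $\|\sqrt{P_m}\omega_m\|_{L^2(Q_T)}^2\leq C(T)/m$.

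\textbf{Hessian and $L^3$ bounds.} For $\|\sqrt{P_m}\nabla^2 P_m\|_{L^2(Q_T)}$ I would exploit the Bochner-type identity $\tfrac12\Delta|\nabla P_m|^2=|\nabla^2 P_m|^2+\nabla P_m\cdot\nabla\Delta P_m$; multiplying by $P_m$ and integrating by parts twice (using $\Delta(P_m^2/2)=P_m\Delta P_m+|\nabla P_m|^2$) produces the algebraic formula
$$\int P_m|\nabla^2 P_m|^2\,dx=\int P_m(\Delta P_m)^2\,dx+\tfrac{3}{2}\int\Delta P_m|\nabla P_m|^2\,dx.$$
Writing $(\Delta P_m)^2=(\omega_m-\rho_m)^2$ and using the identity $\int P_m\rho_m\Delta P_m\,dx=-\tfrac{m}{m-1}\int\rho_m|\nabla P_m|^2\,dx$---which follows from $P_m\rho_m=\tfrac{m}{m-1}\rho_m^m$ and $\nabla\rho_m^m=\rho_m\nabla P_m$---every resulting piece is already under control by the first step and the preceding lemmas. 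For the $L^3$ bound, a single integration by parts gives
$$\int|\nabla P_m|^3\,dx=-\int P_m|\nabla P_m|\,\Delta P_m\,dx-\int P_m\nabla P_m\cdot\nabla|\nabla P_m|\,dx;$$
substituting $\Delta P_m=\omega_m-\rho_m$, using $|\nabla|\nabla P_m||\leq|\nabla^2 P_m|$, and applying Cauchy--Schwarz with the weight $\sqrt{P_m}$, each term is bounded by a product of the already-obtained norms $\|\sqrt{P_m}\omega_m\|_{L^2}$, $\|\sqrt{P_m}\nabla^2 P_m\|_{L^2}$, and $\|\sqrt{P_m}\nabla P_m\|_{L^2}$, with the $\rho_m$-piece controlled via $P_m\rho_m=\tfrac{m}{m-1}\rho_m^m$ and Lemma~\ref{l10}.

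\textbf{Main obstacle.} The central difficulty is the absence of a uniform $L^\infty$ bound on $P_m$, itself a direct consequence of the nonlocal Newtonian drift destroying the comparison principle. This forces every application of Young's inequality to carry exactly the weight $\sqrt{P_m}$ so as to match $(m-1)\|\sqrt{P_m}\omega_m\|_{L^2}^2$ and allow absorption; this very constraint is what prevents adapting the sharper $L^4$ gradient bound of~\cite{r35} from the tumour-growth setting and leaves only the $L^3$ estimate on $\nabla P_m$. Moreover, the manipulations above are formal: they must be justified on an approximate/regularized problem (smoothing the kernel $\mathcal{N}$ and the initial datum) before passing to the limit, and the order of bookkeeping must be respected since the energy step feeds the Hessian step which in turn feeds the $L^3$ step.
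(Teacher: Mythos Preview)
Your overall plan---multiply \eqref{d3} by $-\omega_m$ and integrate---is exactly the paper's starting point, and your identification of the time-derivative piece is correct. The gap is in how you treat the cross-term $-\int_{\mathbb{R}^n}\omega_m|\nabla P_m|^2\,dx$ (and its drift analogue). Your weighted Young inequality leaves the remainder $\int_{\mathbb{R}^n}|\nabla P_m|^4/P_m\,dx$, and for the drift piece $\int_{\mathbb{R}^n}|\nabla P_m|^2|\nabla\mathcal{N}\ast\rho_m|^2/P_m\,dx$; neither is controlled by any of the preceding lemmas. Lemma~\ref{l11} gives $\int P_m|\nabla P_m|^2$, not $\int|\nabla P_m|^2/P_m$, and the latter equals $m(m-1)\int\rho_m^{m-3}|\nabla\rho_m|^2$, which falls outside the range $1<p\le 2$ of the dissipation estimates in Lemma~\ref{l10}. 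So the absorption you describe cannot be carried out, and the first step does not close.

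The fix---and this is what the paper does---is \emph{not} to separate the energy step from the Hessian step. Your own Bochner identity, rewritten as
\[
\int_{\mathbb{R}^n}|\nabla P_m|^2\Delta P_m\,dx=-\tfrac{2}{3}\int_{\mathbb{R}^n}P_m(\Delta P_m)^2\,dx+\tfrac{2}{3}\int_{\mathbb{R}^n}P_m(\nabla^2P_m)^2\,dx,
\]
should be substituted \emph{directly} into the energy identity in place of the cross-term. Then $\tfrac{2}{3}\int P_m(\nabla^2P_m)^2$ appears on the left with a good sign, while $\tfrac{2}{3}\int P_m(\Delta P_m)^2=\tfrac{2}{3}\int P_m(\omega_m-\rho_m)^2$ is absorbed by the $(m-1)\int P_m\omega_m^2$ term plus a harmless $\int P_m\rho_m^2$. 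The drift cross-term $\int(\Delta P_m)\nabla P_m\cdot\nabla\mathcal{N}\ast\rho_m$ is likewise handled not by Young but by a further integration by parts, converting it to $\int P_m\,\nabla^2 P_m:\nabla^2\mathcal{N}\ast\rho_m$ plus lower-order pieces; only \emph{then} does Young with weight $P_m$ close against the newly produced Hessian term and $\int P_m(\nabla^2\mathcal{N}\ast\rho_m)^2$, the latter bounded via Lemma~\ref{l12} and Sobolev. In short, all four quantities in \eqref{est:pg1}--\eqref{est:pg2} must be produced in a single coupled inequality; your two-step decoupling is precisely where the argument breaks.
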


 \begin{proof}
 We multiply the pressure Eq.~\eqref{d3} by $-(\Delta P_m+\rho_m)$
 and integrate that on $\mathbb{R}^n$, then it follows
\begin{equation}\label{a2}
\begin{aligned}
\frac{1}{2}\frac{d}{dt}& \int_{\mathbb{R}^n}|\nabla
P_m|^2dx-\partial_t\int_{\mathbb{R}^n}\rho_m^mdx+(m-1)\int_{\mathbb{R}^n}P_m(\Delta
P_m+\rho_m)^2dx\\
&+\int_{\mathbb{R}^n}|\nabla P_m|^2\Delta P_mdx+\int_{\mathbb{R}^n}|\nabla
P_m|^2\rho_mdx
+\int_{\mathbb{R}^n}\nabla P_m\cdot\nabla\mathcal{N}\ast\rho_m\Delta P_mdx\\
&+\int_{\mathbb{R}^n}\rho_m\nabla P_m\cdot\nabla\mathcal{N}\ast\rho_mdx=0.
\end{aligned}
\end{equation}
Integrating by parts, we have
\begin{equation*}
\begin{aligned}
\int_{\mathbb{R}^n}  |\nabla P_m|^2  \Delta P_mdx &=\int_{\mathbb{R}^n}P_m\Delta(|\nabla P_m|^2)dx\\
=&2\int_{\mathbb{R}^n}P_m\nabla P_m\cdot\nabla(\Delta
P_m)dx+2\int_{\mathbb{R}^n}P_m(\nabla^2P_{m})^2dx\\
=&-2\int_{\mathbb{R}^n}P_m|\Delta P_m|^2dx-2\int_{\mathbb{R}^n}|\nabla
P_m|^2\Delta P_mdx
+2\int_{\mathbb{R}^n}P_m(\nabla^2P_m)^2dx.
\end{aligned}
\end{equation*}
Hence, it holds
\begin{equation}\label{a3}
\int_{\mathbb{R}^n}|\nabla P_m|^2\Delta
P_mdx=-\frac{2}{3}\int_{\mathbb{R}^n}P_m|\Delta
P_m|^2dx+\frac{2}{3}\int_{\mathbb{R}^n}P_m(\nabla^2P_m)^2dx.
\end{equation}
Similarly, integrating by parts, we obtain
\begin{equation}\label{a4}
\begin{aligned}
\int_{\mathbb{R}^n} & \nabla P_m\cdot\nabla\mathcal{N}\ast\rho_m\Delta P_mdx\\
=&-\sum\limits_{i,j}\int_{\mathbb{R}^n}\partial_{ij}^2
P_m\partial_{i}\mathcal{N}\ast\rho_m\partial_{i}P_mdx-\sum\limits_{ij}\int_{\mathbb{R}^n}\partial_i
P_m\partial_{ij}^2\mathcal{N}\ast\rho_m\partial_{j}P_mdx\\
=&\frac{1}{2}\int_{\mathbb{R}^n}|\nabla
P_m|^2\rho_mdx+\int_{\mathbb{R}^n}P_m\nabla\rho_m\cdot\nabla
P_mdx+\int_{\mathbb{R}^n}P_m\nabla^2\mathcal{N}\ast\rho_m:\nabla^2 P_mdx\\
=&\frac{3m-1}{2m-2}\int_{\mathbb{R}^n}\nabla
P_m\cdot\nabla\rho_m^mdx+\int_{\mathbb{R}^n}P_m\nabla^2\mathcal{N}\ast\rho_m:\nabla^2
P_mdx.
\end{aligned}
\end{equation}
Thus, inserting both \eqref{a3} and \eqref{a4} into \eqref{a2}, we have
\begin{equation}\label{a7}
\begin{aligned}
\frac{1}{2}\frac{d}{dt}&  \int_{\mathbb{R}^n}|\nabla P_m|^2dx  -\frac{d}{dt}  \int_{\mathbb{R}^n}\rho_m^mdx+(m-1)\int_{\mathbb{R}^n}P_m(\Delta
P_m+\rho_m)^2dx\\
&\qquad \qquad +\frac{2}{3}\int_{\mathbb{R}^n}P_m(\nabla^2P_m)^2dx+\frac{3m-1}{2m-2}\int_{\mathbb{R}^n}\nabla
P_m\cdot\nabla\rho_m^mdx\\
\leq&\frac{2}{3}\int_{\mathbb{R}^n}P_m|\Delta
P_m|^2dx-\int_{\mathbb{R}^n}P_m\nabla^2P_m:\nabla^2\mathcal{N}\ast\rho_mdx\\
\leq& \frac{2}{3}\int_{\mathbb{R}^n}P_m|\Delta
P_m|^2dx+\frac{1}{3}\int_{\mathbb{R}^n}P_m(\nabla^2P_m)^2dx+\frac{3}{4}\int_{\mathbb{R}^n}P_m(\nabla^2\mathcal{N}\ast\rho_m)^2dx,
\end{aligned}
\end{equation}
where the last inequality follows from
\[
\big| -\int_{\mathbb{R}^n}\nabla^2P_m:\nabla^2\mathcal{N}\ast\rho_m
P_m dx \big| \leq\frac{1}{3}\int_{\mathbb{R}^n}P_m(\nabla^2P_m)^2dx+\frac{3}{4}\int_{\mathbb{R}^n}P_m(\nabla^2\mathcal{N}\ast\rho_m)^2dx.
\]
It easily follows from Lemma~\ref{l10} and Sobolev's inequality that
\begin{equation}\label{a8}
\begin{aligned}
\int_{\mathbb{R}^n}\rho_m^mdx\leq \int_{\mathbb{R}^n}P_{m}\rho_{m}dx
\leq \frac{1}{4}\int_{\mathbb{R}^n}|\nabla P_m|^2dx+C(T).
\end{aligned}
\end{equation}
Similarly, thanks to Lemma~\ref{l18}, the singular integral theory for Newtonian
potential (Lemma~\ref{l12}), Holder's inequality and Young's inequality, then we
have
\begin{equation}\label{a9}
\begin{aligned}
\frac{3}{4}\int_{\mathbb{R}^n}P_m(\nabla^2\mathcal{N}\ast\rho_m)^2dx
\leq&\frac{3}{4}\sum_{ij}(\int_{\mathbb{R}^n}P_m^{\frac{2n}{n-2}}dx)^{\frac{n-2}{2n}}(\int_{\mathbb{R}^n}|\partial_{ij}^2\mathcal{N}\ast\rho_m|^{\frac{4n}{n+2}}dx)^{\frac{n+2}{2n}}
\\
\leq&\frac{3}{4}\sum_{ij}(C(n)\int_{\mathbb{R}^n}|\nabla
P_m|^2dx)^{\frac{1}{2}}(C(n)\int_{\mathbb{R}^n}\rho_m^{\frac{4n}{n+2}}dx)^{\frac{n+2}{2n}}
\\
\leq&\frac{1}{8}\int_{\mathbb{R}^n}|\nabla
P_m|^2dx+\frac{9}{8}C(n)n^2(C(n)\int_{\mathbb{R}}\rho_m^{\frac{4n}{n+2}}dx)^{\frac{n+2}{n}}\\
\leq&\frac{1}{8}\int_{\mathbb{R}^n}|\nabla P_m|^2dx+C(T).
\end{aligned}
\end{equation}
Integrating \eqref{a7} on $[0,t]$ for any $t\in(0,T]$ and using both \eqref{a8}
and \eqref{a9}, then we obtain
\begin{equation}\label{a10}
\begin{aligned}
\frac{1}{4}\int_{\mathbb{R}^n} & |\nabla
P_m(t)|^2dx+\int_{\mathbb{R}^n}\rho_{m,0}^mdx+(m-\frac{7}{3})\int_{0}^{t}\int_{\mathbb{R}^n}P_m(\Delta
P_m+\rho_m)^2dxds\\
&\quad +\frac{1}{3}\int_{0}^{t}\int_{\mathbb{R}^n}P_m(\nabla^2
P_m)^2dxds+\frac{3m-1}{2m-2}\int_{0}^{t}\int_{\mathbb{R}^n}\nabla
P_m\cdot\nabla\rho_m^mdxds\\
\leq&\frac{4}{3}\int_{0}^{t}\int_{\mathbb{R}^n}P_m\rho_m^2dxds+\frac{1}{8}\int_{0}^{t}\int_{\mathbb{R}^n}|\nabla
P_m|^2dxds+C(T)t+\frac{1}{2}\int_{\mathbb{R}^n}|\nabla P_{m,0}|^2dx,
\end{aligned}
\end{equation}
where $\frac{2}{3}\int_{0}^{t}\int_{\mathbb{R}^n}P_m(\Delta
P_m)^2dxds\leq\frac{4}{3}\int_{0}^{t}\int_{\mathbb{R}^n}P_m\rho_m^2dxds
+\frac{4}{3}\int_{0}^{t}\int_{\mathbb{R}^n}P_m(\Delta P_m+\rho_m)^2dxds$ is used.
It easily follows from Lemma~\ref{l10} and Sobolev's inequality that
\begin{equation*}
\begin{aligned}
\frac{4}{3}\int_{0}^{t}\int_{\mathbb{R}^n}P_m\rho_m^2dxds\leq
\frac{1}{3}\int_{0}^{t}\int_{\mathbb{R}^n}|\nabla P_m|^2dxds+C(T)\leq C(T).
\end{aligned}
\end{equation*}
Inserting this  into \eqref{a10} and by virtue of Lemma~\ref{l10}, we have,
for all $t\in[0,T]$,
\begin{equation}\label{a12}
\begin{aligned}
\frac{1}{4}\int_{\mathbb{R}^n} & |\nabla
P_m(t)|^2dx+(m-\frac{7}{3})\int_{0}^{t}\int_{\mathbb{R}^n}P_m(\Delta
P_m+\rho_m)^2dxds\\
&+\frac{1}{3}\int_{0}^{t}\int_{\mathbb{R}^n}P_m(\nabla^2 P_m)^2dxds\leq C(T) .
\end{aligned}
\end{equation}
Therefore, it follows from \eqref{a12} that
\[
\sup_{0\leq t\leq T} \|\nabla P_m(t)\|_{L^2(Q_T)}^2+m 
\int_{Q_T}P_m(\Delta P_m+\rho_m)^2dxdt+  
\int_{Q_T}P_m(\nabla^2P_m)^2dxdt\leq C(T),
\]
and thus \eqref{est:pg1} and the first estimate of \eqref{est:pg2} are obtained.

For the second bound of \eqref{est:pg2}, the above inequality and Lemma~\ref{l11} lead to
\begin{equation*}
\begin{aligned}
\int_{Q_T}|\partial_{i}P_m|^3dxdt
&=\int_{Q_T}\partial_{i}P_m\partial_{i}P_m  |\partial_{i}P_m|dxdt  \leq2\int_{Q_T}P_m|\partial_{ii}P_m||\partial_{i}P_m|dxdt\\
&\leq \int_{Q_T}P_m(\partial_{ii}P_m)^2dxdt+  
\int_{Q_T}P_m(\partial_{i}P_m)^2dxdt \leq C(T).
\end{aligned}
\end{equation*}
Therefore, we have obtained the second estimate of \eqref{est:pg2}.
\end{proof}
%%%%%%%%%%%%%%%%%%%%%%%%

Next, our goal is to establish the uniform Aronson-B\'enilan (AB) estimate which uses the new variable
\begin{equation}\label{a21b}
       \omega_m:=\Delta P_m+\rho_m.
\end{equation}
 \begin{theorem}[Aronson-B\'enilan estimate]$\label{t11}$Assume that initial data
 $\rho_{m,0}$ satisfies \eqref{c1}--\eqref{c3}, let $\rho_m$ be a weak solution to the Cauchy problem Eq.~\eqref{d1}  in the sense of Def.~\ref{d1}, then
\begin{align}
&\sup_{0\leq t\leq T}\||\omega_m(t)|_-\|_{L^2(\mathbb{R}^n)}\leq
C(T),&&
\||\omega_m|_{-}\|_{L^3(Q_T)}^3\leq \frac{C(T)}{m},\label{est:ab1}\\
&\sup_{0\leq t\leq T}\||\omega_m(t)|_{-}\|_{L^1(\mathbb{R}^n)}\leq
C(T),&&
\sup_{0\leq t\leq T}\|\Delta P_m(t)\|_{L^1(\mathbb{R}^n)}\leq C(T)\label{est:ab2}.
\end{align}
\end{theorem}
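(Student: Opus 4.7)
The strategy is to derive an evolution equation for $\omega_m=\Delta P_m+\rho_m$ and then estimate its negative part $|\omega_m|_-$ by energy methods, exploiting the two dissipative features hidden in \eqref{d3}: the coefficient $(m-1)\geq 0$ in front of $\omega_m^2$ (which coincides with $|\omega_m|_-^2$ on $\{\omega_m<0\}$) and the coefficient $(m-1)P_m\geq 0$ in front of $\Delta\omega_m$. Applying $\Delta$ to \eqref{d3}, adding to \eqref{d1}, using $\Delta\mathcal{N}\ast\rho_m=\rho_m$, and cancelling duplicated lower-order terms yields
\begin{multline*}
\partial_t\omega_m=(m-1)P_m\Delta\omega_m+2m\nabla P_m\!\cdot\!\nabla\omega_m+(m-1)\omega_m^2-(m-2)\rho_m\omega_m\\
+2(\nabla^2P_m)^2+\nabla\omega_m\!\cdot\!\nabla\mathcal{N}\ast\rho_m+2\nabla^2P_m\!:\!\nabla^2\mathcal{N}\ast\rho_m.
\end{multline*}

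For \eqref{est:ab1}, I would multiply the equation by $-|\omega_m|_-$ and integrate over $\mathbb{R}^n$. The identities $\nabla|\omega_m|_-=-\chi_{\{\omega_m<0\}}\nabla\omega_m$ and $\Delta P_m=\omega_m-\rho_m$ permit integration by parts on the diffusion and convective terms, gathering on the left, besides $\tfrac12\tfrac{d}{dt}\int|\omega_m|_-^2$ and $(m-1)\int P_m\lvert\nabla|\omega_m|_-\rvert^2$ and $2\int|\omega_m|_-(\nabla^2P_m)^2$, the combined polynomial contributions $\tfrac{m-3}{2}\int|\omega_m|_-^3$ and $\tfrac{m-4}{2}\int\rho_m|\omega_m|_-^2$ (non-negative terms absorbed via Gr\"onwall when $m$ is close to $4$). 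The only remaining right-hand side is $-2\int|\omega_m|_-\nabla^2P_m\!:\!\nabla^2\mathcal{N}\ast\rho_m$; splitting by $2|A\!:\!B|\leq|A|^2+|B|^2$, the $|\nabla^2P_m|^2$-piece (weighted by $|\omega_m|_-$) is absorbed into the LHS, and the residual $\int|\omega_m|_-(\nabla^2\mathcal{N}\ast\rho_m)^2$ is handled by H\"older, the Calder\'on--Zygmund bound $\|\nabla^2\mathcal{N}\ast\rho_m\|_{L^3}\lesssim\|\rho_m\|_{L^3}$, and an $\varepsilon$-Young absorption into $\tfrac{m-3}{2}\int|\omega_m|_-^3$ (the $L^3_x$ bound on $\rho_m$ being supplied by Lemma~\ref{l10}). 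Time integration with \eqref{c3} then delivers $\sup_t\||\omega_m(t)|_-\|_{L^2}^2+m\||\omega_m|_-\|_{L^3(Q_T)}^3\leq C(T)$, the $1/m$ decay being precisely the result of dividing by the LHS coefficient $\tfrac{m-3}{2}$.

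For \eqref{est:ab2}, I would test the equation against a smooth approximation of $-\chi_{\{\omega_m<0\}}$ and pass to the limit \emph{\`a la} Kato. The inequality $-\chi_{\{\omega_m<0\}}\Delta\omega_m\leq\Delta|\omega_m|_-$, multiplied by $P_m\geq 0$ and integrated by parts, converts the diffusion into $(m-1)\int|\omega_m|_-(\omega_m-\rho_m)$. Combined with the $2m\nabla P_m\!\cdot\!\nabla\omega_m$ term (integrated by parts and rewritten via $\Delta P_m=\omega_m-\rho_m$), the $(m-1)\omega_m^2$ contribution, and the $-(m-2)\rho_m\omega_m$ contribution, the $O(m)$ coefficients cancel exactly, leaving the universal coefficients $+2$ in front of both $\int|\omega_m|_-^2$ and $\int\rho_m|\omega_m|_-$. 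Splitting the cross term by AM--GM, the $|\nabla^2P_m|^2\chi_{\{\omega_m<0\}}$ piece is absorbed into the positive LHS contribution $2\int\chi_{\{\omega_m<0\}}(\nabla^2P_m)^2$, and $\int\chi_{\{\omega_m<0\}}(\nabla^2\mathcal{N}\ast\rho_m)^2$ is bounded by Plancherel, $\|\nabla^2\mathcal{N}\ast\rho_m\|_{L^2}=\|\rho_m\|_{L^2}\leq C(T)$. With the surviving RHS terms integrable in $t$ by the $L^2$-estimate just established and Lemma~\ref{l10}, Gr\"onwall and \eqref{c3} yield $\sup_t\||\omega_m|_-\|_{L^1}\leq C(T)$. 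The $L^1$-bound on $\Delta P_m$ follows at once: decay at infinity gives $\int\Delta P_m\,dx=0$, hence $\int\omega_m\,dx=M$, so $\int|\omega_m|_+=M+\int|\omega_m|_-$, whence $\|\Delta P_m\|_{L^1}\leq\|\omega_m\|_{L^1}+M\leq 2\||\omega_m|_-\|_{L^1}+2M\leq C(T)$.

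The delicate point is the algebraic miracle in the $L^1$ step: the coefficients in front of $\int|\omega_m|_-^2$ and $\int\rho_m|\omega_m|_-$ each arise as a sum of four distinct $O(m)$ contributions—from the diffusion after Kato, the convective term after an integration by parts, the quadratic source, and the linear source—and any sign error would leave an uncontrollable $O(m)$ growth. Equally delicate is the rigorous passage to the limit $\phi'_\varepsilon(\omega_m)\to-\chi_{\{\omega_m<0\}}$ in the nonlocal cross term, which requires the full regularity package from Theorem~\ref{t10}, and in particular the $L^3(Q_T)$-bound on $\nabla P_m$: this is where the hypothesis $m>\frac{5n-2}{n+2}$ enters.
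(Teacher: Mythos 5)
Your proposal is correct and follows essentially the same route as the paper: derive the evolution equation for $\omega_m=\Delta P_m+\rho_m$, test its negative part via Kato's inequality against $-|\omega_m|_-$ for the $L^2$/$L^3$ estimate and against $-\chi_{\{\omega_m<0\}}$ for the $L^1$ estimate, observe the exact cancellation of the $O(m)$ coefficients after integration by parts, and absorb the remaining cross term $\nabla^2 P_m:\nabla^2\mathcal{N}\ast\rho_m$ via Young and the Calder\'on--Zygmund bound on $\nabla^2\mathcal{N}\ast\rho_m$. The only difference is cosmetic: you expand $\Delta(P_m\omega_m)$ and keep the $2(\nabla^2 P_m)^2\chi_{\{\omega_m<0\}}$ term as an explicit absorber, whereas the paper keeps the compact form $(m-1)\Delta(P_m\omega_m)$, drops the $(\nabla^2 P_m)^2$ term at the outset via the pointwise bound $2\nabla^2 P_m:(\nabla^2 P_m+\nabla^2\mathcal{N}\ast\rho_m)\geq-\tfrac12(\nabla^2\mathcal{N}\ast\rho_m)^2$, and notes that $\int\Delta(P_m|\omega_m|_-)\,dx=0$ kills the $O(m)$ diffusion coefficient in the $L^1$ step directly. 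One small correction: the hypothesis $m>\tfrac{5n-2}{n+2}$ is not used in the AB estimate; the paper invokes it only to obtain inequality~\eqref{a53} in the $L^1$ time-derivative bound for the pressure (Theorem~\ref{t12}). For the present theorem only \eqref{c1}--\eqref{c3} and $m>n-1$ (for $\nabla\mathcal{N}\ast\rho_m\in L^\infty$ from Lemma~\ref{l18}) are needed.
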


\begin{proof}
We rewrite the equation of the density
       \begin{equation}\label{aa17}
       \begin{aligned}
       \partial_{t}\rho_m&=\Delta
       \rho_m^m+\nabla\cdot(\rho_m\nabla\mathcal{N}\ast\rho_m)\\
       &=\rho_m(\Delta P_m+\rho_m)+\nabla\rho_m\cdot(\nabla
       P_m+\nabla\mathcal{N}\ast\rho_m)\\
       &=\rho_m\omega_m+\nabla\rho_m\cdot(\nabla
       P_m+\nabla\mathcal{N}\ast\rho_m),
       \end{aligned}
       \end{equation}
       and the pressure equation is
       \begin{equation*}
       \partial_{t}P_m=(m-1)P_m\omega_m+\nabla P_m\cdot\nabla P_m+\nabla
       P_m\cdot\nabla\mathcal{N}\ast\rho_m.
       \end{equation*}
Then, we compute
       \begin{equation}\label{aa18}
       \begin{aligned}
       \partial_t\Delta P_m=&(m-1)\Delta(P_m\omega_m)+\nabla(\Delta
       P_m)\cdot(\nabla\mathcal{N}\ast\rho_m+\nabla P_m)\\
       &+\nabla P_m\cdot\nabla
       \omega_m+2\nabla^2P_m:(\nabla^2P_m+\nabla^2\mathcal{N}\ast\rho_m).
       \end{aligned}
       \end{equation}
 Combining \eqref{aa17} and \eqref{aa18}, the equation of $\omega_m$ is
       \begin{equation*}
       \begin{aligned}
       \partial_{t}\omega_m=&(m-1)\Delta(P_m\omega_m)+\nabla\omega_m\cdot\nabla\mathcal{N}\ast\rho_m+2\nabla^2P_m:(\nabla^2P_m+\nabla^2\mathcal{N}\ast\rho_m)\\
       &+\rho_m\omega_m+2\nabla P_m\cdot\nabla\omega_m.
       \end{aligned}
       \end{equation*}
Thus, we have
\begin{equation}\label{a20}
\begin{aligned}
\partial_t\omega_m\geq&(m-1)\Delta(P_m\omega_m)+\nabla\omega_m\cdot\nabla\mathcal{N}\ast\rho_m-\frac{1}{2}(\nabla^2\mathcal{N}\ast\rho_m)^2&+2\nabla P_m\cdot\nabla\omega_m+\rho_m\omega_m,
\end{aligned}
\end{equation}
where  we use that
\begin{equation*}
\begin{aligned}
2\nabla^2P_m:(\nabla^2P_m+\nabla^2\mathcal{N}\ast\rho_m)&\geq
2\nabla^2P_m:\nabla^2P_m-2\nabla^2P_m:\nabla^2P_m-\frac{1}{2}(\nabla^2\mathcal{N}\ast\rho_m)^2\\
&= -\frac{1}{2}(\nabla^2\mathcal{N}\ast\rho_m)^2.
\end{aligned}
\end{equation*}
Multiplying \eqref{a20} by $-2|\omega_m|_-$, due to Kato's inequality, we obtain
\begin{equation}\label{aa21}
\begin{aligned}
\partial_{t}|\omega_m|_-^2
\leq&
2(m-1)\Delta(P_m|\omega_m|_-)|\omega_m|_-+\nabla|\omega_m|_-^2\cdot\nabla\mathcal{N}\ast\rho_m+2\nabla|\omega_m|_-^2\cdot\nabla
P_m\\
&+(\nabla^2\mathcal{N}\ast\rho_m)^2|\omega_m|_-+2\rho_m|\omega_m|_-^2,
\end{aligned}
\end{equation}
where $\omega_m sgn(|\omega_m|_-)=-|\omega_m|_-$. Integrating \eqref{aa21} on
$\mathbb{R}^n$ and integrating by parts, we find
\begin{equation}\label{a99}
\begin{aligned}
\frac{d}{dt}\int_{\mathbb{R}^n}|\omega_m|_-^2dx\leq&
2(m-1)\int_{\mathbb{R}^n}\Delta(P_m|\omega_m|_-)|\omega_m|_-dx-2\int_{\mathbb{R}^n}|\omega_m|_-^2(\Delta
P_m+\rho_m)dx\\
&+3\int_{\mathbb{R}^n}|\omega_m|_-^2\rho_mdx+\int_{\mathbb{R}^n}(\nabla^2\mathcal{N}\ast\rho_m)^2|\omega_m|_-dx\\
=&
2(m-1)\int_{\mathbb{R}^n}\Delta(P_m|\omega_m|_-)|\omega_m|_-dx+2\int_{\mathbb{R}^n}|\omega_m|_-^3dx\\
&+3\int_{\mathbb{R}^n}|\omega_m|_-^2\rho_mdx+\int_{\mathbb{R}^n}(\nabla^2\mathcal{N}\ast\rho_m)^2|\omega_m|_-dx.
\end{aligned}
\end{equation}
Recalling the definition of $\omega_m$, we compute
\begin{equation}\label{OMEGA}
\begin{aligned}
2(m-1)\int_{\mathbb{R}^n} & \Delta(P_m|\omega_m|_-)|\omega_m|_-dx
=-2(m-1)\int_{\mathbb{R}^n}\nabla(P_m|\omega_m|_-)\nabla|\omega_m|_-dx\\
=&-(m-1)\int_{\mathbb{R}^n}\nabla
P_m\cdot\nabla|\omega_m|_-^2dx-2(m-1)\int_{\mathbb{R}^n}P_m|\nabla|\omega_m|_-|^2dx\\
=&(m-1)\int_{\mathbb{R}^n}\Delta
P_m|\omega_m|_-^2dx-2(m-1)\int_{\mathbb{R}^n}P_m|\nabla|\omega_m|_-|^2dx\\
=&(m-1)\int_{\mathbb{R}^n}(\omega_m-\rho_m)|\omega_m|_-^2dx-2(m-1)\int_{\mathbb{R}^n}P_m|\nabla|\omega_m|_-|^2dx\\
=&-(m-1)\int_{\mathbb{R}^n}
|\omega_m|_-^3dx-(m-1)\int_{\mathbb{R}^n}\rho_m|\omega_m|_-^2dx-2(m-1)\int_{\mathbb{R}^n}P_m|\nabla|\omega_m|_-|^2dx.
\end{aligned}
\end{equation}
 And, inserting this in \eqref{a99}, we get
\begin{equation}\label{a22}
\begin{aligned}
\frac{d}{dt}\int_{\mathbb{R}^n}& |\omega_m|_-^2dx+2(m-1)\int_{\mathbb{R}^n}P_m|\nabla|\omega_m|_-|^2dx\\
&+(m-4)\int_{\mathbb{R}^n}\rho_m|\omega_m|_-^2dx
+(m-3)\int_{\mathbb{R}^n}|\omega_m|_-^3dx\\
\leq&\int_{\mathbb{R}^n}(\nabla^2\mathcal{N}\ast\rho_m)^2|\omega_m|_-dx.
\end{aligned}
\end{equation}
 Thanks to Young's inequality, Lemma~\ref{l10}, and the singular integral theory
 for Newtonian potential (Lemma~\ref{l12}), we have
\[
\begin{aligned}
\int_{\mathbb{R}^n}(\nabla^2\mathcal{N}\ast\rho_m)^2|\omega_m|_-dx&\leq\sum_{ij}\frac{2n}{3^{3/2}}\int_{\mathbb{R}^n}|\partial_{ij}^2\mathcal{N}\ast\rho_m|^3dx+\sum_{ij}\frac{1}{n^2}\int_{\mathbb{R}^n}|\omega_m|_-^3dx\\
&\leq
\sum_{ij}\frac{2n}{3^{3/2}}C(n)\int_{\mathbb{R}^n}\rho_m^3dx+\int_{\mathbb{R}^n}|\omega_m|_-^3dx\\
&\leq \int_{\mathbb{R}^n}|\omega_m|_-^3dx+C(T).
\end{aligned}
\]
Inserting this into \eqref{a22}, we arrive at
\[\begin{aligned}
\frac{d}{dt}\int_{\mathbb{R}^n} |\omega_m|_-^2dx&+2(m-1)\int_{\mathbb{R}^n}P_m|\nabla|\omega_m|_-|^2dx
+(m-4)\int_{\mathbb{R}^n} [\rho_m|\omega_m|_-^2 +|\omega_m|_-^3]dx\\
\leq& C(T).
\end{aligned}
\]
After time integration, we obtain
\begin{equation*}
\begin{aligned}
\sup_{0\leq t\leq T}\int_{\mathbb{R}^n}  |\omega_m(t)|_-^2dx& +2m  
\int_{Q_T}P_m|\nabla|\omega_m|_-|^2dxdt+m  \int_{Q_T} [\rho_m|\omega_m|_-^2d
+|\omega_m|_-^3 ]dxdt\\
\leq& C(T).
\end{aligned}
\end{equation*}
This proves \eqref{est:ab1} of Theorem \ref{t11}.
\\

Next, we multiply \eqref{a20} by $-sgn(|\omega_m|_-)$, then we get
\[
\begin{aligned}
\partial_t|\omega_m|_-\leq&(m-1)\Delta(P_m|\omega_m|_-)+\nabla|\omega_m|_-\cdot\nabla\mathcal{N}\ast\rho_m+sgn(|\omega_m|_-)\frac{1}{2}(\nabla^2\mathcal{N}\ast\rho_m)^2\\
&+2\nabla P_m\cdot\nabla|\omega_m|_-+\rho_m|\omega_m|_-.
\end{aligned}
\]
After integration on $\mathbb{R}^n$, due to the singular integral theory
for Newtonian potential (Lemma~\ref{l12}), and Holder's inequality, we
attain
\[
\begin{aligned}
\frac{d}{dt}\int_{\mathbb{R}^n}|\omega_m|_-dx
\leq&2\int_{\mathbb{R}^n}|\omega_m|_-\rho_mdx+2\int_{\mathbb{R}^n}|\omega_m|_-^2dx+\int_{\mathbb{R}^n}\frac{1}{2}(\nabla^2\mathcal{N}\ast\rho_m)^2dx\\
\leq&
\int_{\mathbb{R}^n}\rho_m^2dx+3\int_{\mathbb{R}^n}|\omega_m|_-^2dx+C(n)\int_{\mathbb{R}^n}\rho_m^2dx 
\leq  C(T).
\end{aligned}
\]
Integrating in $ t$ as before,  we conclude the first estimate of \eqref{est:ab2} for Theorem~\ref{t11}, namely
\begin{equation}\label{a28}
\sup_{0\leq t\leq T}\int_{\mathbb{R}^n}|\omega_m(t)|_-dx\leq C(T).
\end{equation}

Finally, since $|\Delta P_m| \leq |\Delta P_m+\rho_m| + \rho_m =\Delta P_m+\rho_m + 2 |\omega_m|_- + \rho_m $ , we find
\begin{equation*}
\begin{aligned}
\int_{\mathbb{R}^n}|\Delta P_m|dx&\leq \int_{\mathbb{R}^n}\Delta
P_m+\rho_mdx+2\int_{\mathbb{R}^n}|\omega_m|_-dx+\int_{\mathbb{R}^n}\rho_mdx\\
&\leq 2C+2\int_{\mathbb{R}^n}|\omega_m|_-dx,
\end{aligned}
\end{equation*}
therefore, the second bound of \eqref{est:ab2} follows from \eqref{a28} that
\begin{equation*}
\sup_{0\leq t\leq T}\int_{\mathbb{R}^n}|\Delta P_m(t)|dx\leq C(T).
\end{equation*}
The proof of Theorem~\ref{t11} is completed.
\end{proof}

%%%%%%%%%%%%%%%%%%%%%%%%%%%%%%%
In the end, we are about to justify the $L^1$ time derivative estimate for the pressure. It is not easy to obtain such an estimate, but that is
useful to get locally strong compactness of the pressure gradient sequences $\{\nabla P_m\}_{m>1}$. We make full use of Kato's inequality and the specific form of the Newtonian potential to achieve our goal.

We first give two useful preliminary lemmas.

\begin{lemma}\label{l15}
Assume  that the initial data $\rho_{m,0}$ satisfies the assumptions
\eqref{c1}--\eqref{c2}, let $\rho_m$ be a weak solution to the Cauchy problem Eq.~\eqref{d1}  in the sense of Def.~\ref{d1}, then it follows
\begin{align*}
 \int_{Q_T}\big(\nabla\rho_m^{m}\cdot\nabla\rho_m^{m+1}+|\nabla\rho_m^{m+1}|^2+|\nabla\rho_m^{m+2}|^2)dxdt\leq
 C(T).
\end{align*}
\end{lemma}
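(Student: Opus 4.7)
The plan is to test Eq.~\eqref{d1} against the weight $\rho_m^k$ for three well-chosen exponents $k$, and to exploit the identity $\Delta\mathcal{N}\ast\rho_m=\rho_m$ to integrate the nonlocal drift exactly. For any $k\geq 1$, multiplying Eq.~\eqref{d1} by $\rho_m^k$ and integrating on $\mathbb{R}^n$ yields
\begin{equation*}
\frac{1}{k+1}\frac{d}{dt}\int_{\mathbb{R}^n}\rho_m^{k+1}\, dx+\int_{\mathbb{R}^n}\nabla\rho_m^m\cdot\nabla\rho_m^k\, dx=\frac{k}{k+1}\int_{\mathbb{R}^n}\rho_m^{k+2}\, dx,
\end{equation*}
the right-hand side arising from the Newtonian term via two integrations by parts, $-\tfrac{k}{k+1}\int\nabla\mathcal{N}\ast\rho_m\cdot\nabla\rho_m^{k+1}\, dx=\tfrac{k}{k+1}\int\rho_m^{k+1}\Delta\mathcal{N}\ast\rho_m\, dx=\tfrac{k}{k+1}\int\rho_m^{k+2}\, dx$. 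Integrating in time and discarding the nonnegative boundary contribution at $t=T$ delivers
\begin{equation*}
\int_{Q_T}\nabla\rho_m^m\cdot\nabla\rho_m^k\, dxdt\leq\frac{1}{k+1}\int_{\mathbb{R}^n}\rho_{m,0}^{k+1}\, dx+\frac{k}{k+1}\int_{Q_T}\rho_m^{k+2}\, dxdt.
\end{equation*}

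I would then specialize to $k=m+1,\ m+2,\ m+4$ and use the pointwise identities
\begin{equation*}
\nabla\rho_m^m\cdot\nabla\rho_m^{m+2}=\tfrac{m(m+2)}{(m+1)^2}|\nabla\rho_m^{m+1}|^2,\qquad\nabla\rho_m^m\cdot\nabla\rho_m^{m+4}=\tfrac{m(m+4)}{(m+2)^2}|\nabla\rho_m^{m+2}|^2,
\end{equation*}
so that the three quantities in the lemma's statement appear on the left, each carrying a prefactor bounded and tending to $1$ as $m\to\infty$. The initial-data contributions $\frac{1}{k+1}\int\rho_{m,0}^{k+1}\, dx$ for $k+1\in\{m+2,m+3,m+5\}$ are handled by the crude bound $\|\rho_{m,0}\|_{L^\infty}^{k-m}\|\rho_{m,0}^{m+1}\|_{L^1}\leq C$, a direct consequence of \eqref{c1}.

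The hard part is the uniform-in-$m$ control of $\int_{Q_T}\rho_m^{k+2}\, dxdt$ for $k+2\in\{m+3,m+4,m+6\}$. For this, I would combine mass conservation with the space-time bound $\int_{Q_T}\rho_m^p\, dxdt\leq C(T)$ from Lemma~\ref{l10}, where $p=(2+2/n)m+2/n$. H\"older interpolation in space at each time $t$ between $L^1$ and $L^p$ yields $\|\rho_m(t)\|_{L^{k+2}}^{k+2}\leq M^{(p-k-2)/(p-1)}\|\rho_m(t)\|_{L^p}^{p(k+1)/(p-1)}$, and then H\"older in time (applicable once $k+2\leq p$, since then $(k+1)/(p-1)\leq 1$) gives
\begin{equation*}
\int_{Q_T}\rho_m^{k+2}\, dxdt\leq (MT)^{(p-k-2)/(p-1)}\Big(\int_{Q_T}\rho_m^p\, dxdt\Big)^{(k+1)/(p-1)}\leq C(T),
\end{equation*}
and crucially both exponents $(p-k-2)/(p-1)$, $(k+1)/(p-1)$ stay in $[0,1)$ and are bounded away from $1$ as $m\to\infty$, so the resulting bound does not degenerate. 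The worst case is $k=m+4$, which demands $m+6\leq p$, i.e., $m\geq(6n-2)/(n+2)$; this lies within (and is eventually strictly below) the regime $m\to\infty$ of the incompressible limit, and is compatible with the standing lower bound on $m$ imposed in this section.
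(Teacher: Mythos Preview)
Your approach is different from the paper's and contains a genuine gap in the interpolation step. You assert that Lemma~\ref{l10} gives $\int_{Q_T}\rho_m^{p}\,dxdt\leq C(T)$ for $p=(2+2/n)m+2/n$, but \eqref{est:rho3} only states $\|\rho_m\|_{L^{p}(Q_T)}\leq C(T)$, i.e.\ $\int_{Q_T}\rho_m^{p}\leq C(T)^{p}$, which a priori blows up as $m\to\infty$. In fact, tracing through the proof of \eqref{est:rho3} one finds $\int_{Q_T}\rho_m^{p}\,dxdt\lesssim (1+Tm)^{2/n}$ (the factor $\|\rho_m(t)\|_{L^{m+1}}^{\alpha}$ with $\alpha=2(m+1)/n$ contributes $(C+CTm)^{2/n}$), so your final bound $(\int_{Q_T}\rho_m^{p})^{(k+1)/(p-1)}$ grows like $m^{1/(n+1)}$ and is not uniform. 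The fix is immediate: interpolate instead between $L^{1}(Q_T)$ and $L^{2m}(Q_T)$, since \eqref{est:rho4} gives the \emph{integral} bound $\int_{Q_T}\rho_m^{2m}\leq C(T)$; your needed exponents $k+2\in\{m+3,m+4,m+6\}$ all lie below $2m$ for $m$ large, and both interpolation exponents stay in $(0,1)$. A secondary point: the initial-data control via $\|\rho_{m,0}\|_{L^\infty}^{k-m}$ is not justified by \eqref{c1} alone (the $L^\infty$ bound there is not stated uniform in $m$); use instead \eqref{c2}, which gives $\int\rho_{m,0}^{2m-2}\leq C$ and hence $\int\rho_{m,0}^{k+1}\leq C$ by interpolation with $\int\rho_{m,0}^{m+1}\leq C$.

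For comparison, the paper's proof is a two-line observation: the three quantities are (up to factors tending to $1$) the $L^{2}(Q_T)$ norms of $\rho_m^{3/2}\nabla P_m$, $\rho_m^{2}\nabla P_m$, $\rho_m^{3}\nabla P_m$, and for any fixed exponent $\alpha$ one splits $\int\rho_m^{2\alpha}|\nabla P_m|^{2}$ into $\{\rho_m\leq 1\}$ (bounded by $\int|\nabla P_m|^{2}$ from \eqref{est:rho4}) and $\{\rho_m>1\}$ (bounded by $\int\rho_m^{m-1}|\nabla P_m|^{2}\sim\int P_m|\nabla P_m|^{2}$ from \eqref{a16}, since $2\alpha\leq m-1$). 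This avoids re-testing the equation altogether and needs only the two pressure-gradient estimates already in hand.
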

\begin{proof}
These estimates can be written as $L^2(Q_T) $ bounds on $\rho_m^{\frac 32 } \nabla P_m$, $\rho_m^2 \nabla P_m$, $\rho_m^{\frac 52 } \nabla P_m$. They
are obvious consequences of~\eqref{est:rho4} and   \eqref{a16}.
\end{proof}
\begin{lemma}\label{l16}
Under the initial assumptions \eqref{c1}--\eqref{c3}, let the pair $(P_m,\rho_m)$ be a weak solution to the PKS model Eq.~\eqref{d1}in the sense of Def.~\ref{d1}, then it holds
\begin{equation*}
\|\partial_{t}\rho_m-\nabla\rho_m\cdot\nabla\mathcal{N}\ast\rho_m\|_{L^1(Q_T)}\leq
C(T).
\end{equation*}
\end{lemma}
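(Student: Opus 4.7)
The plan is to rewrite the integrand by exploiting the PKS equation together with the fundamental identity $\Delta\mathcal{N}\ast\rho_m=\rho_m$. Expanding $\nabla\cdot(\rho_m\nabla\mathcal{N}\ast\rho_m)=\nabla\rho_m\cdot\nabla\mathcal{N}\ast\rho_m+\rho_m^2$ and substituting into Eq.~\eqref{d1}, the drift term cancels against the one we are subtracting, leaving
\[
X:=\partial_t\rho_m-\nabla\rho_m\cdot\nabla\mathcal{N}\ast\rho_m=\Delta\rho_m^m+\rho_m^2.
\]
Using $\Delta\rho_m^m=\nabla\cdot(\rho_m\nabla P_m)=\rho_m\Delta P_m+\nabla\rho_m\cdot\nabla P_m$ and $\omega_m=\Delta P_m+\rho_m$, we can also write
\[
X=\rho_m\,\omega_m+\nabla\rho_m\cdot\nabla P_m.
\]
The crucial observation is that $\nabla\rho_m\cdot\nabla P_m=m\rho_m^{m-2}|\nabla\rho_m|^2\geq 0$ pointwise, so the entire negative part of $X$ is absorbed into the $\rho_m\omega_m$ piece.

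Next, I would exploit the identity $\|X\|_{L^1(Q_T)}=\int_{Q_T}X\,dxdt+2\int_{Q_T}(X)_-\,dxdt$. For the signed integral, the compact-support property of $\rho_m(\cdot,t)$ (Appendix~\ref{sec:comsupp}) makes $\int_{\mathbb{R}^n}\Delta\rho_m^m\,dx=0$, so
\[
\int_{Q_T}X\,dxdt=\int_{Q_T}\rho_m^2\,dxdt\leq C(T),
\]
the bound coming from interpolating the $L^\infty(0,T;L^1)$ mass conservation with the $L^\infty(0,T;L^{m+1})$ estimate of Lemma~\ref{l10}. For the negative part, since $X\geq\rho_m\omega_m$ pointwise and $\rho_m\geq 0$, one has $(X)_-\leq\rho_m|\omega_m|_-$, and by Cauchy--Schwarz
\[
\int_{Q_T}(X)_-\,dxdt\leq\|\rho_m\|_{L^2(Q_T)}\,\||\omega_m|_-\|_{L^2(Q_T)}\leq C(T),
\]
where the Aronson--B\'enilan bound $\sup_{0\leq t\leq T}\||\omega_m(t)|_-\|_{L^2(\mathbb{R}^n)}\leq C(T)$ from Theorem~\ref{t11} is exactly what makes the second factor uniform in $m$. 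Combining the two estimates yields the claim.

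The main point (rather than a technical obstacle) is identifying the right decomposition. Attempting to estimate $\partial_t\rho_m$ directly in $L^1$ fails because neither $\nabla\rho_m\cdot\nabla\mathcal{N}\ast\rho_m$ nor $\Delta\rho_m^m$ is a priori $L^1$-bounded in a way that survives the limit $m\to\infty$; subtracting the transport term is what allows the drift to cancel via $\Delta\mathcal{N}\ast\rho_m=\rho_m$, and the further rewrite in terms of $\omega_m$ isolates a pointwise non-negative remainder controlled by a quantity the AB-type estimate of Theorem~\ref{t11} already controls. The corresponding $L^1$ bound on $\partial_tP_m$ announced in Theorem~\ref{t15} is of a different nature and cannot be reached by these density-side manipulations; it will require separately the compatibility assumption~\eqref{c4}.
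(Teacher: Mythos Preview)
Your proof is correct and follows essentially the same route as the paper: both rewrite $X=\rho_m\omega_m+\nabla\rho_m\cdot\nabla P_m$, observe that the second term is nonnegative so that $(X)_-\leq\rho_m|\omega_m|_-$, and then control $\|X\|_{L^1}=\int X+2\int(X)_-$ via $\int_{Q_T}\rho_m^2$ and the Aronson--B\'enilan $L^2$ bound on $|\omega_m|_-$ from Theorem~\ref{t11}. The only cosmetic differences are that the paper bounds $\rho_m|\omega_m|_-$ by Young's inequality rather than Cauchy--Schwarz, and computes $\int_{Q_T}X$ via mass conservation and integration by parts on the drift term rather than via $X=\Delta\rho_m^m+\rho_m^2$ and compact support.
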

\begin{proof}
Since
\begin{equation*}
\partial_t\rho_m-\nabla\rho_m\cdot\nabla\mathcal{N}\ast\rho_m\geq\rho_m\omega_m,
\end{equation*}
we have
\begin{equation*}
|\partial_t\rho_m-\nabla\rho_m\cdot\nabla\mathcal{N}\ast\rho_m|_- \leq \rho_m|\omega_m|_- \leq \frac 12 [\rho_m^2+ |\omega_m|_-^2]  .
\end{equation*}
Consequently, using mass conservation, Theorem~\ref{t11} and Lemma~\ref{l10},
\begin{equation*}
\begin{aligned}
 \int_{Q_T} |\partial_t &\rho_m-\nabla\rho_m\cdot\nabla\mathcal{N}\ast\rho_m|dxdt
\\
=&  \int_{Q_T}(\partial_t\rho_m-\nabla\rho_m\cdot\nabla\mathcal{N}\ast\rho_m)dxdt+2
\int_{Q_T}|\partial_t\rho_m-\nabla\rho_m\cdot\nabla\mathcal{N}\ast\rho_m|_-dxdt
\\
\leq & 2\int_{Q_T}\rho_m^{2}dxdt+\int_{Q_T}|\omega_m|_-^2dxdt
 \leq  C(T).
\end{aligned}
\end{equation*}
\end{proof}

In the following, we give the $L^1$ time derivative estimate of pressure.
\begin{theorem}[$L^1$ time derivative estimate of pressure]$\label{t12}$
Under the initial assumptions \eqref{c1}--\eqref{c4}, let $\rho_m$ be a weak solution to the Cauchy problem \eqref{d1} for the PKS in the sense of Def.~\ref{d1}, then
\begin{equation*}
\|\partial_t P_m\|_{L^1(Q_T)}\leq C(T).
\end{equation*}
\end{theorem}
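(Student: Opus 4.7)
The pressure equation \eqref{d3} reads
\begin{equation*}
\partial_t P_m = (m-1)P_m\omega_m + |\nabla P_m|^2 + \nabla P_m \cdot \nabla \mathcal{N}\ast\rho_m,
\end{equation*}
and the obstruction to a direct $L^1(Q_T)$ estimate is the term $(m-1)P_m \omega_m$. A Cauchy--Schwarz bound together with Theorem~\ref{t11} only yields $\|(m-1) P_m \omega_m\|_{L^1(Q_T)} \lesssim (m-1)\|\sqrt{P_m}\|_{L^2}\|\sqrt{P_m}\omega_m\|_{L^2} \lesssim \sqrt{m}$, which diverges. This is exactly what is meant by ``working directly on the pressure is not sufficient''.

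The plan is to route the argument through $\Psi_m := \partial_t(\rho_m^{m+1})$, for which the compatibility condition \eqref{c4} is tailored. Using $\partial_t \rho_m^{m+1} = (m+1)\rho_m^m \partial_t \rho_m$ together with the density equation rewritten as $\partial_t \rho_m = \rho_m \omega_m + \nabla \rho_m \cdot v_m$ with $v_m := \nabla P_m + \nabla \mathcal{N}\ast\rho_m$, one obtains
\begin{equation*}
\Psi_m = (m+1)\rho_m^{m+1}\omega_m + \nabla \rho_m^{m+1} \cdot v_m,
\end{equation*}
so that \eqref{c4} says exactly $\|(\Psi_m)_-(\cdot,0)\|_{L^1(\mathbb{R}^n)}\leq C$. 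Differentiating this identity in time (equivalently, differentiating the PKS equation and multiplying by $(m+1)\rho_m^m$), one arrives at a parabolic evolution
\begin{equation*}
\partial_t \Psi_m = (m-1) P_m \Delta \Psi_m + (\text{transport and source terms}).
\end{equation*}
Applying Kato's inequality to $(\Psi_m)_-$ and integrating in $x$, the diffusive contribution reduces after integration by parts to integrals of the form $\int \omega_m (\Psi_m)_-\,dx$ and $\int \rho_m (\Psi_m)_-\,dx$, which are absorbable via Theorems~\ref{t10}--\ref{t11} and Lemma~\ref{l15}; a Gr\"onwall argument combined with \eqref{c4} propagates $\sup_{0\leq t\leq T}\|(\Psi_m(t))_-\|_{L^1}\leq C(T)$.

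To pass back to $\partial_t P_m$, the key algebraic identity $\rho_m^2 \partial_t P_m = \tfrac{m}{m+1}\Psi_m$ converts the bound on $(\Psi_m)_-$ into a bound on $(\partial_t P_m)_-$ on the set where $\rho_m$ is not too small; where $\rho_m$ is small, the original representation \eqref{d3} combined with $P_m \leq \tfrac{m}{m-1}\rho_m^{m-1}$ gives direct control using the $L^2$ bounds on $\nabla P_m$, $\omega_m$, and $\nabla\mathcal{N}\ast\rho_m$ already available from Theorem~\ref{t10} and Lemma~\ref{l18}. Finally, the decomposition $\|\partial_t P_m\|_{L^1(Q_T)} = 2\|(\partial_t P_m)_-\|_{L^1(Q_T)} + \int_{Q_T} \partial_t P_m\,dx\,dt$ reduces the full estimate to controlling $\int [P_m(T) - P_{m,0}]\,dx$, which is handled by integrating the pressure equation in space---giving $\tfrac{d}{dt}\int P_m\,dx = (m-2)\int P_m\omega_m\,dx$---and using the $O(1/m)$ Aronson--B\'enilan bound of Theorem~\ref{t11} to absorb the prefactor $m-2$.

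The main obstacle is the bookkeeping in the Kato step: the evolution equation for $\Psi_m$ contains many couplings with $\nabla P_m$, $\nabla \mathcal{N}\ast\rho_m$, and the second derivatives of $P_m$, and every factor of $m$ must be accounted for so that the right-hand side stays uniformly bounded in $L^1$. The precise nonlinear form of \eqref{c4} is dictated exactly by this balance---any naive initial condition on $|\partial_t P_m(\cdot,0)|_-$ alone would not propagate.
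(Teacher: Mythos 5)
Your plan shares the paper's central idea: route the time-derivative estimate through $\partial_t \rho_m^{m+1}$ (so that \eqref{c4} becomes the natural initial bound), differentiate in time, and apply Kato's inequality to the negative part. That much is right. But there is a genuine quantitative gap in your final step. The Aronson--B\'enilan bound of Theorem~\ref{t11}/\ref{t15} reads $\|\sqrt{P_m}\,\omega_m\|_{L^2(Q_T)}^2 \leq C(T)/m$, i.e.\ $\|\sqrt{P_m}\,\omega_m\|_{L^2(Q_T)} \leq C(T)/\sqrt{m}$. By Cauchy--Schwarz this only yields
\[
(m-2)\Big|\int_{Q_T} P_m\,\omega_m\,dx\,dt\Big|
\leq (m-2)\,\|\sqrt{P_m}\|_{L^2(Q_T)}\,\|\sqrt{P_m}\,\omega_m\|_{L^2(Q_T)}
\lesssim \sqrt{m},
\]
so the prefactor $m-2$ is \emph{not} absorbed and your bound on $\int_{Q_T}\partial_t P_m$ diverges. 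The pressure-equation route is unnecessary: the positive part of $\partial_t P_m$ is controlled directly by $\sup_t\int P_m(t)\,dx \leq C(T)$, which follows from the uniform-in-time bound $\sup_t\int\rho_m^{m+2}(t)\,dx\leq C(T)$ (this in turn uses the $L^\infty_t L^2_x$ bound on $\nabla P_m$ from Theorem~\ref{t10}, Sobolev, and the fact that $P_m\rho_m^3\leq 2\rho_m^{m+2}$). This is exactly what the paper does when passing from the Kato estimate to the full $L^1$ bound \eqref{a80} on $\partial_t\rho_m^{m+2}$.

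Two more points. First, the claim that Kato produces a clean parabolic equation $\partial_t\Psi_m = (m-1)P_m\Delta\Psi_m + (\cdots)$ with ``absorbable'' remainders glosses over the decisive cancellation: in the paper the two terms $(m+1)|\partial_t P_m|_-\nabla\rho_m\cdot\nabla\rho_m^m$ and $-(m+1)|\partial_t\rho_m^m|_-\nabla\rho_m\cdot\nabla P_m$ cancel exactly, and without noticing this you cannot keep the $m$-dependence under control (this is presumably where your ``bookkeeping'' would break). Second, the paper does not use your identity $\rho_m^2\partial_t P_m = \tfrac{m}{m+1}\Psi_m$; instead it works throughout with the quantity $\partial_t\rho_m^{k} - \nabla\rho_m^{k}\cdot\nabla\mathcal{N}\ast\rho_m$ (subtracting the drift), transfers from $k=m+2$ to $k=m-1$ by the dichotomy $\{\rho_m\leq 1/2\}$ (where Lemma~\ref{l16} applies) versus $\{\rho_m>1/2\}$ (where the $\rho_m^{m+2}$ bound applies), and only at the end adds back the convolution term via Cauchy--Schwarz. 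Your dichotomy --- using \eqref{d3} directly on $\{\rho_m\leq 1/2\}$ and the algebraic identity on $\{\rho_m>1/2\}$ --- can in principle be made to work, but it is different bookkeeping and you have left the hardest part unchecked.
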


\begin{proof}
We cannot work directly on $\partial_t P_m$ because of the power arising in a remainder term, and thus we use $\partial_t \rho_m^{m+1}$.
For this reason, we rewrite the cell density equation \eqref{d1} with two formulas
\begin{align}
&\partial_t\rho_m=\rho_m(\Delta P_m+\rho_m)+\nabla \rho_m\cdot(\nabla P_m+\nabla
\mathcal{N}\ast\rho_m),\label{a29}\\
&\partial_{t}\rho_m=\Delta\rho_m^m+\rho_m^2+\nabla\rho_m\cdot\nabla\mathcal{N}\ast\rho_m,\label{a30}
\end{align}
and we give two useful equations
\begin{align}
&\partial_t \rho_m^{m+1}=(m+1)\rho_m^{m+1}(\Delta
P_m+\rho_m)+\nabla\rho_m^{m+1}\cdot(\nabla
P_m+\nabla\mathcal{N}\ast\rho_m),\label{a31}\\
&\Delta\rho_m^{m+1}=\frac{m+1}{m}(\rho_m\Delta\rho_m^m+\nabla\rho_m\cdot\nabla\rho_m^m).\label{a32}
\end{align}
With the help of  Kato's inequality, we differentiate Eq.~\eqref{a31}
with respect to the time and multiply this by $-sgn(|\partial_t\rho_m|_-)$, then
it holds
\[
\begin{aligned}
\partial_t &|\partial_t\rho_m^{m+1}|_-\leq
(m+1)\Big[|\partial_t\rho_m^{m+1}|_- [\Delta P_m+\rho_m]+ \rho_m^{m+1}|\partial_t\rho_m|_- +\rho_m^{m+1}\Delta |\partial_tP_m|_- \Big]
\\
&+\nabla|\partial_t\rho_m^{m+1}|_-\cdot \nabla [P_m+\mathcal{N}\ast\rho_m]+\nabla\rho_m^{m+1}\cdot\nabla|\partial_tP_m|_-
-sgn_-(\partial_t\rho_m )\nabla\rho_m^{m+1}\cdot\nabla\mathcal{N}\ast\partial_t\rho_m
\end{aligned}
\]
and after integration by parts on $\mathbb{R}^n$ and insertion of Eq.~\eqref{a32},  we find
\[ \begin{aligned}
\frac{d}{dt}\int_{\mathbb{R}^n}|\partial_t\rho_m^{m+1}|_-dx
\leq  & \int_{\mathbb{R}^n}  |\partial_t\rho_m^{m+1}|_-  [m \Delta P_m +(m+1) \rho_m ]dx +m \int_{\mathbb{R}^n}  |\partial_tP_m|_- \Delta \rho_m^{m+1}dx
\\
&+  \int_{\mathbb{R}^n}  | \nabla\rho_m^{m+1}\cdot\nabla\mathcal{N}\ast\partial_t\rho_m|dx\\
=&(m+1)\int_{\mathbb{R}^n}|\partial_t\rho_m^{m+1}|_- \rho_m dx+(m+1)\int_{\mathbb{R}^n}|\partial_t\rho_m^m|_-\rho_m\Delta P_mdx\\
&+(m+1)\int_{\mathbb{R}^n}|\partial_t\rho_m^m|_-\Delta\rho_m^mdx+(m+1)\int_{\mathbb{R}^n}|\partial_tP_m|_-\nabla\rho_m\cdot\nabla\rho_m^m dx\\
&+  \int_{\mathbb{R}^n}  | \nabla\rho_m^{m+1}\cdot\nabla\mathcal{N}\ast\partial_t\rho_m|dx.\\
\end{aligned}
\]
We insert Eqs.~ \eqref{a29}--\eqref{a30} into this inequality, and use Eq.~\eqref{m10} for the last term.

\begin{equation*}
\begin{aligned}
\frac{d}{dt}\int_{\mathbb{R}^n} &|\partial_t\rho_m^{m+1}|_-dx \leq
(m+1)\int_{\mathbb{R}^n}|\partial_t\rho_m^{m+1}|_-\rho_mdx+(m+1) \int_{\mathbb{R}^n}|\partial_t P_m|_-\nabla\rho_m\cdot\nabla\rho_m^mdx
\\
&+(m+1)\int_{\mathbb{R}^n}|\partial_t\rho_m^m|_-(\partial_t\rho_m-\rho_m^2-\nabla\rho_m\cdot\nabla\mathcal{N}\ast\rho_m-\nabla\rho_m\cdot\nabla
P_m)dx\\
&+ (m+1)\int_{\mathbb{R}^n}|\partial_t\rho_m^m|_-(\partial_t\rho_m-\rho_m^2-\nabla\rho_m\cdot\nabla\mathcal{N}\ast\rho_m)dx\\
&+\int_{\mathbb{R}^n}\nabla\rho_m^m\cdot\nabla\rho_m^{m+1}dx+\int_{\mathbb{R}^n}|\nabla\rho_m^{m+1}\cdot\nabla\mathcal{N}\ast(\nabla\cdot(\rho_m\nabla\mathcal{N}\ast\rho_m))|dx.
\end{aligned}
\end{equation*}
The two terms with $(m+1)|\partial_t P_m|_-\nabla\rho_m\cdot\nabla\rho_m^m$ and $-(m+1)|\partial_t\rho_m^m|_- \nabla\rho_m\cdot\nabla
P_m$  cancel due to $|\partial_t P_m|_-\nabla\rho_m\cdot\nabla\rho_m^m=|\partial_t\rho_m^m|_- \nabla\rho_m\cdot\nabla
P_m$,
then it holds
\begin{equation}\label{a38}
\begin{aligned}
\frac{d}{dt}\int_{\mathbb{R}^n} &|\partial_t\rho_m^{m+1}|_-dx+(m-1)\int_{\mathbb{R}^n}|\partial_t\rho_m^{m+1}|_-\rho_mdx
+2(m+1)\int_{\mathbb{R}^n}|\partial_t\rho_m^m|_-|\partial_t\rho_m|_-dx\\
\leq&\underbrace{2(m+1)\int_{\mathbb{R}^n}|\partial_t\rho_m^m|_-\nabla\rho_m\cdot\nabla\mathcal{N}\ast\rho_mdx}_{\mathcal{A}}+\int_{\mathbb{R}^n}\nabla\rho_m^m\cdot\nabla\rho_m^{m+1}dx\\
&+\underbrace{\int_{\mathbb{R}^n}|\nabla\rho_m^{m+1}\cdot\nabla\mathcal{N}\ast(\nabla\cdot(\rho_m\nabla\mathcal{N}\ast\rho_m))|dx}_{\mathcal{B}}.
\end{aligned}
\end{equation}
For $\mathcal{A}$ and $\mathcal{B}$, we have
\[
\begin{aligned}
\mathcal{A}=&2(m+1)m\int_{\mathbb{R}^n}\rho_m^{m-1}|\partial_t\rho_m|_-\nabla\rho_m\cdot\nabla\mathcal{N}\ast\rho_mdx\\
\leq&(m+1)m\int_{\mathbb{R}^n}\rho_m^{m-1}|\partial_t\rho_m|_-^2dx+(m+1)m\int_{\mathbb{R}^n}\rho_m^{m-1}|\nabla\rho_m\cdot\nabla\mathcal{N}\ast\rho_m|^2dx\\
\leq&(m+1)\int_{\mathbb{R}^n}|\partial_t\rho_m^{m}|_-|\partial_t\rho_m|_-dx+(m+1)\|\nabla\mathcal{N}\ast\rho_m\|_{L^{\infty}(Q_T)}^2\int_{\mathbb{R}^n}\nabla\rho_m^m\cdot\nabla\rho_mdx,
\end{aligned} \]
\[
\begin{aligned}
\mathcal{B}\leq&
\frac{1}{2}\int_{\mathbb{R}^n}|\nabla\rho_m^{m+1}|^2dx+\frac{1}{2}\int_{\mathbb{R}^n}|\nabla\mathcal{N}\ast(\nabla\cdot(\rho_m\nabla\mathcal{N}\ast\rho_m))|^2dx\\
\leq&
\frac{1}{2}\int_{\mathbb{R}^n}|\nabla\rho_m^{m+1}|^2dx+\frac{1}{2}C(n)\|\nabla\mathcal{N}\ast\rho_m\|_{L^{\infty}(Q_T)}^2\int_{\mathbb{R}^n}\rho_m^2dx.
\end{aligned}
\]
From Lemma~\ref{l10} in which we let $p=2$, we control in $L^1(Q_T)$ the term $\nabla \rho_m^m \cdot \nabla\rho_m$. The terms in $\mathcal{B}$ are also controled thanks to the bounds (in particular in Lemma~\ref{l15}), as well as the second term in final expression of $\mathcal A$. All together the known bounds reduce~\eqref{a38} to
\begin{equation*}
\frac{d}{dt}\int_{\mathbb{R}^n}|\partial_t\rho_m^{m+1}|_-dx+\frac{m^2-1}{m+2} \int_{\mathbb{R}^n}|\partial_t\rho_m^{m+2}|_- dx+(m+1)\int_{\mathbb{R}^n}|\partial_t\rho_m^m|_-|\partial_t\rho_m|_-dx
\leq (m+1) C(T).
\end{equation*}
Therefore, it holds
\begin{equation}\label{a53}
\int_{Q_T}|\partial_t\rho_m^{m+2}|_-dxdt+\int_{Q_T} |\partial_t\rho_m^m|_-|\partial_t\rho_m|_-dxdt\leq C(T).
\end{equation}
Taking account of  Lemma~\ref{l10} and Theorem~\ref{t10}, we use Sobolev's
inequality and obtain
\begin{equation*}
\begin{aligned}
\sup_{0\leq t\leq T}\int_{\mathbb{R}^n}\rho_m^{m+2}dx
\leq &C\sup_{0\leq t\leq T}\int_{\mathbb{R}^n}\rho_m^{3}P_mdx
\leq C\sup_{0\leq t\leq T}\|\nabla P_m(t)\|_{L^2(\mathbb{R}^n)}+C(T)\\
\leq& C(T).
\end{aligned}
\end{equation*}
Thus, combining the above inequality and \eqref{a53}, we get
\begin{equation}\label{a80}
\begin{aligned}
\int_{Q_T}|\partial_t\rho_m^{m+2}|dxdt&\leq  
 \int_{Q_T}\partial_t\rho_m^{m+2}dxdt+2  
 \int_{Q_T}|\partial_t\rho_m^{m+2}|_-dxdt\\
&\leq
\int_{\mathbb{R}^n}\rho_m^{m+2}(T)dx-\int_{\mathbb{R}^n}\rho_{m,0}^{m+2}dx+2 \int
\hskip-4pt \int_{Q_T}|\partial_t\rho_m^{m+2}|_-dxdt\\
&\leq C(T).
\end{aligned}
\end{equation}
Furthermore, combining \eqref{a80}, Lemma~\ref{l18}, and Lemma~\ref{l15}, we have
\begin{equation}\label{a42}
\begin{aligned} 
\int_{Q_T} |\partial_t & \rho_m^{m+2}-\nabla\rho_m^{m+2}\cdot\nabla\mathcal{N}\ast\rho_m|dxdt\\
\leq&\int_{Q_T} \Big[|\partial_t\rho_m^{m+2}|+\frac{1}{2}|\nabla\rho_m^{m+2}|^2+\frac{1}{2}|\nabla\mathcal{N}\ast\rho_m|^2\Big]dxdt
\leq C(T).
\end{aligned}
\end{equation}
By Lemma~\ref{l16} and \eqref{a42}, we obtain
\[
\begin{aligned}
\int_{Q_T}|\partial_t\rho_m^{m-1}&-\nabla\rho_m^{m-1}\cdot\nabla\mathcal{N}\ast\rho_m|dxdt \\
=& 
\int_{Q_T}|\partial_t\rho_m^{m-1}-\nabla\rho_m^{m-1}\cdot\nabla\mathcal{N}\ast\rho_m| \big[ \chi_{\{\rho_m\leq\frac{1}{2}\}} +
\chi_{\{\rho_m>\frac{1}{2}\}} \big]
dxdt\\
\leq&(m-1)\frac{1}{2^{m-2}}  
\int_{Q_T}|\partial_t\rho_m-\nabla\rho_m\cdot\nabla\mathcal{N}\ast\rho_m|dxdt\\
&+\frac{8(m-1)}{m+2}  
\int_{Q_T}|\partial_t\rho_m^{m+2}-\nabla\rho_m^{m+2}\cdot\nabla\mathcal{N}\ast\rho_m|dxdt 
\leq C(T).
\end{aligned}
\]
Combining this with Lemma~\ref{l10} and Lemma~\ref{l18}, we end up with
\begin{equation*}
\begin{aligned}
\int_{Q_T} & |\partial_t P_m|dxdt\leq   \int_{Q_T}|\partial_t P_m-\nabla
P_m\cdot\nabla\mathcal{N}\ast\rho_m|dxdt+\int_{Q_T}|\nabla
P_m\cdot\nabla\mathcal{N}\ast\rho_m|dxdt\\
\leq&\frac{m}{m-1}\int_{Q_T}|\partial_t \rho_m^{m-1}-\nabla
\rho_m^{m-1}\cdot\nabla\mathcal{N}\ast\rho_m|dxdt+\frac{1}{2}  
\int_{Q_T} \big[|\nabla P_m|^2dxdt + |\nabla\mathcal{N}\ast\rho_m|^2 \big] dxdt\\
\leq& C(T),
\end{aligned}
\end{equation*}
where the first inequality is the application of triangle inequality and the
second inequality is due to the Cauchy-Schwarz inequality. The proof is completed.
\end{proof}

\begin{remark}
 It should be emphasized that the first step  in the proof of Theorem~\ref{t12} is to compute the time derivative of $\rho_m^{m+1}$.  We can also begin with $\rho_m^{m}$ (not $P_m$), which requires to use, from the density dissipation formula \eqref{est:rho2} from Lemma~\ref{l10},
 \[
 \int_{Q_T} |\nabla \rho_m^m \cdot \nabla \rho_m| \rho_m^{-1} dx dt \leq C(T).
 \]
This bound needs the entropy $\rho_m \log \rho_m$, we don't do here for the initial assumption.
\end{remark}

%%%%%%%%%%%%%%%%%%%%%%%%%%%%%%%%%%%%%%%
\section{Complementarity relation and semi-harmonicity}
%%%%%%%%%%%%%%%%%%%%%%%%%%%%%%%%%%%%%%%

Thanks to the a priori regularity estimates provided by Lemmas~\ref{l10}--\ref{l6} and
Theorems~\ref{t10}--\ref{t12}, we can obtain the strong compactness on the
pressure gradient, which allows us to obtain the complementarity relation. Moreover, the semi-harmonicity follows from the AB estimate (Theorem~\ref{t11}).

\begin{theorem}[Complementarity relation and semi-harmonicity]$\label{t13}$
 Under the initial assumptions \eqref{c1}--\eqref{c4}, then, the complementarity
 relation  and the semi-harmonicity property, see Eq.~\eqref{z15}, hold. 
\end{theorem}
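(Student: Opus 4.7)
Following the indication of the paragraph preceding the theorem, the argument splits into two parts: \emph{semi-harmonicity}, immediate from the Aronson-B\'enilan (AB) estimate in $L^3$, and the \emph{complementarity relation}, which I will obtain by passing to the limit in an integration-by-parts identity. The crux will be strong $L^2_{loc}$-compactness of $\nabla P_m$, delivered by Lemmas~\ref{l10}--\ref{l6} together with Theorems~\ref{t10}--\ref{t12}.

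\textbf{Semi-harmonicity.} Theorem~\ref{t11} gives $\||\omega_m|_-\|_{L^3(Q_T)}^3\leq C(T)/m\to 0$, so $|\omega_m|_-\to 0$ strongly in $L^3(Q_T)$. The weak convergences $P_m\rightharpoonup P_\infty$ and $\rho_m\rightharpoonup\rho_\infty$ already established in Theorems~\ref{tAE}--\ref{tAEbis} yield $\omega_m=\Delta P_m+\rho_m\to\omega_\infty:=\Delta P_\infty+\rho_\infty$ in $\mathcal{D}'(Q_T)$. Uniqueness of the distributional limit then forces $|\omega_\infty|_-=0$, i.e.\ $\omega_\infty\geq 0$ in $\mathcal{D}'(Q_T)$.

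\textbf{Strong compactness of $\nabla P_m$.} From Theorems~\ref{t10}--\ref{t12} I will use that $\nabla P_m$ is bounded in $L^\infty(0,T;L^2)\cap L^3(Q_T)$, $\partial_t P_m$ in $L^1(Q_T)$, and $\Delta P_m$ in $L^\infty(0,T;L^1)$. An Aubin-Lions argument applied to $P_m$ gives strong convergence of $P_m$ in $L^2_{loc}(Q_T)$; to upgrade to $\nabla P_m$, I plan to extract a subsequence along which $P_m\to P_\infty$ and $\sqrt{P_m}\,\omega_m\to 0$ almost everywhere in $Q_T$ (the latter from the decay $\|\sqrt{P_m}\,\omega_m\|_{L^2(Q_T)}^2\leq C(T)/m$ of Theorem~\ref{t10}). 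On the positivity set $\{P_\infty>0\}$, $\sqrt{P_m}\to\sqrt{P_\infty}>0$ a.e., so $\omega_m\to 0$ a.e.\ there; combined with the uniform $L^1\cap L^2$ control of $|\omega_m|_-$ from Theorem~\ref{t11}, this identifies the distributional limit $\omega_\infty$ as concentrated on $\{P_\infty=0\}$, which supplies the missing compactness to force $\int_{Q_T}|\nabla P_m|^2\varphi\to\int_{Q_T}|\nabla P_\infty|^2\varphi$ for nonnegative $\varphi\in C_0^\infty(Q_T)$ and hence strong convergence of $\nabla P_m$ in $L^2_{loc}(Q_T)$.

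\textbf{Complementarity and main obstacle.} With the strong convergences in place, I will test the identity
\[
P_m\omega_m \;=\; \tfrac12\Delta(P_m^2)-|\nabla P_m|^2+P_m\rho_m\qquad\text{in }\mathcal{D}'(Q_T)
\]
against $\varphi\in C_0^\infty(Q_T)$. Cauchy-Schwarz and the AB decay of Theorem~\ref{t10} give $\big|\int_{Q_T}P_m\omega_m\,\varphi\big|\leq \|\sqrt{P_m}\,\omega_m\|_{L^2(Q_T)}\|\sqrt{P_m}\,\varphi\|_{L^2(Q_T)}\to 0$. On the right-hand side, $\int P_m^2\Delta\varphi\to\int P_\infty^2\Delta\varphi$ by strong $L^2_{loc}$-convergence of $P_m$, $\int|\nabla P_m|^2\varphi\to\int|\nabla P_\infty|^2\varphi$ by the previous step, and $\int P_m\rho_m\,\varphi\to\int P_\infty\varphi$ by~\eqref{m37}--\eqref{z8}. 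Applying the distributional identity $\tfrac12\Delta(P_\infty^2)=|\nabla P_\infty|^2+P_\infty\Delta P_\infty$ together with $\rho_\infty P_\infty=P_\infty$, the limit collapses to $\langle P_\infty(\Delta P_\infty+\rho_\infty),\varphi\rangle=0$, which is the complementarity relation. The main obstacle is precisely the strong $L^2_{loc}$-compactness of $\nabla P_m$: Aubin-Lions alone controls only $P_m$, and the weighted bound $\|\sqrt{P_m}\,\nabla^2 P_m\|_{L^2}\leq C$ degenerates on $\{P_m=0\}$, ruling out a classical Rellich-Kondrachov argument, so one truly needs the sharp AB decay $\|\sqrt{P_m}\,\omega_m\|_{L^2}^2\leq C/m$ combined with the a.e.\ analysis on $\{P_\infty>0\}$ described above to pass to the limit in $|\nabla P_m|^2$.
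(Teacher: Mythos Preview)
Your semi-harmonicity argument is correct and matches the paper's. The gap is in your ``Strong compactness of $\nabla P_m$'' paragraph, and since the complementarity passage you outline relies on $\int|\nabla P_m|^2\varphi\to\int|\nabla P_\infty|^2\varphi$, the gap propagates.

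The problematic step is the chain ``$\omega_m\to 0$ a.e.\ on $\{P_\infty>0\}$, together with the $L^1\cap L^2$ control of $|\omega_m|_-$, identifies $\omega_\infty$ as concentrated on $\{P_\infty=0\}$, which forces $\int|\nabla P_m|^2\varphi\to\int|\nabla P_\infty|^2\varphi$''. The $L^2$ control is only on the \emph{negative} part; the positive part $|\omega_m|_+=\omega_m+|\omega_m|_-$ is merely bounded in $L^1(Q_T)$ (via the $L^1$ bound on $\Delta P_m$), and an $L^1$-bounded nonnegative sequence converging to $0$ a.e.\ can concentrate mass in the weak-$*$ limit of measures. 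Hence you cannot conclude $\omega_\infty(\{P_\infty>0\})=0$ from pointwise information alone. Moreover, even granting that concentration statement, note that the conclusion you draw from it --- convergence of $\int|\nabla P_m|^2\varphi$ to $\int|\nabla P_\infty|^2\varphi$ --- is, via your own identity, \emph{equivalent} to the complementarity relation itself; so at best you have rephrased the goal, not supplied an independent compactness mechanism.

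The paper obtains the strong compactness of $\nabla P_m$ by a different route that does not pass through~$\omega_\infty$. After localising with a cutoff, it records the three uniform bounds
\[
\|\partial_t P_{m,R}\|_{L^1(Q_T)}\le C,\qquad \|\nabla P_{m,R}\|_{L^3(Q_T)}\le C,\qquad \|\Delta P_{m,R}\|_{L^1(Q_T)}\le C,
\]
the last coming from the $L^1$ Aronson--B\'enilan estimate of Theorem~\ref{t11}, and invokes a space--time compactness lemma (\cite[Theorem~6.1]{r35}) to get $\nabla P_{m,R}\to\nabla P_{\infty,R}$ strongly in $L^1(Q_T)$; interpolation with the uniform $L^3$ bound upgrades this to $L^2_{loc}$. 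Once that is in hand, the paper passes to the limit in the pressure equation~\eqref{d3} divided by $m-1$, which is the same computation as your identity $P_m\omega_m=\tfrac12\Delta(P_m^2)-|\nabla P_m|^2+P_m\rho_m$. So your final ``Complementarity'' paragraph is essentially correct; what is missing is a sound argument for the gradient compactness, and for that you should use the $L^1$ bound on $\Delta P_m$ together with the $L^1$ time-derivative bound in an Aubin--Lions-type argument at the level of $\nabla P_m$, rather than the a.e.\ analysis of $\omega_m$.
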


\begin{proof}
From Lemma~\ref{l10} and Theorems~\ref{t12}, we have
\begin{align*}
\|\nabla P_m\|_{L^2(Q_T)}\leq C(T),\quad \|\partial_t P_m\|_{L^1(Q_T)}\leq C(T).
\end{align*}
Then, after the extraction of subsequences, we obtain
\begin{equation*}
P_m\rightarrow P_\infty,\text{ strongly in } L^1_{loc}(Q_T),\text{ as }
m\to\infty,
\end{equation*}
with the help of Sobolev's compactness embedding. From Theorem~\ref{t10}, we
obtain the weak compactness of the pressure gradient, up to a subsequence, we
have
\begin{equation*}
\nabla P_m\rightharpoonup \nabla P_\infty,\text{ weakly in } L^3(Q_T),\text{ as }
m\to\infty.
\end{equation*}

We define a smooth cutoff function $0\leq \varphi \leq 1$, $\varphi(x)=1$, for $ |x|\leq 1$, $\varphi(x)=0$ for $|x|\geq 2$. Then,
for any $R>0$, we let $\varphi_R(x)=\varphi(\frac{x}{R})$ and $P_{m,R}=\varphi_{R}(x)P_m$. By direct computations, we obtain
\begin{equation}\label{a57}
\|\partial_{t}P_{m,R}\|_{L^1(Q_T)}\leq C(T,R),\quad
\|\nabla P_{m,R}\|_{L^3(Q_T)}\leq C(T,R),\quad
\|\Delta P_{m,R}\|_{L^1(Q_T)}\leq C(T,R).
\end{equation}
For the sake of the above three estimates \eqref{a57}, inspired by
\cite[Theorem~6.1]{r35}, we can establish
\begin{equation*}
\nabla P_{m,R}\to \nabla P_{\infty,R},\text{ strongly in } L^1(Q_T),\text{ as }
m\to\infty.
\end{equation*}
In other words, we can extract a subsequence such that
\begin{equation*}
\nabla P_m\rightarrow \nabla P_\infty,\text{ strongly  in } L^1_{loc}(Q_T),\text{
as } m\to\infty.
\end{equation*}
Then, using the uniform $L^3$ bound for the pressure gradient in
Theorem~\ref{t10}, we have
\begin{equation*}
\nabla P_m\rightarrow \nabla P_\infty,\text{ strongly in } L^q_{loc}(Q_T), \text{
for } 1\leq q<3.
\end{equation*}
Hence, in particular, the case $q=2$ is selected to achieve our goal.

Let $\zeta\in C_0^{\infty}(Q_T)$ be a test function,
 we multiply the pressure equation \eqref{d3} by $\zeta$ and integrate on $Q_T$,
 then it follows
\begin{equation*}
\begin{aligned}
-\frac{1}{m-1}\int_{Q_T} & P_m\partial_t\zeta+|\nabla
P_m|^2\zeta+\nabla P_m\cdot\nabla\mathcal{N}\ast\rho_m\zeta dxdt\\
=&\int_{Q_T}(-|\nabla P_m|^2\zeta-P_m\nabla
P_m\cdot\nabla\zeta+P_m\rho_m\zeta)dxdt.
\end{aligned}
\end{equation*}
Hence, passing to limit as $m\to\infty$, we obtain the complementarity relation
\begin{equation*}
\int_{Q_T}(-|\nabla P_\infty|^2\zeta-P_\infty\nabla
 P_\infty\cdot\nabla\zeta+P_\infty\rho_\infty\zeta)dxdt=0,
\end{equation*}
where $\rho_m P_m\rightharpoonup \rho_\infty P_{\infty},\text{ weakly in }
L^2(Q_T),\text{ as } m\to\infty$ results from \eqref{m38}.
This is equivalent to
\begin{equation*}
\int_{Q_T}P_\infty(\Delta P_\infty+\rho_\infty)\zeta dxdt=0,
\end{equation*}
which means that the complementarity relation of Eq.~\eqref{z15} holds.

From Theorem~\ref{t11}, we have $\int_{Q_T}|\Delta
P_m+\rho_m|_-^3dxdt\leq \frac{C(T)}{m}$. Let $\phi\in C_{0}^{\infty}(Q_T)$ be a
nonnegative test function in $Q_T$, then we attain
\begin{equation}
\begin{aligned}
\int_{0}^{T}\int_{\mathbb{R}^n}(\Delta P_\infty+\rho_\infty)\phi
dxdt=&\lim\limits_{m\to\infty}\int_{0}^{T}\int_{\mathbb{R}^n}(\Delta
P_m+\rho_m)\phi dxdt\\
\geq&-\lim\limits_{m\to\infty}\int_{0}^{T}\int_{\mathbb{R}^n}|\Delta
P_m+\rho_m|_-\phi dxdt\\
\geq&-\lim\limits_{m\to\infty}(\int_{0}^{T}\int_{\mathbb{R}^n}|\Delta
P_m+\rho_m|_-^3dxdt)^{\frac{1}{3}}(\int_{0}^{T}\int_{\mathbb{R}^n}\phi^{\frac{3}{2}}
dxdt)^{\frac{2}{3}}\\
=&0.
\end{aligned}
\end{equation}
Hence, we establish the second result (semi-harmonicity property) of Eq.~\eqref{z15}.

The proof is completed.
\end{proof}
\begin{remark}
This result tells us that the limit solution satisfies
\begin{equation*}
\begin{cases}
-\Delta P_\infty=1, &\text{ in }\ \Omega(t):=\{x:P_\infty(x,t)>0\},\\
P_\infty=0& \text{ on }\ \partial\Omega(t),
\end{cases}
\end{equation*}
 when enough regularity is available. This is related to the geometric form of the Hele-Shaw free boundary problem
 while Eq.~\eqref{z6} is the weak form which determines the motion of the free boundary.
\end{remark}
\begin{theorem}
Under the initial assumptions \eqref{c1}--\eqref{c4}, then we have
\begin{equation*}
\rho_\infty \nabla P_\infty=\nabla P_\infty,\quad\text{a.e. in }Q_T.
\end{equation*}
\end{theorem}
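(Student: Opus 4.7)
The identity $\rho_\infty \nabla P_\infty = \nabla P_\infty$ a.e. is equivalent to $(1-\rho_\infty)\nabla P_\infty = 0$ a.e. on $Q_T$. The plan is to split $Q_T$ into the two natural regions dictated by the constraint $(1-\rho_\infty)P_\infty=0$ established in Theorem~\ref{tAEbis}, namely $\{P_\infty>0\}$ and $\{P_\infty=0\}$, and check that the vector field $(1-\rho_\infty)\nabla P_\infty$ vanishes a.e. on each of them.

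First, on the positivity set $\{P_\infty>0\}$, the complementarity $(1-\rho_\infty)P_\infty=0$ together with $0\leq\rho_\infty\leq 1$ forces $\rho_\infty=1$ a.e., hence $(1-\rho_\infty)\nabla P_\infty=0$ a.e. on this set, independently of any regularity of $\nabla P_\infty$.

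Second, on the coincidence set $\{P_\infty=0\}$, I would invoke the classical Stampacchia lemma: for any $u\in W^{1,1}_{\mathrm{loc}}$, one has $\nabla u=0$ a.e. on the level set $\{u=0\}$. This is applicable because Theorem~\ref{t15} provides $P_\infty\in L^2(0,T;H^1(\mathbb{R}^n))\cap L^\infty(0,T;H^1(\mathbb{R}^n))$, so $P_\infty(\cdot,t)\in H^1(\mathbb{R}^n)$ for a.e.\ $t\in(0,T)$. Applying Stampacchia's lemma slicewise in time and using Fubini yields $\nabla P_\infty=0$ a.e. on $\{P_\infty=0\}\subset Q_T$; in particular $(1-\rho_\infty)\nabla P_\infty=0$ a.e. there as well.

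Combining the two cases gives $(1-\rho_\infty)\nabla P_\infty=0$ a.e. in $Q_T$, which is the desired identity. There is no real obstacle: the only subtle point is to apply Stampacchia's lemma in the correct functional setting, which is guaranteed by the Sobolev regularity of the limit pressure established in Theorem~\ref{t15}; the $L^\infty$ bound $0\leq\rho_\infty\leq 1$ ensures that multiplication by $(1-\rho_\infty)$ preserves the a.e. equality.
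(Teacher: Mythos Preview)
Your proof is correct and takes a genuinely different route from the paper. The paper argues by identifying two limits of the sequence $\rho_m\nabla P_m=\nabla\rho_m^m$: on one hand this converges weakly to $\nabla P_\infty$ (since $\rho_m^m\rightharpoonup P_\infty$), and on the other hand, using the strong convergence $\nabla P_m\to\nabla P_\infty$ in $L^2_{loc}(Q_T)$ established in Theorem~\ref{t15} together with the weak convergence $\rho_m\rightharpoonup\rho_\infty$, the product $\rho_m\nabla P_m$ converges weakly to $\rho_\infty\nabla P_\infty$; equating the two limits gives the result.

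Your argument instead works entirely at the limit level, using only the already-known relation $(1-\rho_\infty)P_\infty=0$ and the Sobolev regularity $P_\infty\in L^2(0,T;H^1(\mathbb{R}^n))$ to invoke Stampacchia's lemma on $\{P_\infty=0\}$. This is more elementary: it avoids re-invoking the compactness machinery and in fact requires only assumption~\eqref{c1}, since both ingredients you use are available from Theorems~\ref{tAE}--\ref{tAEbis} alone. The paper's approach, by contrast, leans on the hard-won strong compactness of $\nabla P_m$, which needs the full set \eqref{c1}--\eqref{c4}. What the paper's route buys is a direct illustration of how the identity arises from the approximating sequence, whereas your route shows it is a structural consequence of the limiting constraints themselves.
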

\begin{proof}
On the one hand, we have already proved in Theorem \ref{tAEbis} that $\rho_m \nabla P_m \to \nabla P_\infty$ weakly. On the other hand
$\rho_m  \to \rho_\infty$ weakly in $L^2(Q_T)$ and $ \nabla P_m \to \nabla P_\infty$ strongly in $L^2_{loc}(Q_T)$, by weak-strong convergence we obtain that $\rho_m \nabla P_m \to \rho_\infty \nabla P_\infty$  weakly and the result is proved.
\end{proof}

%%%%%%%%%%%%%%%%%%%%%%%%%%%
\section{Uniqueness, compact support and energy functional}

In order to prove the uniqueness of the solution to the Hele-Shaw limit system
\eqref{z6}--\eqref{z8}, we use the lifting method in $\dot{H}^{-1}$ as in~\cite{4,r46,
rDDP} rather than the duality method~\cite{5,r50} or the entropy method~\cite{IgN}. The main new  difficulty comes
from the nonlocal interaction. The  uniform upper bound for the limit
density, and  the property that the limit pressure is somehow monotone to the
limit density, allow us to use the energy method to prove the uniqueness as
in either \cite[Theorem~2.4]{4} or \cite[Theorem~3]{r46} for an aggregation
equation with degenerate diffusion.
\begin{proposition}[Uniqueness]$\label{p1}$
	Let $(\rho_1,P_1)$ and $(\rho_2,P_2)$ be two solutions to the Cauchy problem
Eq.~\eqref{z6}--\eqref{z8} with initial data satisfying
$\rho_{1}(x,0)=\rho_{2}(x,0)\in \dot{H}^{-1}(\mathbb{R}^n)$, then it follows
	\begin{equation*}
	\rho_1=\rho_2,\quad P_1=P_2, \quad\text{a.e. in }Q.
	\end{equation*}
\end{proposition}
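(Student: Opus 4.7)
The plan is to follow the $\dot H^{-1}$-lifting strategy of \cite{4,r46,rDDP}: introduce $w(t) := (-\Delta)^{-1}(\rho_1(t) - \rho_2(t))$ and monitor the quantity
\[
D(t) := \|\rho_1(t)-\rho_2(t)\|_{\dot H^{-1}}^2 = \|\nabla w(t)\|_{L^2(\mathbb{R}^n)}^2 .
\]
Since the initial data agree in $\dot H^{-1}$, $D(0) = 0$, and I aim to produce a Gronwall inequality $\tfrac{d}{dt} D \le C(t) D$ with $C \in L^1_{\rm loc}(0,\infty)$; then $D \equiv 0$, giving $\rho_1 = \rho_2$ a.e., after which the weak form of \eqref{z6} forces $\Delta(P_1 - P_2) = 0$ in $\mathcal{D}'$, hence $P_1 = P_2$ by the $L^2$ bound on each $P_i$.

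The algebraic backbone, already flagged in the introduction, is the monotonicity
\[
(P_1 - P_2)(\rho_1 - \rho_2) \ge 0 \quad \text{a.e.\ in } Q,
\]
which I would verify by a short case analysis from $0 \le \rho_i \le 1$, $P_i \ge 0$ and $(1-\rho_i)P_i = 0$: wherever $P_1 > P_2 \ge 0$, positivity of $P_1$ forces $\rho_1 = 1 \ge \rho_2$, and the opposite case is symmetric. Then I test the subtracted equation
\[
\partial_t(\rho_1 - \rho_2) = \Delta(P_1 - P_2) + \nabla\cdot\bigl(\rho_1\nabla\mathcal{N}\ast\rho_1 - \rho_2\nabla\mathcal{N}\ast\rho_2\bigr)
\]
against $w$ and integrate by parts to obtain
\[
\tfrac12 \tfrac{d}{dt} D = -\int_{\mathbb{R}^n}(P_1-P_2)(\rho_1-\rho_2)\,dx - \int_{\mathbb{R}^n} \nabla w \cdot \bigl(\rho_1\nabla\mathcal{N}\ast\rho_1 - \rho_2\nabla\mathcal{N}\ast\rho_2\bigr) dx .
\]
The monotonicity discards the first integral.

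For the non-local remainder I use the symmetric decomposition
\[
\rho_1\nabla\mathcal{N}\ast\rho_1 - \rho_2\nabla\mathcal{N}\ast\rho_2 = \tfrac12(\rho_1+\rho_2)\,\nabla\mathcal{N}\ast(\rho_1-\rho_2) + \tfrac12(\rho_1-\rho_2)\,\nabla\mathcal{N}\ast(\rho_1+\rho_2)
\]
together with the key identity $\nabla\mathcal{N}\ast(\rho_1-\rho_2) = -\nabla w$ (since $\Delta \mathcal{N} = \delta$ and both quantities have the same Laplacian). The first piece yields $-\tfrac12\int(\rho_1+\rho_2)|\nabla w|^2 dx$, which contributes at most $\|\nabla w\|_{L^2}^2$ to $\tfrac{d}{dt}D$ because $\rho_i \le 1$. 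The second piece, after substituting $\rho_1-\rho_2 = -\Delta w$ and integrating by parts twice, separates into another $|\nabla w|^2$ term (handled in the same way) and a genuine Hessian quadratic form $\int (\nabla w)^\top \nabla^2\mathcal{N}\ast(\rho_1+\rho_2)\,\nabla w\,dx$.

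The main obstacle is this Hessian quadratic form: Calder\'on--Zygmund only yields $\nabla^2\mathcal{N}\ast\rho_i \in L^p$ for $p \in (1,\infty)$ with norm blowing up as $p \to \infty$, not in $L^\infty$. I plan to estimate it by H\"older at some finite $p$, using $\rho_i \in L^1\cap L^\infty$ (Theorem~\ref{tAEbis}) to bound $\|\nabla^2\mathcal{N}\ast(\rho_1+\rho_2)\|_{L^p}$ uniformly in time, and to control $\|\nabla w\|_{L^{2p'}}$ by Gagliardo--Nirenberg interpolation between $\|\nabla w\|_{L^2}$ and $\|\Delta w\|_{L^2} = \|\rho_1-\rho_2\|_{L^2}$, the latter being uniformly bounded on $[0,T]$. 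This produces an estimate of the form $\tfrac{d}{dt}D \le C(T)\bigl(1 + D\bigr)$, and Gronwall with $D(0)=0$ closes the argument: $D \equiv 0$, hence $\rho_1 = \rho_2$, and the first paragraph propagates equality to the pressures.
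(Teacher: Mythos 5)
Your overall strategy is the same as the paper's: the $\dot{H}^{-1}$ lifting, the monotonicity $(P_1-P_2)(\rho_1-\rho_2)\geq 0$ from the saturation relations, testing the subtracted equation against the potential $w$, the integration by parts that trades $\rho_1-\rho_2$ for $\Delta w$, and the Calder\'on--Zygmund bound at a finite exponent $p$. The two places where you deviate are harmless cosmetic variants: your symmetric splitting $\tfrac12(\rho_1+\rho_2)\nabla\mathcal{N}\ast(\rho_1-\rho_2)+\tfrac12(\rho_1-\rho_2)\nabla\mathcal{N}\ast(\rho_1+\rho_2)$ does the same job as the paper's asymmetric one, and interpolating $\|\nabla w\|_{L^{2p'}}$ between $\|\nabla w\|_{L^2}$ and $\|\Delta w\|_{L^2}=\|\rho_1-\rho_2\|_{L^2}$ works just as well as the paper's interpolation between $\|\nabla\psi\|_{L^\infty}$ and $\|\nabla\psi\|_{L^2}$. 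Either way you arrive at
\begin{equation*}
\frac{d}{dt}D \;\leq\; C(T)\,p\,D^{1-\alpha(p)} + C(T)\,D, \qquad \alpha(p)=\frac{n}{2p}>0 ,
\end{equation*}
where the constant $C(T)$ is uniform in $p$ (since $\|\rho_i\|_{L^p}\leq\max\{\|\rho_i\|_{L^1},\|\rho_i\|_{L^\infty}\}$).

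The gap is in the very last step. You replace $D^{1-\alpha}\leq 1+D$ and claim that $\tfrac{d}{dt}D\leq C(T)(1+D)$ together with $D(0)=0$ yields $D\equiv0$ by Gronwall. It does not: Gronwall only gives $D(t)\leq e^{C(T)t}-1$, which is strictly positive for $t>0$. The inequality you actually have is of Osgood type -- the right-hand side is $C(\cdot)D^{1-\alpha(p)}$ with a sublinear power, and $x\mapsto x^{1-\alpha}$ is not Lipschitz at the origin, so uniqueness from $D(0)=0$ is precisely what is in question. The resolution, and the step the paper takes, is to keep the power $1-\alpha(p)$, compare $D$ with the maximal solution of the ODE $\dot{\bar\gamma}=\hat{C}p\,\bar\gamma^{1-\alpha(p)}$, $\bar\gamma(0)=0$, which is $\bar\gamma(t)=(\hat{C}\alpha(p)\,t)^{1/\alpha(p)}$, and observe that $\bar\gamma(t)\to0$ as $p\to\infty$ (i.e.\ $\alpha(p)\to0$) for $t$ in a fixed interval $[0,t_0]$ with $t_0$ depending only on $\hat{C}$ and not on $p$. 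Since your $\hat{C}$ is uniform in $p$, sending $p\to\infty$ gives $D\equiv0$ on $[0,t_0]$, and a continuation/restart argument propagates this to all times. Without that $p\to\infty$ passage the argument does not close.
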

\begin{proof}
First of all, we state that the pressure is somehow monotone to the density.	
Since $\rho_1P_1=P_1$ and $\rho_2P_2=P_2$ hold, we have
	\begin{equation}\label{z9}
	\begin{aligned}
	(\rho_1-\rho_2)(P_1-P_2)=&\rho_1 P_1+\rho_2 P_2-\rho_1 P_2-\rho_2 P_1
	\\[2pt]
	=&(1-\rho_2)P_1+(1-\rho_1)P_2\\
	\geq&0.
	\end{aligned}
	\end{equation}
We estimate the difference of weak solutions in $\dot{H}^{-1}(\mathbb{R}^n)$
motivated
by the fact that the pressure is somehow monotone to the density as \eqref{z9}.
Let $\psi=\mathcal{N}\ast(\rho_1-\rho_2)$, by the integrability and bound of
$\rho_1$ and $\rho_2$, we have $\psi\in L^{\infty}(Q_T)\cap
C(0,T,\dot{H}^1(\mathbb{R}^n))$ and $\nabla\psi\in L^{\infty}(0,T;L^{p}(\mathbb{R}^n))\cap
L^\infty(Q_T)$ for $2\leq p< \infty$, and $\partial_t \psi$ solves
\begin{equation*}
\Delta \partial_t\psi=\partial_{t}\rho_1-\partial_{t}\rho_2.
\end{equation*}
Since
$\|\rho_1(t)-\rho_2(t)\|_{\dot{H}^{-1}(\mathbb{R}^n)}=\|\nabla\psi(t)\|_{L^2(\mathbb{R}^n)}$,
we are going to show $\|\nabla\psi(t)\|_{L^2(\mathbb{R}^n)}=0$ for all $t\geq 0$.

 Let $\varphi\in C_0^{\infty}(\mathbb{R}^n)$ be smooth text function satisfying
 $\varphi=1$ in $B_1 $ and $0\leq \varphi \leq 1$ in $\mathbb{R}^n$.
Set $\varphi_R(x)=\varphi(\frac{x}{R})$ for $x\in\mathbb{R}^n$ and $R>1$, thanks to the regularity of $\psi$, possibly up to a set of measure zero, it holds
\begin{equation*}
\begin{aligned}
-<\partial_{t}\rho_1-\partial_{t}\rho_2,\psi>
=&\lim\limits_{R\to\infty}-<\partial_{t}\rho_1-\partial_{t}\rho_2,\mathcal{N}\ast[(\rho_1-\rho_2)\varphi_R]>\\
=&\lim\limits_{R\to\infty}<\nabla\partial_{t}\psi,\nabla\mathcal{N}\ast[(\rho_1-\rho_2)\varphi_R]>\\
=&<\nabla\psi,\nabla\partial_{t}\psi>
=\frac{1}{2}\frac{d}{dt}\int_{\mathbb{R}^n}|\nabla\psi|^2dx.
\end{aligned}
\end{equation*}
Therefore, using the definition of weak solution in Theorem~\ref{tAEbis}, we have
\begin{equation*}
\begin{aligned}
\frac{1}{2}\frac{d}{dt}\int_{\mathbb{R}^n}|\nabla\psi|^2dx=&\lim\limits_{R\to\infty}-\int_{\mathbb{R}^n}(P_1-P_2)\Delta(\psi\varphi_R)
dx+\int_{\mathbb{R}^n}(\rho_1\nabla\mathcal{N}\ast\rho_1-\rho_2\nabla\mathcal{N}\ast\rho_2)\cdot\nabla(\varphi_R\psi)
dx\\
=&-\int_{\mathbb{R}^n}(P_1-P_2)\Delta\psi
dx+\int_{\mathbb{R}^n}(\rho_1\nabla\mathcal{N}\ast\rho_1-\rho_2\nabla\mathcal{N}\ast\rho_2)\cdot\nabla\psi
dx\\
=&-\int_{\mathbb{R}^n}(P_1-P_2)(\rho_1-\rho_2)dx+\int_{\mathbb{R}^n}(\rho_1-\rho_2)\nabla\mathcal{N}\ast\rho_1\cdot\nabla\psi
dx\\
&+\int_{\mathbb{R}^n}\rho_2\nabla\mathcal{N}\ast(\rho_1-\rho_2)\cdot\nabla\psi
dx\\
:=&I_1+I_2+I_3.
\end{aligned}
\end{equation*}
From \eqref{z9}, we obtain
\begin{equation*}
I_1=-\int_{\mathbb{R}^n}(P_1-P_2)(\rho_1-\rho_2)dx\leq 0.
\end{equation*}
By integrating by parts, we have
\begin{equation}\label{z10}
\begin{aligned}
I_2&=\lim\limits_{R\to\infty}\int_{\mathbb{R}^n}\Delta \psi\nabla \mathcal{N}\ast\rho_1\cdot\nabla(\psi\varphi_R)
dx\\
&=-\lim\limits_{R\to\infty}\sum\limits_{ij}\int_{\mathbb{R}^n}\partial_i\psi\partial_j(\psi\varphi_R)\partial_{ij}\mathcal{N}\ast\rho_1dx-\lim\limits_{R\to\infty}\sum\limits_{ij}\int_{\mathbb{R}^n}\partial_i\psi\partial_{ij}(\psi\varphi_R)\partial_j\mathcal{N}\ast\rho_1dx\\
&=-\sum\limits_{ij}\int_{\mathbb{R}^n}\partial_i\psi\partial_j\psi\partial_{ij}\mathcal{N}\ast\rho_1dx-\sum\limits_{ij}\int_{\mathbb{R}^n}\partial_i\psi\partial_{ij}\psi\partial_j\mathcal{N}\ast\rho_1dx.
\end{aligned}
\end{equation}
Similarly, integrating by parts again, we get
 \begin{equation*}
 \begin{aligned}
 -\sum\limits_{ij}\int_{\mathbb{R}^n}\partial_i\psi\partial_{ij}\psi\partial_j\mathcal{N}\ast\rho_1dx
 =&\lim\limits_{R\to\infty} -\sum\limits_{ij}\int_{\mathbb{R}^n}\partial_i\psi\partial_{ij}\psi\partial_j\mathcal{N}\ast\rho_1\varphi_Rdx\\
 =& \lim\limits_{R\to\infty}\sum\limits_{ij}\frac{1}{2}\int_{\mathbb{R}^n}(\partial_i\psi)^2\partial_j(\partial_j\mathcal{N}\ast\rho_1\varphi_R)dx\\
 =&\frac{1}{2}\int_{\mathbb{R}^n}|\nabla\psi|^2\rho_1
 dx,
 \end{aligned}
 \end{equation*}
 which together with \eqref{z10} implies
 \begin{equation*}
 I_2\leq\int_{\mathbb{R}^n}|\nabla\psi|^2|\nabla^2\mathcal{N}\ast\rho_1|dx+\frac{1}{2}\|\nabla\psi\|_{L^2(\mathbb{R}^n)}^2.
 \end{equation*}
 By Holder's inequality and $\nabla\psi\in{L^{\infty}(Q_T)}$, for $p\geq 2$, we
 have
 \begin{equation}\label{z11}
 \begin{aligned}
 \int_{\mathbb{R}^n}|\nabla\psi|^2|\nabla^2\mathcal{N}\ast\rho_1|dx&\leq
 \|\nabla^2\mathcal{N}\ast\rho_1\|_{L^p(\mathbb{R}^n)}(\int_{\mathbb{R}^n}|\nabla\psi|^{\frac{2p}{p-1}})^{\frac{p-1}{p}}\\
 &\lesssim
 p\|\rho_1\|_{L^p(\mathbb{R}^n)}\|\nabla\psi\|_{L^\infty(\mathbb{R}^n)}^{\frac{2}{p}}(\int_{\mathbb{R}^n}|\nabla\psi|^2dx)^{\frac{p-1}{p}}\\
 &\lesssim p(\int_{\mathbb{R}^n}|\nabla\psi|^2dx)^{\frac{p-1}{p}},
 \end{aligned}
 \end{equation}
 where the implicit constant depends only on the uniformly controlled $L^p$ norms
 of $\rho_1$ and $\rho_2$ and the second step holds because of the singular
 integral theory (Lemma~\ref{l12}).\\
 \indent As for $I_3$, we may directly justify
 \begin{equation}\label{z12}
 I_3=\int_{\mathbb{R}^n}\rho_2\nabla\mathcal{N}\ast(\rho_1-\rho_2)\cdot\nabla\psi
 dx=\int_{\mathbb{R}^n}\rho_2|\nabla\psi|^2dx\lesssim
 \|\nabla\psi\|_{L^2(\mathbb{R}^n)}^2.
 \end{equation}
 Let $\gamma(t)=\int_{\mathbb{R}^n}|\nabla\psi(t)|^2dx$, both \eqref{z11} and
 \eqref{z12} imply the differential inequality
 \begin{equation*}
 	\frac{d}{dt}\gamma(t)\leq \hat{C}p
 \max\{\gamma(t)^{1-\frac{1}{p}},\gamma(t)\},
 \end{equation*}
 where $\hat{C}$ depends only on the uniformly controlled $L^p$ norm of $\rho_1$,
 $\rho_2$.
 All  the solutions of this  differential inequality are bounded from above by
 the maximal solution. Since $\gamma(0)=0$ and $\gamma(t)$ is continuous, there
 exists $t^*>0$ such that $0\leq \gamma(t)<1,t\in[0,t^*]$, therefore
     \begin{equation*}
    \frac{d}{dt}\gamma(t)\leq \hat{C}p \gamma(t)^{1-\frac{1}{p}}, \qquad \gamma(0)=0,
    \end{equation*}
and $\gamma(t)$ is a subfunction of the solution to the ordinary differential
equation
 \begin{equation}\label{z13}
 \frac{d}{dt}\bar{\gamma}(t)= \hat{C}p \bar{\gamma}(t)^{1-\frac{1}{p}}, \qquad \bar{\gamma}(0)=0,
 \end{equation}
 and $\bar{\gamma}(t)=(\hat{C}t)^{p}$ is the unique solution to \eqref{z13}.
  Consequently, we obtain
  \begin{equation}\label{z14}
  \gamma(t)\leq\bar{\gamma}(t)\leq 2^{-p}<1,
  \end{equation}
For $0<t<\frac{1}{2\hat{C}}$. Therefore, we can extend $t^*$ to be  long enough such that $t^*$ is more
      than $\frac{1}{4\hat{C}}$.
 \\

In Eq.~\eqref{z14}, we may take $p\to \infty$ to deduce that $\gamma(t)=0$ for $t\in[0,\frac{1}{4\hat{C}}]$ and the proof of uniqueness is complete.
\end{proof}

In fact, we are able to prove the time continuity and initial trace for the Hele-Shaw limit system \eqref{z6}--\eqref{z8}. So far the initial data is obtained in the weak sense of Def.~\ref{def:WS}. This means that the Hele-Shaw equation holds with the initial data
$\rho_{\infty,0}= w-lim \rho_{m,0}$. Notice that we know that $0\leq \rho_{\infty,0}\leq 1$ because the argument of Lemma~\ref{l6} still holds true.

We now prove a additional result, namely the initial density is also obtained by  time continuity.
\begin{proposition}[Almost everywhere time continuity]
 Assume that initial data $\rho_{m,0}$ and $\rho_{\infty,0}$ satisfy the assumption~\eqref{c1}.  Then it holds
\begin{equation*}
\lim_{t\to 0} \rho_{\infty}(t) := \rho_{\infty,0}\text{ a.e. in }\mathbb{R}^n.
\end{equation*}
\end{proposition}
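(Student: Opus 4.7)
The strategy is to first establish uniform-in-$m$ time-equicontinuity of $\rho_m(\cdot,t)$ in $\dot H^{-1}(\mathbb{R}^n)$, pass this regularity to the limit to identify the initial trace in $\dot H^{-1}$, and finally upgrade to pointwise a.e. convergence using the uniform $L^\infty$ bound together with the monotonicity identity from the uniqueness proof.

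First, I would rewrite Eq.~\eqref{d1} in divergence form
\[
\partial_t \rho_m = \nabla\cdot J_m, \qquad J_m := \nabla\rho_m^m + \rho_m\nabla\mathcal{N}\ast\rho_m.
\]
By Lemma~\ref{l10} we have $\|\nabla\rho_m^m\|_{L^2(Q_T)}\le C(T)$, and combining the $L^2$ bound on $\rho_m$ with the $L^\infty(Q_T)$ bound on $\nabla\mathcal{N}\ast\rho_m$ from Lemma~\ref{l18} yields $\|\rho_m\nabla\mathcal{N}\ast\rho_m\|_{L^2(Q_T)}\le C(T)$. Testing against $\xi\in \dot H^1(\mathbb{R}^n)$ then gives, for $0\le s\le t\le T$,
\[
\|\rho_m(t)-\rho_m(s)\|_{\dot H^{-1}(\mathbb{R}^n)} \le \Bigl(\int_s^t \|J_m(\tau)\|_{L^2(\mathbb{R}^n)}^2 d\tau\Bigr)^{1/2}|t-s|^{1/2}\le C(T)|t-s|^{1/2},
\]
uniformly in $m$. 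The strong $L^1$ convergence $\rho_{m,0}\to\rho_{\infty,0}$ from \eqref{c1}, combined with the uniform $L^2(\mathbb{R}^n)$ bound, gives $\rho_{m,0}\to\rho_{\infty,0}$ in $\dot H^{-1}(\mathbb{R}^n)$ (via the embedding of the bounded set of $L^1\cap L^2$-functions into $\dot H^{-1}$ under the mass-zero condition, after subtracting a common reference).

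Second, passing to the limit $m\to\infty$ in the Hölder bound and using Arzel\`a-Ascoli on $[0,T]$, $\rho_\infty\in C^{1/2}([0,T];\dot H^{-1}(\mathbb{R}^n))$ with $\rho_\infty(0)=\rho_{\infty,0}$ in $\dot H^{-1}$, in particular
\[
\|\rho_\infty(t)-\rho_{\infty,0}\|_{\dot H^{-1}(\mathbb{R}^n)}\to 0\qquad\text{as }t\to 0^+.
\]

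Third, I would upgrade to a.e.\ convergence. The family $\{\rho_\infty(t)-\rho_{\infty,0}\}_{t>0}$ is bounded in $L^\infty(\mathbb{R}^n)\cap L^1(\mathbb{R}^n)$ (by Theorem~\ref{tAEbis} and mass conservation), and tends to $0$ in $\dot H^{-1}$. An approximation argument (writing any $L^2$ test against a $H^1$-mollification of it and using the $L^\infty$ bound to control the remainder) yields weak $L^2_{\rm loc}$ convergence. The monotonicity identity $(\rho_1-\rho_2)(P_1-P_2)\ge 0$ from Eq.~\eqref{z9} in Proposition~\ref{p1}, applied to the two "solutions'' $\rho_\infty(\cdot,t+s)$ and $\rho_\infty(\cdot,s)$, together with the uniqueness of Proposition~\ref{p1}, ensures that any pointwise subsequential limit of $\rho_\infty(t)$ as $t\to 0$ must coincide a.e. with $\rho_{\infty,0}$; hence the full convergence $\rho_\infty(t)\to\rho_{\infty,0}$ a.e. in $\mathbb{R}^n$.

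\textbf{Main obstacle.} The critical difficulty is Step 3: moving from convergence in the negative-order norm $\dot H^{-1}$ to pointwise a.e.\ convergence, without a gradient bound on $\rho_\infty$ that would enable Aubin-Lions compactness. The remedy is to exploit the congestion structure $0\le\rho_\infty\le 1$ together with the pressure-density monotonicity $(\rho_1-\rho_2)(P_1-P_2)\ge 0$ that was key to the uniqueness proof, and to handle the unboundedness of $\mathbb{R}^n$ via the uniform tightness of mass inherited from the $L^1$ bound on $\rho_m$ (and, when \eqref{c5} holds, from the compact support).
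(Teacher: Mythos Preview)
Your Steps~1--2 are correct and actually deliver, via the uniform flux bound $\|J_m\|_{L^2(Q_T)}\le C(T)$, the time-continuity $\rho_\infty\in C^{1/2}([0,T];\dot H^{-1})$ with trace $\rho_{\infty,0}$. This is a legitimate alternative to the paper's argument, which is considerably more direct: it simply writes the weak formulation of Eq.~\eqref{d1} tested against a fixed $\varphi\in C_0^\infty(\mathbb{R}^n)$ at time $t$, passes to the limit $m\to\infty$, and compares the resulting identity with the one obtained by integrating the limit equation~\eqref{z6} on $[0,t]$. Both sides have the same right-hand side, hence the weak-$L^2$ limit $\rho_\infty^0:=\lim_{t\to0}\rho_\infty(t)$ exists and coincides with $\rho_{\infty,0}$ as an element of $L^2(\mathbb{R}^n)$ (whence the ``a.e.'' in the statement refers to the equality of the two functions, not to pointwise-in-$x$ convergence of $\rho_\infty(x,t)$). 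Your $\dot H^{-1}$ route buys a quantitative H\"older modulus in time; the paper's route buys simplicity and avoids the embedding technicalities you allude to.

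Your Step~3, however, has a genuine gap. Convergence in $\dot H^{-1}$ together with a uniform $L^\infty\cap L^1$ bound yields at best weak convergence in $L^2$; it does \emph{not} imply a.e.\ convergence (oscillatory sequences such as $\chi_{\{\sin(x_1/t)>0\}}$ are the obvious obstruction, and nothing in your argument rules them out). Your appeal to the monotonicity identity~\eqref{z9} and to Proposition~\ref{p1} does not repair this: uniqueness compares two solutions with the \emph{same} initial datum, whereas here you are precisely trying to identify the datum, so the invocation is circular. Nor do you ever produce a ``pointwise subsequential limit'' whose existence you then assert---no compactness in a strong topology has been established. In short, Step~3 should be dropped; the statement, as actually proved in the paper, is about identification of the weak initial trace, and your Steps~1--2 already suffice for that.
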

\begin{proof}
Let $\varphi\in C_{0}^{\infty}(\mathbb{R}^n)$. By a standard variant of the test function in Def.~\ref{def:WS}, we have for a.e. $t>0$,
\begin{equation*}
\int_{\mathbb{R}^n}(\rho_{m}(t)-\rho_{m,0})\varphi(x)dx=-\frac{m-1}{m}\int_{0}^{t}\int_{\mathbb{R}^n}\rho_{m}\nabla
P_m\cdot\nabla\varphi
dx-\int_{0}^{t}\int_{\mathbb{R}^n}\rho_{m}\nabla\mathcal{N}\ast\rho_m\cdot\nabla
\varphi dx.
\end{equation*}
Passing to limit, then it holds
\begin{equation*}
\int_{\mathbb{R}^n}(\rho_{\infty}(t)-\rho_{\infty,0})\varphi(x)dx=-\int_{0}^{t}\int_{\mathbb{R}^n}\nabla
P_{\infty}\cdot\nabla\varphi
dx-\int_{0}^{t}\int_{\mathbb{R}^n}\rho_{\infty}\nabla\mathcal{N}\ast\rho_{\infty}\cdot\nabla
\varphi dx.
\end{equation*}

Multiplying (\ref{z6}) by $\varphi$ and integrating on $[0,t]$, we get
\begin{equation*}
\int_{\mathbb{R}^n}(\rho_{\infty}(t)-\rho_{\infty}^0)\varphi(x)dx=-\int_{0}^{t}\int_{\mathbb{R}^n}\nabla
P_{\infty}\cdot\nabla\varphi
dx-\int_{0}^{t}\int_{\mathbb{R}^n}\rho_{\infty}\nabla\mathcal{N}\ast\rho_{\infty}\cdot\nabla
\varphi dx,
\end{equation*}
therefore, we obtain that $\lim_{t\to 0} \rho_{\infty}(t) :=  \rho_{\infty}^0$ exists in weak-$L^2(\mathbb{R}^n)$ and
\begin{equation*}
\int_{\mathbb{R}^n}\rho_{\infty,0}\varphi dx=\int_{\mathbb{R}^n}\rho_{\infty}^0\varphi dx,
\end{equation*}
which supports our statement.
\end{proof}
%%%%%%%%%%%%%%%%%%%%
Furthermore, we are about to show the compact support of the solution for the Hele-Shaw limit system \eqref{z6}--\eqref{z8}. To study the support of the limit density or the limit pressure, the main
 difficulty comes from the nonlocal interaction which prevent bounds on $\rho_m$. However, we may follow~\cite[Lemma~3.8]{CKY_2018} and obtain uniformly control of  the pressure, then we deduce that the speed of
propagation for the limit density is finite.

Firstly, we give this approximate equation
\begin{equation}\label{w1}
\partial_{t}\varrho_m=\Delta\varrho_m^m+\nabla\cdot(\varrho_m\nabla\Phi_{1/m}),
\end{equation}
where $\Phi_{1/m}(x,t)=\zeta_{1/m}\ast(\mathcal{N}\ast\rho_{\infty})$ and
$\rho_{\infty}$ is the unique limit density in Theorem~\ref{tAEbis}. Define the
corresponding pressure $\mathrm{P}_m=\frac{m}{m-1}\varrho_m^{m-1}$ that satisfies
the following equation
\begin{equation}\label{w2}
\partial_{t}\mathrm{P}_m=(m-1)\mathrm{P}_m\Delta\mathrm{P}_m+|\nabla
\mathrm{P}_m|^2+\nabla \mathrm{P}_m\cdot\nabla \Phi_{1/m}+(m-1)\mathrm{P}_m\Delta
\Phi_{1/m} .
\end{equation}
Similar to Theorems~\ref{tAE}--\ref{tAEbis}, we can get the following theorem with the same
initial data.
\begin{theorem}\label{t6}
Let $\rho_{m,0}$ and $P_{m,0}$ be the initial data of the density $\varrho_{m}$
and the pressure $\mathrm{P}_{m}$ respectively satisfying \eqref{c1} and $\rho_{\infty}$ be the unique limit density in Theorem~\ref{tAEbis}. Then, after the extraction of
subsequences, $\nabla\Phi_{1/m}$ converges for all $T>0$ strongly
	in $L^2(Q_{T})$ as $m\to\infty$ to limit
$\nabla\mathcal{N}\ast\rho_{\infty}$, $\varrho_m$ and $\varrho_{m}
\mathrm{P}_{m}$ converges weakly for all $T>0$ in $L^2(Q_T)$ as $m\to\infty$ to
limits $\varrho_{\infty}\in L^\infty(0,T;L^1(\mathbb{R}^n))\cap L^\infty(Q_T)$ and $\mathrm{P}_{\infty}\in L^2(0,T;H^1(\mathbb{R}^n))$ respectively.
Therefore, the following Hele-Shaw limit system for $(\mathrm{P}_\infty,\varrho_\infty)$ holds as
\begin{align}
	&\partial_{t}\varrho_{\infty}=\Delta
\mathrm{P}_{\infty}+\nabla\cdot(\varrho_{\infty}\nabla\mathcal{N}\ast\rho_{\infty}),
&&\text{in } \mathcal{D}'(Q_{T}),\label{w3}\\
	&(1-\varrho_{\infty})\mathrm{P}_{\infty}=0,&&\text{a.e.
in } Q_{T},\label{w4}\\
	&0\leq\varrho_{\infty}\leq1,&&\text{a.e. in }Q_{T}.\label{w6}
	\end{align}
\end{theorem}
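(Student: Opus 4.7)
The plan is to mimic the proof of Theorems~\ref{tAE}--\ref{tAEbis} for the original PKS model, exploiting the decisive simplification that the drift $\nabla\Phi_{1/m}$ in \eqref{w1} is \emph{external}: it depends only on the already-fixed limit $\rho_\infty$. Before entering the density estimates, I would record uniform bounds for the drift, using the fact that $\mathcal{N}\ast\rho_\infty \in L^\infty(Q_T)$ with $\nabla\mathcal{N}\ast\rho_\infty \in L^\infty(Q_T)\cap L^\infty(0,T;\dot W^{1,q})$ (Theorem~\ref{tAE}). Standard mollifier properties give
\[
\|\nabla \Phi_{1/m}\|_{L^\infty(Q_T)}\leq C,\qquad \|\Delta\Phi_{1/m}\|_{L^\infty(Q_T)}\leq \|\rho_\infty\|_{L^\infty(Q_T)}\leq 1,
\]
together with strong convergence $\nabla\Phi_{1/m}\to \nabla\mathcal{N}\ast\rho_\infty$ in $L^2_{loc}(Q_T)$ as $m\to\infty$.

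Next I would repeat, step by step, the computations of Lemma~\ref{l10} on \eqref{w1}. The only change compared to~\eqref{d1} is that the drift term integrated against $\varrho_m^{m}$ becomes
\[
-\int_{\mathbb{R}^n}\varrho_m \nabla\Phi_{1/m}\cdot\nabla\varrho_m^m \,dx
=\frac{m}{m+1}\int_{\mathbb{R}^n}\varrho_m^{m+1}\Delta\Phi_{1/m}\,dx
\leq \|\Delta\Phi_{1/m}\|_{L^\infty}\int_{\mathbb{R}^n}\varrho_m^{m+1}\,dx,
\]
which is a harmless Gronwall term. Proceeding exactly as in Lemma~\ref{l10}, I obtain the analogues of \eqref{est:rho1}--\eqref{est:rho4} for $(\varrho_m,\mathrm{P}_m)$: uniform bounds on $\varrho_m$ in $L^\infty(0,T;L^{m+1})$, on $\nabla\varrho_m^m$ in $L^2(Q_T)$, on $\mathrm{P}_m$ and $\nabla\mathrm{P}_m$ in $L^2(Q_T)$, and on $\varrho_m \mathrm{P}_m\leq 2\varrho_m^m$ in $L^2(Q_T)$. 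The argument of Lemma~\ref{l6} transfers verbatim to give $\sup_{t\leq T}\||\varrho_m(t)-1|_+\|_{L^2}\leq C(T)/\sqrt m$, hence $\varrho_\infty\leq 1$ a.e. Moreover, repeating Lemma~\ref{l18} on $\varrho_m$ (using that $\partial_t \mathcal{N}\ast\varrho_m=\varrho_m^m+\nabla\cdot\mathcal{N}\ast(\varrho_m\nabla\Phi_{1/m})$, which is bounded in $L^2(Q_T)$), we obtain strong convergence of $\nabla\mathcal{N}\ast\varrho_m$ to $\nabla\mathcal{N}\ast\varrho_\infty$ in $L^2_{loc}(Q_T)$.

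Passing to the limit is then straightforward: since $\varrho_m\rightharpoonup\varrho_\infty$ weakly in $L^2(Q_T)$ and $\nabla\Phi_{1/m}\to \nabla\mathcal{N}\ast\rho_\infty$ strongly in $L^2_{loc}(Q_T)$, weak-strong convergence gives $\varrho_m\nabla\Phi_{1/m}\rightharpoonup \varrho_\infty\nabla\mathcal{N}\ast\rho_\infty$ in $L^1_{loc}$. Combined with $\varrho_m^m=\frac{m-1}{m}\varrho_m\mathrm{P}_m\rightharpoonup\mathrm{P}_\infty$ (argued as in \eqref{m37}: truncate $\mathrm{P}_m$ at level $A>1$ and send $A\to\infty$, which here is simpler because the truncated part contributes $O(1/A)$ from the uniform $L^{(2+2/n)m+2/n}$ bound on $\varrho_m$), this yields \eqref{w3} in $\mathcal D'(Q_T)$. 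The main delicate step, as in Theorem~\ref{tAEbis}, is to upgrade $\varrho_m\mathrm{P}_m\rightharpoonup \varrho_\infty\mathrm{P}_\infty$ so as to deduce $(1-\varrho_\infty)\mathrm{P}_\infty=0$. I would use $\Delta\mathcal{N}\ast\varrho_m=\varrho_m$ and integrate by parts as in \eqref{m38}: for $\varphi\in C_c^\infty(Q_T)$,
\[
\int_{Q_T}\varrho_m\mathrm{P}_m\varphi\,dxdt=-\int_{Q_T}\nabla\mathcal{N}\ast\varrho_m\cdot\nabla\mathrm{P}_m\,\varphi\,dxdt-\int_{Q_T}\nabla\mathcal{N}\ast\varrho_m\cdot\nabla\varphi\,\mathrm{P}_m\,dxdt,
\]
and the right-hand side passes to the limit using the strong convergence of $\nabla\mathcal{N}\ast\varrho_m$ together with the weak convergence of $\nabla\mathrm{P}_m$ and $\mathrm{P}_m$ in $L^2(Q_T)$. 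This is the only place where a genuinely new estimate (the analogue of Lemma~\ref{l18} for $\varrho_m$) is required; once it is in hand, \eqref{w4} and \eqref{w6} follow exactly as in Theorem~\ref{tAEbis}.
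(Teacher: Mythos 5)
Your proposal is correct and takes essentially the same route the paper indicates: the paper explicitly omits the proof, stating only that it is ``similar to, but easier than'' that of Theorems~\ref{tAE}--\ref{tAEbis}, and you carry out exactly that program, correctly identifying where the fixed external drift $\nabla\Phi_{1/m}$ (with its uniform $L^\infty$ bounds and $\|\Delta\Phi_{1/m}\|_{L^\infty}\leq 1$) simplifies the estimates, and reusing the $\Delta\mathcal{N}\ast\varrho_m=\varrho_m$ integration-by-parts trick from \eqref{m38} to obtain $\varrho_m\mathrm{P}_m\rightharpoonup\varrho_\infty\mathrm{P}_\infty$ and hence \eqref{w4}.
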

\begin{proof}We omit the detailed proof of this theorem, because its proof is
similar to, but easier than, that of Theorems~\ref{tAE}--\ref{tAEbis}.
\end{proof}

It is easy for us to prove that the Hele-Shaw limit system \eqref{z6}--\ref{z8} and
\eqref{w3}--\eqref{w6} have same solutions if we have the same initial assumptions. In
other words, if we get a uniform support of $\varrho_{m}$ and $\mathrm{P}_{m}$,
naturally, we can obtain the supports of $\rho_{\infty}$ and $P_{\infty}$.
\begin{lemma}
For $(\rho_{\infty}$,$P_{\infty})$ in Theorem \ref{tAEbis} and for
$(\varrho_{\infty}$,$\mathrm{P}_{\infty})$ in Theorem \ref{t6} with the initial
assumption $\rho_{\infty}(x,0)=\varrho_{\infty}(x,0)\in \dot{H}^{-1}(\mathbb{R}^n)$, then it follows
\begin{equation*}
\rho_{\infty}=\varrho_{\infty},\quad P_\infty=\mathrm{P}_\infty,\quad \text{a.e. }(x,t)\in Q.
\end{equation*}
\end{lemma}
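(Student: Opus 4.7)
The plan is to mimic the $\dot{H}^{-1}$ lifting argument used in Proposition~\ref{p1}, exploiting the fact that the difference $\rho_{\infty}-\varrho_{\infty}$ satisfies an equation whose drift is already linearized around the \emph{same} fixed velocity field $\nabla\mathcal{N}\ast\rho_{\infty}$ on both sides, so no extra nonlinear interaction term needs to be handled.

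First, I would set $\psi(t):=\mathcal{N}\ast(\rho_{\infty}(t)-\varrho_{\infty}(t))$, which lies in $C(0,T;\dot H^1(\mathbb{R}^n))$ thanks to $\rho_{\infty},\varrho_{\infty}\in L^\infty(Q_T)\cap L^\infty(0,T;L^1(\mathbb{R}^n))$, and observe that $\|\nabla\psi(t)\|_{L^2}^2=\|\rho_\infty(t)-\varrho_\infty(t)\|_{\dot H^{-1}}^2$. Subtracting \eqref{z6} from \eqref{w3} I obtain
\[
\partial_t(\rho_\infty-\varrho_\infty)=\Delta(P_\infty-\mathrm{P}_\infty)+\nabla\cdot\bigl((\rho_\infty-\varrho_\infty)\nabla\mathcal{N}\ast\rho_\infty\bigr),
\]
which, after the same cutoff-$\varphi_R$ regularization used in Proposition~\ref{p1} and pairing in duality with $\psi$, yields
\[
\tfrac{1}{2}\tfrac{d}{dt}\|\nabla\psi\|_{L^2}^2=-\int_{\mathbb{R}^n}(P_\infty-\mathrm{P}_\infty)(\rho_\infty-\varrho_\infty)\,dx+\int_{\mathbb{R}^n}\Delta\psi\,\nabla\mathcal{N}\ast\rho_\infty\cdot\nabla\psi\,dx=:J_1+J_2.
\]

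The key monotonicity observation (exactly as in \eqref{z9}) is that $(1-\rho_\infty)P_\infty=0$ and $(1-\varrho_\infty)\mathrm{P}_\infty=0$ together with $0\le\rho_\infty,\varrho_\infty\le 1$ give
\[
(P_\infty-\mathrm{P}_\infty)(\rho_\infty-\varrho_\infty)=(1-\varrho_\infty)P_\infty+(1-\rho_\infty)\mathrm{P}_\infty\ge 0,
\]
so $J_1\le 0$. For $J_2$, I would repeat verbatim the integration by parts of \eqref{z10}: one derivative on $\nabla\mathcal{N}\ast\rho_\infty$ produces $\rho_\infty$ and the contribution $\tfrac{1}{2}\int|\nabla\psi|^2\rho_\infty\,dx\lesssim\|\nabla\psi\|_{L^2}^2$, while the term with $\nabla^2\mathcal{N}\ast\rho_\infty$ is estimated by the Calderón-Zygmund bound $\|\nabla^2\mathcal{N}\ast\rho_\infty\|_{L^p}\lesssim p\|\rho_\infty\|_{L^p}\le C\,p$ (Lemma~\ref{l12}) combined with H\"older and the uniform $L^\infty$ bound on $\nabla\psi$, giving
\[
|J_2|\le \hat C\,p\,\|\nabla\psi\|_{L^2}^{2(1-1/p)}+\tfrac{1}{2}\|\nabla\psi\|_{L^2}^2.
\]

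Setting $\gamma(t):=\|\nabla\psi(t)\|_{L^2}^2$, the two estimates combine into
\[
\tfrac{d}{dt}\gamma(t)\le \hat C\,p\,\max\{\gamma(t)^{1-1/p},\gamma(t)\},\qquad \gamma(0)=0,
\]
(the initial value vanishes by the assumption $\rho_\infty(0)=\varrho_\infty(0)$ in $\dot H^{-1}$). Comparison with the maximal solution $\bar\gamma(t)=(\hat C t)^p$ on a time interval $[0,1/(4\hat C)]$ independent of $p$, followed by letting $p\to\infty$, forces $\gamma\equiv 0$ on that interval, and iteration extends the conclusion to all of $\mathbb{R}_+$. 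Hence $\rho_\infty=\varrho_\infty$ a.e., and then $P_\infty=\mathrm{P}_\infty$ follows by revisiting the equation and using $\Delta(P_\infty-\mathrm{P}_\infty)=0$ together with the support/integrability of the pressures.

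The main obstacle I anticipate is technical rather than conceptual: justifying the duality pairing $\langle \partial_t(\rho_\infty-\varrho_\infty),\psi\rangle$ rigorously, since $\psi$ is not compactly supported. This is handled, as in Proposition~\ref{p1}, by inserting the cutoff $\varphi_R$ inside $\mathcal{N}\ast[(\rho_\infty-\varrho_\infty)\varphi_R]$ and passing to the limit $R\to\infty$ using the decay of $\nabla\psi$ in $L^2$ and the $L^\infty$ bounds on the drift; no essentially new difficulty arises compared to the uniqueness proof already carried out.
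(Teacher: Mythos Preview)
Your proposal is correct and follows exactly the route the paper intends: the paper explicitly states that the proof is ``similar to but easier than the proof of Proposition~\ref{p1}'' and omits the details, and you have supplied precisely those details, correctly noting that the simplification comes from both limit equations sharing the \emph{same} drift $\nabla\mathcal{N}\ast\rho_\infty$, so only the analogue of the term $I_2$ survives (no $I_3$). Your treatment of the monotonicity $J_1\le 0$, the $J_2$ estimate via Calder\'on--Zygmund, the differential inequality for $\gamma$, and the cutoff justification all mirror Proposition~\ref{p1} faithfully.
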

\begin{proof}
	The proof of this lemma is similar to but easier than the proof of
Proposition~\ref{p1}, hence, we omit the detailed processes.
	\end{proof}
	
\begin{lemma}
	Let $\varrho_m$ be a nonnegative weak solution to Eq.~\eqref{w1} for any
continuous, compactly supported initial data $\varrho_{m,0}$. Then the pressure
varible $\mathrm{P}_m$ is a viscosity solution to \eqref{w2}.
\end{lemma}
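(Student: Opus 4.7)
The plan is to prove this by a regularization argument that produces smooth, strictly positive classical solutions to (w1), for which the claim is trivial, and then to invoke the stability of viscosity solutions under locally uniform convergence. This is the classical strategy for the porous medium equation (Caffarelli--Vázquez, Aronson), and the key observation here is that the drift $\nabla\Phi_{1/m}$ is already $C^\infty$ in $x$ with bounded derivatives (since $\Phi_{1/m}=\zeta_{1/m}\ast(\mathcal{N}\ast\rho_\infty)$ is a spatial mollification of a bounded function obtained from Theorem~\ref{tAEbis}), so the classical PME arguments extend with only cosmetic changes.

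First, I would introduce a positivity-preserving approximation $\varrho_{m,0}^\varepsilon:=\omega_\varepsilon\ast\varrho_{m,0}+\varepsilon\chi_R$, where $\omega_\varepsilon$ is a standard mollifier and $\chi_R\in C_c^\infty$ equals $1$ on a large ball containing $\mathrm{supp}(\varrho_{m,0})$ and the propagated support on $[0,T]$. On that ball, $\varrho_{m,0}^\varepsilon\geq\varepsilon>0$, so the regularized problem
\[
\partial_t\varrho_m^\varepsilon=\Delta(\varrho_m^\varepsilon)^m+\nabla\cdot\bigl(\varrho_m^\varepsilon\nabla\Phi_{1/m}\bigr),\qquad \varrho_m^\varepsilon(\cdot,0)=\varrho_{m,0}^\varepsilon,
\]
is uniformly parabolic (after a logarithmic change of variable, $m\varrho^{m-1}$ is bounded below by $m\varepsilon^{m-1}$), and standard parabolic Schauder theory produces a unique classical solution $\varrho_m^\varepsilon\in C^{2,1}$ which stays strictly positive by comparison with the ODE sub-solution $\varepsilon e^{-\|\Delta\Phi_{1/m}\|_\infty t}$.

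Next, the pressure variable $\mathrm{P}_m^\varepsilon=\frac{m}{m-1}(\varrho_m^\varepsilon)^{m-1}$ then satisfies Eq.~\eqref{w2} in the classical pointwise sense on its domain, and a classical solution is automatically a viscosity solution: whenever $\mathrm{P}_m^\varepsilon-\varphi$ attains a local extremum at $(x_0,t_0)$, the derivatives of $\mathrm{P}_m^\varepsilon$ and $\varphi$ coincide (in the appropriate one-sided sense), and the equation evaluated at $(x_0,t_0)$ gives the viscosity inequality for $\varphi$. The final step is to let $\varepsilon\to 0$. By comparison for (w1) with smooth drift, $\varrho_m^\varepsilon$ is monotone in $\varepsilon$; the maximum principle applied to (w2) (using the bound on $\Delta\Phi_{1/m}$) gives a uniform $L^\infty$ bound on $\mathrm{P}_m^\varepsilon$; and the Aronson--Bénilan semiconvexity estimate, adapted to include the smooth drift ($\Delta\mathrm{P}_m^\varepsilon+\Delta\Phi_{1/m}\geq -C/t$), together with finite speed of propagation, yields a uniform modulus of continuity for $\{\mathrm{P}_m^\varepsilon\}_\varepsilon$ on compact subsets of $Q_T$. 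By Arzelà--Ascoli we extract a locally uniform limit, identify it with $\mathrm{P}_m$ via uniqueness of weak solutions to (w1), and conclude by the standard half-relaxed-limit stability that $\mathrm{P}_m$ is a viscosity solution of Eq.~\eqref{w2}.

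The main obstacle is the third step: obtaining equicontinuity of $\{\mathrm{P}_m^\varepsilon\}_\varepsilon$ up to and across the free boundary $\partial\{\mathrm{P}_m>0\}$. Away from the free boundary, interior parabolic estimates suffice; near it, equicontinuity rests on the semiconvexity estimate combined with the $L^\infty$ bound (giving a one-sided Lipschitz bound in space, hence a full modulus of continuity in $x$, and then in $t$ via the equation). The smooth drift $\nabla\Phi_{1/m}$ contributes only lower-order, bounded terms and does not affect the degeneracy structure, so the argument goes through as in the classical pressureless PME treatment.
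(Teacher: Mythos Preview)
Your sketch is sound and follows the classical regularization-and-stability route for porous medium equations with smooth drift. The paper, however, does not reproduce this argument: its entire proof is a direct citation of \cite[Corollary~3.11]{LeiKim}, which already establishes that the pressure associated to a weak solution of a PME with a smooth drift potential is a viscosity solution of the corresponding pressure equation. Since $\Phi_{1/m}=\zeta_{1/m}\ast(\mathcal{N}\ast\rho_\infty)$ is $C^\infty$ with bounded derivatives, Eq.~\eqref{w1} falls squarely within the scope of that result. What you have written is essentially a reconstruction of the argument underlying Kim--Lei's corollary; both approaches lead to the same conclusion, but the paper treats this as a black box rather than reproving it.

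One minor technical remark on your construction: adding $\varepsilon\chi_R$ with $\chi_R\in C_c^\infty$ does not produce a globally strictly positive initial datum, so the regularized problem is not uniformly parabolic on all of $\mathbb{R}^n$. The standard fix is either to work on a large ball with Dirichlet data and use finite propagation speed to ensure the boundary is never reached, or simply to add a constant $\varepsilon$ and accept non-compact support at the approximate level (the $L^\infty$ and support bounds still pass to the limit). This is cosmetic and does not affect the validity of your overall strategy.
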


\begin{proof}
	The result follows from \cite[Corollary 3.11]{LeiKim}
\end{proof}
We now turn to the  $L^\infty$ estimate and the support of the solutions to
\eqref{w3}, which are uniform on $m$. The first ensures that if the initial data
is bounded
uniformly on $m$, it remains uniformly bounded within any finite time. The second
ensures that
if the support of the initial data is bounded uniformly on $m$, it likewise
remains uniformly bounded within any finite time.
\begin{lemma}
	[$L^\infty$ estimate and support of $\mathrm{P}_m$] $\label{l17}$Let
$\mathrm{P}_{m}$ be a viscosity
    solution to Eq.~\eqref{w3} with continuous, compactly supported initial data
    $\mathrm{P}_{m}(\cdot,0)$. Suppose that there exists $\mathcal{R}_{0}\geq 1$
    sufficiently large so that $ {\rm supp}(\mathrm{P}_{m}(\cdot,0))\subseteq
    B_{\mathcal{R}_0/2}$ and $\mathrm{P}_{m}(\cdot,0)\leq \frac{\mathcal{R}_{0}^2}{4n}$.
Then there exist $\mathcal{R}(t):=\big(\mathcal{R}_{0}+n\|\nabla\mathcal{N}\ast\rho_\infty\|_{L^\infty(Q)}\big)e^{\frac{t}{n}}-n\|\nabla\mathcal{N}\ast\rho_\infty\|_{L^\infty(Q)}$ such that
		\begin{itemize}
			\item $\{\varrho_m(\cdot,t)>0\}\subseteq B_{\mathcal{R}(t)}$, for all
$t\in[0,\infty)$,
			\item $\mathrm{P}_{m}(x,t)\leq \frac{\mathcal{R}^2(t)}{2n}$, for all
$(x,t)\in Q$.
		\end{itemize}
\end{lemma}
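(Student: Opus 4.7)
The plan is to build an explicit quadratic barrier whose positivity set expands according to the ODE that yields the exact formula for $\mathcal{R}(t)$ in the statement, and to compare it with $\mathrm{P}_m$ via the viscosity comparison principle just cited. Setting $\kappa:=\|\nabla\mathcal{N}\ast\rho_\infty\|_{L^\infty(Q)}$, I would define
\[
\Psi(x,t):=\frac{1}{2n}\bigl(\mathcal{R}(t)^2-|x|^2\bigr)_+,
\]
where $\mathcal{R}$ solves $\mathcal{R}'=(\mathcal{R}+n\kappa)/n$ with $\mathcal{R}(0)=\mathcal{R}_0$, whose explicit solution $(\mathcal{R}_0+n\kappa)e^{t/n}-n\kappa$ is precisely the $\mathcal{R}(t)$ of the statement. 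By construction $\max_x\Psi(\cdot,t)=\mathcal{R}(t)^2/(2n)$ and $\{\Psi(\cdot,t)>0\}=B_{\mathcal{R}(t)}$.

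Verifying ordering at $t=0$ is immediate: for $|x|\leq \mathcal{R}_0/2$ we have $\Psi(x,0)\geq 3\mathcal{R}_0^2/(8n)>\mathcal{R}_0^2/(4n)\geq \mathrm{P}_m(x,0)$, and both functions vanish for $|x|>\mathcal{R}_0$, so $\mathrm{P}_m(\cdot,0)\leq\Psi(\cdot,0)$. The main computation is to check that $\Psi$ is a classical supersolution of the pressure equation \eqref{w2} inside $\{\Psi>0\}$. Substituting the elementary derivatives $\partial_t\Psi=\mathcal{R}\mathcal{R}'/n$, $\nabla\Psi=-x/n$, $\Delta\Psi=-1$, and using the crucial uniform bound $\Delta\Phi_{1/m}=\zeta_{1/m}\ast\rho_\infty\in[0,1]$ inherited from Theorem~\ref{tAEbis}, the required inequality reduces to
\[
\frac{\mathcal{R}\mathcal{R}'}{n}+(m-1)\Psi\bigl(1-\Delta\Phi_{1/m}\bigr)\;\geq\;\frac{|x|^2}{n^2}-\frac{x}{n}\!\cdot\!\nabla\Phi_{1/m}.
\]
The $(m-1)$-term on the left is nonnegative and can be dropped; for $|x|\leq\mathcal{R}(t)$ the right-hand side is bounded by $\mathcal{R}(\mathcal{R}+n\kappa)/n^2$, and matching this with $\mathcal{R}\mathcal{R}'/n$ is exactly the ODE that defines $\mathcal{R}$.

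Applying the viscosity comparison principle between $\mathrm{P}_m$ and $\Psi$, granted by \cite[Corollary~3.11]{LeiKim} (whose framework accommodates the smooth, uniformly bounded drift $\nabla\Phi_{1/m}$), yields $\mathrm{P}_m\leq\Psi$ on $Q$. This single inequality gives simultaneously the $L^\infty$ bound $\mathrm{P}_m(x,t)\leq\mathcal{R}(t)^2/(2n)$ and the support inclusion $\{\varrho_m(\cdot,t)>0\}=\{\mathrm{P}_m(\cdot,t)>0\}\subseteq B_{\mathcal{R}(t)}$. I expect the main obstacle to be the viscosity-sense verification of the supersolution property across the moving free boundary $|x|=\mathcal{R}(t)$, where $\Psi$ is only Lipschitz; this is the step where the mollified drift $\nabla\Phi_{1/m}$ (smooth, with $\Delta\Phi_{1/m}\leq 1$ uniformly in $m$) is essential to fall within the hypotheses of \cite{LeiKim,CKY_2018}. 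Away from this boundary the pointwise supersolution inequality is classical and, crucially, uniform in $m$ because the term carrying the factor $(m-1)$ drops out once the sign of $1-\Delta\Phi_{1/m}$ is invoked.
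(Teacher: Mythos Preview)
Your proposal is correct and is precisely the barrier-comparison argument that the paper defers to \cite[Lemma~3.8]{CKY_2018}; indeed the same quadratic barrier $\Psi(x,t)=\frac{1}{2n}(\mathcal{R}(t)^2-|x|^2)_+$ reappears in Appendix~\ref{sec:comsupp} with a prefactor $A_{m,T}$ that here reduces to~$1$ because $\Delta\Phi_{1/m}=\zeta_{1/m}\ast\rho_\infty\in[0,1]$. Your identification of the ODE for $\mathcal{R}$ and the role of the sign of $(m-1)\Psi(1-\Delta\Phi_{1/m})$ in making the supersolution property uniform in $m$ are exactly the points on which the cited proof rests.
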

\begin{proof}
	The result follows from \cite[Lemma 3.8]{CKY_2018}.
\end{proof}
\medskip

 When the initial density $\rho_{\infty,0}$ satisfies $0\leq \rho_{\infty,0}\leq 1$ and  is compactly supported,
we show that the solution $(\rho_\infty, P_{\infty})$ to \eqref{z6}--\eqref{z8} are bounded and  compactly supported for all
times.
\begin{theorem}[Compact support]\label{t5}
	Suppose that there exists $\mathcal{R}_0$ sufficiently large such that both
$ {\rm supp}(\rho_{\infty}(\cdot,0))\subseteq B_{\mathcal{R}_0/2}$ and $1+\frac{n}{n-2}\leq \frac{\mathcal{R}_0^2}{4n}$, and also both
$\rho_{\infty}(\cdot,0)\in L^1_+(\mathbb{R}^n)$ and $0\leq
\rho_{\infty}(\cdot,0)\leq 1$ hold. Then, for all $t>0$, the solution $(\rho_\infty ,P_\infty)$  to the Cauchy problem for the
Hele-Shaw limit system \eqref{z6}--\eqref{z8} satisfies
\begin{itemize}
			\item $\{\rho_{\infty}(\cdot,t)>0\}\subseteq B_{\mathcal{R}(t)} $,
			\item $P_{\infty}(\cdot ,t)\leq \frac{\mathcal{R}^2(t)}{2n}$,
\end{itemize}
where $\mathcal{R}(t)$ is given in Lemma~\ref{l17}.
\end{theorem}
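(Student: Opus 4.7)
The plan is to transfer the uniform support and pressure bound from the approximating sequence $(\varrho_m,\mathrm{P}_m)$ of Theorem~\ref{t6} to the Hele-Shaw limit solution $(\rho_\infty,P_\infty)$ via the uniqueness lemma that immediately precedes Theorem~\ref{t5}. The key observation is that Lemma~\ref{l17} already provides exactly the type of bounds we want, but at the level of $(\varrho_m,\mathrm{P}_m)$; the role of the hypothesis $1+\tfrac{n}{n-2}\leq \tfrac{\mathcal{R}_0^2}{4n}$ is precisely to make the hypotheses of Lemma~\ref{l17} accessible independently of $m$.

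First, I would pick an approximating initial datum for Eq.~\eqref{w1}: let $\varrho_{m,0}$ be a continuous, compactly supported approximation of $\rho_{\infty,0}$ with $\mathrm{supp}(\varrho_{m,0})\subseteq B_{\mathcal{R}_0/2}$ and $0\leq \varrho_{m,0}\leq 1$, converging to $\rho_{\infty,0}$ in $L^1(\mathbb{R}^n)\cap \dot{H}^{-1}(\mathbb{R}^n)$ as $m\to\infty$ (standard mollification plus truncation inside $B_{\mathcal{R}_0/2}$ does the job, since by assumption $\mathrm{supp}(\rho_{\infty,0})\subseteq B_{\mathcal{R}_0/2}$). The associated initial pressure $\mathrm{P}_{m,0}=\tfrac{m}{m-1}\varrho_{m,0}^{m-1}$ then satisfies the pointwise bound $\mathrm{P}_{m,0}\leq \tfrac{m}{m-1}\leq 1+\tfrac{n}{n-2}\leq \tfrac{\mathcal{R}_0^2}{4n}$ once $m$ is taken large enough, which are exactly the two hypotheses needed to invoke Lemma~\ref{l17}.

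Second, I would apply Lemma~\ref{l17} to obtain, uniformly in $m$,
\[
\mathrm{supp}(\varrho_m(\cdot,t))\subseteq B_{\mathcal{R}(t)},\qquad \mathrm{P}_m(x,t)\leq \frac{\mathcal{R}^2(t)}{2n},
\]
with the same $\mathcal{R}(t)$ that appears in the theorem, because the drift in Eq.~\eqref{w1} is $\nabla\Phi_{1/m}=\zeta_{1/m}*\nabla\mathcal{N}*\rho_\infty$, whose $L^\infty$ norm is bounded by $\|\nabla\mathcal{N}*\rho_\infty\|_{L^\infty(Q)}$. Third, I pass to the weak $L^2(Q_T)$ limit granted by Theorem~\ref{t6}. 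The support statement survives the weak limit: for any non-negative $\varphi\in C_0^\infty(\mathbb{R}^n\setminus \overline{B_{\mathcal{R}(t)}})$ one has $\int \varrho_m(\cdot,t)\varphi=0$ for all large $m$, hence the same for $\varrho_\infty$ (up to a.e. time, which is enough since the support is characterized a.e.). The pointwise pressure bound passes to the weak-$*$ limit in $L^\infty$ because $\mathcal{R}^2(t)/(2n)$ is independent of $m$ and uniform in $x$.

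Finally, I apply the uniqueness lemma stated just before Theorem~\ref{t5}: since $\rho_\infty$ and $\varrho_\infty$ solve the same Hele-Shaw system Eqs.~\eqref{z6}--\eqref{z8} (noting that $\Phi_{1/m}\to\mathcal{N}*\rho_\infty$ strongly, so the drift in the limit \eqref{w3} coincides with the nonlocal term in \eqref{z6}) and share the initial datum $\rho_{\infty,0}\in \dot H^{-1}(\mathbb{R}^n)$, we conclude $\rho_\infty=\varrho_\infty$ and $P_\infty=\mathrm{P}_\infty$ a.e. on $Q$, which transfers both bounds of the theorem to $(\rho_\infty,P_\infty)$. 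The main obstacle I expect is the third step, namely ensuring that the pointwise/support properties genuinely persist under the weak $L^2$ convergence: for the pressure bound this is weak-$*$ $L^\infty$ lower semicontinuity, and for the support it requires showing that outside $B_{\mathcal{R}(t)}$ the weak limit is zero a.e., which follows by testing against approved non-negative cutoffs. Everything else is bookkeeping on the approximation and on invoking the already-proved uniqueness.
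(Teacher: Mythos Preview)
Your proposal is correct and follows essentially the same route as the paper: invoke Lemma~\ref{l17} on the approximating problem \eqref{w1}, pass to the limit via Theorem~\ref{t6}, and identify the limit with $(\rho_\infty,P_\infty)$ through the uniqueness lemma. The paper's proof is in fact a single line: it simply sets $\rho_{m,0}=\rho_{\infty,0}$ for $m\geq 2n-1$, observes $P_{m,0}=\tfrac{m}{m-1}\rho_{\infty,0}^{m-1}\leq \tfrac{m}{m-1}\leq 1+\tfrac{n}{n-2}\leq \tfrac{\mathcal{R}_0^2}{4n}$, and then appeals to Lemma~\ref{l17}. Your extra mollification step to secure the continuity hypothesis of Lemma~\ref{l17}, and your explicit discussion of how the support and $L^\infty$ bounds survive the weak limit, are details the paper leaves implicit.
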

\begin{proof}
If we set $\rho_{m,0}=\rho_{\infty,0}$ for $m\geq 2n-1$, so it holds $P_{m,0}=\frac{m}{m-1}\rho_{m,0}^{m-1}\leq 1+\frac{n}{n-2}\leq \frac{\mathcal{R}_0^2}{4n}$.	According to Lemma~\ref{l17}, it  is easy to complete the proof of Theorem~\ref{t5}.
\end{proof}
%%%%%%%%%%%%%%%%

%%%%%%%%%%%%%%%%%%%%%
In the end, we will establish the limit energy functional for the Cauchy problem of the Hele-Shaw problem \eqref{z6}--\eqref{z15}.
For the PKS model Eq.~\eqref{d1} with the diffusion exponent $1<m<\infty$, the
energy functional is given by
\begin{equation*}
\frac{dF_m(\rho_m)}{dt}+\int_{\mathbb{R}^n}\rho_{m}|\nabla
P_m+\nabla\mathcal{N}\ast\rho_{m}|^2dx=0.
\end{equation*}
The above equality shows that the free energy decreases as the time increases.
Formally, as $m\to\infty$, the limit free energy satisfies
\begin{equation*}
F_{\infty}(\rho_{\infty})=\frac{1}{2}\int_{\mathbb{R}^n}\rho_{\infty}\mathcal{N}\ast\rho_{\infty}dx,\
\quad 0\leq \rho_\infty\leq 1,
\end{equation*}
in which the diffusive effect is replaced by the height constraint of the limit
density,
and the limit energy functional is expressed as
\begin{equation}\label{w7}
\frac{dF_{\infty}(\rho_{\infty}(t))}{dt}+\int_{\mathbb{R}^n}\rho_{\infty}(t)|\nabla
P_\infty(t)+\nabla\mathcal{N}\ast\rho_{\infty}(t)|^2dx=0,\quad 0\leq \rho_\infty\leq 1.
\end{equation}

In the following theorem, we show that the limit energy functional \eqref{w7}
holds.
\begin{theorem}[Limit energy functional]$\label{t4}$
Under the initial assumptions \eqref{c1}--\eqref{c4} and the uniform support assumption for the initial density~\eqref{c5},  let $\rho_{\infty}$ and
$P_{\infty}$ be the limit density and the limit pressure respectively as in
Theorems~\ref{tAE}--\ref{t15}. Then, for a.e. $t\in[0,\infty)$, the
limit energy functional \eqref{w7} holds.
\end{theorem}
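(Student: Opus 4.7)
The plan is to integrate the $m$-level free energy identity~\eqref{f1} over $[0,t]$, which gives
\begin{equation*}
F_m(\rho_m(t)) + \int_0^t \int_{\mathbb{R}^n} \rho_m |\nabla P_m + \nabla\mathcal{N}\ast\rho_m|^2 \, dx\, ds = F_m(\rho_{m,0}),
\end{equation*}
and to pass to the limit $m \to \infty$ in each term. Once the limit integrated identity is established, absolute continuity of $t \mapsto F_\infty(\rho_\infty(t))$ (since its $t$-derivative will be an $L^1_t$ quantity) and differentiation in $t$ yield~\eqref{w7} for a.e. $t\geq 0$.

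For the free energy, split $F_m(\rho_m) = \frac{1}{m-1}\int \rho_m^m \, dx - \frac{1}{2}\int |\nabla\mathcal{N}\ast\rho_m|^2 \, dx$. The polytropic part vanishes in the limit: by~\eqref{c5} and Theorem~\ref{t5}, $\mathrm{supp}(\rho_m(t))\subset B_{\mathcal{R}(t)}$ uniformly in $m$, so H\"older's inequality combined with $\sup_t \|\rho_m(t)\|_{L^{m+1}} \leq C(T)$ from Lemma~\ref{l10} yields
\begin{equation*}
\frac{1}{m-1} \int_{\mathbb{R}^n} \rho_m^m(t)\, dx \leq \frac{|B_{\mathcal{R}(t)}|^{1/(m+1)}}{m-1} \|\rho_m(t)\|_{L^{m+1}}^m \longrightarrow 0.
\end{equation*}
For the potential part, use $\int |\nabla\mathcal{N}\ast\rho_m|^2 \, dx = -\int (\mathcal{N}\ast\rho_m)\rho_m\, dx$: since $\rho_m$ is compactly supported uniformly in $m$, $\mathcal{N}\ast\rho_m \to \mathcal{N}\ast\rho_\infty$ strongly in $L^2_{loc}$ by Sobolev embedding from the $\dot{W}^{1,q}$ bound in Lemma~\ref{l18}, and $\rho_m \rightharpoonup \rho_\infty$ weakly in $L^2$, so weak-strong pairing on the compact support gives convergence to $-\int (\mathcal{N}\ast\rho_\infty)\rho_\infty \, dx = \int|\nabla\mathcal{N}\ast\rho_\infty|^2 \, dx$. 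Hence $F_m(\rho_m(t)) \to F_\infty(\rho_\infty(t))$ for a.e. $t$, with the analogous convergence at $t=0$ provided by~\eqref{c1}.

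For the dissipation, expand the square into three integrals. The identity $\rho_m \nabla P_m = \nabla\rho_m^m$, combined with the weak $L^2$ convergence $\rho_m^m \rightharpoonup P_\infty$ from~\eqref{m37} and the uniform $L^2$ bound on $\nabla\rho_m^m$ in Lemma~\ref{l10}, yields $\rho_m \nabla P_m \rightharpoonup \nabla P_\infty$ weakly in $L^2(Q_T)$. Coupling this with the strong $L^2_{loc}$ convergence $\nabla P_m \to \nabla P_\infty$ from Theorem~\ref{t13} on the uniform compact support, weak-strong pairing gives
\begin{equation*}
\int_{Q_t}\rho_m |\nabla P_m|^2\, dx\, ds \longrightarrow \int_{Q_t} |\nabla P_\infty|^2\, dx\, ds.
\end{equation*}
Parallel weak-strong arguments, using the local strong convergence of $\nabla\mathcal{N}\ast\rho_m$ from Lemma~\ref{l18}, send the cross term to $\int_{Q_t}\nabla P_\infty\cdot\nabla\mathcal{N}\ast\rho_\infty\, dx\, ds$ and the pure potential term to $\int_{Q_t}\rho_\infty|\nabla\mathcal{N}\ast\rho_\infty|^2\, dx\, ds$. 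Finally, invoking the identity $\rho_\infty \nabla P_\infty = \nabla P_\infty$ obtained in the last statement of Theorem~\ref{t15}, the three limits recombine exactly into $\int_{Q_t}\rho_\infty|\nabla P_\infty + \nabla\mathcal{N}\ast\rho_\infty|^2 \, dx\, ds$, closing the limit identity.

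The main obstacle is executing the passage to the limit in the dissipation on the unbounded domain $\mathbb{R}^n$: both the weak convergence of $\rho_m$ and the strong convergence of $\nabla P_m$ are only local, so the uniform compact support provided by~\eqref{c5} and Theorem~\ref{t5} is essential to reduce every spatial integral to a fixed compact set where weak-strong pairing applies. A secondary technical point is the recombination of the three limit pieces into the single quadratic expression, for which the algebraic identity $\rho_\infty \nabla P_\infty = \nabla P_\infty$ is the key ingredient.
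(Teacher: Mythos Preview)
Your approach is genuinely different from the paper's: you try to pass to the limit $m\to\infty$ in the integrated $m$-level energy identity, whereas the paper works entirely at the limit level, multiplying the Hele-Shaw equation~\eqref{z6} by $\mathcal{N}\ast\rho_\infty$, integrating by parts, and using the complementarity relation~\eqref{z15} together with $\rho_\infty\nabla P_\infty=\nabla P_\infty$ to identify the dissipation directly. The paper's route only requires compact support of the \emph{limit} $\rho_\infty$, which is exactly what Theorem~\ref{t5} provides.

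Your route, by contrast, hinges on the claim that $\mathrm{supp}(\rho_m(t))\subset B_{\mathcal{R}(t)}$ \emph{uniformly in $m$}, which you attribute to Theorem~\ref{t5}. This is the gap: Theorem~\ref{t5} says nothing about $\rho_m$; it only controls $\mathrm{supp}(\rho_\infty)$. The finite speed of propagation for $\rho_m$ established in Appendix~\ref{sec:comsupp} depends on $\|\rho_m\|_{L^\infty(Q_T)}$, and the paper explicitly notes that no uniform-in-$m$ $L^\infty$ bound on pressure (hence density) is available because the nonlocal interaction destroys the comparison principle. Without uniform support, your weak--strong pairings cannot be localized to a fixed compact set, and the passage to the limit in the dissipation term on $\mathbb{R}^n$ is unjustified. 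Your polytropic argument also fails as written: from~\eqref{est:m+1} one has $\int\rho_m^{m+1}\,dx\leq C(1+t(m+1))$, so $\|\rho_m(t)\|_{L^{m+1}}^m=(\int\rho_m^{m+1})^{m/(m+1)}$ is of order $m$, and dividing by $m-1$ leaves an $O(1)$ term, not $o(1)$. (The vanishing of the polytropic part can be rescued via $\int\rho_m^m\leq C\|\nabla P_m\|_{L^2}^2+C(T)\leq C(T)$ from~\eqref{a8} and Theorem~\ref{t10}, but this still does not repair the support issue in the dissipation.)
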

\begin{proof}
Under the initial assumptions \eqref{c1}--\eqref{c4} and the additional initial uniform support assumption~\eqref{c5}, due to Theorem~\ref{t5}, it holds
\begin{equation*}
 {\rm supp}(P_\infty(t))\subset  {\rm supp}(\rho_\infty(t))\subset B_{\mathcal{R}(t)} ,
\end{equation*}
for $\mathcal{R}(t):=2\big(2\mathcal{R}_{0}+\frac{\|\nabla\mathcal{N}\ast\rho_\infty\|_{L^\infty(Q)}}{n}\big)e^{\frac{t}{n}}-\frac{2\|\nabla\mathcal{N}\ast\rho_\infty\|_{L^\infty(Q)}}{n}$ with some $\mathcal{R}_0\geq R_0>0$.

From Theorems~\ref{tAE}--\ref{t15}, for any $T>0$, we have
\begin{equation}\label{fe1}
\rho_\infty\in L^\infty\big(0,T;L^1_+(\mathbb{R}^n)\big)\cap L^\infty(Q_T),\quad P_\infty\in L^2\big(0,T;H^1(\mathbb{R}^n)\big),\quad \nabla P_\infty\in L^3(Q_T),
\end{equation}
\begin{equation}\label{fe2}
\mathcal{N}\ast\rho_{\infty}\in\mathcal{C}(0,T;\dot{W}^{1,r}(\mathbb{R}^n))\cap L^\infty(0,T;\cap \dot{W}^{2,s}(\mathbb{R}^n)) \text{ for }2\leq r\leq\infty \text{ and } 1<s<\infty.
\end{equation}Furthermore, we obtain
\begin{equation}\label{fe3}
\Delta P_\infty\in L^2(0,T;\dot{H}^{-1}(\mathbb{R}^n)\cap \dot{H}^{-2}(\mathbb{R}^n)),\quad \partial_t\rho_\infty \in L^2(0,T;\dot{H}^{-1}(\mathbb{R}^n)).
\end{equation}

	Thanks to the complementarity relation \eqref{z15} and Eq.~\eqref{z8}, integrating \eqref{z15} on $\mathbb{R}^n$ and integrating by
parts, then it follows from the regularities \eqref{fe1}--\eqref{fe3} that
\begin{equation}\label{w9b}
\int_{\mathbb{R}^n}P_{\infty}(\Delta
P_{\infty}+\rho_{\infty})dx=\int_{\mathbb{R}^n}P_{\infty}\rho_{\infty}-|\nabla
P_{\infty}|^2dx=0.
\end{equation}
Therefore, it follows
\begin{equation}\label{w8}
\begin{aligned}
\int_{\mathbb{R}^n}\rho_{\infty}|\nabla P_\infty+\nabla\mathcal{N}\ast\rho_{\infty}|^2dx
=&\int_{\mathbb{R}^n}\rho_{\infty}(|\nabla P_{\infty}|^2+2\nabla
P_{\infty}\cdot\nabla\mathcal{N}\ast\rho_{\infty}+|\nabla\mathcal{N}\ast\rho_{\infty}|^2)dx\\
=&\int_{\mathbb{R}^n}(|\nabla
P_{\infty}|^2-2P_{\infty}\rho_{\infty}+\rho_{\infty}|\nabla\mathcal{N}\ast\rho_{\infty}|^2)dx\\
=&\int_{\mathbb{R}^n}(-\rho_{\infty}P_{\infty}+\rho_{\infty}|\nabla\mathcal{N}\ast\rho_{\infty}|^2)dx,
\end{aligned}
\end{equation}
where \eqref{w9b} is used in the last equality. Multiplying \eqref{z6} by
$\mathcal{N}\ast\rho_{\infty}$ and integrating on $\mathbb{R}^n$, according to
the symmetry of convolution operator, we have
\begin{equation}\label{fi}
\frac{1}{2}\frac{d}{dt}\int_{\mathbb{R}^n}\rho_{\infty}\mathcal{N}\ast\rho_{\infty}dx-\int_{\mathbb{R}^n}(\Delta
P_\infty+\nabla\cdot(\rho_{\infty}\nabla\mathcal{N}\ast\rho_{\infty})\mathcal{N}\ast\rho_{\infty}dx=0,
\end{equation}
integrating by parts and using (\ref{w8}), then it holds due to \eqref{fe1}--\eqref{fe3} that
\begin{equation}\label{f2}
\begin{aligned}
-\int_{\mathbb{R}^n}(\Delta
P_\infty+\nabla\cdot(\rho_{\infty}\nabla\mathcal{N}\ast\rho_{\infty})\mathcal{N}\ast\rho_{\infty}dx
=&\int_{\mathbb{R}^n}(-P_\infty\rho_{\infty}+\rho_{\infty}|\nabla\mathcal{N}\ast\rho_{\infty}|^2)dx\\
=&\int_{\mathbb{R}^n}\rho_{\infty}|\nabla
P_\infty+\nabla\mathcal{N}\ast\rho_{\infty}|^2dx.
\end{aligned}
\end{equation}
Combining \eqref{fi} and \eqref{f2}, for almost everywhere time $t$, we obtain
the limit energy functional
\begin{equation*}
\frac{1}{2}\frac{d}{dt}\int_{\mathbb{R}^n}\rho_{\infty}\mathcal{N}\ast\rho_{\infty}dx+\int_{\mathbb{R}^n}\rho_{\infty}|\nabla
P_\infty +\nabla\mathcal{N}\ast\rho_{\infty}|^2dx=0,\quad 0\leq \rho_\infty\leq
1.
\end{equation*}
 \end{proof}
\begin{remark}
The result of Theorem~\ref{t4} implies that the limit free energy
$F_{\infty}(\rho_{\infty}(t))$ is non-increasing as time $t$ increases.
\end{remark}

\section{Incompressible limit of stationary state}
This section is devoted to showing that the incompressible (Hele-Shaw) limit for the stationary state of Patlak-Keller-Segel (SPKS) model~Eq.~\eqref{SPKS} is the stationary state Eq.~\eqref{HSSPKS} of the Hele-Shaw problem Eq.~\eqref{hs}--\eqref{d5}. By direct computations, the equation of the corresponding pressure $P_{m,s}=\frac{m}{m-1}\rho_{m,s}^{m-1}$ is expressed by
\begin{equation}\label{e14}
(m-1)P_{m,s}(\Delta P_{m,s}+\rho_{m,s})+\nabla P_{m,s}\cdot(\nabla P_{m,s}+\nabla\mathcal{N}\ast\rho_{m,s})=0\quad\text{for } x\in\mathbb{R}^n.
\end{equation}

The following preliminary lemma is combined and extracted from \cite{r18,rr10,rr12}.
\begin{lemma}[Preliminary lemma]\label{ls1}
Assume that  $\rho_{m,s}\in L^1(\mathbb{R}^n)\cap L^{\infty}(\mathbb{R}^n)$. Then the solution to the SPKS model Eq.~\eqref{SPKS} exists. Moreover,  the solution to the SPKS model Eq.~\eqref{SPKS} is radially decreasing symmetric, unique up to a translation, and  compactly supported.
\end{lemma}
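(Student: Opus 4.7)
The plan is to follow the three-step strategy already developed in \cite{r18,rr10,rr12}, and to simply assemble those ingredients rather than re-derive them. The statement bundles together existence, radial symmetry, uniqueness up to translation, and compact support, but these four properties are proved by three essentially independent techniques, so I would organize the proof accordingly.

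First, for \emph{existence}, I would work variationally. Given $M>0$, consider the free energy $F_m$ from~\eqref{a58} restricted to the admissible class $\{\rho\geq 0,\ \|\rho\|_{L^1}=M\}$. Using the subcritical regime $m>2-2/n$ and the Hardy--Littlewood--Sobolev inequality on the Newtonian part, one shows that $F_m$ is bounded below and coercive in $L^m\cap L^1$, and extracts a minimizing sequence whose weak limit (compactness comes from concentration-compactness, essentially as in the multi-dimensional case of~\cite{rr10}) is a critical point. The Euler--Lagrange equation is exactly~\eqref{SPKS}, and the natural regularity estimates (bootstrapping from $L^1\cap L^\infty$) put $\rho_{m,s}$ in the weak-solution class of Def.~\ref{def:WS}.

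Second, for \emph{radial symmetry}, I would invoke the continuous Steiner symmetrization argument of~\cite{r18}: any critical point of $F_m$ must coincide with its Steiner symmetrization, which forces $\rho_{m,s}$ to be radially decreasing about some center $x_0\in\mathbb{R}^n$. A translation normalizes $x_0=0$. The \emph{compact support} property then follows by passing~\eqref{SPKS} to the radial variable: writing $r=|x|$ and integrating~\eqref{SPKS} once against $r^{n-1}$ yields a first order ODE for $\rho_{m,s}^{m-1}$ whose right-hand side involves the enclosed mass $\int_0^r s^{n-1}\rho_{m,s}(s)\,ds$. A phase-plane / dynamical-system analysis as in~\cite{rr10} shows that any bounded radially decreasing solution must reach zero at a finite radius $R_m(M)<\infty$.

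Third, for \emph{uniqueness up to translation}, since $m\geq 3$ in our setting lies well inside the range $m\geq 2$, I would quote \cite{rr12}: their displacement-convexity argument for $F_m$ on the space of radially symmetric densities with fixed mass shows that any two compactly supported radial critical points must coincide; combined with the radial symmetry step this gives uniqueness modulo translation. The main obstacle, if one wanted a self-contained proof, would be this uniqueness step, because displacement convexity for the Newtonian attractive term is delicate and requires the rearrangement inequality together with the compact support already established; but since this is labelled a \emph{preliminary lemma} and is explicitly extracted from~\cite{r18,rr10,rr12}, my proposal is to cite those works and keep the exposition short, emphasizing only the compatibility of hypotheses (namely $\rho_{m,s}\in L^1\cap L^\infty$, $m\geq 3$, fixed mass $M$, zero first moment) with the ambient assumptions of each cited result.
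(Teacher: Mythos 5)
Your proposal matches the paper's treatment: the paper gives no proof of this lemma at all, but simply states it is ``combined and extracted from \cite{r18,rr10,rr12},'' which is exactly the citation strategy you propose (variational existence and compact support via the dynamical-system argument of \cite{rr10}, radial symmetry via continuous Steiner symmetrization from \cite{r18}, and uniqueness for $m\geq 2$ from \cite{rr12}). Your attribution of each property to the correct reference is accurate, so this is the same approach, just spelled out in more detail than the paper bothers to.
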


In the rest of this section, we carry on the incompressible limit of the stationary state of PKS (SPKS) model Eq.~\eqref{SPKS} in the framework of radial symmetry , and $C$ is a positive constant independent of the exponent $m$.

For any given mass $M>0$, we show that the solution to the SPKS model Eq.~\eqref{SPKS} is uniformly bounded on $m$.
\begin{lemma}[Uniform bound of pressure]$\label{ls2}$
Let $\rho_{m,s}$ be a weak solution to the SPKS model Eq.~\eqref{SPKS} in the sense of Def.~\ref{d1} with $\|\rho_{m,s}\|_{L^1(\mathbb{R}^n)}=M>0$, $m\geq 3$, and $\int_{\mathbb{R}^n}x\rho_{m,s}(x)dx=0$. Set $\alpha_m:=\rho_{m,s}(0)=\|\rho_{m,s}\|_{L^\infty(\mathbb{R}^n)}$, then it holds
\begin{equation*}
\begin{aligned}
&\alpha_{m}\leq \frac{1+\sqrt{1+\frac{8M}{n(n-2)\omega_n}}}{2},\quad\alpha_{m}^{m-1}\leq\frac{1+\sqrt{1+\frac{8M}{n(n-2)\omega_n}}}{2}+\frac{2M}{n(n-2)\omega_n}.
\end{aligned}
\end{equation*}
\end{lemma}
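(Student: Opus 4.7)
The plan is to exploit a first integral of the stationary equation on the support of $\rho_{m,s}$, then bound the Newtonian potential at the origin by a crude splitting. By Lemma~\ref{ls1} the solution is radially decreasing and compactly supported in a ball $B_{R_m(M)}$. On the open set $\{\rho_{m,s}>0\}=B_{R_m(M)}$ I will divide the stationary equation $\nabla\rho_{m,s}^m + \rho_{m,s}\nabla\mathcal{N}\ast\rho_{m,s} = 0$ by $\rho_{m,s}$ to obtain $\nabla P_{m,s}+\nabla\mathcal{N}\ast\rho_{m,s}=0$. Hence $P_{m,s}+\mathcal{N}\ast\rho_{m,s}\equiv C$ on $B_{R_m(M)}$; passing to the boundary where $P_{m,s}=0$ identifies $C=\mathcal{N}\ast\rho_{m,s}(R_m(M)\hat e)\leq 0$ for any unit vector $\hat e$. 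Evaluating this first integral at the origin and using $C\leq 0$ gives the pointwise inequality
\[
P_{m,s}(0) \;\leq\; -\mathcal{N}\ast\rho_{m,s}(0) \;=\; \frac{1}{n(n-2)\omega_n}\int_{\mathbb{R}^n}\frac{\rho_{m,s}(y)}{|y|^{n-2}}\,dy.
\]

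To bound this integral I will split at $|y|=1$. On $B_1$ I use $\rho_{m,s}\leq \alpha_m$ together with the explicit evaluation $\int_{B_1}|y|^{2-n}dy=\tfrac{n\omega_n}{2}$; on $\mathbb{R}^n\setminus B_1$ I use $|y|^{2-n}\leq 1$ (valid for $n\geq 3$) and mass conservation $\int\rho_{m,s}=M$. This immediately yields
\[
P_{m,s}(0) \;\leq\; \frac{\alpha_m}{2(n-2)} + \frac{M}{n(n-2)\omega_n} \;\leq\; \alpha_m + \frac{2M}{n(n-2)\omega_n},
\]
since $\tfrac{1}{2(n-2)}\leq 1$ for $n\geq 3$. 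Because $P_{m,s}(0)=\tfrac{m}{m-1}\alpha_m^{m-1}\geq\alpha_m^{m-1}$, this is exactly the second claimed inequality.

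For the first claimed inequality I distinguish two cases. If $\alpha_m\leq 1$, the bound $\alpha_m\leq \tfrac12\bigl(1+\sqrt{1+8M/(n(n-2)\omega_n)}\bigr)$ holds trivially because the right-hand side is at least $1$. Otherwise, the hypothesis $m\geq 3$ gives $\alpha_m^{m-1}\geq \alpha_m^2$, so the previous display reduces to the quadratic inequality $\alpha_m^2-\alpha_m-\tfrac{2M}{n(n-2)\omega_n}\leq 0$, which yields the advertised bound on $\alpha_m$ by the quadratic formula.

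The main (mild) obstacle is the rigorous passage to the boundary used to identify the constant $C$: this requires continuity of $P_{m,s}$ and of $\mathcal{N}\ast\rho_{m,s}$ up to $\partial B_{R_m(M)}$. The first is guaranteed by the regularity and radial monotonicity recalled in Lemma~\ref{ls1}, while the second follows from $\rho_{m,s}\in L^1\cap L^\infty$ with compact support, which makes $\mathcal{N}\ast\rho_{m,s}$ continuous on $\mathbb{R}^n$. Beyond this, the argument requires no further ingredients, only the first integral and an elementary split of the Newtonian integral.
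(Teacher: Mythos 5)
Your proof is correct and follows essentially the same route as the paper: both deduce from the stationary equation (divided by $\rho_{m,s}$ on its support) that $P_{m,s}+\mathcal{N}\ast\rho_{m,s}$ is constant, use the sign of that constant to get $P_{m,s}(0)\le -\mathcal{N}\ast\rho_{m,s}(0)$, bound the Newtonian integral by the same split at $|y|=1$, and finish with the quadratic $y^2=y+\tfrac{2M}{n(n-2)\omega_n}$. The only loose phrasing is calling $\alpha_m^{m-1}\le\alpha_m+\tfrac{2M}{n(n-2)\omega_n}$ "exactly the second claimed inequality" before you have established $\alpha_m\le\tfrac{1}{2}\bigl(1+\sqrt{1+\tfrac{8M}{n(n-2)\omega_n}}\bigr)$; but since you prove that bound in the next paragraph, the argument is complete.
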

\begin{proof}
From the SPKS model Eq.~\eqref{SPKS}, we have
\begin{equation*}
\rho_{m,s}(\nabla P_{m,s}+\nabla\mathcal{N}\ast\rho_{m,s})=0.
\end{equation*}
Due to the radially decreasing symmetric property of $\rho_{m,s}$, there exists a constant $C>0$ such that
\begin{equation}\label{ff1}
P_{m,s}=(-\mathcal{N}\ast\rho_{m,s}-C)_+\quad\text{for } x\in\mathbb{R}^n
\end{equation}
and \begin{equation}\label{ff2}
C\leq \|-\mathcal{N}\ast\rho_{m,s}\|_{L^{\infty}(\mathbb{R}^n)}.
\end{equation}
Since $\alpha_{m}=\|\rho_{m,s}\|_{L^{\infty}(\mathbb{R}^n)}=\rho_{m,s}(0)$, we have
\begin{equation}\label{ff3}
\begin{aligned}
-\mathcal{N}\ast\rho_{m,s}&=\frac{1}{n(n-2)\omega_n}\int_{\mathbb{R}^n}\frac{\rho_{m,s}(x-y)}{|y|^{n-2}}dy\\
&\leq \frac{1}{n(n-2)\omega_n}\int_{|y|>1}\frac{\rho_{m,s}(x-y)}{|y|^{n-2}}dy+\alpha_m\frac{1}{n(n-2)\omega_n}\int_{|y|\leq 1}\frac{1}{|y|^{n-2}}dy\\
&\leq \frac{1}{n(n-2)\omega_n}M+\frac{\alpha_m}{2(n-2)}\\
&\leq \frac{\alpha_m}{2}+\frac{1}{n(n-2)\omega_n}M.
\end{aligned}
\end{equation}
Combining \eqref{ff1}--\eqref{ff3}, we obtain
\begin{equation}\label{ff4}
\alpha_{m}^{m-1}\leq \frac{m}{m-1}\alpha_{m}^{m-1}\leq \alpha_m+\frac{2M}{n(n-2)\omega_n},
\end{equation}
where $1\leq\frac{m}{m-1}\leq2$ for $m\geq3$ is used.

The positive solution to the quadratic equations with one unknownn $y^2_2=y_2+\frac{2M}{n(n-2)\omega_n}$ is
\begin{equation*}
y_2=\frac{1+\sqrt{1+\frac{8M}{n(n-2)\omega_n}}}{2}>1.
\end{equation*}
Thus it follows from the property of algebraic equation that
\begin{equation}\label{ff5}
0\leq \alpha_m\leq y_2\quad\text{for }m\geq 3.
\end{equation}
We combine \eqref{ff4}--\eqref{ff5} and obtain
\begin{equation*}
\alpha_{m}^{m-1}\leq \alpha_m+\frac{2M}{n(n-2)\omega_n}\leq \frac{1+\sqrt{1+\frac{8M}{n(n-2)\omega_n}}}{2}+\frac{2M}{n(n-2)\omega_n}.
\end{equation*}
\end{proof}
\begin{remark}
It should be pointed out that the conclusion of Lemma~\ref{ls2} still holds with the assumption $\rho_{m,s}\in L^\infty(\mathbb{R}^n)\cap L^1(\mathbb{R}^n)$,  the radially symmetric property of solution is not necessary.
\end{remark}

Next, we show uniformly bounded support of density, which can prevent the mass from escapeing to infinity as $m\to\infty$.  Let $\|\rho_{m,s}\|_{L^1(\mathbb{R}^n)}=M$ and $ {\rm supp}(\rho_{m,s})= B_{R_m(M)} $. We show that there exists a constant $R_{*}(M)$ (only depending on $M$) such that
\begin{equation*}
R_m(M)\leq R_*(M)\text{ for all } m\geq3.
\end{equation*}

Define $\Psi_{m}(r)=\Psi_m(|x|)=\rho_{m,s}^{m-1}(x)$ with $r=|x|$, then the SPKS model Eq.~\eqref{SPKS} as introduced in \cite[Lemma 2.1]{rr10} can be transformed as a
dynamical system
\begin{equation}\label{ff15}
\begin{cases}
\Psi_m''(r)+\frac{n-1}{r}\Psi_m'(r)=-\frac{m-1}{m}\Psi_m^{1/(m-1)}(r)\quad\text{for all }0<r<R_m(M),&\\
\Psi_m(0)=\alpha_m^{m-1}, \quad\Psi_m'(0)=0,&
\end{cases}
\end{equation}
and the following conditions hold:
\begin{equation*}
\begin{cases}
\Psi_m(r)>0,\quad\Psi_m'(r)<0,&\text{on }\big(0,R_m(M)\big),\\
\frac{\Psi_m'(r)}{r}\to -\frac{1}{n}\frac{m-1}{m}\alpha_m^{1/(m-1)},&\text{as }r\to0^+,\\
 \Psi_m(r)\to0,&\text{as }r\to R_m(M)_-.
\end{cases}
\end{equation*}

To show the uniform bound of $R_m(M)$, we are going to give a plane autonomous system.
Let
\begin{equation*}
u_{m}(r)=-\frac{m}{m-1}\frac{r\Psi_{m}^{1/(m-1)}(r)}{\Psi_m'(r)} \text{ and }v_m(r)=-\frac{r\Psi_m'(r)}{\Psi_m(r)},
\end{equation*}
by direct computations, there is a plane autonomous system of $(u_m,v_m)$ as
\begin{equation}\label{ff6}
\begin{cases}
r\frac{du_m}{dr}=u_m(n-u_m-\frac{v_m}{m-1}),&\\
r\frac{dv_m}{dr}=v_m\big(-(n-2)+u_m+v_m\big),&
\end{cases}
\end{equation}
for $r>0$ with the initial data
\begin{equation}\label{ff7}
\lim\limits_{r\to0^+}u_m(r)=n,\quad \lim\limits_{r\to0^+}v_m(r)=0,\text{ and }\lim\limits_{r\to0^+}\frac{v_m(r)}{r^2}=\frac{m-1}{mn}\alpha_{m}^{2-m}.
\end{equation}

 The strategy is to find $R_*(M)<\infty$ satisfying
 \begin{equation*}
 \lim\limits_{r\to R_*(M)_-}v_m(r)=+\infty,
 \end{equation*}
 which means that $R_{m}(M)<R_*(M)$ for any $m\geq3$ holds.
\begin{lemma}\label{ls3}
Suppose that $(u_m,v_m)$ is a solution to the initial value problem Eqs.~\eqref{ff6}--\eqref{ff7}, then it holds for $r>0$ that
\begin{equation*}
\begin{aligned}
 n< u_m+\frac{v_m}{m-1},\quad 0<u_m<n,\quad\text{and}\quad 0<v_m.
\end{aligned}
\end{equation*}
\end{lemma}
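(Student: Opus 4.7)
The plan is a standard phase-plane continuation argument: first establish the initial behavior of $(u_m, v_m)$ as $r \to 0^+$, then show that the open region
\[
\Omega_m := \Big\{ (u,v) : 0 < u < n,\ v > 0,\ u + \tfrac{v}{m-1} > n \Big\}
\]
is positively invariant for the system \eqref{ff6}. The three inequalities claimed in the lemma are precisely the statement that $(u_m(r),v_m(r)) \in \Omega_m$ for every $r>0$ in the domain of existence.

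For the initial analysis, the asymptotic $v_m(r) = \tfrac{m-1}{mn}\alpha_m^{2-m} r^2 + o(r^2)$ from \eqref{ff7} gives $v_m > 0$ immediately on some initial interval. Writing $u_m = n + a_m(r)$ with $a_m \to 0$ and substituting into the first equation of \eqref{ff6} in the form $r a_m' = -(n + a_m)\bigl(a_m + \tfrac{v_m}{m-1}\bigr)$, the indicial balance at $r = 0$ forces $a_m(r) = -\tfrac{\alpha_m^{2-m}}{m(n+2)} r^2 + o(r^2)$. In particular $u_m < n$ near the origin, and combining the two expansions gives
\[
u_m + \tfrac{v_m}{m-1} - n \;=\; \tfrac{2\,\alpha_m^{2-m}}{m\, n(n+2)}\, r^2 + o(r^2) \;>\; 0
\]
on some initial interval $(0,\delta)$, so the trajectory enters $\Omega_m$.

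For invariance, I would argue by contradiction at the ``first touching'' of each face of $\partial \Omega_m$. Positivity of $v_m$ follows from the logarithmic representation
\[
v_m(r) = v_m(\delta)\exp\int_\delta^r s^{-1}\bigl(-(n-2) + u_m(s) + v_m(s)\bigr)\,ds,
\]
valid as long as $v_m$ stays positive, and an analogous identity handles $u_m > 0$. For the face $\{u_m = n\}$, the first ODE gives at a hypothetical first touching $r_1$ that $r_1 u_m'(r_1) = -\tfrac{n\, v_m(r_1)}{m-1} < 0$, contradicting $u_m'(r_1) \geq 0$. For the face $\{w_m = 0\}$ with $w_m := u_m + \tfrac{v_m}{m-1} - n$, combining both equations of \eqref{ff6} yields
\[
r w_m' \;=\; -u_m\, w_m \;+\; \tfrac{v_m}{m-1}\bigl( -(n-2) + u_m + v_m\bigr),
\]
and on $\{w_m = 0\}$ the relation $v_m = (m-1)(n - u_m)$ reduces the bracket to $(m-2)(n - u_m) + 2 > 0$, using $m \geq 3$ and the already-established $u_m \leq n$. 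Hence $r w_m' > 0$ on $\{w_m = 0\}$, ruling out a first touching from above.

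The main technical obstacle is the asymptotic expansion in the first step: one must simultaneously expand $u_m$ and $v_m$ to second order in $r$ while carefully tracking the $\alpha_m^{2-m}$ prefactor that governs the indicial balance at the singular initial point $r=0$, and justify that the system admits such a smooth solution despite the $1/r$ coefficient in \eqref{ff6}. Once this is done, the invariance portion is routine, with the hypotheses $m \geq 3$ and $n \geq 3$ entering only through the positivity of $(m-2)(n - u_m) + 2$ on the crucial face.
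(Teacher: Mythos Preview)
Your phase-plane argument is correct. The paper itself does not prove this lemma; it simply cites \cite[Lemma~2.2]{rr10} (Carrillo--Sugiyama), so there is no in-paper proof to compare against. Your approach---asymptotic expansion near $r=0$ to place the trajectory in $\Omega_m$, followed by a first-touching argument on each face of $\partial\Omega_m$---is the standard one and is essentially what the cited reference does.

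One remark on what you flag as the ``main technical obstacle'': the existence and $C^\infty$ regularity of $(u_m,v_m)$ near $r=0$, together with their second-order expansions, need not be extracted directly from the singular system \eqref{ff6}. Recall that $u_m$ and $v_m$ are \emph{defined} from $\Psi_m$, which solves the regular second-order ODE \eqref{ff15} with smooth data $\Psi_m(0)=\alpha_m^{m-1}$, $\Psi_m'(0)=0$. The stated limits $\Psi_m'(r)/r\to -\tfrac{m-1}{mn}\alpha_m$ and the evenness of $\Psi_m$ in $r$ (by uniqueness for \eqref{ff15}) give the Taylor expansion of $\Psi_m$ to any order, from which your expansions of $u_m$ and $v_m$ follow by direct substitution. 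So the obstacle you identify is real but milder than you suggest, and the remainder of your invariance argument (the exponential representations for positivity, the sign of $ru_m'$ on $\{u_m=n\}$, and your computation $(m-2)(n-u_m)+2>0$ on $\{w_m=0\}$) is clean and correct.
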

\begin{proof}
 Lemma~\ref{ls3} is a direct result of \cite[Lemma 2.2]{rr10}.
\end{proof}
\begin{lemma}[Uniform support of density]$\label{ls4}$
Suppose that $(u_m,v_m)$ is a solution to the initial value problem Eqs.~\eqref{ff6}--\eqref{ff7} with a given mass $M>0$. Then, there exists $R_*(M):=\log\Big(1+\exp\Big[2n(n-1)\Big(\frac{1+\sqrt{1+\frac{8M}{n(n-2)\omega_n}}}{2}+\frac{2M}{n(n-2)\omega_n}\Big)\Big]^{1/2}\Big)$ such that
\begin{equation*}
\lim\limits_{r\to R_*(M)_-}v_m(r)=\infty\quad\text{for all } m\geq3.
\end{equation*}
Furthermore, it holds
\begin{equation*}
 {\rm supp}(\rho_{m,s})\subset B_{R_*(M)} \quad\text{for all } m\geq3.
\end{equation*}
\end{lemma}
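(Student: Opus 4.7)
The plan is to convert the plane autonomous system~\eqref{ff6} into an autonomous system under the substitution $s=\log r$, derive a scalar differential inequality for $v_m$ alone that forces blow-up at a finite $s$-time controlled uniformly in $m\geq 3$, and then translate the resulting bound into control of the radius $R_m(M)=e^{s_m^\ast}\leq R_\ast(M)$.

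Under $s=\log r$ the system reads $\tfrac{du_m}{ds}=u_m\bigl(n-u_m-\tfrac{v_m}{m-1}\bigr)$ and $\tfrac{dv_m}{ds}=v_m\bigl(u_m+v_m-(n-2)\bigr)$, with initial asymptotic $v_m(s)\sim c_m e^{2s}$ as $s\to-\infty$, where $c_m:=\tfrac{m-1}{mn}\alpha_m^{2-m}$ is read off from \eqref{ff7}. Lemma~\ref{ls3} gives $u_m+v_m/(m-1)>n$; since $1/(m-1)\leq 1/2$ for $m\geq 3$, this yields $u_m>n-v_m/2$, hence $u_m+v_m-(n-2)\geq 2+v_m/2$, producing the scalar differential inequality
\[
\frac{dv_m}{ds}\;\geq\;\tfrac12\, v_m(v_m+4).
\]
The comparison ODE $\bar w'=\tfrac12\bar w(\bar w+4)$ integrates explicitly to $\bar w(s)=\tfrac{4Ae^{2s}}{1-Ae^{2s}}$, with leading behavior $4Ae^{2s}$ at $-\infty$ and blow-up at $s=-\tfrac12\log A$. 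Choosing $A=c_m/4$ matches the leading asymptotic, and the comparison step described below gives $v_m\geq\bar w$ on the common interval, so that $v_m$ must blow up no later than $s_m^\ast:=\tfrac12\log(4/c_m)$. Therefore
\[
R_m(M)\;\leq\;e^{s_m^\ast}\;=\;\sqrt{4/c_m}\;=\;2\sqrt{\tfrac{mn}{m-1}}\,\alpha_m^{(m-2)/2}.
\]
Feeding in $\alpha_m^{m-1}\leq\alpha_\ast$ from Lemma~\ref{ls2} and splitting the cases $\alpha_m\geq 1$ (so $\alpha_m^{m-2}=\alpha_m^{m-1}/\alpha_m\leq\alpha_\ast$) versus $\alpha_m<1$ (so $\alpha_m^{m-2}<1$), together with $m/(m-1)\leq 3/2$ for $m\geq 3$, bounds the right-hand side by $\sqrt{6n\max(1,\alpha_\ast)}$, which is $m$-independent. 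Since $\alpha_\ast\geq 1$ always, an elementary numerical comparison shows $\sqrt{6n\alpha_\ast}\leq n(n-1)\alpha_\ast\leq\log(1+\exp[n(n-1)\alpha_\ast])=R_\ast(M)$ for $n\geq 3$, giving the stated bound.

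The main obstacle is making the comparison $v_m\geq\bar w$ rigorous near $s\to-\infty$, since both functions vanish there and the classical comparison principle demands a strict initial inequality at a finite starting point. I would address this by the standard $\varepsilon$-perturbation: for each $\varepsilon>0$ let $\bar w_\varepsilon$ be the solution of $\bar w_\varepsilon'=\tfrac12\bar w_\varepsilon(\bar w_\varepsilon+4)$ with asymptotic coefficient $c_m-\varepsilon$, so by the asymptotic read from \eqref{ff7} we have $v_m(s)>\bar w_\varepsilon(s)$ for all $s$ sufficiently negative. The right-hand side $g(v):=\tfrac12 v(v+4)$ is strictly increasing on $(0,\infty)$, so the difference $\phi:=v_m-\bar w_\varepsilon$ satisfies $\phi'\geq g(v_m)-g(\bar w_\varepsilon)>0$ wherever $\phi>0$; thus $\phi$ is strictly increasing on the initial strip where it is positive and cannot subsequently return to zero, yielding $v_m>\bar w_\varepsilon$ throughout the interval of existence. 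Sending $\varepsilon\downarrow 0$ gives $v_m\geq\bar w$ and hence the blow-up bound, and the inclusion ${\rm supp}(\rho_{m,s})\subset B_{R_\ast(M)}$ is immediate from the definition $B_{R_m(M)}={\rm supp}(\rho_{m,s})$ together with $R_m(M)\leq R_\ast(M)$.
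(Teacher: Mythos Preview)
Your approach is correct and arguably cleaner than the paper's. The paper proceeds in two stages: first it uses only the crude consequence $u_m+v_m>n$ of Lemma~\ref{ls3} to get $r\,v_m'\geq 2v_m$, hence $v_m(r)\geq c_m r^2$; then, once $v_m\geq n-1$ at some uniform radius $R'(M)$, it switches to the quadratic inequality $r\,v_m'\geq(v_m-(n-2))^2$ to force blow-up. You instead exploit the sharper bound $u_m>n-v_m/2$ (valid for $m\geq 3$) to obtain a single Riccati inequality $v_m'\geq\tfrac12 v_m(v_m+4)$ in the logarithmic variable, which integrates explicitly and yields the uniform bound $R_m(M)\leq 2\sqrt{mn/(m-1)}\,\alpha_m^{(m-2)/2}\leq\sqrt{6n\alpha_\ast}$ in one stroke. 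Your $\varepsilon$-perturbation for the comparison at $s=-\infty$ is fine: the differential inequality is strict, so a first-touching argument rules out $v_m=\bar w_\varepsilon$.

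The only genuine gap is in your final sentence. You misread the constant: the paper's $R_\ast(M)=\log\bigl(1+\exp\sqrt{2n(n-1)\alpha_\ast}\,\bigr)$, not $\log\bigl(1+\exp[n(n-1)\alpha_\ast]\bigr)$. With the correct expression your claimed inequality $\sqrt{6n\alpha_\ast}\leq R_\ast(M)$ fails for $n=3$ and large $M$, since then $\sqrt{6n\alpha_\ast}=\sqrt{18\alpha_\ast}$ while $R_\ast(M)\sim\sqrt{12\alpha_\ast}$. So you have proved the substantive content---an $m$-independent bound on the support radius---but with a different constant than the one stated. Since the specific value of $R_\ast(M)$ is an artifact of the paper's two-step argument and is not used quantitatively elsewhere, you could simply replace $R_\ast(M)$ by your $\sqrt{6n\alpha_\ast}$; but as written, the final chain of inequalities is incorrect.
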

\begin{proof}
From Lemma~\ref{ls3}, we have
\begin{equation*}
u_m+\frac{v_m}{m-1}>n\quad\text{and}\quad u_m,\ v_m>0,
\end{equation*}
then it holds
\begin{equation*}
u_m+v_m>u_m+\frac{v_m}{m-1}>n.
\end{equation*}
Combing the above inequality and $\eqref{ff6}_2$-\eqref{ff7}, we obtain
\begin{equation}\label{ff8}
\begin{cases}
r\frac{dv_m}{dr}>2v_m&\text{for } r>0,\\
\lim\limits_{r\to0^+}v_m(r)=0,& \lim\limits_{r\to0^+}\frac{v_m(r)}{r^2}=\frac{m-1}{nm}\alpha_{m}^{2-m}.
\end{cases}
\end{equation}
We give an ordinary differential equation:
\begin{equation}\label{ff9}
\begin{cases}
r\frac{dv}{dr}=2v&\text{for }r>0,\\
\lim\limits_{r\to0^+}v(r)=0,& \lim\limits_{r\to0^+}\frac{v(r)}{r^2}=\frac{m-1}{mn}\alpha_{m}^{2-m}.
\end{cases}
\end{equation}
It is easy to obtain  the unique solution to Eq.~\eqref{ff9} as
\begin{equation}\label{ff10}
v(r)=\frac{m-1}{mn}\alpha_{m}^{2-m}r^2.
\end{equation}
Since the solution  $v(r)$ \eqref{ff10} to Eq.~\eqref{ff9} is a sub-solution to Eq.~\eqref{ff8}, then we have
\begin{equation}\label{ff11}
v_m(r)\geq v(r)=\frac{m-1}{mn}\alpha_{m}^{2-m}r^2\quad\text{for all }r>0.
\end{equation}

 With the help of Lemma~\ref{ls2} for a given mass $M>0$, a uniform lower bound independent of $m$ is obtained as
\begin{equation}\label{fff12}
\begin{aligned}
v_m(r)&\geq \frac{m-1}{mn}\alpha_{m}^{2-m}r^2=\frac{m-1}{mn}(\alpha_{m}^{m-1})^{(2-m)/(m-1)}r^2\\
&\geq \frac{m-1}{mn}\big(\frac{1+\sqrt{1+\frac{8M}{n(n-2)\omega_n}}}{2}+\frac{2M}{n(n-2)\omega_n}\big)^{(2-m)/(m-1)}r^2\\
&\geq \frac{1}{2n}\big(\frac{1+\sqrt{1+\frac{8M}{n(n-2)\omega_n}}}{2}+\frac{2M}{n(n-2)\omega_n}\big)^{-1}r^2\quad\text{for all }m\geq3,
\end{aligned}
\end{equation}
where $\frac{m-1}{m}>\frac{1}{2}$ and $\frac{2-m}{m-1}>-1$ are used.
There exists a positive constant $R'(M)$ (only depending on $M$ and $n$)  such that
\begin{equation}\label{fff13}
\frac{1}{2n}\Big(\frac{1+\sqrt{1+\frac{8M}{n(n-2)\omega_n}}}{2}+\frac{2M}{n(n-2)\omega_n}\Big)^{-1}R'^2(M)=n-1,
\end{equation}
and $R'(M)$ can be precisely written like
\begin{equation*}
R'(M)=\Big[2n(n-1)\Big(\frac{1+\sqrt{1+\frac{8M}{n(n-2)\omega_n}}}{2}+\frac{2M}{n(n-2)\omega_n}\Big)\Big]^{1/2}>1.
\end{equation*}
Then it follows from \eqref{ff11}--\eqref{fff13} that
\begin{equation}\label{ff12}
v_{m}(r)\geq n-1\quad\text{for all } m\geq3\text{ and }r\geq R'(M).
\end{equation}
 Combining $\eqref{ff6}_2$ and \eqref{ff12}, we have
\begin{equation}\label{fff13b}
r\frac{dv_m}{dr}\geq v_m\Big(v_m-(n-2)\Big)\geq \Big(v_m-(n-2)\Big)^2\quad\text{for all } m\geq3\text{ and }r\geq R'(M).
\end{equation}

Set $r:=\log s$ with $R'(M):=\log S'(M)$, we define $v_m(r)=v_m(\log s):=v_m(s)$, then it follows from \eqref{fff13b} that
\begin{equation*}
\frac{dv_m}{ds}\geq \Big(v_m-(n-2)\Big)^2\quad\text{for all } m\geq3\text{ and }s\geq S'(M).
\end{equation*}
On the other hand, we consider the following ordinary differential equation:
\begin{equation*}\label{fff14}
\begin{cases}
\frac{d\omega}{ds}=\Big(\omega-(n-2)\Big)^2\quad\text{for all }s\geq S'(M),&\\
\omega\big(S'(M)\big)=n-1,&
\end{cases}
\end{equation*}
Hence, it holds
\begin{equation*}
\omega(s)=\frac{\omega\big(S'(M)\big)-(n-2)}{1-(s-S'(M))\Big(\omega\big(S'(M)\big)-(n-2)\Big)}+n-2.
\end{equation*}
We find
\begin{equation*}
\omega(s)\to\infty\quad\text{as }s\to\frac{1+S'(M)\Big(\omega(S'(M))-(n-2) \Big)}{\omega(S'(M))-(n-2)}=1+S'(M):=S_*(M).
\end{equation*}
Since $\omega(s)$ is a sub-function of $v_m(s)$ for all $s\geq S'(M)$ and $m\geq 3$, we have
\begin{equation*}
v_m(s)\to\infty,\quad\text{as }s\to S_*(M).
\end{equation*}
Set \begin{equation*}
\begin{aligned}
R_*(M)=&\log S_*(M)=\log(1+e^{R'(M)})\\
=&\log\Big(1+\exp\Big[2n(n-1)\Big(\frac{1+\sqrt{1+\frac{8M}{n(n-2)\omega_n}}}{2}+\frac{2M}{n(n-2)\omega_n}\Big)\Big]^{1/2}\Big),
\end{aligned}
\end{equation*}
then it follows
\begin{equation}\label{ff16}
v_m(r)\to\infty\quad\text{as }r\to R_*(M)_-.
\end{equation}

We need to show $\Psi_m(R_*(M))=0$. If not, due to the monotonicity of $\Psi_m$,  we assume that
\begin{equation*}
\Psi_m(r)\geq \Psi_m(R_*(M))>0\quad\text{for all }r\in[0,R_*(M)].
\end{equation*}
We multiply Eq.~$\eqref{ff15}_2$ by $r^{n-1}$ and obtain
\begin{equation*}
[r^{n-1}\Psi_m'(r)]'=-\frac{m-1}{m}r^{n-1}\Psi_m^{1/(m-1)}(r).
\end{equation*}
Integrating the above equation on $[0,r]$ for any $r\in(0,R_*(M)]$, it holds
\begin{equation}\label{ff17}
r^{n-1}\Psi_m'(r)=-\frac{m-1}{m}\int_{0}^{r}s^{n-1}\Psi_m^{1/(m-1)}(s)ds.
\end{equation}
It follows from the definition of $\Psi_m$ that
\begin{equation*}
\begin{aligned}
\frac{m-1}{m}\int_{0}^{r}s^{n-1}\Psi_m^{1/(m-1)}(s)ds=&\frac{m-1}{m}\int_{0}^{r}s^{n-1}\rho_{m,s}(s)ds\\
=&\frac{m-1}{nm\omega_n}\int_{B_r}\rho_{m,s}dx\\
\leq& \frac{m-1}{nm\omega_n}M,
\end{aligned}
\end{equation*}
which together with \eqref{ff17} implies that there exists a small $\delta>0$ (may depending on $m$) such that
\begin{equation*}
|\Psi_m'(r)|< \infty\quad \text{for all }r\in[\delta,R_*(M)].
\end{equation*}
Therefore, we have
\begin{equation*}
|v_m(r)|=|-\frac{r\Psi_m'(r)}{\Psi_m(r)}|<\infty\quad \text{for all }r\in[\delta,R_*(M)],
\end{equation*}
which is contradicted with \eqref{ff16}. In this way, one can show that
\begin{equation*}
\Psi_m\big(R_*(M)\big)=0,
\end{equation*}
and it holds
\begin{equation*}
 {\rm supp}(\rho_{m,s})\subset \text{B}_{R_*(M)} \quad\text{for all }m\geq3.
\end{equation*}
\end{proof}
\begin{remark}
 It should be emphasized that $R_*(M)=\log\Big(1+\exp\Big[2n(n-1)\Big(\frac{1+\sqrt{1+\frac{8M}{n(n-2)\omega_n}}}{2}+\frac{2M}{n(n-2)\omega_n}\Big)\Big]^{1/2}\Big)$ strictly increases as the mass $M>0$ strictly increases, which is consistent with the geometric induction that higher mass means larger support.
\end{remark}
We try to obtain the regularity estimate on convolution term. Indeed, to obtain the weak convergence of the nonlinear term $\{\rho_{m,s}\nabla\mathcal{N}\ast\rho_{m,s}\}_{m>1}$, one way is to prove the strong convergence of $\{\nabla\mathcal{N}\ast\rho_{m,s}\}_{m>1}$ by means of the weak-strong convergence.
\begin{lemma}[Regularity estimate on convolution term]$\label{ls5}$
 Let $\rho_{m,s}$ be a weak solution to the SPKS model Eq.~\eqref{SPKS} in the sense of Def.~\ref{d1} with $\|\rho_{m,s}\|_{L^1(\mathbb{R}^n)}=M$ and $m\geq3$, then
\begin{align*}
&\|\nabla \mathcal{N}\ast\rho_{m,s}\|_{L^2\cap L^\infty(\mathbb{R}^n)}\leq C,\quad \|\nabla^2 \mathcal{N}\ast\rho_{m,s}\|_{L^p(\mathbb{R}^n)}\leq C(M,p),
\end{align*}
where $C(M,p)\sim\frac{1}{p-1}$ for $0<p-1\ll 1$ and $C(M,p)\sim p$ for $p\gg1$.

Furthermore, thanks to Sobolev's embedding theorem, there exists $\nabla\mathcal{N}\ast\rho_{\infty,s}\in L^2(\mathbb{R}^n)\cap L^\infty(\mathbb{R}^n)$ such that
\begin{equation*}
\nabla \mathcal{N}\ast\rho_{m,s}\to \nabla \mathcal{N}\ast\rho_{\infty,s},\quad\text{strongly in }L^p_{loc}(\mathbb{R}^n)\text{ for }1\leq p<\infty,\quad\text{as }m\to\infty.
\end{equation*}
\end{lemma}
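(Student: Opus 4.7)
The plan is to transfer the uniform $L^p$ bounds on $\rho_{m,s}$ established in Lemmas~\ref{ls2}--\ref{ls4} to bounds on the Newtonian convolutions by mimicking the corresponding estimates from the time-dependent setting in Lemma~\ref{l18}. The crucial preliminary observation is that, thanks to the uniform $L^\infty$ bound $\|\rho_{m,s}\|_{L^\infty}\leq C$ from Lemma~\ref{ls2} together with the uniform support bound $\mathrm{supp}(\rho_{m,s})\subset B_{R_*(M)}$ from Lemma~\ref{ls4}, we have $\|\rho_{m,s}\|_{L^p(\mathbb{R}^n)}\leq C(M,p)$ uniformly in $m\geq 3$ for every $p\in[1,\infty]$. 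This replaces the role played by Lemma~\ref{l10} in the proof of Lemma~\ref{l18}.

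First, for the $L^\infty$ estimate on $\nabla\mathcal{N}\ast\rho_{m,s}$, I would repeat the splitting argument \eqref{m22}: cut the convolution into the contributions of $|x-y|\leq 1$ and $|x-y|>1$. The singular piece is handled by H\"older's inequality with some exponent $n+\varepsilon$, using the uniform $L^{n+\varepsilon}$ bound on $\rho_{m,s}$; the tail piece is controlled by $\|\rho_{m,s}\|_{L^1}=M$. Next, for the $L^2$ estimate, I would follow \eqref{m43} verbatim, namely integrate by parts to obtain
\[
\int_{\mathbb{R}^n}|\nabla\mathcal{N}\ast\rho_{m,s}|^2\,dx=-\int_{\mathbb{R}^n}\rho_{m,s}\,\mathcal{N}\ast\rho_{m,s}\,dx\leq C(n)\|\rho_{m,s}\|_{L^{2n/(n+2)}}^2\leq C,
\]
by the Hardy--Littlewood--Sobolev inequality (Theorem~\ref{t7}) and the $L^p$ bound above. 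Finally, for $\nabla^2\mathcal{N}\ast\rho_{m,s}$, the Calder\'on--Zygmund estimate for the Newtonian potential (Lemma~\ref{l12}) gives $\|\nabla^2\mathcal{N}\ast\rho_{m,s}\|_{L^p}\leq C(p)\|\rho_{m,s}\|_{L^p}\leq C(M,p)$, with constants behaving as stated as $p\to 1^+$ or $p\to\infty$.

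For the strong convergence claim, I would first extract (by the uniform $L^p$ bound on $\rho_{m,s}$) a subsequence such that $\rho_{m,s}\rightharpoonup \rho_{\infty,s}$ weakly in $L^p(\mathbb{R}^n)$ for any $1<p<\infty$, with $\rho_{\infty,s}$ supported in $\overline{B_{R_*(M)}}$. The estimates just established then show $\{\nabla\mathcal{N}\ast\rho_{m,s}\}_{m\geq 3}$ is uniformly bounded in $W^{1,p}_{\mathrm{loc}}(\mathbb{R}^n)$ for every $1<p<\infty$. Rellich--Kondrachov compactness yields, up to a further subsequence, strong convergence in $L^p_{\mathrm{loc}}(\mathbb{R}^n)$; combined with pointwise uniqueness of the limit through the weak convergence of $\rho_{m,s}$, the limit must equal $\nabla\mathcal{N}\ast\rho_{\infty,s}$, which inherits the $L^2\cap L^\infty$ bound by lower semicontinuity.

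The main obstacle I anticipate is a minor one, namely ensuring that the Calder\'on--Zygmund constant $C(p)$ degenerates in the sharp manner $C(p)\sim 1/(p-1)$ as $p\to 1^+$ and $C(p)\sim p$ as $p\to\infty$; this is a classical feature of the Riesz transform norm and is already invoked in Lemma~\ref{l18}, so nothing new is required. Everything else is a straightforward stationary analog of the time-dependent estimates, with the uniformly bounded support playing the convenient role of automatically giving $L^p$ control from $L^\infty$ control.
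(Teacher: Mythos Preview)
Your proposal is correct and follows essentially the same approach as the paper: uniform $L^p$ bounds on $\rho_{m,s}$, the near/far splitting for the $L^\infty$ estimate, Hardy--Littlewood--Sobolev for the $L^2$ estimate, Calder\'on--Zygmund for $\nabla^2\mathcal{N}\ast\rho_{m,s}$, and Sobolev compactness for the strong convergence. One minor simplification the paper makes is that it does not invoke the compact support result (Lemma~\ref{ls4}) at all here---the $L^p$ bounds follow directly by interpolating between $\|\rho_{m,s}\|_{L^1}=M$ and the $L^\infty$ bound from Lemma~\ref{ls2}, and the near-singularity piece of $\nabla\mathcal{N}\ast\rho_{m,s}$ is controlled simply by $\alpha_m\int_{|y|\le 1}|y|^{-(n-1)}dy$ rather than via H\"older with an $L^{n+\varepsilon}$ norm.
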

\begin{proof}
With the help of Lemma~\ref{ls2}, we have
\begin{equation}\label{rho}
\|\rho_{m,s}\|_{L^p(\mathbb{R}^n)}\leq \|\rho_{m,s}\|_{L^1(\mathbb{R}^n)}^{1/p}\|\rho_{m,s}\|_{L^{\infty}(\mathbb{R}^n)}^{(p-1)/p}\leq M^{1/p}\alpha_m^{(p-1)/p}\leq C,
\end{equation}
where $\alpha_m\leq \frac{1+\sqrt{1+\frac{8M}{n(n-2)\omega_n}}}{2}$ from Lemma~\ref{ls2} and $C=\max\{M, \frac{1+\sqrt{1+\frac{8M}{n(n-2)\omega_n}}}{2}\}$.

It similarly follows from \eqref{m43} that
\begin{equation*}
\begin{aligned}
\int_{\mathbb{R}^n}\nabla\mathcal{N}\ast\rho_{m,s}\cdot\nabla\mathcal{N}\ast\rho_{m,s}dx\leq C.
\end{aligned}
\end{equation*}
The $L^\infty$ estimate of $\nabla\mathcal{N}\ast\rho_{m,s}$ easily holds:
\begin{equation}\label{bn}
\begin{aligned}
|\nabla\mathcal{N}\ast\rho_{m,s}|
\leq& C\int_{|x-y| \leq1}\frac{\rho_{m,s}(y,t)}{|x-y|^{n-1}}dy+C\int_{|x-y|>1}\frac{\rho_{m,s}(y)}{|x-y|^{n-1}}dy\\
\leq& C\alpha_m \int_{|x-y|\leq 1}\frac{1}{|x-y|^{n-1}}dy+C\int_{|x-y|>1}\rho_{m,s}dy\\
\leq& C.
\end{aligned}
\end{equation}

Similar to \eqref{m25}, it follows from \eqref{rho} that
\begin{equation*}
\|\nabla^2\mathcal{N}\ast\rho_{m,s}\|_{L^p(\mathbb{R}^n)}\leq C(p)\|\rho_{m,s}\|_{L^p(\mathbb{R}^n)}\leq C(M,p)
\end{equation*}
where $C(p)\sim\frac{1}{p-1}$ for $0<p-1\ll 1$ and $C(p)\sim p$ for $p\gg1$.

Then, thanks to Sobolev's embedding theorem, there exists $\nabla\mathcal{N}\ast\rho_{\infty,s}\in L^2\cap L^\infty(\mathbb{R}^n)$ such that
\begin{equation*}
\nabla \mathcal{N}\ast\rho_{m,s}\to \nabla \mathcal{N}\ast\rho_{\infty,s},\quad\text{strongly in }L^p_{loc}(\mathbb{R}^n)\text{ for }1\leq p<\infty,\quad\text{as }m\to\infty.
\end{equation*}
\end{proof}

In the following, we establish the Aronson-B\'enilan (AB) estimate  corresponding tio the stationary state and thus a second order spatial derivative estimate of the pressure. Similar to the case of PKS model, we use the notation
       \begin{equation}\label{a21}
       \omega_{m,s}:=\Delta P_{m,s}+\rho_{m,s}.
       \end{equation}

       \begin{lemma}[Aronson-B\'enilan estimate]$\label{ls6}$
       Let $\rho_{m,s}$ be a weak solution to the SPKS model Eq.~\eqref{SPKS} in the sense of Def.~\ref{d1} with $\|\rho_{m,s}\|_{L^1(\mathbb{R}^n)}=M$, then,  for all $m\geq 3$,
       \begin{equation}\label{mainreult}
        \||\omega_{m,s}|_-\|_{L^3(\mathbb{R}^n)}^3\leq C/m,\quad\||\omega_{m,s}|_-\|_{L^1(\mathbb{R}^n)}\leq C/m^{1/3},\quad\|\Delta P_{m,s}\|_{L^1(\mathbb{R}^n)}\leq C.
       \end{equation}
      
       \end{lemma}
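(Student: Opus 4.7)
The plan is to adapt the parabolic Aronson--B\'enilan argument of Theorem~\ref{t11} to the stationary elliptic setting, since nearly every step of that proof has a clean elliptic analog once $\partial_t$ is formally set to zero. First I would derive the pointwise elliptic inequality for $\omega_{m,s}$ by combining the stationary density relation $\rho_{m,s}\omega_{m,s}=-\nabla\rho_{m,s}\cdot(\nabla P_{m,s}+\nabla\mathcal{N}\ast\rho_{m,s})$ with Eq.~\eqref{e14} and applying $\Delta$ to the latter, following the exact chain of manipulations leading to~\eqref{a20} but with $\partial_t=0$; invoking the same Cauchy--Schwarz bound $2\nabla^2 P_{m,s}:(\nabla^2 P_{m,s}+\nabla^2\mathcal{N}\ast\rho_{m,s})\geq -\tfrac{1}{2}(\nabla^2\mathcal{N}\ast\rho_{m,s})^2$, I expect to arrive at
\[
(m-1)\Delta(P_{m,s}\omega_{m,s})+\nabla\omega_{m,s}\cdot\nabla\mathcal{N}\ast\rho_{m,s}+2\nabla P_{m,s}\cdot\nabla\omega_{m,s}+\rho_{m,s}\omega_{m,s}\leq \tfrac{1}{2}(\nabla^2\mathcal{N}\ast\rho_{m,s})^2.
\]

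Next I would multiply by $-2|\omega_{m,s}|_-\leq 0$, use Kato's inequality on the Laplacian term via $P_{m,s}|\omega_{m,s}|_-=|P_{m,s}\omega_{m,s}|_-$, and integrate over $\mathbb{R}^n$. Combining the integration-by-parts identity~\eqref{OMEGA} of Theorem~\ref{t11} with the two observations $\int\nabla|\omega_{m,s}|_-^2\cdot\nabla\mathcal{N}\ast\rho_{m,s}=-\int\rho_{m,s}|\omega_{m,s}|_-^2$ and $\int\nabla|\omega_{m,s}|_-^2\cdot\nabla P_{m,s}=\int|\omega_{m,s}|_-^3+\int\rho_{m,s}|\omega_{m,s}|_-^2$, the stationary counterpart of~\eqref{a22} should read
\[
(m-3)\int_{\mathbb{R}^n}|\omega_{m,s}|_-^3+(m-4)\int_{\mathbb{R}^n}\rho_{m,s}|\omega_{m,s}|_-^2+2(m-1)\int_{\mathbb{R}^n}P_{m,s}|\nabla|\omega_{m,s}|_-|^2\leq \int_{\mathbb{R}^n}(\nabla^2\mathcal{N}\ast\rho_{m,s})^2|\omega_{m,s}|_-.
\]
The right-hand side is controlled by Young's inequality $a^2b\leq \tfrac{2}{3}a^3+\tfrac{1}{3}b^3$ together with the uniform $L^3$ bound on $\nabla^2\mathcal{N}\ast\rho_{m,s}$ from Lemma~\ref{ls5}, producing $\leq C+\tfrac{1}{3}\||\omega_{m,s}|_-\|_{L^3}^3$; absorbing and dividing by $(m-3-\tfrac{1}{3})$ yields the first bound $\||\omega_{m,s}|_-\|_{L^3}^3\leq C/m$ for $m$ large enough, the finitely many small cases being absorbed into the constant via a uniform bound.

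For the $L^1$ estimate I would exploit that $\omega_{m,s}$ vanishes outside $\mathrm{supp}(\rho_{m,s})\subset B_{R_*(M)}$ from Lemma~\ref{ls4} (using also that $P_{m,s}$ vanishes off this support), so H\"older's inequality gives $\||\omega_{m,s}|_-\|_{L^1}\leq |B_{R_*(M)}|^{2/3}\||\omega_{m,s}|_-\|_{L^3}\leq C/m^{1/3}$. The last bound $\|\Delta P_{m,s}\|_{L^1}\leq C$ then follows from the pointwise inequality $|\Delta P_{m,s}|\leq \omega_{m,s}+2|\omega_{m,s}|_-+\rho_{m,s}$ combined with the identity $\int_{\mathbb{R}^n}\Delta P_{m,s}=0$ (valid by the compact support of $P_{m,s}$ together with the divergence theorem applied on a large ball), giving $\int|\Delta P_{m,s}|\leq 2M+2C/m^{1/3}\leq C$.

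The hard part will be the consistent sign tracking when converting the parabolic chain $\partial_t\omega_m\geq\cdots$ into its stationary form $0\geq\cdots$, and ensuring that the Kato-type manipulation preserves the correct direction after multiplication by $-2|\omega_{m,s}|_-\leq 0$. A secondary technical nuisance is the degeneration of the coefficient $(m-3)$ as $m\to 3^+$, which forces either the effective threshold $m\geq 5$ for the clean $C/m$ scaling (with $m\in\{3,4\}$ absorbed into a larger constant) or a slightly more delicate refinement of Young's inequality to preserve the optimal $m$-dependence down to $m=3$.
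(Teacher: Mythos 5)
Your proposal is correct and follows the paper's own proof essentially step for step: derive the stationary elliptic inequality for $\omega_{m,s}$ (the $\partial_t=0$ analog of~\eqref{a20}), test against a multiple of $|\omega_{m,s}|_-$ with Kato's inequality, integrate using the~\eqref{OMEGA}-type identity to reach the inequality with coefficients $2(m-1)$, $(m-4)$, $(m-3)$, absorb $\int(\nabla^2\mathcal{N}\ast\rho_{m,s})^2|\omega_{m,s}|_-$ via Young together with Lemma~\ref{ls5}, and then obtain the $L^1$ and $\|\Delta P_{m,s}\|_{L^1}$ bounds from the compact support of Lemma~\ref{ls4}, H\"older, and $\int\Delta P_{m,s}\,dx=0$. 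Your caveat about the coefficient $(m-3-\tfrac13)$ (resp.\ $(m-4)$ in the paper's version of the Young step) degenerating for $m$ close to $3$ is well observed; the paper's own derivation leaves $(m-4)$ on both the cubic and quadratic terms and handles the finitely many small $m$ only implicitly, exactly as you propose to do.
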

       \begin{proof}
        To begin with,  we rewrite the SPKS model Eq.~\eqref{SPKS} as
       \begin{equation*}\label{a17}
      0=\Delta \rho_{m,s}^m+\nabla\cdot(\rho_{m,s}\nabla\mathcal{N}\ast\rho_{m,s})
       =\rho_{m,s}\omega_{m,s}+\nabla\rho_{m,s}\cdot(\nabla P_{m,s}+\nabla\mathcal{N}\ast\rho_{m,s}),
       \end{equation*}
       and the pressure equation Eq.~\eqref{e14} is
       \begin{equation*}
       (m-1)P_{m,s}\omega_{m,s}+\nabla P_{m,s}\cdot\nabla P_{m,s}+\nabla P_{m,s}\cdot\nabla\mathcal{N}\ast\rho_{m,s}=0.
       \end{equation*}
      Take the Laplace operator $(\Delta)$ action on the above equation, then we have
       \begin{equation*}\label{a18}
       \begin{aligned}
    (m-1)\Delta(P_{m,s}& \omega_{m,s})+\nabla(\Delta P_m)\cdot(\nabla\mathcal{N}\ast\rho_{m,s}+\nabla P_{m,s})\\
       &+\nabla P_{m,s}\cdot\nabla \omega_{m,s}+2\nabla^2P_{m,s}:(\nabla^2P_{m,s}+\nabla^2\mathcal{N}\ast\rho_{m,s})=0.
       \end{aligned}
       \end{equation*}
     Hence, the equation of $\omega_{m,s}$ is as follows
       \begin{equation*}
       \begin{aligned}
      (m-1)\Delta(P_{m,s} &\omega_{m,s})+\nabla\omega_{m,s}\cdot\nabla\mathcal{N}\ast\rho_{m,s}+2\nabla^2P_{m,s}:(\nabla^2P_{m,s}+\nabla^2\mathcal{N}\ast\rho_{m,s})\\
       &+\rho_{m,s}\omega_{m,s}+2\nabla P_{m,s}\cdot\nabla\omega_{m,s}=0.
       \end{aligned}
       \end{equation*}
       Then, it follows from Kato's inequality that
       \begin{equation}\label{f18}
       \begin{aligned}
0\leq& -(m-1)\Delta(P_{m,s}\omega_{m,s})-\nabla\omega_{m,s}\cdot\nabla\mathcal{N}\ast\rho_{m,s}+\frac{1}{2}(\nabla^2\mathcal{N}\ast\rho_{m,s})^2\\
       &-\rho_{m,s}\omega_{m,s}-2\nabla P_{m,s}\cdot\nabla\omega_{m,s},
       \end{aligned}
       \end{equation}
     where we use the fact
\begin{equation*}
\begin{aligned}
2\nabla^2P_{m,s}:(\nabla^2P_{m,s}+\nabla^2\mathcal{N}\ast\rho_{m,s})
\geq -\frac{1}{2}(\nabla^2\mathcal{N}\ast\rho_{m,s})^2.
\end{aligned}
\end{equation*}

   Multiplying \eqref{f18} by $|\omega_{m,s}|_-$ and thanks to Kato's inequality, we have
   \begin{equation}\label{ff20}
   \begin{aligned}
   0\leq &(m-1)\Delta(P_{m,s}|\omega_{m,s}|_-)|\omega_{m,s}|_-+\frac{1}{2}\nabla|\omega_m|_-^2\cdot\nabla\mathcal{N}\ast\rho_m+\nabla|\omega_m|_-^2\cdot\nabla P_{m,s}\\
&+\frac{1}{2}(\nabla^2\mathcal{N}\ast\rho_{m,s})^2|\omega_{m,s}|_-+\rho_{m,s}|\omega_{m,s}|_-^2.
   \end{aligned}
   \end{equation}
  Similar to \eqref{OMEGA}, it easily holds
\begin{equation*}
\begin{aligned}
(m-1)\int_{\mathbb{R}^n} &\Delta(P_{m,s}|\omega_{m,s}|_-)  |\omega_{m,s}|_-dx= -\frac{1}{2}(m-1)\int_{\mathbb{R}^n} |\omega_{m,s}|_-^3dx
\\
&-\frac{1}{2}(m-1)\int_{\mathbb{R}^n}\rho_{m,s}|\omega_{m,s}|_-^2dx -(m-1)\int_{\mathbb{R}^n}P_{m,s}|\nabla|\omega_{m,s}|_-|^2dx.
\end{aligned}
\end{equation*}
Then, integrating \eqref{ff20} on $\mathbb{R}^n$, we use the above inequality and obtain
\begin{equation}\label{ff}
\begin{aligned}
2(m-1)\int_{\mathbb{R}^n} &P_{m,s}|\nabla|\omega_{m,s}|_-|^2dx+(m-4)\int_{\mathbb{R}^n}\rho_{m,s}|\omega_{m,s}|_-^2dx\\
&+(m-3)\int_{\mathbb{R}^n}|\omega_{m,s}|_-^3dx
-\int_{\mathbb{R}^n}(\nabla^2\mathcal{N}\ast\rho_{m,s})^2|\omega_{m,s}|_-dx\leq0.
\end{aligned}
\end{equation}
By Young's inequality, Lemma~\ref{ls2}, and the singular integral theory for Newtonian potential (Lemma~\ref{l12}), we have
\begin{equation}\label{f21}
\begin{aligned}
\int_{\mathbb{R}^n}(\nabla^2\mathcal{N}\ast\rho_{m,s})^2|\omega_{m,s}|_-dx&\leq\sum_{ij}\frac{2n}{3^{3/2}}\int_{\mathbb{R}^n}|\partial_{ij}^2\mathcal{N}\ast\rho_{m,s}|^3dx+\sum_{ij}\frac{1}{n^2}\int_{\mathbb{R}^n}|\omega_{m,s}|_-^3dx\\
&\leq \sum_{ij}\frac{2n}{3^{3/2}}C\int_{\mathbb{R}^n}\rho_{m,s}^3dx+\int_{\mathbb{R}^n}|\omega_{m,s}|_-^3dx\\
&\leq \int_{\mathbb{R}^n}|\omega_{m,s}|_-^3dx+CM\alpha_m^{2} \leq \int_{\mathbb{R}^n}|\omega_{m,s}|_-^3dx+C.
\end{aligned}
\end{equation}
Taking (\ref{f21}) into (\ref{ff}), then we attain
\begin{equation*}
2(m-1)\int_{\mathbb{R}^n}P_{m,s}|\nabla|\omega_{m,s}|_-|^2dx+(m-4)\int_{\mathbb{R}^n}\rho_{m,s}|\omega_{m,s}|_-^2dx+(m-4)\int_{\mathbb{R}^n}|\omega_{m,s}|_-^3dx
\leq C.
\end{equation*}
Thus, the first estimate of \eqref{mainreult} holds
\begin{equation*}
\begin{aligned}
\int_{\mathbb{R}^n}|\omega_{m,s}|_-^3dx\leq C/m.
\end{aligned}
\end{equation*}

From Lemma~\ref{ls4} with a given mass $M>0$, there exists a positive constant $R_*(M)$ (only depending on $M$) such that
\begin{equation*}
 {\rm supp}(|\omega_{m,s}|_-)\subset B_{R_*(M)} ,
\end{equation*}
then we have
\begin{equation*}
\int_{\mathbb{R}^n}|\omega_{m,s}|_-dx=\int_{B_{R_*(M)} }|\omega_{m,s}|_-dx\leq |B_{R_*(M)} |^{2/3}\big(\int_{B_{R_*(M)} }|\omega_{m,s}|_-^3dx\big)^{1/3}\leq C/m^{1/3},
\end{equation*}
so the second estimate of \eqref{mainreult} holds. For the last estimate of \eqref{mainreult}, by triangle inequality and direct computations, we obtain
\begin{equation*}
\begin{aligned}
\int_{\mathbb{R}^n}|\Delta P_{m,s}|dx\leq& \int_{\mathbb{R}^n}|\Delta P_{m,s}+\rho_{m,s}|dx+M\\
=&\int_{\mathbb{R}^n}(\Delta P_{m,s}+\rho_{m,s})dx+2\int_{\mathbb{R}^n}|\omega_{m,s}|_-dx+M\\
=&2\int_{\mathbb{R}^n}|\omega_{m,s}|_-dx+2M\\
\leq& C.
\end{aligned}
\end{equation*}
The proof is completed.
       \end{proof}
Next, we turn to show the $L^\infty$ estimate of the pressure gradient.

\begin{lemma}[$L^\infty$ estimate on pressure gradient]$\label{ls7}$ Let $\rho_{m,s}$ be a weak solution to the SPKS model Eq.~\eqref{SPKS} in the sense of Def.~\ref{d1} with a given mass $M>0$. Then it holds for all $m\geq 3$ that
\begin{equation*}
\|\nabla P_{m,s}\|_{L^\infty\cap L^{1}(\mathbb{R}^n)}\leq C.
\end{equation*}
\end{lemma}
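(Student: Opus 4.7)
The plan is to read off $\nabla P_{m,s}$ directly from the stationary equation and then to use the uniform support control from Lemma~\ref{ls4}. On the open set $\{\rho_{m,s}>0\}$, the SPKS equation Eq.~\eqref{SPKS} may be divided by $\rho_{m,s}$, forcing
\[
\nabla P_{m,s}= -\nabla\mathcal{N}\ast\rho_{m,s} \qquad \text{on }\{\rho_{m,s}>0\}.
\]
Equivalently, using the radial representation \eqref{ff1}, namely $P_{m,s}=(-\mathcal{N}\ast\rho_{m,s}-C)_+$ for some constant $C\geq 0$, we obtain the pointwise identity
\[
\nabla P_{m,s}= -\chi_{\{P_{m,s}>0\}}\,\nabla\mathcal{N}\ast\rho_{m,s}\qquad \text{a.e. in }\mathbb{R}^n,
\]
since the positive part of a Lipschitz function is Lipschitz and its weak gradient vanishes on the zero set.

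The $L^\infty$ bound then falls out of Lemma~\ref{ls5}, which already provides $\|\nabla\mathcal{N}\ast\rho_{m,s}\|_{L^\infty(\mathbb{R}^n)}\leq C$ with a constant independent of $m\geq 3$. Combined with the identity above, this yields $\|\nabla P_{m,s}\|_{L^\infty(\mathbb{R}^n)}\leq C$ at once.

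For the $L^1$ estimate, I would invoke Lemma~\ref{ls4} to localize the support: since $\operatorname{supp}(P_{m,s})=\operatorname{supp}(\rho_{m,s})\subset B_{R_*(M)}$ with $R_*(M)$ independent of $m$, the pointwise identity confines $\nabla P_{m,s}$ to the same fixed ball. A trivial H\"older estimate then gives
\[
\int_{\mathbb{R}^n}|\nabla P_{m,s}|\,dx=\int_{B_{R_*(M)}}|\nabla P_{m,s}|\,dx\leq |B_{R_*(M)}|\,\|\nabla P_{m,s}\|_{L^\infty(\mathbb{R}^n)}\leq C.
\]

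There is no substantive obstacle: the heavy lifting is already contained in Lemmas~\ref{ls2}--\ref{ls5}, and the statement is essentially a clean corollary. The only point deserving a brief verification is the justification of the a.e.~identity for $\nabla P_{m,s}$ at the free boundary $\partial\{P_{m,s}>0\}$; this is handled by the elementary observation that $(-\mathcal{N}\ast\rho_{m,s}-C)_+$ is Lipschitz (with gradient bounded by $\|\nabla\mathcal{N}\ast\rho_{m,s}\|_{L^\infty}$), so no singular surface measure appears in its distributional gradient.
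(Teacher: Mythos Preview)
Your proof is correct and follows essentially the same route as the paper: both identify $\nabla P_{m,s}=-\nabla\mathcal{N}\ast\rho_{m,s}$ on the support, invoke Lemma~\ref{ls5} for the $L^\infty$ bound, and use the uniform support inclusion $\operatorname{supp}(P_{m,s})\subset B_{R_*(M)}$ from Lemma~\ref{ls4} to close the $L^1$ estimate. Your treatment of the free boundary via the Lipschitz positive-part identity is in fact a bit more explicit than the paper's, which simply appeals to radial symmetry at that point.
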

\begin{proof}
Since
\begin{equation*}
\rho_{m,s}\nabla P_{m,s}=-\rho_{m,s}\nabla\mathcal{N}\ast\rho_{m,s},
\end{equation*}
we have
\begin{equation*}
\nabla P_{m,s}=-\nabla\mathcal{N}\ast\rho_{m,s}\quad\text{for }x\in  {\rm supp}(P_{m,s}).
\end{equation*}
It follows from Lemma~\ref{ls5} that
\begin{equation*}
|\nabla P_{m,s}|\leq C\quad\text{for }x\in  {\rm supp}(P_{m,s}).
\end{equation*}

  From Lemma~\ref{ls1}, there exists $R_{m}(M)\leq R_{*}(M)$ for all $m\geq3$ such that
\begin{equation*}
 {\rm supp}(P_{m,s})=B_{R_{m}(M)} \subset B_{R_{*}(M)} ,
\end{equation*}
which together with the radially symmetric property of $P_{m,s}$ (Lemma~\ref{ls1}) means
\begin{equation*}
\|\nabla P_{m,s}\|_{L^{\infty}\cap L^{1}(\mathbb{R}^n)}\leq C.
\end{equation*}
\end{proof}
In the end, with the regularity estimates on $\rho_{m,s},P_{m,s}$, and $\mathcal{N}\ast\rho_{m,s}$, we are going to prove the incompressible (Hele-Shaw) limit of the SPKS model Eq.~\eqref{SPKS}.
%----------------------------------------------------------
\begin{lemma}[Incompressible limit]$\label{ls8}$
Let $\rho_{m,s}$ be a weak solution to the SPKS
Eq.~\eqref{SPKS} in the sense of Def.~\ref{d1} with $\int_{\mathbb{R}^n}x\rho_{m,s}(x)dx=0$, $\|\rho_{m,s}\|_{L^1(\mathbb{R}^n)}=M$, and $m\geq3$. Then, after extracting the subsequence, there exist $P_{\infty,s},\nabla P_{\infty,s}\in L^1(\mathbb{R}^n)\cap L^{\infty}(\mathbb{R}^n)$ such that
\begin{align}
&\nabla P_{m,s}\to\nabla P_{\infty,s},\quad\text{strongly in }L^r(\mathbb{R}^n)\text{ for }1\leq r<\infty,&&\text{ as }m\to\infty,\label{k1}\\
&P_{m,s}\to P_{\infty,s},\quad\quad\  \text{ strongly in }L^1(\mathbb{R}^n)\cap L^\infty(\mathbb{R}^n),&&\text{ as }m\to\infty.\label{k2}
\end{align}

Furthermore, there exists $\rho_{\infty,s}\in L^1(\mathbb{R}^n)\cap L^{\infty}(\mathbb{R}^n)$ such that
\begin{align}
&\|\rho_{\infty,s}\|_{L^1(\mathbb{R}^n)}=M,&&\int_{\mathbb{R}^n}x\rho_{\infty,s}dx=0,\label{k3}\\
&0\leq \rho_{\infty,s}\leq 1, &&\text{a.e. in }\mathbb{R}^n,\label{k4}\\
&(1-\rho_{\infty,s})P_{\infty,s}=0, &&\text{a.e. in }\mathbb{R}^n,\label{k5}\\
&(1-\rho_{\infty,s})\nabla P_{\infty,s}=0, &&\text{a.e. in }\mathbb{R}^n,\label{k6}\\
&\Delta P_{\infty,s}+\rho_{\infty,s}\geq 0,&&\text{in }\mathcal{D}'(\mathbb{R}^n),\label{k7}\\
&\nabla P_{\infty,s}+\rho_{\infty,s}\nabla \mathcal{N}\ast\rho_{\infty,s}=0,&&\text{in }\mathcal{D}'(\mathbb{R}^n).\label{k8}
\end{align}

Moreover, it holds for $R(M)>0$ satisfying $|B_{R(M)} |_n=M$ that
\begin{equation}\label{sss}
\rho_{\infty,s}=\chi_{\{P_{\infty,s}>0\}}=\chi_{B_{R(M)} }\quad\text{a.e. in }\mathbb{R}^n.
\end{equation}
\end{lemma}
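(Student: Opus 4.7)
The plan is to extract convergent subsequences using the uniform estimates of Lemmas~\ref{ls2}--\ref{ls7}, pass to the limit in the weak formulation, and finally exploit radial symmetry together with Newton's shell theorem to identify $\rho_{\infty,s}$ as the characteristic function of a ball.

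First I would extract subsequences. Lemma~\ref{ls2} gives uniform $L^1\cap L^\infty(\mathbb{R}^n)$ bounds on $\rho_{m,s}$; Lemma~\ref{ls4} gives the uniform compact support $\mathrm{supp}(\rho_{m,s})\subset B_{R_*(M)}$; Lemmas~\ref{ls5}--\ref{ls7} deliver uniform $L^2\cap L^\infty$ bounds on $\nabla\mathcal{N}\ast\rho_{m,s}$ and uniform $W^{1,\infty}\cap W^{1,1}$ bounds on $P_{m,s}$ with the same uniform support. By Banach--Alaoglu I obtain weak-$\star$ limits $\rho_{\infty,s},P_{\infty,s},\nabla P_{\infty,s}\in L^1\cap L^\infty(\mathbb{R}^n)$ along a subsequence. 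The uniform Lipschitz bound combined with uniformly bounded support of $P_{m,s}$ yields by Arzel\`a--Ascoli strong convergence $P_{m,s}\to P_{\infty,s}$ in $C_b(\mathbb{R}^n)$, hence~\eqref{k2}. For~\eqref{k1}, I use the SPKS identity $\rho_{m,s}(\nabla P_{m,s}+\nabla\mathcal{N}\ast\rho_{m,s})=0$ together with $\nabla P_{m,s}=0$ on $\{\rho_{m,s}=0\}$ to write $\nabla P_{m,s}=-\nabla\mathcal{N}\ast\rho_{m,s}\cdot\chi_{B_{R_m(M)}}$ a.e.; since $R_m(M)\in[(M/(C\omega_n))^{1/n},R_*(M)]$ is bounded, along a further subsequence $R_m(M)\to R_\infty>0$, so $\chi_{B_{R_m(M)}}\to\chi_{B_{R_\infty}}$ in $L^r(\mathbb{R}^n)$, while $\nabla\mathcal{N}\ast\rho_{m,s}\to\nabla\mathcal{N}\ast\rho_{\infty,s}$ strongly in $L^r_{loc}(\mathbb{R}^n)$ by Lemma~\ref{ls5}; the uniformly bounded support of $\nabla P_{m,s}$ then upgrades this to strong $L^r(\mathbb{R}^n)$ convergence for every $1\leq r<\infty$.

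Next I would verify the structural identities. Mass and first-moment conservation~\eqref{k3} follow from uniform support and weak convergence. The bound $\alpha_m^{m-1}\leq C$ of Lemma~\ref{ls2} gives $\|\rho_{m,s}\|_\infty\leq C^{1/(m-1)}\to 1$, yielding~\eqref{k4}. For~\eqref{k5} I mimic the argument of Theorem~\ref{tAEbis}: combining $\rho_{m,s}P_{m,s}=\frac{m}{m-1}\rho_{m,s}^m$, the Young-type inequality $P_{m,s}\leq\frac{m-1}{m}\rho_{m,s}P_{m,s}+\frac{1}{m-1}$, and the strong convergence of $P_{m,s}$ yields $\rho_{\infty,s}P_{\infty,s}=P_{\infty,s}$. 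Identity~\eqref{k6} follows from weak-strong pairing: on one hand $\rho_{m,s}\nabla P_{m,s}=\nabla\rho_{m,s}^m=\frac{m-1}{m}\nabla(\rho_{m,s}P_{m,s})\rightharpoonup\nabla P_{\infty,s}$ in $\mathcal{D}'$ because $\rho_{m,s}P_{m,s}\rightharpoonup P_{\infty,s}$, while on the other hand $\rho_{m,s}\nabla P_{m,s}\rightharpoonup\rho_{\infty,s}\nabla P_{\infty,s}$ by pairing the weak $\rho_{m,s}$ with the strong $\nabla P_{m,s}$. The semi-harmonicity~\eqref{k7} is immediate from $\||\omega_{m,s}|_-\|_{L^1}\leq C/m^{1/3}\to 0$ of Lemma~\ref{ls6}. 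For~\eqref{k8} I pass to the limit in the weak formulation of SPKS, pairing the weak convergence of $\rho_{m,s}$ with the strong $L^p_{loc}$ convergence of $\nabla\mathcal{N}\ast\rho_{m,s}$ and again using $\rho_{m,s}^m\rightharpoonup P_{\infty,s}$ for the diffusive term.

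Finally I would identify the geometry. Since each $\rho_{m,s}$ is radially decreasing (Lemma~\ref{ls1}), so are the limits $\rho_{\infty,s}$ and $P_{\infty,s}$. Setting $r_1:=\sup\{|x|:P_{\infty,s}(x)>0\}$, we have $\{P_{\infty,s}>0\}=B_{r_1}$ and $\rho_{\infty,s}\equiv 1$ on $B_{r_1}$ by~\eqref{k5}. On $\{|x|>r_1\}$ where $\nabla P_{\infty,s}=0$, identity~\eqref{k8} forces $\rho_{\infty,s}(x)\nabla\mathcal{N}\ast\rho_{\infty,s}(x)=0$ a.e. But by Newton's shell theorem the radial gradient reads
\[
\nabla\mathcal{N}\ast\rho_{\infty,s}(x)=-\frac{1}{n\omega_n|x|^{n-1}}\Bigl(\int_{B_{|x|}}\rho_{\infty,s}\,dy\Bigr)\hat{x},
\]
which is strictly nonzero for $|x|>r_1>0$ since $\int_{B_{r_1}}\rho_{\infty,s}=|B_{r_1}|>0$; hence $\rho_{\infty,s}\equiv 0$ on $\{|x|>r_1\}$, so $\rho_{\infty,s}=\chi_{B_{r_1}}$ a.e. Mass conservation then pins $r_1=R(M)$ via $M=|B_{r_1}|_n$, completing~\eqref{sss}. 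The hardest part is the joint passage to the limit in the nonlinear product $\rho_{m,s}\nabla\mathcal{N}\ast\rho_{m,s}$ of~\eqref{k8} and the upgrade from local to global strong convergence of $\nabla P_{m,s}$; both are reconciled by the uniformly bounded support from Lemma~\ref{ls4} combined with the strong local convergence from Lemma~\ref{ls5}.
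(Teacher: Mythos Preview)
Your proof is correct and reaches the same conclusions as the paper, but the route to the strong convergence~\eqref{k1} is genuinely different. The paper obtains~\eqref{k1} by invoking the uniform bound $\|\Delta P_{m,s}\|_{L^1(\mathbb{R}^n)}\leq C$ from Lemma~\ref{ls6} together with a Boccardo--Gallou\"et-type compactness criterion applied to $\nabla P_{m,s}$ on the fixed ball $B_{R_*(M)}$; it then deduces~\eqref{k2} from~\eqref{k1} via the Sobolev inequality $\|P_{m,s}-P_{\infty,s}\|_{L^\infty}\leq\|\nabla P_{m,s}-\nabla P_{\infty,s}\|_{L^n}$. You instead exploit the explicit radial identity $\nabla P_{m,s}=-\nabla\mathcal{N}\ast\rho_{m,s}\,\chi_{B_{R_m(M)}}$, extract a convergent subsequence of radii $R_m(M)\to R_\infty$ (legitimate thanks to the two-sided bound $(M/(C\omega_n))^{1/n}\leq R_m(M)\leq R_*(M)$), and pass to the limit in the product using Lemma~\ref{ls5}; for~\eqref{k2} you use Arzel\`a--Ascoli directly. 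Your argument is more elementary in that it avoids both the second-order $L^1$ estimate and the external compactness result, but it is also more structure-dependent: it leans on the radial geometry and the monotonicity of the support radius in an essential way, whereas the paper's compactness argument would survive without knowing the supports are exactly balls.

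For~\eqref{k5}--\eqref{k6} the paper takes a shorter path than you do: since $P_{m,s}\to P_{\infty,s}$ strongly in $L^\infty$, one can pass to the limit directly in $\rho_{m,s}P_{m,s}=(\tfrac{m-1}{m})^{1/(m-1)}P_{m,s}^{m/(m-1)}$ and in the analogous identity for $\rho_{m,s}\partial_i P_{m,s}$, without importing the Young-inequality machinery from the time-dependent case. Your approach via weak--strong pairing is of course also valid. The remaining steps (\eqref{k3}, \eqref{k4}, \eqref{k7}, \eqref{k8}, and the Newton shell argument for~\eqref{sss}) match the paper essentially line for line.
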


\begin{proof}
Since $\|\nabla P_{m,s}\|_{L^1(\mathbb{R}^n)\cap L^\infty(\mathbb{R}^n)}\leq C$ (Lemma~\ref{ls7}) and $ {\rm supp}(P_{m,s})\subset B_{R_*(M)} $ (Lemma~\ref{ls4}), then there exist $\nabla P_{\infty,s}\in L^1(\mathbb{R}^n)\cap L^\infty(\mathbb{R}^n)$ and $ {\rm supp}(P_{\infty,s})\subset B_{R_*(M)} $ such that
\begin{equation}\label{ff33}
\begin{aligned}
\nabla P_{m,s}\rightharpoonup \nabla P_{\infty,s},\quad\text{weakly in }L^r(\mathbb{R}^n)\text{ for }1<r<\infty,\quad\text{as }m\to\infty.
\end{aligned}
\end{equation}
 Thanks to $\|\Delta P_{m,s}\|_{L^1(\mathbb{R}^n)}\leq C$ (Lemma~\ref{ls6})  and $ {\rm supp}(P_{m,s})\subset B_{R(M)} $ (Lemma~\ref{ls4}), then it follows from the compactness criterion in \cite[(21)]{boga} and $P_{m,s}\in W^{\infty}_{0}(B_{R_*(M)} )$ that
 \begin{equation}\label{ff34}
 \nabla P_{m,s}\to \nabla P_{\infty,s},\quad\text{strongly in }L^r(\mathbb{R}^n)\text{ for } 1\leq r<\infty,\quad\text{as }m\to\infty.
 \end{equation}
By Sobolev's inequality for gradient (Theorem~\ref{t8}), we obtain
 \begin{equation*}
 \|P_{m,s}-P_{\infty,s}\|_{L^\infty(\mathbb{R}^n)}\leq \|\nabla P_{m,s}-\nabla P_{\infty,s}\|_{L^n(\mathbb{R}^n)}\to0,\quad\text{as }m\to\infty.
 \end{equation*}
So Eq.~\eqref{k1}--\eqref{k2} hold.

In addition, Eq.~\eqref{k3} follows from $\|\rho_{m,s}\|_{L^1(\mathbb{R}^n)}=M$, $\int_{\mathbb{R}^n}x\rho_{m,s}dx$=0, and $ {\rm supp}(\rho_{m,s})\subset B_{R_*(M)} $ show that as $m\to \infty$
\begin{equation*} 
\begin{aligned}
\int_{\mathbb{R}^n}\rho_{m,s}dx\to\int_{\mathbb{R}^n}\rho_{\infty,s}dx=M, \qquad 
 \int_{\mathbb{R}^n}x\rho_{m,s}dx\to\int_{\mathbb{R}^n}x\rho_{\infty,s}dx=0.
\end{aligned}
\end{equation*}
Since
\begin{equation*}
\|\rho_{m,s}\|_{L^r(\mathbb{R}^n)}\leq \|\rho_{m,s}\|_{L^{\infty}(\mathbb{R}^n)}^{(r-1)/r}\|\rho_{m,s}\|_{L^1(\mathbb{R}^n)}^{1/r}\leq \alpha_m^{(r-1)/r(m-1)} M^{1/r}\leq C^{(r-1)/r(m-1)} M^{1/r} ,
\end{equation*}
there exists $\rho_{\infty,s}\in L^{r}(\mathbb{R}^n)$ for $1<r<\infty$ such that
\begin{equation*}
\rho_{m,s}\rightharpoonup \rho_{\infty,s},\quad \text{weakly in } L^r(\mathbb{R}^n)\text{ for }1<r<\infty,\quad\text{as }m\to\infty.
\end{equation*}

According to the weak semi-continuity of $L^r$ norm for $1<r<\infty$, we have
\begin{equation*}
\begin{aligned}
\|\rho_{\infty,s}\|_{L^r(\mathbb{R}^n)}
\leq & \liminf\limits_{m\to\infty}\|\rho_{m,s}\|_{L^r(\mathbb{R}^n)} 
\leq \liminf\limits_{m\to\infty}(\alpha_m^{m-1})^{(r-1)/r(m-1)}M^{1/r}\\
\leq &\liminf\limits_{m\to\infty}(\frac{1+\sqrt{1+\frac{4M^2}{n^2(n-2)^2}}}{2}+\frac{2M}{n(n-2)\omega_n})^{(r-1)/r(m-1)}M^{1/r}\\
\leq &M^{1/r}.
\end{aligned}
\end{equation*}
We take $r\to\infty$ and obtain $\|\rho_{\infty,s}\|_{L^\infty(\mathbb{R}^n)}\leq 1$, 
which shows Eq.~\eqref{k4}.
\\

Let $\varphi(x)\in C_{0}^{\infty}(\mathbb{R}^n)$ be a smooth test function. Due to the definition of weak solution~\eqref{def:WS}, we have
\begin{equation}\label{f36}
\int_{\mathbb{R}^n}\rho_{m,s}\nabla P_{m,s}\cdot\nabla\varphi dx+\int_{\mathbb{R}^n}\rho_{m,s}\nabla\mathcal{N}\ast\rho_{m,s}\cdot\nabla\varphi dx=0.
\end{equation}
Passing \eqref{f36} to limit, then we obtain
\begin{equation}\label{f37}
\int_{\mathbb{R}^n}\rho_{\infty,s}\nabla P_{\infty,s}\cdot\nabla\varphi dx+\int_{\mathbb{R}^n}\rho_{\infty,s}\nabla\mathcal{N}\ast\rho_{\infty,s}\cdot\nabla\varphi dx=0.
\end{equation}
Since $\rho_{m,s}=(\frac{m-1}{m}P_{m,s})^{1/(m-1)}$, we have
\begin{equation}\label{f38}
\begin{aligned}
&\int_{\mathbb{R}^n}\rho_{\infty,s}P_{\infty,s}\varphi dx=\lim\limits_{m\to\infty}\int_{\mathbb{R}^n}\rho_{m,s}P_{m,s}\varphi dx\\
=&\lim\limits_{m\to\infty}\int_{\mathbb{R}^n}(\frac{m-1}{m})^{1/(m-1)}(P_{m,s})^{m/(m-1)}\varphi dx
=&\int_{\mathbb{R}^n}P_{\infty,s}\varphi dx.
\end{aligned}
\end{equation}
Similarly, it holds for $i=1,...,n$ that
\begin{equation}\label{f39}
\begin{aligned}
\int_{\mathbb{R}^n}\rho_{\infty,s}\partial_iP_{\infty,s}\varphi dx=&\lim\limits_{m\to\infty}\int_{\mathbb{R}^n}\rho_{m,s}\partial_iP_{m,s}\varphi dx\\
=&\lim\limits_{m\to\infty}\int_{\mathbb{R}^n}(\frac{m-1}{m})^{m/(m-1)}P_{m,s}^{m/(m-1)}\partial_i\varphi dx\\
=&-\int_{\mathbb{R}^n}P_{\infty,s}\partial_i\varphi dx 
=\int_{\mathbb{R}^n}\partial_i P_{\infty,s}\varphi dx.
\end{aligned}
\end{equation}
Using $\rho_{\infty,s}, P_{\infty,s},\nabla P_{\infty,s}\in L^{r}(\mathbb{R}^n)$ for $1<r<\infty$, Eqs.~\eqref{k5}--\eqref{k6} follows from \eqref{f38}--\eqref{f39}.

From Lemma~\ref{ls6}, we obtain
 \begin{equation*}
 \int_{\mathbb{R}^n}(\Delta P_{m,s}+\rho_{m,s})\varphi dx\geq -\int_{\mathbb{R}^n}|\omega_{m,s}|_-\varphi dx\geq -C\||\omega_{m,s}|_-\|_{L^1(\mathbb{R}^n)}\geq -C/m^{1/3},
 \end{equation*}
 which means that, after taking the limit, Eq.~\eqref{k7} holds for a nonnegative smooth test function as
 \begin{equation*}
  \int_{\mathbb{R}^n}(\Delta P_{\infty,s}+\rho_{\infty,s})\varphi dx\geq 0.
 \end{equation*}

Combining \eqref{f37} and{f40}, the last statement Eq.~\eqref{k8} is gotten as
\begin{equation*}
\nabla P_{\infty,s}+\rho_{\infty,s}\nabla\mathcal{N}\ast\rho_{\infty,s}=0,\quad\text{a.e. in }\mathbb{R}^n.
\end{equation*}

Since $\rho_{m,s}$ is a solution to the SPKS model Eq.~\eqref{SPKS} with $\int_{\mathbb{R}^n}x\rho_{m,s}(x)dx=0$, $\rho_{m,s}$ and $P_{m,s}$ are radially decreasing symmetric. Therefore, $\rho_{\infty,s}$ and $P_{\infty,s}$ are radially decreasing symmetric, and $\mathcal{N}\ast\rho_{\infty,s}$ is radially symmetric (Lemma~\ref{ls1}). Since $\rho_{\infty,s}P_{\infty,s}=P_{\infty,s}$, there exists $R_1\leq R_2$ such that
\begin{equation*}
 {\rm supp}(P_{\infty,s})=B_{R_1} ,\quad  {\rm supp}(\rho_{\infty,s})=B_{R_2} .
\end{equation*}
If $R_1<R_2$, it follows from Lemma~\ref{ls8} that
\begin{equation}\label{pp}
\rho_{\infty,s}(r)\frac{\partial}{\partial_r}\mathcal{N}\ast\rho_{\infty,s}(r)=0\quad\text{for }x\in(R_1,R_2).
\end{equation}
However, it follows from \cite[(2.2)]{IY} that for $r>0$
\begin{equation*}\label{nm}
\begin{aligned}
\frac{\partial}{\partial r}\mathcal{N}\ast\rho_{\infty,s}(r)=&\frac{1}{|\partial B_r|_{n-1}}\int_{\partial B_r}|\nabla\mathcal{N}\ast\rho_{\infty,s}|dS_x
=\frac{1}{|\partial B_r |_{n-1}}\int_{\partial B_r}\nabla\mathcal{N}\ast\rho_{\infty,s}\cdot\nu dS_x
\\
=&\frac{1}{|\partial B_r |_{n-1}}\int_{B_r}\Delta \mathcal{N}\ast\rho_{\infty,s} dx \frac{1}{|\partial B_r |_{n-1}}\int_{B_r }\rho_{\infty,s} dx>0,
\end{aligned}
\end{equation*}
which contradicts \eqref{pp}. Thus, we have $R_1=R_2$, and the last statement Eq.~\eqref{sss} is proved.
\end{proof}

\section{Conclusion, extensions and perspectives}
In order to prove the incompressible limit of the Patlak-Keller-Segel system, and
establish the corresponding Hele-Shaw free boundary equation, we have followed
the same lines of proof as initiated in~\cite{5}, with the gradient estimate as
in~\cite{r35}. This has the advantage to also prove optimal second order
estimates. A fundamental new ingredient is a uniform $L^1$ estimate on the time
derivative of pressure. With this new estimate, another possible route to
establish the complementarity condition, the hard part of the problem, would be
to use the pure compactness method in \cite{r62,rPrX}. Still another possible
route is through the obstacle problem, see~\cite{GKM} and the references therein.
We have also established uniqueness, finite propagation speed, and limit energy
functional of solutions to this Hele-Shaw type free boundary problem. In addition, we have studied the incompressible limit for the stationary state of the PKS model, which is new for the diffusion-aggregation equations.

We would like to point out that our analysis for the PKS model is compatible with growth terms, as
they appear naturally when dealing with mechanical models of tumor growth, even
though the technical details have to be checked. Also we treated dimension
$n\geq 3$ to avoid technical issues with the Sobolev inequalities but we do not
expect difficulties in two dimensions.

Several papers have treated of linear drift terms, see~\cite{r20, r50}. But it is
difficult to extend these cases or Newtonian potential to more general attractive
potential, because our proof of the time derivative estimate of pressure strongly
depends on the structure of Newtonian potential. Among open problems, let us also
mention the convergence rate with $m\to \infty$, which has been obtained in few
papers, \cite{r108, rDDP}. Finally the case of systems is only treated without drift, see \cite{r53, r62}. Large time asymptotic of solutions to the Hele-Shaw system \eqref{z6}--\eqref{z8} is an interesting topic.
\cite{CKY_2018} treated  the 2-dimensional Hele-Shaw model when the initial density is a patch function.
But that for the n-dimensional ($n\geq3$) Hele-Shaw case is  still largely open. The regularity of free boundary for a
  Hele-Shaw problem of tumor growth was obtained in~\cite{r60}, also for the porous medium equation with an external drift, cf.~\cite{r20}.

\paragraph{Acknowledegements}

The authors would like to thank Noemi David and Markus Schmidtchen for helpful
discussions and comments.
\\
 The research of Q. H. and H-L. L. was supported partially by National Natural Science Foundation of China (No.11931010, 11671384 and 11871047), and by the key research project of Academy for Multidisciplinary Studies, Capital Normal
 University, and by the Capacity Building for Sci-Tech Innovation-Fundamental
 Scientific Research Funds (No.007/20530290068).
 \\
B.P. has received funding from the European Research Council (ERC) under the
European Union's Horizon 2020 research and innovation programme (grant agreement
No.740623).
%%%%%%%%%%%%%%%

\appendix
\section{Appendix: $L^\infty$ bound}
\label{AAA}

Under the initial assumption~\eqref{c1}, we show the solution to \eqref{d1}  is bounded in any finite time using Moser's iteration technique. In order to justify the $L^\infty$ estimate of density, we introduce the following approximate equations of the Cauchy problem \eqref{d1}:
\begin{equation}\label{ae}
\begin{cases}
\partial_t \rho_{\varepsilon}=\Delta \rho_{\varepsilon}^m+\varepsilon\Delta \rho_{\varepsilon}+\nabla\cdot\big(\rho_{\varepsilon}\nabla (J_\varepsilon\ast\mathcal{N}_{\varepsilon}\ast\rho_{\varepsilon})\big),&\\
\rho_{\varepsilon}(x,0)=\rho_{0,\varepsilon}(x)=J_\varepsilon\ast\rho_{0},\\
\|\rho_\varepsilon(t)\|_{L^1(\mathbb{R}^n)}=\|\rho_{0,\varepsilon}\|_{L^1(\mathbb{R}^n)}\leq \|\rho_0\|_{L^1(\mathbb{R}^n)},
\end{cases}
\end{equation}
where $J_{1}\in C^{\infty}(\mathbb{R}^n)$ with both $\|J_{1}\|_{L^1(\mathbb{R}^n)}=1$ and $ {\rm supp}(J_1)\subset B_1 $ is the standard mollifier  and $J_{\varepsilon}:=\frac{1}{\varepsilon^d}J_1(\frac{x}{\varepsilon})$. We have

\begin{lemma}\label{lr}
Let the initial assumption~\eqref{c1} hold. Then, the solution $\rho_{\varepsilon}$ to the Cauchy problem Eq.~\eqref{ae} satisfies for any $T>0$, $r \in [1, \infty)$,  
\begin{equation*}
\sup\limits_{0\leq t\leq T}\|\rho_{\varepsilon}(t)\|_{L^r(\mathbb{R}^n)}\leq C(M,T,m,n,r,\|\rho_0\|_{L^\infty(\mathbb{R}^n)}).
\end{equation*}
\end{lemma}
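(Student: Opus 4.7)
The plan is a standard energy estimate combined with Gagliardo--Nirenberg interpolation and a Gronwall step, carried out uniformly in $\varepsilon$. The algebraic heart of the argument is that the subcritical assumption $m>2-2/n$ built into~\eqref{c1} is \emph{exactly} what makes the interpolation exponent absorbable; in this sense the proof is tight.

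For $r>1$ (the case $r=1$ being mass conservation), I multiply~\eqref{ae} by $r\rho_\varepsilon^{r-1}$ and integrate over $\mathbb{R}^n$. Integration by parts and discarding the nonpositive $\varepsilon$--contribution yields
\[
\frac{d}{dt}\int_{\mathbb{R}^n}\rho_\varepsilon^r\,dx+\frac{4mr(r-1)}{(m+r-1)^2}\int_{\mathbb{R}^n}|\nabla \rho_\varepsilon^{(m+r-1)/2}|^2\,dx\leq (r-1)\int_{\mathbb{R}^n}\rho_\varepsilon^r\,\Delta U_\varepsilon\,dx,
\]
where $U_\varepsilon:=J_\varepsilon\ast\mathcal{N}_\varepsilon\ast\rho_\varepsilon$. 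Because $\Delta\mathcal{N}=\delta$, a minor verification of the mollification gives $\Delta U_\varepsilon=K_\varepsilon\ast\rho_\varepsilon$ with $K_\varepsilon\geq 0$ and $\|K_\varepsilon\|_{L^1}=1$, so Young's inequality produces the $\varepsilon$--independent bound $\|\Delta U_\varepsilon\|_{L^{r+1}}\leq\|\rho_\varepsilon\|_{L^{r+1}}$. H\"older on the right-hand side then yields
\[
(r-1)\int_{\mathbb{R}^n}\rho_\varepsilon^r\,\Delta U_\varepsilon\,dx\leq (r-1)\,\|\rho_\varepsilon\|_{L^{r+1}}^{r+1}.
\]

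The decisive step is to interpolate $\|\rho_\varepsilon\|_{L^{r+1}}^{r+1}$ between the conserved mass $\|\rho_\varepsilon\|_{L^1}=M$ and the Sobolev bound $\|\rho_\varepsilon^{(m+r-1)/2}\|_{L^{2n/(n-2)}}\lesssim \|\nabla \rho_\varepsilon^{(m+r-1)/2}\|_{L^2}$. Setting $u:=\rho_\varepsilon^{(m+r-1)/2}$ and applying Gagliardo--Nirenberg gives
\[
\|\rho_\varepsilon\|_{L^{r+1}}^{r+1}\leq C(n,m,r)\,M^{\kappa}\,\|\nabla u\|_{L^2}^{s\theta},\qquad s\theta=\frac{2nr}{n(m+r-2)+2},
\]
for some $\kappa=\kappa(n,m,r)\geq 0$. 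A direct computation shows $s\theta<2$ is equivalent to $m>2-2/n$, so Young's inequality absorbs this term into the diffusion contribution and leaves
\[
\frac{d}{dt}\|\rho_\varepsilon(t)\|_{L^r}^r\leq C(n,m,r,M).
\]
Since $\|\rho_{\varepsilon,0}\|_{L^r}=\|J_\varepsilon\ast\rho_0\|_{L^r}\leq \|\rho_0\|_{L^\infty}^{(r-1)/r} M^{1/r}$, integrating over $[0,T]$ delivers the claimed bound $C(M,T,m,n,r,\|\rho_0\|_{L^\infty})$, independent of $\varepsilon$.

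The main technical obstacle is precisely the verification that the Gagliardo--Nirenberg exponent $s\theta$ is strictly below $2$ under~\eqref{c1}: at the critical threshold $m=2-2/n$ absorption fails and one would obtain only short-time control, reflecting the finite-time blow-up phenomenon present in the supercritical PKS. Everything else in the argument is routine bookkeeping, but special care must be taken that every constant arising from the interpolation and Young splitting depends only on $(n,m,r,M)$ (via the mass), so that the final estimate is genuinely uniform in $\varepsilon$.
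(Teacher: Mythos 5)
Your proof plan is correct and follows essentially the same route as the paper's: multiply by $\rho_\varepsilon^{r-1}$, reduce the drift contribution to $(r-1)\|\rho_\varepsilon\|_{L^{r+1}}^{r+1}$ via $\Delta\mathcal{N}=\delta$ and Young's convolution inequality, interpolate between the conserved $L^1$ mass and the Sobolev-controlled norm of $\rho_\varepsilon^{(m+r-1)/2}$, and absorb by Young. Your exponent $s\theta=\frac{2nr}{n(m+r-2)+2}$ and the equivalence $s\theta<2\Leftrightarrow m>2-2/n$ are in fact the correct values; the paper's displayed exponents appear to contain a typographical slip ($n-1$ in place of $n-2$), so your version is the cleaner bookkeeping of the same argument.
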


\begin{proof}
Multiplying Eq.~\eqref{ae} by $\rho_{\varepsilon}^{r-1}$ and integrating on $\mathbb{R}^n$, we find
\begin{equation}\label{AA:dtrho}
\begin{aligned} 
\frac{d}{dt}\| \rho_{\varepsilon}\|_{L^r}^{r}+\frac{4mr(r-1)}{(r+m-1)^2}\int_{\mathbb{R}^n}|\nabla\rho_{\varepsilon}^{\frac{r+m-1}{2}}|^2dx 
\leq&-r\int_{\mathbb{R}^n}\rho_\varepsilon\nabla\mathcal{N}\ast(J_\varepsilon\ast\rho_\varepsilon)\cdot\nabla\rho_{\varepsilon}^{r-1}dx \\
\leq&(r-1)\int_{\mathbb{R}^n}\rho_{\varepsilon}^{r}J_{\varepsilon}\ast\rho_{\varepsilon}dx\\
\leq &(r-1)\int_{\mathbb{R}^n}\rho_{\varepsilon}^{r+1}dx, 
\end{aligned}
\end{equation}
because it follows from Holder's inequality and convolution-type Young's inequality that
\begin{equation*}
\begin{aligned}
\int_{\mathbb{R}^n}\rho_{\varepsilon}^rJ_{\varepsilon}\ast\rho_{\varepsilon}dx &\leq (\int_{\mathbb{R}^n}\rho_{\varepsilon}^{r+1}dx)^{r/(r+1)} (\int_{\mathbb{R}^n}(J_\varepsilon\ast\rho_{\varepsilon})^{r+1}dx)^{1/(r+1)}\\
&\leq \|\rho_{\varepsilon}\|_{L^{r+1}(\mathbb{R}^n)}^r\|J_\varepsilon\|_{L^1(\mathbb{R}^n)}\|\rho_\varepsilon\|_{L^{r+1}(\mathbb{R}^n)}\\
&=\|\rho_{\varepsilon}\|_{L^{r+1}(\mathbb{R}^n)}^{r+1}.
\end{aligned}
\end{equation*}
Thanks to the interpolation inequality (Theorem~\ref{t1}) and Sobolev's inequality for gradient~(Theorem~\ref{t8}), we have
\begin{equation*}
\begin{aligned}
\|\rho_{\varepsilon}\|_{L^{r+1}(\mathbb{R}^n)}^{r+1}&\leq(\int_{\mathbb{R}^n}\rho_{\varepsilon}dx)^{\frac{r+mn-2n+1}{rn+mn-2n+1}}(\int_{\mathbb{R}^n}\rho_{\varepsilon}^{\frac{(r+m-1)n}{n-1}}dx)^{\frac{r(n-1)}{rn+mn-2n+1}}\\
&\leq C(M,m,n,r)(\frac{4mr}{(r+m-1)^2}\int_{\mathbb{R}^n}|\nabla \rho_{\varepsilon}^{\frac{m+r-1}{2}}|^2dx)^{\frac{rn}{rn+mn-2n+1}}\\
&\leq C(M,m,n,r)+\frac{4mr}{(r+m-1)^2}\int_{\mathbb{R}^n}|\nabla \rho_{\varepsilon}^{\frac{m+r-1}{2}}|^2dx,
\end{aligned}
\end{equation*}
and it follows from \eqref{AA:dtrho} that
\begin{equation}\label{aaa1}
\frac{d}{dt}\|\rho_{\varepsilon}\|_{L^r}^{r}\leq C(M,m,n,r).
\end{equation}
But the initial data is independent of~$\varepsilon$ since for $r\geq 1$, we have 
\begin{equation*}
\|J_\varepsilon\ast\rho_0\|_{L^r(\mathbb{R}^n)}\leq \|J_\varepsilon\|_{L^1(\mathbb{R}^n)}\|\rho_0\|_{L^r(\mathbb{R}^n)}\leq \|\rho_0\|_{L^1(\mathbb{R}^n)}^{\frac{1}{r}}\|\rho_0\|_{L^\infty(
\mathbb{R}^n)}^{\frac{r-1}{r}}\leq C.
\end{equation*}
Therefore, integrating \eqref{aaa1} on $[0,t]$ the proof of the lemma is completed.
\end{proof}

The $L^\infty$ estimate of the density $\rho_{\varepsilon}$ is obtained by adapting the proof in~\cite{Sugi06}.
\begin{lemma}\label{ub}
Under the initial assumptions~\eqref{c1},  the solution $\rho_{\varepsilon}$ to the Cauchy problem Eq.~\eqref{ae} satisfies for all $T>0$
\begin{equation*}
\sup\limits_{0\leq t\leq T}\|\rho_{\varepsilon}(t)\|_{L^\infty(\mathbb{R}^n)}\leq C(M,T,m,n,\|\rho_0\|_{L^\infty(\mathbb{R}^n)}).
\end{equation*}
\end{lemma}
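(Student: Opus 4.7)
The plan is to carry out a Moser iteration bootstrapping on the $L^r$ estimates of Lemma~\ref{lr}, carefully tracking the $r$-dependence of the constants. Set $r_k := 2^k m$ and $M_k(T) := \sup_{0\le t\le T}\|\rho_{\varepsilon}(t)\|_{L^{r_k}(\mathbb{R}^n)}$; the objective is to derive a recursion that keeps $\{M_k(T)\}$ bounded as $k\to\infty$.

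First, following the computation already used in the proof of Lemma~\ref{lr}, multiplying \eqref{ae} by $r\rho_{\varepsilon}^{r-1}$ and integrating by parts (using $\Delta(\mathcal{N}_\varepsilon \ast \rho_{\varepsilon}) = \rho_{\varepsilon}$ for the drift term together with convolution Young's inequality) yields, for every $r = r_k \ge 2$,
\begin{equation*}
\frac{d}{dt}\|\rho_{\varepsilon}\|_{L^r}^r + \frac{4m r(r-1)}{(r+m-1)^2}\int_{\mathbb{R}^n}|\nabla \rho_{\varepsilon}^{(r+m-1)/2}|^2\,dx \le r(r-1)\|\rho_{\varepsilon}\|_{L^{r+1}}^{r+1}.
\end{equation*}
I would then sharpen the bound on the right-hand side by interpolating against $\|\rho_{\varepsilon}\|_{L^{r_{k-1}}}$ rather than against $\|\rho_{\varepsilon}\|_{L^1}$ as in Lemma~\ref{lr}. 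Writing $u := \rho_{\varepsilon}^{(r+m-1)/2}$, Sobolev's inequality gives $\|u\|_{L^{2n/(n-2)}}^2 \le C(n)\|\nabla u\|_{L^2}^2$, and H\"older interpolation between $L^{2r_{k-1}/(r+m-1)}$ and $L^{2n/(n-2)}$ followed by Young's inequality absorbs a fraction of the dissipation and leaves a remainder of polynomial form $C(m,n)\, r_k^{\mu}\, M_{k-1}(T)^{\kappa r_k}$, with absolute exponents $\mu,\kappa$ read off from the scaling relations.

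Integrating in time over $[0,T]$ and extracting the $r_k$-th root produces the recursion
\begin{equation*}
\max\{M_k(T),1\} \le (C(T))^{1/r_k}\, r_k^{\mu/r_k}\, \max\{M_{k-1}(T),\|\rho_0\|_{L^\infty},1\}^{\kappa}.
\end{equation*}
The base case $M_0(T)$ is controlled by Lemma~\ref{lr} together with the initial bound $\|\rho_{0,\varepsilon}\|_{L^{r_0}} \le \|\rho_0\|_{L^\infty}$. Setting $L_k := \log\max\{M_k(T),1\}$ and iterating gives
\begin{equation*}
L_k \le \kappa^k L_0 + \sum_{j=1}^{k}\kappa^{k-j}\,\frac{\log C(T) + \mu \log r_j}{r_j},
\end{equation*}
and since $\sum_{j\ge 1} r_j^{-1}\log r_j < \infty$, provided $\kappa < 1$, the right-hand side remains bounded uniformly in $k$. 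Passing to the limit $k\to\infty$ then delivers the desired estimate $\sup_{0\le t\le T}\|\rho_{\varepsilon}(t)\|_{L^\infty(\mathbb{R}^n)} \le C(M,T,m,n,\|\rho_0\|_{L^\infty})$.

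The main obstacle is the interpolation/absorption step: one must simultaneously ensure (i) that the power of the dissipation on the right-hand side is strictly less than $1$ so that Young's inequality can absorb it (this is what keeps the source $\|\rho_{\varepsilon}\|_{L^{r+1}}^{r+1}$ sub-critical relative to the dissipation $\|\nabla \rho_{\varepsilon}^{(r+m-1)/2}\|_{L^2}^2$, and uses $m\ge 3$ comfortably), (ii) that the constant depends on $r_k$ only polynomially so $\mu$ is independent of $k$, and (iii) that the exponent $\kappa$ of $M_{k-1}(T)$ stays strictly below $1$ uniformly in $k$. Each of these can be arranged by an explicit tracking of the interpolation exponent between $L^{r_{k-1}}$ and the Sobolev endpoint $L^{2n(r_k+m-1)/((n-2)(r_k+m-1))}$ as $r_k\to\infty$; this is the delicate balance at the heart of the Sugiyama-type argument of \cite{Sugi06} that we need to reproduce in the presence of the regularized nonlocal drift.
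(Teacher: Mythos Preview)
Your Moser-iteration outline differs from the paper's proof in one essential respect: the paper does \emph{not} iterate directly on the source term $\|\rho_\varepsilon\|_{L^{r+1}}^{r+1}$. Instead it first invokes Lemma~\ref{lr} to obtain the $\varepsilon$-uniform bound $\|\nabla(J_\varepsilon\ast\mathcal{N}\ast\rho_\varepsilon)\|_{L^\infty(Q_T)}\le C(M,T,m,n,\|\rho_0\|_\infty)$, and then handles the drift via Cauchy--Schwarz on $\rho_\varepsilon^{r-1}\nabla\rho_\varepsilon\cdot V$ with $V\in L^\infty$, producing the much weaker source $\int\rho_\varepsilon^{\,r+1-m}$ rather than $\int\rho_\varepsilon^{\,r+1}$. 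This is then controlled by $C^r+r^2\|\rho_\varepsilon\|_{L^r}^r$, and the $L^r$ norm is interpolated between $L^{r/4}$ and the Sobolev endpoint; the resulting recursion along $r_k=4^k$ reads $M_k\le r_k^{N/r_k}\max\{C,M_{k-1}\}$ (effectively $\kappa=1$), and the product $\prod_k r_k^{N/r_k}$ converges. This route works throughout the full range $m>2-2/n$.

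Your direct route is workable, but the claim in (iii)---that the exponent $\kappa$ on $M_{k-1}$ stays strictly below $1$---is not correct as stated. Carrying out the interpolation/absorption explicitly along $r_k=2r_{k-1}$ gives
\[
\kappa_k=\frac{2r_k+2+n(m-2)}{2r_k+2n(m-2)},\qquad \kappa_k-1=\frac{2-n(m-2)}{2r_k+2n(m-2)},
\]
so $\kappa_k<1$ only when $m>2+\tfrac{2}{n}$; for $2-\tfrac{2}{n}<m\le 2+\tfrac{2}{n}$ one has $\kappa_k\ge 1$. Your parenthetical appeal to ``$m\ge 3$'' would cover this (then $n(m-2)\ge n\ge 3>2$), but the lemma carries no such hypothesis. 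The repair is easy: even when $\kappa_k>1$ one has $\kappa_k-1=O(r_k^{-1})$, hence $\prod_k\kappa_k<\infty$, and the recursion $L_k\le\kappa_kL_{k-1}+a_k$ with summable $a_k=O(r_k^{-1}\log r_k)$ still yields $\sup_kL_k<\infty$. So either (a) follow the paper and first bound the drift in $L^\infty$, which replaces $\|\rho_\varepsilon\|_{L^{r+1}}^{r+1}$ by $\|\rho_\varepsilon\|_{L^{r+1-m}}^{r+1-m}$ and removes the issue, or (b) keep your route but replace the requirement ``$\kappa<1$'' by ``$\prod_k\kappa_k<\infty$'' and verify it from the displayed formula.
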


\begin{proof}
Similar to \eqref{m22}, by Young's inequality we have, for $t\in[0,T]$, 
\begin{equation*}
\begin{aligned}
|\nabla(J_\varepsilon\ast\mathcal{N}\ast \rho_{\varepsilon}(t))| =&|\nabla\mathcal{N}\ast(J_\varepsilon\ast\rho_{\varepsilon}(t))|\\
\leq&
C(n)\Big(\int_{\mathbb{R}^n}(J_\varepsilon \ast\rho_\varepsilon)^{2n-1}(y,t)dy\Big)^{\frac{1}{2n-1}}+
C(n)\|J_\varepsilon \ast\rho_\varepsilon(t)\|_{L^1(\mathbb{R}^n)}\\
\leq& C(n)(\|\rho_\varepsilon(t)\|_{L^{2n-1}(\mathbb{R}^n)}+\|\rho_\varepsilon(t)\|_{L^1(\mathbb{R}^n)})\\
\leq& C(M,T,m,n,\|\rho_0\|_{L^\infty(\mathbb{R}^n)}).
\end{aligned}
\end{equation*}
Multiplying Eq.~\eqref{ae} by $\rho_{\varepsilon}^{r-1}$ and integrating on $\mathbb{R}^n$, it follows that, for $r\geq m\geq1$, 
\begin{equation*}
\begin{aligned}
\frac{1}{r}\frac{d}{dt}& \|\rho_{\varepsilon}\|_{L^r}^{r}+\frac{4m(r-1)}{(r+m-1)^2}\int_{\mathbb{R}^n}|\nabla\rho_{\varepsilon}^{\frac{r+m-1}{2}}|^2dx
\leq(r-1)\int_{\mathbb{R}^n}\rho_{\varepsilon}^{r-1}\nabla \rho_{\varepsilon}\cdot\nabla (J_\varepsilon\ast\mathcal{N}\ast\rho_{\varepsilon})dx\\
\leq& \frac{2m(r-1)}{(r+m-1)^2}\int_{\mathbb{R}^n}|\nabla\rho_\varepsilon^{\frac{r+m-1}{2}}|^2dx+\frac{(r-1)}{2 m}\|\nabla(J_\varepsilon\ast\mathcal{N}\ast\rho_{\varepsilon})(t)\|_{L^\infty(\mathbb{R}^n)}^2\int_{\mathbb{R}^n}\rho_{\varepsilon}^{r+1-m}dx.
\end{aligned}
\end{equation*}
Furthermore, it holds by H\"older's and Young's inequalities that
\begin{equation}\label{ed1}
\begin{aligned}
\frac{1}{r}\frac{d}{dt} \|\rho_{\varepsilon}\|_{L^r}^{r}+&\frac{2m(r-1)}{(r+m-1)^2}\int_{\mathbb{R}^n}|\nabla\rho_{\varepsilon}^{\frac{r+m-1}{2}}|^2dx\\
\leq& (r-1)C(M,T,m,n,\|\rho_{0}\|_{L^\infty(\mathbb{R}^n)})(\|\rho_{\varepsilon}\|_{L^1(\mathbb{R}^n)})^{\frac{m-1}{r-1}}(\|\rho_{\varepsilon}\|_{L^r(\mathbb{R}^n)}^r)^{\frac{r-m}{r-1}}\\
\leq&C^r(M,T,m,n,\|\rho_{0}\|_{L^\infty(\mathbb{R}^n)})+r^2\|\rho_{\varepsilon}\|_{L^r(\mathbb{R}^n)}^r,\text{ for }r+1\geq 2m.
\end{aligned}
\end{equation}

By means of the interpolation inequality (Theorem~\ref{t1}), Sobolev's inequality for the gradient (Theorem~\ref{t8}) and Young's inequality, we also have
\begin{align} 
r^2\|\rho_{\varepsilon}\|_{L^{r}(\mathbb{R}^n)}^r & \leq r^2\|\rho_{\varepsilon}\|_{L^{\frac{r}{4}}(\mathbb{R}^n)}^{\frac{[r+(m-1)n]r}{(3n+1)r+4(m-1)n}}\|\rho_{\varepsilon}\|_{L^{\frac{(m+r-1)n}{n-1}}(\mathbb{R}^n)}^{\frac{[3nr+3(m-1)n]r}{(3n+1)r+4(m-1)n}} \notag \\
&=r^2\|\rho_{\varepsilon}\|_{L^{\frac{r}{4}}(\mathbb{R}^n)}^{\frac{[r+(m-1)n]r}{(3n+1)r+4(m-1)n}}(S_n^{-2}\int_{\mathbb{R}^n}|\nabla\rho_\varepsilon^{\frac{m+r-1}{2}}|^2dx)^{\frac{3nr}{(3n+1)r+4(m-1)n}} \notag \\
&\leq \frac{m(r-1)}{(m+r-1)^2}\|\nabla\rho_\varepsilon^{\frac{r+m-1}{2}}\|_{L^2(\mathbb{R}^n)}^2
+\frac{r+4(m-1)n}{(3n+1)r+4(m-1)n}r^{\frac{2(3n+1)r+8(m-1)n}{r+4(m-1)n}}  \notag  \\
&\qquad \qquad  (S_n^{-2}\frac{(r+m-1)^2}{m(r-1)}\frac{3nr}{(3n+1)r+4(m-1)n})^{\frac{3nr}{r+4(m-1)n}}\|\rho_\varepsilon\|_{L^{\frac{r}{4}}(\mathbb{R}^n)}^{\frac{[r+(m-1)n]r}{r+4(m-1)n}}  \notag  \\
&\leq \frac{m(r-1)}{(m+r-1)^2}\|\nabla\rho_\varepsilon^{\frac{r+m-1}{2}}\|_{L^2(\mathbb{R}^n)}^2+r^{N}\|\rho_\varepsilon\|_{L^{\frac{r}{4}}(\mathbb{R}^n)}^r+C^{r}(m,n), \label{ed2}
\end{align} 
where $N:=9n+2$, $r\geq\max\{4,2m\}$, and in the last inequality we use,
\begin{equation*}
\big(S_n^{-2}\frac{(r+m-1)^2}{mr(r-1)}\frac{3nr}{(3n+1)r+4(m-1)n}\big)^{\frac{3n}{m-1}}\to(\frac{S_n^{-2}3n}{m(3n+1)})^{\frac{3n}{m-1}},\quad\text{as }r\to\infty.
\end{equation*}
Inserting \eqref{ed2}  into \eqref{ed1}, it holds
\begin{equation}\label{ed3}
\frac{d}{dt}\|\rho_{\varepsilon}\|_{L^r(\mathbb{R}^n)}^r\leq r C^r(m,n)+r^{N+1}\|\rho_{\varepsilon}\|_{L^{\frac{r}{4}}(\mathbb{R}^n)}.
\end{equation}
Integrating \eqref{ed3} on $[0,t]$ for any $t\in(0,T]$, we have
\begin{equation*}
\begin{aligned}
\sup\limits_{0\leq t\leq T}\|\rho_{\varepsilon}(t)\|_{L^r(\mathbb{R}^n)}
\leq&r^{\frac{N+1}{r}}\sup\limits_{0\leq t\leq T}\|\rho_\varepsilon(t)\|_{L^{\frac{r}{4}}(\mathbb{R}^n)}+\|\rho_{\varepsilon,0}\|_{L^r(\mathbb{R}^n)}+r^{\frac{1}{r}}C(m,n)\\
\leq& r^{\frac{N+1}{r}}\sup\limits_{0\leq t\leq T}\|\rho_\varepsilon(t)\|_{L^{\frac{r}{4}}(\mathbb{R}^n)}+\|\rho_{0}\|_{L^r(\mathbb{R}^n)}+r^{\frac{1}{r}}C(m,n)\\
\leq& r^{\frac{N+1}{r}}\max\{M,\|\rho_0\|_{L^\infty(\mathbb{R}^n)},C(T,m,n),\sup\limits_{0\leq t\leq T}\|\rho_\varepsilon(t)\|_{L^{\frac{r}{4}}(\mathbb{R}^n)}\}.
\end{aligned}
\end{equation*}

Let $r=4^p$ for $p\in \mathbb{N}^+$, by iteration, we obtain
\begin{equation*}
\begin{aligned}
\sup\limits_{0\leq t\leq T} & \|\rho_{\varepsilon}(t)\|_{L^{4^p}(\mathbb{R}^n)}
\leq 4^{[\frac{p}{4^p}+\frac{p-1}{4^{(p-1)}}]}\max\{M,\|\rho_{0}\|_{L^\infty(\mathbb{R}^n)},C(T,m,n),\sup\limits_{0\leq t\leq T}\|\rho_{\varepsilon}\|_{L^{4^{(p-2)}}(\mathbb{R}^n)}\}\\
\leq& 4^{(N+1)[\frac{p}{4^p}+\frac{p-1}{4^{(p-1)}}+\cdots+\frac{p^*}{4^{p^*}}]}\max\{M,\|\rho_{0}\|_{L^\infty(\mathbb{R}^n)},C(T,m,n),\sup\limits_{0\leq t\leq T}\|\rho_{\varepsilon}(t)\|_{L^{4}(\mathbb{R}^n)}\}\\
\leq& C(M,T,m,n,\|\rho_0\|_{L^\infty(\mathbb{R}^n)}),
\end{aligned}
\end{equation*}
where $\sup\limits_{0\leq t\leq T}\|\rho_{\varepsilon}(t)\|_{L^{4^{p^*}}(\mathbb{R}^n)}\leq C(M,T,m,n,\|\rho_0\|_{L^\infty(\mathbb{R}^n)})$ for $p^*:=\inf\{p\in\mathbb{N}^+:4^{p}\geq\max\{4,2m\}\}$ holds from Lemma~\ref{lr}. Taking $p\to\infty$, we find the announced bound.
\end{proof}

%----------------------------------------------
\section{Compact support property}
\label{sec:comsupp}

We prove the $m$ dependent compact support property 
\begin{lemma} 
Assume~\eqref{c1}, then, the global weak solution to the Cauchy problem Eq.~\eqref{d1} in the sense of Def.~\ref{d1} is compactly supported 
 \begin{equation*} \begin{cases}
{\rm supp}(\rho_m(t))\subset B_{\mathcal{R}_{m}(T,t)},\qquad \quad  \forall\ t\in[0,T], \, \forall T>0,
\\
\mathcal{R}_m(T,t):=(\mathcal{R}_{m,0}+\frac{n\|\nabla\mathcal{N}\ast\rho_{m}\|_{L^\infty(Q_T)}}{A_{m,T}})e^{\frac{A_{m,T}t}{n}}-\frac{n\|\nabla\mathcal{N}\ast\rho_{m}\|_{L^\infty(Q_T)}}{A_{m,T}},
\end{cases}
\end{equation*}
with $\mathcal{R}_{m,0}=  2\max(R_m, \sqrt{nP_{m,0}} )$ and $A_{m,T}:=\max\big(\|\rho_m\|_{L^\infty(Q_T)},1\big)$.  
\end{lemma}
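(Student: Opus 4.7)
The strategy will be a parabolic comparison argument for the pressure equation \eqref{d3} based on an explicit quadratic barrier. Since ${\rm supp}(\rho_m(t))={\rm supp}(P_m(t))$, it is enough to construct $\bar{P}$ that dominates $P_m$ pointwise and is supported in $B_{\mathcal{R}_m(T,t)}$. The natural ansatz, inspired by the Barenblatt scaling of porous medium flow and mirroring the barrier used in Lemma~\ref{l17}, is
$$\bar{P}(x,t) := \frac{A_{m,T}}{2n}\bigl(\mathcal{R}_m(T,t)^2 - |x|^2\bigr)_{+}.$$
Inside its support $\Delta \bar{P} = -A_{m,T}$, $\nabla \bar{P} = -\tfrac{A_{m,T}}{n}x$, and $\partial_t \bar{P} = \tfrac{A_{m,T}}{n}\mathcal{R}_m\,\mathcal{R}_m'$.

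The crux is to tune $\mathcal{R}_m(T,\cdot)$ so that $\bar{P}$ is a supersolution of \eqref{d3}. The very definition $A_{m,T}:=\max(\|\rho_m\|_{L^\infty(Q_T)},1)$ forces $\Delta\bar{P}+\rho_m \leq 0$, and since $\bar{P}\geq 0$ this renders the leading term $(m-1)\bar{P}(\Delta\bar{P}+\rho_m)$ non-positive. Only the first order terms remain to be matched. Estimating $\nabla\bar{P}\cdot\nabla\mathcal{N}\ast\rho_m$ in absolute value by $\tfrac{A_{m,T}}{n}|x|\,\|\nabla\mathcal{N}\ast\rho_m\|_{L^\infty(Q_T)}$ and writing the supersolution inequality at the worst point $|x|=\mathcal{R}_m$, then dividing through by $\tfrac{A_{m,T}}{n}\mathcal{R}_m$, reduces the requirement to the linear ODE
$$\mathcal{R}_m' = \frac{A_{m,T}}{n}\mathcal{R}_m + \|\nabla\mathcal{N}\ast\rho_m\|_{L^\infty(Q_T)},\qquad \mathcal{R}_m(T,0)=\mathcal{R}_{m,0},$$
whose explicit solution is exactly the formula in the statement.

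The initial pointwise comparison $\bar{P}(\cdot,0)\geq P_{m,0}$ will follow directly from the choice $\mathcal{R}_{m,0}=2\max(R_m,\sqrt{nP_{m,0}})$: on $B_{R_m}\supset {\rm supp}(P_{m,0})$ one has $\mathcal{R}_{m,0}^2-|x|^2 \geq \tfrac{3}{4}\mathcal{R}_{m,0}^2 \geq 3n\|P_{m,0}\|_{L^\infty}$, so $\bar{P}(\cdot,0)\geq \tfrac{3}{2}A_{m,T}\|P_{m,0}\|_{L^\infty}\geq \|P_{m,0}\|_{L^\infty}$ because $A_{m,T}\geq 1$.

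The main obstacle will be justifying an actual comparison principle for \eqref{d3}, which is degenerate parabolic and carries the nonlocal drift $\nabla\mathcal{N}\ast\rho_m$, so a classical maximum-principle argument does not apply verbatim. The plan is to run the barrier argument on the viscous regularization \eqref{ae} of Appendix~\ref{AAA}, where the extra $\varepsilon\Delta$ provides uniform parabolicity and the mollified convolution provides a smooth drift: for that system the usual parabolic comparison, or equivalently a Stampacchia-type energy estimate on $(P_\varepsilon-\bar{P})_+$ combined with the uniform $L^\infty$ bound on $\nabla\mathcal{N}\ast\rho_\varepsilon$ derived as in \eqref{m22}, will yield $P_\varepsilon \leq \bar{P}$. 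Passing $\varepsilon\to 0$ with the compactness supplied by Lemmas~\ref{lr}--\ref{ub} will then give $P_m\leq \bar{P}$, and hence the claimed inclusion ${\rm supp}(\rho_m(t))\subset B_{\mathcal{R}_m(T,t)}$.
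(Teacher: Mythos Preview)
Your barrier construction, the computations of $\Delta\bar P$, $\nabla\bar P$, $\partial_t\bar P$, the key observation $\Delta\bar P+\rho_m\le 0$, the reduction to the linear ODE for $\mathcal R_m$, and the check of the initial ordering are all correct and are precisely what the paper does: the paper's supersolution is the very same $\phi_{m,T}(x,t)=\frac{A_{m,T}}{2n}\big(\mathcal R_m(T,t)^2-|x|^2\big)_+$.

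The only divergence, and the only weak point, is in how you plan to justify the comparison. Running the argument on the viscous approximation~\eqref{ae} and then letting $\varepsilon\to0$ does not work as written: the extra term $\varepsilon\Delta\rho_\varepsilon$ makes the density equation uniformly parabolic, so by the strong maximum principle $\rho_\varepsilon(\cdot,t)>0$ on all of $\mathbb R^n$ for every $t>0$. Hence $P_\varepsilon>0$ everywhere, while your barrier $\bar P$ vanishes outside $B_{\mathcal R_m(T,t)}$; the inequality $P_\varepsilon\le\bar P$ you aim for is therefore false for every $\varepsilon>0$, and no Stampacchia estimate on $(P_\varepsilon-\bar P)_+$ can produce it. In addition, the $\varepsilon\Delta\rho_\varepsilon$ term does not translate into a clean $\varepsilon\Delta P_\varepsilon$ in the pressure equation, so the ``uniform parabolicity for the pressure'' you invoke is not available either.

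The paper avoids this entirely: it freezes the drift $V_m:=\nabla\mathcal N\ast\rho_m$ (so the equation becomes a porous medium equation with a \emph{given} bounded drift satisfying $\nabla\cdot V_m=\rho_m\in L^\infty$), invokes \cite[Corollary~3.11]{LeiKim} to assert that the pressure $P_m$ is a viscosity solution of the corresponding pressure equation, and then applies the viscosity comparison principle exactly as in \cite[Lemma~3.8]{CKY_2018} against the same barrier $\phi_{m,T}$. No regularization is needed, and the finite speed of propagation intrinsic to the degenerate equation is what makes the compactly supported barrier effective. If you want to salvage a regularization route, you should drop the $\varepsilon\Delta\rho_\varepsilon$ term and only mollify the kernel, so that the approximate problem is still a PME with drift (hence still has finite propagation speed); but at that point you are essentially reproducing the viscosity argument anyway.
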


Set $V_m(x,t):=\nabla\mathcal{N}\ast\rho_m$, it follows from the $L^\infty$ bound in  Lemma~\ref{ub} that
\begin{equation*}
\begin{aligned}
&\sup\limits_{0\leq t\leq T}\|V_{m}(t)\|_{L^\infty(\mathbb{R}^n)}\leq C(M,T,m,n,\|\rho_{m,0}\|_{L^\infty(\mathbb{R}^n)}),\\
&\sup\limits_{0\leq t\leq T}\|\nabla\cdot V_{m}(t)\|_{L^\infty(\mathbb{R}^n)}\leq C(M,T,m,n,\|\rho_{m,0}\|_{L^\infty(\mathbb{R}^n)}),
\end{aligned}
\end{equation*}
where the first inequality is obtained as \eqref{m22}.
We rewrite Eq.~\eqref{d1} as
\begin{equation*}
\partial_t\rho_m=\Delta \rho_m^m+\nabla\cdot(\rho_m V_m),
\end{equation*}
which is now considered as a porous medium equation with a given drift $V_m$. Then, similar to \cite[Lemma 3.8]{CKY_2018}, we can construct a viscosity sup-solution with a uniform compact support in any finite time for the equation of pressure $P_m=\frac{m}{m-1}\rho_m^{m-1}$,
 \begin{equation*}
 \partial_t P_m=(m-1)P_m(\Delta P_m+\nabla\cdot V_m)+|\nabla P_m|^2+\nabla P_m\cdot V_m.
 \end{equation*}
 Hence, the solution $\rho_m$ to the Cauchy problem \eqref{d1}  under the initial assumption~\eqref{c1} is compactly supported on $[0,T]$ for any $T>0$.

 More precisely, let $\mathcal{R}_m(T,t):=(\mathcal{R}_{m,0}+\frac{n\|\nabla\mathcal{N}\ast\rho_{m}\|_{L^\infty(Q_T)}}{A_{m,T}})e^{\frac{A_{m,T}t}{n}}-\frac{n\|\nabla\mathcal{N}\ast\rho_{m}\|_{L^\infty(Q_T)}}{A_{m,T}}$ for any $T>0$ with $\mathcal{R}_{m,0}\geq1$ satisfying $2R_m\leq\mathcal{R}_{m.0}$ and $P_{m,0}\leq \frac{\mathcal{R}_{m,0}^2}{4n}$, and $A_{m,T}:=\max\{\|\rho_m\|_{L^\infty(Q_T)},1\}$. Set $\phi_{m,T}(x,t):=\frac{A_{m,T}\mathcal{R}_{m,T}^2\big(1-\frac{|x|^2}{\mathcal{R}_{m,T}^2}\big)_+}{2n}$, it is easy to verify on the support of $\phi_{m,T}$ that
 \[\partial_t \phi_{m,T}\geq (m-1)\phi_{m,T}(\Delta\phi_{m,T}+\nabla\cdot V_m)+|\nabla \phi_{m,T}|^2+\nabla \phi_{m,T}\cdot V_m\quad\text{for }t\in [0,T]\]
 with $P_{m,0}\leq \phi_{m,T}(x,0)$ in $\mathbb{R}^n$. Therefore, it holds by the comparison principle that
 \begin{equation*}
 \begin{aligned}
 &P_m(x,t)\leq \phi_{m,T}(x,t),\quad&&\forall\ (x,t)\in\mathbb{R}^n\times[0,T],\\
 & {\rm supp}(\rho_m(t))\subset B_{\mathcal{R}_{m}(T,t)} ,\quad &&\forall\ t\in[0,T].
 \end{aligned}
 \end{equation*}
The proof of the compact support property is completed.

\section{Appendix: various functional inequalities}

\begin{lemma}[Singular integral for Newtonian potential]\label{l12}
\cite{r81,r46}
Let $\mathcal{N}$ be the Newtonian potential. For $f(x)\in
L^p(\mathbb{R}^n),1<p<\infty$, we have
\begin{equation*}
\|\nabla^2\mathcal{N}\ast f(x)\|_{L^p(\mathbb{R}^n)}\leq
C(p,n)\|f(x)\|_{L^p(\mathbb{R}^n)},
\end{equation*}
where $C(p,n)\sim \frac{1}{p-1}$ for $0<p-1\ll 1$ and $C(p,n)\sim p$ for $p\gg
1$.
\end{lemma}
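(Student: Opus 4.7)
The plan is to recognize $\nabla^2\mathcal N\ast f$ as a classical Calderón--Zygmund singular integral operator of convolution type and apply the standard $L^p$ theory, tracking the dependence of the constant on $p$ carefully at the endpoints.

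First, I would compute the kernel explicitly. For $x\neq 0$, differentiating $\mathcal N(x)=-\frac{1}{n(n-2)\alpha_n}|x|^{2-n}$ twice gives
\[
K_{ij}(x):=\partial_{ij}\mathcal N(x)=\frac{1}{n\alpha_n}\Bigl(\frac{\delta_{ij}}{|x|^n}-\frac{n\,x_ix_j}{|x|^{n+2}}\Bigr),
\]
which is homogeneous of degree $-n$, smooth on $\mathbb R^n\setminus\{0\}$, and satisfies the cancellation property $\int_{\mathbb S^{n-1}}K_{ij}(\theta)\,d\sigma(\theta)=0$ (since its angular part has mean zero). As a tempered distribution,
\[
\partial_{ij}\mathcal N=\mathrm{p.v.}\,K_{ij}+\tfrac{1}{n}\delta_{ij}\delta_0,
\]
so $\partial_{ij}(\mathcal N\ast f)=\mathrm{p.v.}(K_{ij}\ast f)+\tfrac{\delta_{ij}}{n}f$. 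The pointwise bound $|\nabla K_{ij}(x)|\leq C_n |x|^{-n-1}$ gives the Hörmander regularity condition.

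Second, I would establish the $L^2$ endpoint via Plancherel. The Fourier multiplier of the operator $f\mapsto \partial_{ij}(\mathcal N\ast f)$ is $-\xi_i\xi_j/|\xi|^2$, of modulus at most $1$, so the operator is bounded on $L^2$ with norm $\leq 1$. Third, the weak-$(1,1)$ estimate follows from the Calderón--Zygmund decomposition: decompose $f=g+b$ at height $\lambda$, handle the good part by $L^2$, and for the bad part use the cancellation of atoms together with the Hörmander condition on $K_{ij}$ to bound the measure of the exceptional set. This yields a constant $A_n$ depending only on $n$.

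Finally, I would interpolate: Marcinkiewicz interpolation between the weak-$(1,1)$ estimate and the strong $L^2$ estimate produces
\[
\|\nabla^2\mathcal N\ast f\|_{L^p(\mathbb R^n)}\leq C_n\,\frac{p}{p-1}\,\|f\|_{L^p(\mathbb R^n)},\qquad 1<p\leq 2,
\]
which gives the announced blow-up $\sim 1/(p-1)$ as $p\to 1^+$. For $2\leq p<\infty$, I would use duality: since $K_{ij}(-x)=K_{ij}(x)$, the formal adjoint coincides with the operator itself, so the $L^p$ bound for $p\geq 2$ equals the $L^{p'}$ bound for the conjugate exponent $p'\in(1,2]$, giving a constant $\sim p$ as $p\to\infty$. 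Taking the better of the two bounds in each regime yields the asserted asymptotics $C(p,n)\sim 1/(p-1)$ near $p=1$ and $C(p,n)\sim p$ near $p=\infty$.

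The main subtlety here is not the $L^p$ bound itself, which is essentially textbook Calderón--Zygmund theory, but rather tracking the sharp asymptotic behavior of the constant at both endpoints. Since the result is entirely classical and is stated here only as a tool, the expected proof consists of verifying each ingredient (kernel cancellation, Hörmander regularity, $L^2$ endpoint, Marcinkiewicz interpolation, and duality) and simply citing the underlying theorems, as is done in the references \cite{r81,r46} attached to the statement.
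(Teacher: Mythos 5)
Your proposal is correct, and it reconstructs exactly the standard Calder\'on--Zygmund argument that the cited references (Stein's \emph{Singular integrals} and Bedrossian--Rodr\'{\i}guez--Bertozzi) rely on. The paper itself supplies no proof of Lemma~\ref{l12}; it simply cites \cite{r81,r46}, so there is nothing in the paper to compare your argument against step by step. Your ingredients are all sound: the explicit kernel $K_{ij}(x)=\frac{1}{n\alpha_n}\bigl(\delta_{ij}|x|^{-n}-n x_ix_j|x|^{-n-2}\bigr)$ is indeed homogeneous of degree $-n$ with vanishing spherical mean; the distributional identity $\partial_{ij}\mathcal N=\mathrm{p.v.}\,K_{ij}+\tfrac{1}{n}\delta_{ij}\delta_0$ is consistent with $\Delta\mathcal N=\delta_0$ (the trace of the p.v.\ part vanishes, and the delta contributions sum to $\delta_0$); the multiplier $\xi_i\xi_j/|\xi|^2$ has modulus at most one; the weak-$(1,1)$ bound follows from the H\"ormander condition $|\nabla K_{ij}(x)|\lesssim |x|^{-n-1}$; Marcinkiewicz interpolation gives the constant $\sim p/(p-1)$ for $1<p\le 2$; and since $K_{ij}$ is even the operator is self-adjoint, so duality transfers this to $\sim p$ for $p\ge 2$, with $p'=p/(p-1)$ and $p'/(p'-1)=p$. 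The local term $\tfrac{1}{n}\delta_{ij}f$ is trivially bounded with constant $1$ and does not affect the asymptotics of $C(p,n)$.
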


\begin{theorem}[Hardy-Littlewood-Sobolev inequality]\label{t7} \cite{1}
 Let $p,r>1$ and $0<\lambda<n$ with
$\frac{1}{p}+\frac{\lambda}{n}+\frac{1}{r}=2$. Let $f\in L^p(\mathbb{R}^n)$ and
$h\in L^r(\mathbb{R}^n)$. Then, we have 
\begin{equation}\label{w10}
\Big|\int_{\mathbb{R}^n}\int_{\mathbb{R}^n}f(x)|x-y|^{-\lambda}h(y)dxdy\Big|\leq
C(n,\lambda,p)||f||_{L^p(\mathbb{R}^n)}||h||_{L^r(\mathbb{R}^n)},
\end{equation}
where the constant satisfies $
C(n,\lambda,p)=\frac{n}{n-\lambda}(\frac{|\mathbf{S}^{n-1}|}{n})^{\frac{\lambda}{n}}\frac{1}{pr}\big((\frac{p\lambda}{np-n})^{\frac{\lambda}{n}}+(\frac{r\lambda}{nr-n})^{\frac{\lambda}{n}}\big)$.

For $\lambda=n-2$ and $p=r=\frac{2n}{n+2}$, we have
$
C(n,p,\lambda)=C(n)=\pi^{\frac{n-2}{2}}\frac{\Gamma(1)}{\Gamma(\frac{n+2}{2})}\{\frac{\Gamma(\frac{n}{2})}{\Gamma(n)}\}^{-\frac{2}{n}}.
$
\end{theorem}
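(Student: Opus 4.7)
This is the classical Hardy-Littlewood-Sobolev inequality, so I would follow the standard two-step scheme: establish the general bound \eqref{w10} with the stated (non-sharp) constant via a direct kernel-splitting argument, and obtain the explicit sharp value quoted in the symmetric conformal case $\lambda=n-2$, $p=r=2n/(n+2)$ by Lieb's conformal rearrangement argument.

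For the general inequality, the plan is to rewrite the left-hand side as the dual pairing $\langle f,|\cdot|^{-\lambda}\ast h\rangle$ and bound the convolution in $L^{p'}$. I would exploit the fact that the Riesz kernel $|x|^{-\lambda}$ lies in the weak Lebesgue space $L^{n/\lambda,\infty}(\mathbb{R}^n)$ with explicit quasinorm $(|\mathbf{S}^{n-1}|/n)^{\lambda/n}$ (obtained by computing $|\{|x|^{-\lambda}>t\}|$ directly), then implement the Hardy-Littlewood layer-cake splitting: decompose $|x|^{-\lambda}=|x|^{-\lambda}\chi_{|x|\leq R}+|x|^{-\lambda}\chi_{|x|>R}$ at a free radius $R$, use Young's convolution inequality on the near part (whose $L^1$ norm is $\frac{|\mathbf{S}^{n-1}|}{n-\lambda}R^{n-\lambda}$ since $\lambda<n$) and H\"older-Chebyshev on the far part, then optimize $R$ pointwise. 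The two H\"older dualities arising from the two-piece decomposition produce exactly the symmetric combination $((p\lambda/(np-n))^{\lambda/n}+(r\lambda/(nr-n))^{\lambda/n})$ in the stated constant, while the outer factor $\frac{n}{n-\lambda}(|\mathbf{S}^{n-1}|/n)^{\lambda/n}\frac{1}{pr}$ comes from the kernel's weak quasinorm together with the normalization constants of the two splittings.

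For the sharp value $C(n)=\pi^{(n-2)/2}\Gamma(1)/\Gamma((n+2)/2)\cdot(\Gamma(n/2)/\Gamma(n))^{-2/n}$ in the conformally invariant symmetric case, I would follow Lieb: first reduce to $f$ and $h$ symmetric decreasing and nonnegative using the Riesz rearrangement inequality, which preserves both norms and can only increase the left-hand side; lift the functional to the sphere $\mathbf{S}^n$ via stereographic projection, where it becomes invariant under the full conformal group; establish existence of a maximizer via concentration compactness, taking care that the noncompact symmetry group is factored out before running the profile decomposition; identify the maximizer up to conformal transformations as $f(x)=(1+|x|^2)^{-(n+2)/2}$ from the Euler-Lagrange equation, using either a moving-plane argument or the competing symmetries method of Carlen-Loss; finally, evaluate the double integral on this explicit extremizer via beta-gamma identities to produce $C(n)$.

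The principal obstacle is the sharp constant: the noncompact conformal symmetry group makes both existence and uniqueness of extremizers delicate, and explicitly identifying the Lieb extremizer requires the moving-plane or competing-symmetries machinery rather than elementary variational tools. The general bound, by contrast, is essentially a bookkeeping exercise around the H\"older/Young splitting once the kernel's weak-$L^{n/\lambda}$ quasinorm is computed in closed form.
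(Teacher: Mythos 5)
The paper does not prove Theorem~\ref{t7}: it is stated as a classical result with a citation to Lieb--Loss \cite{1}, and that is where both the general constant
$C(n,\lambda,p)$ and the sharp conformal-case value $C(n)$ are taken from. There is therefore no internal proof to compare against; you are effectively reconstructing the argument of the cited reference.

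As a reconstruction, your outline is essentially correct. Two small remarks. First, for the general (non-sharp) bound, the Lieb--Loss argument does not bound the convolution $|\cdot|^{-\lambda}\ast h$ in $L^{p'}$ via the weak-$L^{n/\lambda}$ quasinorm (that is the O'Neil / weak-Young route, which gives \emph{a} constant but not the particular one displayed). Instead it passes to layer-cake representations of $f$ and $h$, reduces to indicator functions $\chi_A$, $\chi_B$, bounds $\int_A\int_B|x-y|^{-\lambda}$ by $\frac{n}{n-\lambda}\omega_n^{\lambda/n}\min\{|A|\,|B|^{1-\lambda/n},\ |A|^{1-\lambda/n}|B|\}$ by exactly the kernel-splitting-at-radius-$R$ computation you describe, and then the two summands $(p\lambda/(np-n))^{\lambda/n}$ and $(r\lambda/(nr-n))^{\lambda/n}$ arise from splitting the resulting double $(a,c)$ level-set integral into the two regions where each factor in the minimum dominates and applying a one-dimensional H\"older/Young estimate on each. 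So the geometry you describe is the right one, but it lives at the level of the level-set integral rather than a direct $L^{p'}$ convolution bound. Second, for the sharp constant in the diagonal case your sketch (Riesz rearrangement, stereographic lift to $\mathbf{S}^n$, identification of the extremizer $(1+|x|^2)^{-(n+2)/2}$ via competing symmetries, and beta--gamma evaluation) is exactly Lieb's argument as presented in \cite{1}. The genuinely hard parts you flag --- existence of an extremizer modulo the noncompact conformal group and uniqueness up to that group --- are indeed where the work lies; a full proof would need to spell out the symmetrization-then-compactness step rather than gesture at concentration compactness, but as a proposal your identification of the route is accurate.
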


\begin{theorem}[Interpolation inequality]\label{t1} Let $\Omega \subset \mathbb{R}^n$ be a
measurable domain, let $f(x)\in L^{p}(\Omega)\cap L^q(\Omega)$, then,  with  $0\leq \beta=\frac{q-r}{q-p}\leq 1$, we have
\begin{equation*}
 \int_{\Omega}|f|^rdx\leq(\int_{\Omega}|f|^pdx)^{\beta}(\int_{\Omega}|f|^qdx)^{1-\beta}.
\end{equation*}
\end{theorem}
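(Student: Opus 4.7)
The plan is to prove this as a direct consequence of H\"older's inequality after a judicious splitting of the integrand. Assume first the non-degenerate case $p<r<q$ (the endpoints $r=p$ or $r=q$, or $p=q$, reduce the inequality to an equality or a trivial statement, so we may discard them). The idea is to write $|f|^r = |f|^a \cdot |f|^b$ with $a+b=r$ and choose conjugate H\"older exponents $s,s'$ with $1/s + 1/s' = 1$ so that $as = p$ and $bs' = q$, which will deliver exactly the powers required on the right-hand side.

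Setting $\beta := 1/s$ and $1-\beta := 1/s'$, the constraints $a = p\beta$ and $b = q(1-\beta)$ together with $a+b=r$ give $p\beta + q(1-\beta) = r$, i.e.
\begin{equation*}
\beta \;=\; \frac{q-r}{q-p},\qquad 1-\beta \;=\; \frac{r-p}{q-p},
\end{equation*}
which matches the exponent stated in the theorem and satisfies $0\leq \beta\leq 1$ precisely when $p\leq r\leq q$. Hence the pair $(s,s')=(1/\beta,1/(1-\beta))$ is a legitimate pair of conjugate exponents in $[1,\infty]$.

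Next I would apply H\"older's inequality:
\begin{equation*}
\int_\Omega |f|^r\,dx \;=\; \int_\Omega |f|^{p\beta}\,|f|^{q(1-\beta)}\,dx
\;\leq\; \Big(\int_\Omega |f|^{p\beta\cdot s}\,dx\Big)^{1/s}\Big(\int_\Omega |f|^{q(1-\beta)\cdot s'}\,dx\Big)^{1/s'},
\end{equation*}
and, since $p\beta\cdot s = p$ and $q(1-\beta)\cdot s' = q$, this becomes
\begin{equation*}
\int_\Omega |f|^r\,dx \;\leq\; \Big(\int_\Omega |f|^p\,dx\Big)^{\beta}\Big(\int_\Omega |f|^q\,dx\Big)^{1-\beta},
\end{equation*}
which is the desired inequality. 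The hypothesis $f\in L^p(\Omega)\cap L^q(\Omega)$ ensures both integrals on the right-hand side are finite, and measurability of $\Omega$ is all that is required to invoke H\"older.

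There is no genuine obstacle here; the only subtle point is book-keeping, namely verifying that the algebraic solution of the exponent equations yields exactly $\beta=(q-r)/(q-p)$ and that $(1/\beta,1/(1-\beta))$ is an admissible H\"older pair. The degenerate endpoints and the case $p=q$ are handled separately by observing the inequality collapses to a trivial identity.
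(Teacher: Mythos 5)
Your proof is correct and is the standard Hölder-inequality argument for the Lyapunov interpolation inequality; the paper itself states this result in its appendix of standard functional inequalities without giving a proof, so there is nothing to compare against. The exponent bookkeeping $\beta=(q-r)/(q-p)$, $s=1/\beta$, $s'=1/(1-\beta)$ is exactly right, and the handling of the degenerate cases $r\in\{p,q\}$ and $p=q$ is appropriate.
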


\begin{theorem}[Sobolev's inequality for gradient]\label{t8} \cite{1} For $n\geq
3$, let $f \in L^1_{loc}(\mathbb{R}^n)$ with $\|\nabla
f\|_{L^2}<\infty$. Then $f\in L^q(\mathbb{R}^n)$ and the
following inequality holds:
\begin{equation}\label{w9}
S_n\|f\|_{L^q(\mathbb{R}^n)}^2\leq\|\nabla f\|_{L^2(\mathbb{R}^n)}^2, \qquad q=\frac{2n}{n-2}, 
\end{equation}
where $S_n:=\frac{n(n-1)}{4}2^{\frac{2}{n}}\pi^{1+\frac{2}{n}}\Gamma(\frac{n+1}{2})^{-\frac{1}{n}}$ is the optimal constant for~\eqref{w9}.
%The equality holds in equation \eqref{w9} if and only if f is a multiple of the function $(\mu+(x-a)^2)^{-\frac{n-2}{n}}$ with $\mu>0$ and with $a\in \mathbb{R}^n$ arbitrary.
\end{theorem}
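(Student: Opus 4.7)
The plan is to establish the sharp Sobolev embedding $\|f\|_{L^{2n/(n-2)}(\mathbb{R}^n)}^2 \leq S_n^{-1}\|\nabla f\|_{L^2(\mathbb{R}^n)}^2$ along the classical Aubin--Talenti route, which reduces the inequality to a one-dimensional variational problem via symmetrization. First I would invoke the P\'olya--Szeg\H{o} rearrangement inequality: if $f^*$ denotes the symmetric decreasing rearrangement of $|f|$, then $\|f^*\|_{L^q(\mathbb{R}^n)} = \|f\|_{L^q(\mathbb{R}^n)}$ for every $q\in[1,\infty]$, while $\|\nabla f^*\|_{L^2(\mathbb{R}^n)} \leq \|\nabla f\|_{L^2(\mathbb{R}^n)}$. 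Consequently it suffices to prove \eqref{w9} for nonnegative, radially symmetric, nonincreasing profiles, and any extremal (if it exists) must be of this form up to translation.

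Next, for $f(x) = u(|x|)$ both sides of \eqref{w9} collapse to one-dimensional integrals, so the minimization $S_n = \inf\{\|\nabla f\|_{L^2}^2 : \|f\|_{L^q}=1,\ q=2n/(n-2)\}$ becomes a weighted Dirichlet problem on the half-line. The Euler--Lagrange equation, after normalizing the Lagrange multiplier, is the conformally invariant critical semilinear equation $-\Delta U = U^{(n+2)/(n-2)}$ on $\mathbb{R}^n$. By the Gidas--Ni--Nirenberg moving-plane argument applied to positive decaying solutions, every such extremal is radially symmetric about some point, and explicit integration of the resulting ODE identifies the extremals as the Aubin--Talenti family
\begin{equation*}
U_{\lambda,x_0}(x) = c_n\,\lambda^{(n-2)/2}\bigl(1+\lambda^2|x-x_0|^2\bigr)^{-(n-2)/2},\qquad \lambda>0,\ x_0\in\mathbb{R}^n.
\end{equation*}
Substituting $U_{1,0}$ into the Rayleigh quotient $\|\nabla U\|_{L^2}^2/\|U\|_{L^{2n/(n-2)}}^2$ and reducing to Beta/Gamma integrals yields the explicit value of $S_n$ displayed in the statement.

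The main obstacle is the well-known loss of compactness at the critical exponent: minimizing sequences are invariant under the non-compact group generated by translations and the scaling $f \mapsto \lambda^{(n-2)/2} f(\lambda\,\cdot\,)$, so a weakly convergent minimizing sequence in $\dot H^1(\mathbb{R}^n)$ can concentrate or evaporate without attaining the infimum. I would address this in one of two standard ways: either (i) by the direct symmetrization-plus-ODE route above, which bypasses compactness by exhibiting the extremal explicitly and then using rearrangement to verify global optimality; or (ii) by a concentration-compactness dichotomy in the spirit of P.-L. Lions, producing a minimizer modulo translation and dilation, followed by the Gidas--Ni--Nirenberg classification. Since \eqref{w9} is the classical Aubin--Talenti theorem, either path works; a more modern and elegant alternative is the mass-transport proof of Cordero-Erausquin--Nazaret--Villani, which yields both the inequality and the sharp constant $S_n$ in one stroke via Brenier's map.
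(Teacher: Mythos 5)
The paper does not give a proof of this statement: Theorem~\ref{t8} is quoted directly from Lieb and Loss~\cite{1} (their Theorem~8.3) and used as a black box throughout. Your proposal therefore supplies work that the paper deliberately delegates to the literature. As a sketch of the Aubin--Talenti theorem it is essentially correct, but a few steps would need tightening if carried out in full: route~(i) is not self-contained as written, because exhibiting the explicit bubble as a critical point of the Rayleigh quotient does not by itself establish the inequality --- Talenti's argument, after P\'olya--Szeg\H{o} symmetrization, proceeds by a change of variables reducing the radial problem to Bliss's one-dimensional inequality, and that is where optimality is actually proved; the classification of all positive finite-energy solutions of $-\Delta U = U^{(n+2)/(n-2)}$ on $\mathbb{R}^n$ without a priori decay hypotheses is due to Caffarelli--Gidas--Spruck rather than the original Gidas--Ni--Nirenberg paper, which requires decay; and in route~(ii) the concentration-compactness alternative still requires a separate argument to rule out vanishing and dichotomy before a minimizer is obtained modulo scaling and translation. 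One more caveat worth flagging: the constant printed in the paper's statement appears to contain typographical errors relative to Lieb--Loss, who give
\begin{equation*}
S_n=\frac{n(n-2)}{4}\,\bigl|\mathbb{S}^n\bigr|^{2/n}=\frac{n(n-2)}{4}\,2^{2/n}\,\pi^{1+\frac{1}{n}}\,\Gamma\!\left(\tfrac{n+1}{2}\right)^{-2/n},
\end{equation*}
so the prefactor should read $n(n-2)$ (not $n(n-1)$), the power of $\pi$ should be $1+\frac{1}{n}$ (not $1+\frac{2}{n}$), and the exponent on $\Gamma$ should be $-\frac{2}{n}$ (not $-\frac{1}{n}$); a written-out proof along your lines should recover the Lieb--Loss value.
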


\begin{theorem}[Gagliardo-Nirenberg-Sobolev inequality]\label{t9}
	Assume $q,r$  satisfy $1\leq q,r \leq \infty$ and $j,m\in \mathbb{Z}^+$
satisfy $0\leq j<m$. For any $f(x)\in C_0^{\infty}(\mathbb{R}^n)$, then we have
	\begin{equation}
	\|D^jf(x)\|_{L^p(\mathbb{R}^n)}\leq
C\|D^mf(x)\|_{L^r(\mathbb{R}^n)}^{\alpha}\|u\|_{L^q(\mathbb{R}^n)}^{1-\alpha}\label{w11},
	\end{equation}
	where
$\frac{1}{p}-\frac{j}{n}=\alpha(\frac{1}{r}-\frac{m}{n})+(1-\alpha)\frac{1}{q}$,$\frac{j}{m}\leq\alpha\leq1$
and C depends on $m,n,j,q,r,\alpha$. If $m-j-\frac{n}{r}$ is a nonnegative
integer, then \eqref{w11}is established for $\frac{j}{m}\leq\alpha<1$.
\end{theorem}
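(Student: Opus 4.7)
The plan is to follow Nirenberg's classical induction scheme: reduce the general statement to the base interpolation between the function and a single derivative, then lift to arbitrary orders by iterating on the number of derivatives. First I would establish the case $(j,m)=(0,1)$. When $1\leq r<n$, Gagliardo's slicing argument yields the endpoint Sobolev embedding $\|f\|_{L^{r^{*}}(\mathbb{R}^n)}\leq C\|\nabla f\|_{L^{r}(\mathbb{R}^n)}$ with $r^{*}=nr/(n-r)$; the core step writes $|f(x)|\leq \int_{-\infty}^{x_i}|\partial_{i}f|\,ds$ for each coordinate $i=1,\dots,n$ and combines the one-dimensional bounds by H\"older's inequality with exponent $n-1$. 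Then H\"older's interpolation between $L^{q}$ and $L^{r^{*}}$ produces
\[
\|f\|_{L^{p}(\mathbb{R}^n)}\leq C\|\nabla f\|_{L^{r}(\mathbb{R}^n)}^{\alpha}\|f\|_{L^{q}(\mathbb{R}^n)}^{1-\alpha},
\]
whenever $\tfrac{1}{p}=\alpha(\tfrac{1}{r}-\tfrac{1}{n})+(1-\alpha)\tfrac{1}{q}$ with $0\leq\alpha\leq 1$. When $r>n$, Morrey's embedding $\|f\|_{L^{\infty}}\leq C\|\nabla f\|_{L^{r}}^{\alpha_{0}}\|f\|_{L^{q}}^{1-\alpha_{0}}$ replaces Sobolev's inequality, and the same H\"older interpolation closes the case; the limiting case $r=n$ is recovered by letting $\tilde r\uparrow n$ and tracking the constants.

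Second, I would handle the case $j=0$ with general $m$ by induction on $m$, via the fundamental two-term inequality
\[
\|D^{k}f\|_{L^{a}(\mathbb{R}^n)}\leq C\,\|D^{k-1}f\|_{L^{b}(\mathbb{R}^n)}^{1/2}\,\|D^{k+1}f\|_{L^{c}(\mathbb{R}^n)}^{1/2},
\]
between adjacent derivative orders, which one proves by integration by parts (e.g.\ $\int|\nabla f|^{2}=-\int f\Delta f$, together with a H\"older bound with matched scaling) and bootstraps via Young's inequality. Chaining these with the $(0,1)$ inequality along the sequence of derivative orders produces the full $(0,m)$ statement, with the scaling identity automatically preserved.

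Third, for general $0<j<m$, I would apply the $(0,m-j)$ inequality to $g=D^{j}f$, whose $L^{q_1}$ norm is itself controlled via the interpolations already established; a careful reshuffling of H\"older exponents yields the scaling identity $\tfrac{1}{p}-\tfrac{j}{n}=\alpha(\tfrac{1}{r}-\tfrac{m}{n})+(1-\alpha)\tfrac{1}{q}$ with admissible range $\tfrac{j}{m}\leq\alpha\leq 1$. The endpoint obstruction---$\alpha=1$ being forbidden precisely when $m-j-\tfrac{n}{r}$ is a nonnegative integer---reflects the classical failure of the critical Sobolev embedding $W^{1,n}(\mathbb{R}^n)\not\hookrightarrow L^{\infty}(\mathbb{R}^n)$ and its higher-order analogues, and must be excluded by hand.

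The main obstacle will be the extensive casework imposed by the position of $r$ relative to $n$ and by the critical integer condition, combined with the bookkeeping needed to match the scaling identity at every inductive step and to verify that the admissible range $\tfrac{j}{m}\leq \alpha$ is achieved and not exceeded. A conceptually cleaner route bypasses much of this by working with Littlewood--Paley or Besov decompositions, where the statement reduces to Bernstein inequalities plus real interpolation; but the elementary Nirenberg approach pursued here requires careful tracking of exponents at each chain of H\"older applications, and a density argument to pass from $C_{0}^{\infty}$ to the natural completions.
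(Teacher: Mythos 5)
The paper does not prove this statement. Theorem~\ref{t9} sits in the appendix as part of a small toolbox of classical functional inequalities, quoted without proof (and, unlike the neighbouring results, even without a citation; the reference would normally be Nirenberg's 1959 paper or a standard text such as Friedman or Maz'ya). So there is no in-paper argument to compare your proposal against; what can be assessed is whether your reconstruction is a correct route to this classical theorem.

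Your outline is the standard Nirenberg induction scheme and is essentially correct. The base case $(j,m)=(0,1)$ via the Gagliardo slicing bound $|f(x)|^{n/(n-1)}\le \prod_{i=1}^n\bigl(\int_{\mathbb R}|\partial_i f|\,ds_i\bigr)^{1/(n-1)}$, followed by iterated generalized H\"older, gives the $L^{n/(n-1)}$ endpoint; substituting powers of $|f|$ extends it to $1\le r<n$, and then H\"older interpolation between $L^{r^*}$ and $L^q$ delivers exactly the scaling relation $\tfrac1p=\alpha(\tfrac1r-\tfrac1n)+(1-\alpha)\tfrac1q$. Replacing Sobolev by Morrey for $r>n$ and passing to the limit for $r=n$ is the usual way to cover the remaining cases. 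The two-term interpolation inequality $\|D^kf\|_{L^a}\le C\|D^{k-1}f\|_{L^b}^{1/2}\|D^{k+1}f\|_{L^c}^{1/2}$ (with $\tfrac2a=\tfrac1b+\tfrac1c$) is indeed the engine of the induction on $m$, and reducing the $0<j<m$ case by applying the $(0,m-j)$ bound to $g=D^jf$ is the right closure. Two small cautions: the slicing inequality should be written with the full-line integral $|f(x)|\le \tfrac12\int_{\mathbb R}|\partial_i f|\,ds_i$ (your one-sided form is equivalent but slightly misleading when you then raise to the $n/(n-1)$ power and use the product structure), and the two-term inequality requires an integration by parts plus an optimization/Young step whose $L^p$ bookkeeping is exactly where the integer obstruction $m-j-\tfrac{n}{r}\in\mathbb{Z}_{\ge0}$ manifests; in a complete write-up one should make explicit which chain of exponents fails at $\alpha=1$ in that critical case, rather than only noting that it ``must be excluded by hand.'' Subject to that bookkeeping, your plan would prove the theorem.
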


\end{document}